\newtheorem{theorem}{Theorem}[section]
\newtheorem{lemma}{Lemma}[section]
\newtheorem{proposition}{Proposition}[section]
\newtheorem{corollary}{Corollary}[section]
\theoremstyle{definition}
\newtheorem{definition}{Definition}[section]
\theoremstyle{remark}
\newtheorem{remark}{Remark}[section]
\numberwithin{equation}{section}
\def\p{\partial}
\newcommand{\R}{{\mathbb R}}
\newcommand{\pe}{\Pi^{\perp}}
\def\dd{{\rm d}}
\def\be{\begin{equation}}
\def\en{\end{equation}}
\def\bs{\begin{split}}
\def\es{\end{split}}
\def\ba{\begin{align}}
\def\ea{\end{align}}
\newcommand{\vertiii}[1]{{\left\vert\kern-0.25ex\left\vert\kern-0.25ex\left\vert #1
    \right\vert\kern-0.25ex\right\vert\kern-0.25ex\right\vert}}
\newcommand{\supp}{{\rm supp}}
\author[Robin Ming Chen]{Robin Ming Chen}
\address{Department of Mathematics, University of Pittsburgh, Pittsburgh, PA 15260, USA.}
\email{mingchen@pitt.edu}
\author[Feimin Huang]{Feimin Huang}
\address{State Key Laboratory of Mathematical Sciences, Academy of Mathematics and Systems Science, Chinese Academy of Sciences, Beijing 100190, China.}
\email{fhuang@amt.ac.cn}
\author[Dehua Wang]{Dehua Wang}
\address{Department of Mathematics, University of Pittsburgh, Pittsburgh, PA 15260, USA.}
\email{dwang@math.pitt.edu}
\author[Difan Yuan]{Difan Yuan}
\address{School of Mathematical Sciences, Beijing Normal University and
 Laboratory of Mathematics, and Complex Systems, Ministry of Education, Beijing 100875, China.}
\email{yuandf@amss.ac.cn}
\title[Stability of vortex sheet in elastodynamics]
{Nonlinear stability of compressible vortex sheets in three-dimensional elastodynamics}
\keywords{Vortex sheets, Elastodynamics, Contact discontinuities, Linear stability, Nonlinear stability, Para-differential calculus, Nash-Moser iteration}
\subjclass[2020]{35Q35, 76E17, 74F10, 35L40, 76N10}
\date{}
\begin{document}
\begin{abstract}
We investigate the nonlinear stability and existence of compressible vortex sheet solutions for three-dimensional isentropic elastic flows. This problem involves a nonlinear hyperbolic system with a characteristic free boundary. Compared to the two-dimensional case, the additional spatial dimension introduces intricate frequency interactions between elasticity and velocity, significantly complicating the stability analysis. Building upon previous results on the weakly linear stability of elastic vortex sheets \cite{RChen2021}, we perform a detailed study of the roots of the Lopatinski$\breve{\mathrm{i}}$ determinant and identify a geometric stability condition associated with the deformation gradient.

To address the challenges of the variable-coefficient linearized problem, we employ an upper triangularization technique that isolates the outgoing modes into a closed system, where they appear only at the leading order. This enables us to derive energy estimates despite derivative loss. The major novelty of our approach includes the following two key aspects:
(1) For the three-dimensional compressible Euler vortex sheets, the front symbol exhibits degenerate ellipticity in certain frequency directions, which makes it challenging to ensure the front's regularity using standard energy estimates. Our analysis reveals that the non-parallel structure of the deformation gradient tensor plays a crucial role in recovering ellipticity in the front symbol, thereby enhancing the regularity of the free interface. (2) Another significant challenge in three dimensions arises from the strong degeneracy caused by the collision of repeated roots and poles. Unlike in two dimensions, where such interactions are absent, we encounter a co-dimension one set in frequency space where a double root coincides with a double pole. To resolve this, we refine Coulombel's diagonalization framework \cite{Coulombel2004shocks} and construct a suitable transformation that reduces the degeneracy order of the Lopatinski$\breve{\mathrm{i}}$ matrix, enabling the use of localized G${\rm\mathring a}$rding-type estimates to control the characteristic components. Finally, we employ a Nash-Moser iteration scheme to establish the local existence and nonlinear stability of vortex sheets under small initial perturbations, demonstrating stability within a subsonic regime.
\end{abstract}
\maketitle

\setcounter{tocdepth}{1}
\tableofcontents

\section{Introduction}

Vortex sheets are interfaces between two inviscid incompressible or compressible flows, characterized by a contact discontinuity in the fluid velocity.  Across these interfaces, the tangential velocity field exhibits a jump discontinuity, while the normal component of the flow velocity remains continuous. Vortex sheets arise from various physical phenomena in fluid mechanics, including oceanography, plasma physics, astrophysics, elastodynamics and aerodynamics. In compressible flows, they are one of the fundamental wave types, along with shock waves and rarefaction waves, in multi-dimensional (M-D) hyperbolic systems of conservation laws. Studying the existence and stability of compressible vortex sheets can give a better understanding of M-D Riemann problems and the behavior of entropy solutions; see Chen-Feldman \cite{ChenFeldman} and Dafermos \cite{Dafermos}.

In this paper, we focus on studying the vortex sheets in 3D compressible inviscid flows in elastodynamics: ({\it cf.} \cite{Gurtin,Joseph} for the physical background):
\begin{equation}\label{elastic2}
\begin{cases}
\rho_t+\mathrm{div}(\rho \mathbf{\textbf{v}})=0,\\
(\rho \mathbf{\textbf{v}})_t+\mathrm{div}(\rho \mathbf{\textbf{v}}\otimes \mathbf{\textbf{v}})+\nabla p=\mathrm{div}(\rho \mathbf{F}\mathbf{F}^{\top}),\\
(\rho\mathcal{F}_j)_t+ \mathrm{div}(\rho \mathcal{F}_j\otimes\textbf{v}-\textbf{v}\otimes\rho \mathcal{F}_j)=\mathbf{0},\\
\end{cases}
\end{equation}
where $\rho$ denotes the density, $\mathbf{v}=(v_1,v_2,v_3)^{\top}\in \R^3$ the velocity, ${\mathcal{F}}_j$ is the $j$th column of deformation gradient $\mathrm{\mathbf{F}}=(F_{ij})\in \mathbf{M}^{3\times3},i,j=1,2,3$ and $p$ for the pressure with $p=p(\rho)$ a smooth strictly increasing function on $(0,\infty).$  We also introduce Mach number $M=\frac{|\mathbf{\textbf{v}}|}{c},$ where
\begin{align}\label{c.def}
c:=c(\rho)=\sqrt{p'(\rho)}, \text{ for } \rho>0.
 \end{align}
 The fluids occupy in 3D space and the interfaces are 2D embedded inside the fluid.
 \smallskip

Note that by taking divergence of the third equations in \eqref{elastic2}, we end up with
 \begin{equation}\nonumber
 \partial_t(\mathrm{div}(\rho \mathcal{F}_j))=0,\text{ for } j=1,2,3.
\end{equation}
In column-wise components, we can write the {\it intrinsic property} (involution condition for the elastic flow, refer to \cite{Dafermos}) as follows:
\begin{equation}\label{intrinsic}
\mathrm{div}(\rho \mathcal{F}_j)=0, \text{ for } j=1,2,3.
\end{equation}
This intrinsic property holds at any time throughout the flow if it is initially satisfied.

\subsection{History Review}
The study of compressible vortex sheets has a long and rich history, originating from the seminal works of Miles \cite{Miles1957,Miles1958} and Fejer-Miles \cite{Fejer1963}. These early studies established that in the two-dimensional compressible Euler flow, vortex sheets exhibit violent instability when the Mach number $M<\sqrt{2}$, an effect analogous to the Kelvin--Helmholtz instability in incompressible fluids. Later, Artola--Majda \cite{Artola,Artola1987,Artola1989} investigated the interaction between vortex sheets and highly oscillatory waves, demonstrating that global-in-time nonlinear instability persists for $M > \sqrt{2}$, making global existence results challenging in the multidimensional settings.

A major breakthrough in the mathematical analysis of compressible vortex sheets came from Coulombel and Secchi \cite{Coulombel2004,CS08MR2423311}, who employed microlocal analysis and the Nash-Moser iteration technique to establish the local-in-time nonlinear stability of 2D compressible vortex sheets under small perturbations. Their results were restricted to the supersonic regime $M>\sqrt{2}$, relying on stability conditions akin to those used for shock waves by Majda \cite{Majda1983,Majda19832} and Coulombel \cite{Coulombel2002,Coulombel2004shocks}. Extensions of these results to non-isentropic Euler flows \cite{CoulombelMorando2004,Morando2008,MTW2018} further demonstrated how entropy variations influence stability. More recently, research has been carried out on steady three-dimensional compressible vortex sheets \cite{WangYJDE2013,WangYSIAM2015,WangY2015} and relativistic vortex sheets \cite{CSW19MR3925528}, providing insights into broader applications and mathematical structures.

In three-dimensional flows, the dynamics become substantially more intricate. Miles \cite{Miles1958} observed that disturbances propagating at large angles to the undisturbed flow amplify instability, and Serre \cite{Serre} later demonstrated through normal mode analysis that 3D compressible vortex sheets remain unstable for all Mach numbers, mirroring the Kelvin--Helmholtz instability in incompressible fluids. This suggests that additional physical mechanisms -- such as external forces, surface tension, or viscosity -- are required to stabilize the interface.

Magnetohydrodynamics (MHD) provides one such stabilizing effect. Chen--Wang \cite{ChenG2008,chen2012characteristic} and Trakhinin \cite{Trakhinin2009} independently proved that non-parallel magnetic fields stabilize compressible current-vortex sheets, a result that was also obtained for 2D MHD current-vortex sheets \cite{WangYARMA2013,Morando2023}. Thermoelasticity is proved to be able to stabilize contact discontinuities \cite{chen2020stability}. Another stabilizing mechanism arises from viscoelastic effects; numerical simulations and theoretical studies \cite{Azaiez,Huilgol1981} suggest that viscoelasticity counteracts vortex sheet instabilities. Huilgol \cite{Huilgol1981,Huilgol2015} studied vortex sheet formation in viscoelastic fluids, showing that unsteady shearing motions can induce vortex sheet structures, while Hu--Wang \cite{Hu2012} investigated singularity formation in viscoelastic flows. The stabilization by surface tension was confirmed in the work of Stevens \cite{stevens}, where the local existence and structural stability for 3D compressible Euler vortex sheets were obtained. For one-phase compressible elastic fluids, stability can be inferred from the classical Rayleigh--Taylor sign condition or non-collinearity of the elastic tensor; see Trakhinin \cite{Trakhinin2018}.

Significant effort has been devoted to understanding the influence of elasticity on vortex sheet stability. Linear and nonlinear stability for 2D compressible elastic vortex sheets has been rigorously established in \cite{RChen2017,RChen2018,RChen2020,RChen20212,RChen2023CAMC}. More recently, Chen--Huang--Wang--Yuan \cite{RChen2021} investigated the weakly linear stability of 3D compressible elastic vortex sheets, deriving necessary and sufficient conditions for stability through spectral analysis and {\it a priori} estimates. We would also like to mention the structural stability of 2D shock waves in compressible elastodynamics \cite{morando2020structural,trakhinin2022weak}.

\subsection{Classical Challenges and Resolutions in 2D}\label{subsec classical challenges}
One of the fundamental difficulties in analyzing vortex sheets stems from the characteristic nature of the free boundary, which limits control over the trace of characteristic components \cite{Coulombel2004,Lax,Majda1975,Gavage2007}. Specifically, the failure of the uniform Kreiss--Lopatinski$\breve{\mathrm{i}}$ (UKL) condition leads to a loss of tangential derivatives in estimating the solutions in terms of source terms in the linearized problem \cite{Coulombel2004}, making standard approaches insufficient for proving stability results. Additionally, the presence of elasticity complicates the root structure of the Lopatinski$\breve{\mathrm{i}}$ determinant, making it difficult to directly apply classical Kreiss symmetrization techniques.

A recent development in overcoming these difficulties was introduced in \cite{RChen2017} in the study of 2D rectilinear compressible elastic vortex sheets, where the authors proposed an upper triangularization technique to isolate outgoing modes from the system at all points in the frequency space. This method effectively separates the system into a closed form where the outgoing modes can be proved to be zero, simplifying the analysis to estimating only the incoming modes, which can be derived directly from the Lopatinski$\breve{\mathrm{i}}$ determinant. As a result, the linear stability was achieved. This approach was further extended by Chen--Huang--Wang--Yuan \cite{RChen2021} to analyze 3D linear stability for rectilinear compressible elastic fluids, providing a crucial step toward the nonlinear dynamics.

Linearizing around a non-constant background state introduces spatially dependent coefficients in the system, leading to additional complications in controlling the behavior of solutions. One convenient way to derive energy-type estimates is to use paradifferential calculus of Bony \cite{bony1981}. A particularly difficult issue in the paralinearization approach is that the Lopatinski$\breve{\mathrm{i}}$ determinant may vanish at certain frequencies (called {\it roots}). Coulombel \cite{Coulombel2002,Coulombel2004shocks} and Coulombel--Secchi \cite{Coulombel2004} developed a bicharacteristic extension method to construct weight functions that mitigate this degeneracy. However, this method relies on the assumption that the leading order symbol matrix for the paralinearized system of the non-characteristic form remains diagonalizable along bicharacteristic curves. This assumption fails when the roots coincide with the points where the system cannot be reduced to a non-characteristic form -- these points are referred to as {\it poles}. This breakdown has been a significant obstacle in applying classical energy methods.

To overcome this, a new approach was designed in \cite{RChen2018}, based on a refined upper triangularization of the para-linearized system. Instead of relying on bicharacteristic curves, this method constructs weight functions that depend solely on the background state variables. This avoids discrepancies between bicharacteristic extensions and pole distributions, allowing for a more robust stability analysis. A key advantage of this approach is that it provides a framework to ensure improved regularity of the outgoing mode even in the variable-coefficient case, which plays a crucial role in compensating the loss of higher regularity for the characteristic components near the poles. This is particularly important when extending stability results to nonlinear settings, where controlling derivative loss is essential for proving local well-posedness \cite{RChen2020}.

\subsection{New Challenges from Dimension Increase and Resolutions in 3D}
In addition to the above difficulties, the transition from two-dimensional to fully three-dimensional vortex sheets introduces new analytical and structural challenges. The additional spatial dimension not only increases the degrees of freedom in frequency space but also creates more potential instability directions, making frequency interactions significantly more intricate. Unlike in 2D, where instabilities are constrained to a plane, 3D vortex sheets exhibit a broader range of possible resonance mechanisms. However, in the case of elastic vortex sheets, the additional elasticity components in 3D play a stabilizing role by restricting the growth of unstable perturbations, provided that certain geometric conditions on the deformation gradient are satisfied.

\subsubsection{Enhanced ellipticity from elasticity}
As is mentioned in Section \ref{subsec classical challenges}, the energy estimates suffer a loss of tangential derivative due to the failure of the UKL condition. Since the wave front appears only in the boundary conditions of the vortex sheet system, one key strategy is to ensure that instabilities can only arise from the traces of solutions to the interior dynamical system rather than from the front symbol. In other words, the boundary conditions for the front need to satisfy certain {\it ellipticity} condition.

For 2D compressible Euler vortex sheets, this ellipticity is achieved because the front symbol is homogeneous and does not vanish on the closed hemisphere in the frequency space; see \cite[Lemma 4.1]{Coulombel2004}. This property allows for the recovery of one derivative in the regularity estimates of the vortex sheet front. Furthermore, it enables the elimination of the front from the system, reducing the problem to a standard boundary value problem with a symbolic boundary condition, similar to the case of shock waves \cite{Majda1983}. The introduction of elasticity preserves the essential algebraic structure, ensuring that the ellipticity condition and subsequent reduction of the system remain valid \cite{RChen2017}.

However, increasing the spatial dimension introduces additional tangential components, significantly complicating frequency interactions and resonance effects. In particular, for the 3D compressible Euler vortex sheets, the front symbol exhibits {\it degenerate ellipticity} along certain frequency directions, which makes it more difficult to control the regularity of the front using standard energy estimates. A key discovery in our analysis is that elasticity provides an {\it ellipticity enhancement} mechanism that counteracts this degeneracy. More precisely, we show that if the deformation gradient on the free boundary satisfies the geometric condition
\begin{equation}\label{nonpara}
\mathrm{F}_1\times \mathrm{F}_2\neq\mathbf{0}, \qquad \text{or equivalently,} \qquad  \mathrm{F}_1\nparallel\mathrm{F}_2,
\end{equation}
where $\mathrm{F}_1$ and $\mathrm{F}_2$ denote the first two rows of the deformation gradient (see \eqref{row vecs}), then boundary ellipticity is restored, allowing for the recovery of one  derivative for the free interface regularity; see \eqref{bfront}. Another fundamental difference between the 2D and 3D elastic vortex sheet problems lies in the elimination of the front from the boundary conditions. In two dimensions, there exists a natural linear transformation  that removes the front-related terms from the boundary conditions, leaving the remaining system non-singular and ensuring that the normal component of the unknown function can be controlled by its non-characteristic part. In contrast, in 3D, no obvious transformation structure is available due to the increased complexity of frequency interactions. Nevertheless, by exploiting the non-parallel structural property of elasticity in \eqref{nonpara}, we construct a suitable matrix that facilitates the elimination of the front, thus preserving the essential structure needed for stability estimates; see \eqref{PI}.

\subsubsection{Resolving higher-order singularities in the Lopatinski$\breve{\mathrm{i}}$ condition}\label{subsec deg}
As is explained in Section \ref{subsec classical challenges}, the presence of roots of the Lopatinski$\breve{\mathrm{i}}$ determinant leads to a loss of derivatives in the energy estimates, while at each pole, the para-linearized system cannot be reduced to a non-characteristic form. In the case of 2D elastic vortex sheets, the method developed in \cite{RChen2018} effectively treats the scenario where a simple root coincides with a simple pole.

In 3D, the situation is considerably more complex. Specifically, there exists a co-dimension one set in frequency space where a double pole, arising from the left and right states of the two-phase system, collides with a double root. Since the system cannot be directly transformed into a non-characteristic form, we employ the refined upper triangularization technique of \cite{RChen2018} to separate the outgoing modes into a closed form, allowing us to derive improved regularity estimates. These estimates are then used to analyze the coupling between the characteristic part and the outgoing mode, ultimately enabling control of the characteristic components. The final step is to estimate the (boundary trace of) incoming modes in terms of the outgoing modes and source terms.

A crucial factor in determining whether the boundary estimates can be closed is the behavior of the restriction of the boundary symbol $\beta$ to the stable subspace $\text{span}\{ E^r, E^l \}$ of the linearized system, which corresponds to the Lopatinski$\breve{\mathrm{i}}$ matrix $L := \beta (E^r \ E^l)$. As noted earlier, $L$ is not invertible, and direct computation reveals that $L$ has a one-dimensional kernel at the roots. This singularity leads to the failure of UKL, resulting in derivative loss. The important work of Coulombel \cite{Coulombel2004shocks} on weakly stable shock waves developed a framework to handle cases where the boundary symbol $\beta$ vanishes at first order  at the roots. This technique has been successfully applied to the study of 2D compressible Euler vortex sheets \cite{Coulombel2004}, 2D compressible elastic vortex sheets \cite{RChen2018,RChen2020}, and 2D relativistic vortex sheets \cite{CSW19MR3925528}. However, for 3D elastic vortex sheets, a double root may appear, leading to a higher-order degeneracy where $\beta$ vanishes at second order at the double root. To resolve this issue, we extend Coulombel's argument by constructing two invertible mappings $P_1$ and $P_2$ near the double root, which are symbols of type $\Gamma^0_2$ (degree 0 and regularity 2; see Definition \ref{definition para}). Under these transformations, the Lopatinski$\breve{\mathrm{i}}$ matrix $L$ is transformed to
\[
\beta_{\rm{in}}:=P_1LP_2=\left[\begin{matrix}
1 & 0\\
0 & \Lambda^{-2}(\gamma+i\sigma_1)(\gamma+i\sigma_2)\end{matrix}\right],
\]
where $\Lambda, \sigma_1, \sigma_2 \in \Gamma^1_2$ are real-valued scalar symbols. The second-order vanishing of $\beta$ is captured by the symbol $(\gamma+i\sigma_1)(\gamma+i\sigma_2)$. We show that this construction ensures $\beta_{\rm{in}} \in \Gamma^0_2$, which brings the problem into the framework of \cite{Coulombel2004shocks}, allowing us to use localized G${\rm\mathring a}$rding's inequality to compensate for the loss of derivatives; see Section \ref{subsec case1}. We would like to comment that this diagonalization of $L$ into $\beta_{\rm{in}}$ and the associated reduction in the degree of the double roots of the Lopatinski$\breve{\mathrm{i}}$ determinant are expected to be useful for other free-boundary models with similar algebraic structures.

\medskip

The rest of the paper is organized as follows. In Section \ref{vortex}, we formulate 3D nonlinear problem of vortex sheets, fix the free boundary, linearize the system around a given constant solution, introduce the function spaces and useful lemmas, and state our main result, Theorem \ref{thm}.   In Section \ref{variablecoe}, we introduce the effective linear problem and its formulation with variable coefficients. In Section \ref{sec.well-posed}, we prove a well-posedness result of the effective linear problem in the usual Sobolev space $H^m$ with $m$ large enough. In Section \ref{sec.compa}, we transform the original
nonlinear problem into the case with zero initial data. We construct approximate solutions to incorporate the initial data into the
interior equations. The necessary compatibility conditions are imposed on the initial data
for the construction of smooth approximate solutions. Finally, we show the existence and stability results of solutions to the reduced problem and conclude the main result in Section \ref{sec.Nash} by using Nash-Moser iteration.
\bigskip

\section{Formulation, Notations and Main Result}\label{vortex}

In this section, we will derive the governing dynamics of vortex sheets from the elastic equation \eqref{elastic2}, linearize them around a planar vortex sheet, and state our main result.

\subsection{Statement for the Vortex Sheet  Problem}\label{govern}

Recall the definition of vortex sheet solutions for \eqref{elastic2}. Let
$U(t,x_1,x_2,x_3)=(\rho,\textbf{v},\mathbf{F})(t,x_1,x_2,x_3)$ be a solution that is  piecewise  smooth  on both sides  of a smooth hypersurface $$\Gamma=\{x_3=\varphi(t,x_1,x_2)\}.$$ Denote $\partial_i=\partial_{x_i},i=1,2,3,$ for the partial derivatives, normal vector $\nu=(-\partial_1\varphi,-\partial_2\varphi,1)$ on $\Gamma$ and
\begin{eqnarray}\nonumber
U(t,x_1,x_2,x_3)=
\begin{cases}
U^+(t,x_1,x_2,x_3)& \text{ if } x_3>\varphi(t,x_1,x_2),\\
U^-(t,x_1,x_2,x_3)& \text{ if } x_3<\varphi(t,x_1,x_2),\\
\end{cases}
\end{eqnarray}
where $U^{\pm}=(\rho^{\pm},\textbf{v}^{\pm},\mathbf{F}^{\pm})(t,x_1,x_2,x_3).$  The solution $U$ satisfies the Rankine-Hugoniot jump relations at each point on $\Gamma:$
\begin{equation}\label{RH}
\begin{split}
&\partial_t\varphi[\rho]-[\rho\textbf{v}\cdot\nu]=0,\\
&\partial_t\varphi[\rho\textbf{v}]-[(\rho\textbf{v}\cdot\nu)\textbf{v}]-[p]\nu+[\rho\mathbf{F}\mathbf{F}^{\top}\nu]=\mathbf{0},\\
&\partial_t\varphi[\rho\mathcal{F}_j]-[(\textbf{v}\cdot\nu)\rho\mathcal{F}_j]+[(\rho\mathcal{F}_j\cdot \nu)\textbf{v}]=\mathbf{0},\\
\end{split}
\end{equation}
where we write $[f]$ as the jump of the quantity $f$ across the hypersurface $\Gamma.$ For a vortex sheet (contact discontinuity), we require
\begin{equation}\label{cd}
[\textbf{v}\cdot\nu]=0, \; [\textbf{v}]\neq\mathbf{0}, \text{ and } \varphi_t=\textbf{v}^{\pm}\cdot \nu\Big|_{\Gamma}.
\end{equation}
Therefore the jump conditions reduce to
\begin{equation}\label{RH3}
\rho^+=\rho^-, \quad \varphi_t=\textbf{v}^+\cdot\nu=\textbf{v}^-\cdot\nu, \quad \mathcal{F}^{+}_j\cdot\nu=\mathcal{F}^{-}_j\cdot\nu.
\end{equation}

It is crucial that in the derivation of the Rankine-Hugoniot condition, we need to regard $$\rho^{\pm} \mathcal{F}^{\pm}_{j}\cdot\nu=0$$ as an {\it intrinsic property}.
Therefore, we also have
\begin{align}\label{F}
\mathcal{F}^{\pm}_{j}\cdot\nu=0, \text{ for } j=1,2,3\quad \text{on } \Gamma(t).
  \end{align}

To flatten and fix the free boundary $\Gamma,$ we need to introduce the function $\varPhi(t,x_1,x_2,x_3)$ to set the variable transformation $\varPhi^{\pm}(t,x_1,x_2,x_3)$ as follows. We first consider the class of functions $\varPhi(t,x_1,x_2,x_3)$ such that $\inf\{\partial_3\varPhi\}>0.$ Then we define
\begin{align}\label{transform}
U^{\pm}_{\sharp}=(\rho^{\pm}_{\sharp},\textbf{v}^{\pm}_{\sharp},\mathbf{F}^{\pm}_{\sharp})(t,x_1,x_2,x_3) :=(\rho,\textbf{v},\mathbf{F})(t,x_1,x_2,\varPhi(t,x_1,x_2,\pm x_3)),
\end{align}
for $x_3\geq0.$  In the following argument, we drop the index $\sharp$ for notation simplicity. Define $\varPhi^{\pm}(t,x_1,x_2,x_3) := \varPhi(t,x_1,x_2,\pm x_3).$
Inspired by \cite{Coulombel2004,Francheteau2000}, it is natural to require $\varPhi^{\pm}$ satisfying the eikonal equation
\begin{equation} \label{Phi.eq}
\begin{split}
 \partial_t\varPhi^{\pm}+v_1^{\pm}\partial_1\varPhi^{\pm}+v_2^{\pm}\partial_2\varPhi^{\pm}-v_3^{\pm}=0, \quad
 \pm\partial_3\varPhi^{\pm}\geq \kappa>0,
\end{split}
\end{equation}
when $x_3\geq 0$, and
\begin{equation}\label{Phi.eq.c}
\varPhi^{+}=\varPhi^{-}=\varphi, \quad \mathrm{if}\ x_3= 0,
\end{equation}
{for some constant $\kappa>0$.}

Through this variable transformation, equations \eqref{elastic2} become
\begin{equation}\label{U}
\begin{split}
&\partial_tU^{\pm}+A_1(U^{\pm})\partial_1U^{\pm}+A_2(U^{\pm})\partial_2U^{\pm}\\
&\quad+\frac{1}{\partial_3\varPhi^{\pm}}\big(A_3(U^{\pm})-\partial_t\varPhi^{\pm}I-\partial_1\varPhi^{\pm}A_1(U^{\pm})-\partial_2\varPhi^{\pm}A_2(U^{\pm})\big)\partial_3U^{\pm}=\mathbf{0},
\end{split}
\end{equation}
for $x_3>0$ with free boundary $x_3=0,$ where
\begin{equation}\label{A}
\begin{split}
 &A_1(U) :=\begin{bmatrix}\begin{smallmatrix}        v_1 & \rho & 0 & 0 & 0 & 0 & 0 & 0 & 0 & 0 & 0 & 0 & 0\\
    \frac{p'}{\rho} & v_1 & 0 & 0 & -F_{11} & 0 & 0 & -F_{12} & 0 & 0 & -F_{13} & 0 & 0\\
    0 & 0 & v_1 & 0 & 0 & -F_{11} & 0 & 0 & -F_{12} & 0 & 0 & -F_{13} & 0\\
    0 & 0 & 0 & v_1 & 0 & 0 & -F_{11} & 0 & 0 & -F_{12} & 0 & 0 & -F_{13}\\
    0 & -F_{11} & 0 & 0 & v_1 & 0 & 0 & 0 & 0 & 0 & 0 & 0 & 0\\
    0 & 0 & -F_{11} & 0 & 0 & v_1 & 0 & 0 & 0 & 0 & 0 & 0 & 0\\
    0 & 0 & 0 & -F_{11} & 0 & 0 & v_1 & 0 & 0 & 0 & 0 & 0 & 0\\
    0 & -F_{12} & 0 & 0 & 0 & 0 & 0 & v_1 & 0 & 0 & 0 & 0 & 0\\
    0 & 0 & -F_{12} & 0 & 0 & 0 & 0 & 0 & v_1 & 0 & 0 & 0 & 0\\
    0 & 0 & 0 &-F_{12} & 0 & 0 & 0 & 0 & 0 & v_1 & 0 & 0 & 0\\
    0 & -F_{13} & 0 & 0 & 0 & 0 & 0 & 0 & 0 & 0 & v_1 & 0 & 0\\
    0 & 0 & -F_{13} & 0 & 0 & 0 & 0 & 0 & 0 & 0 & 0 & v_1 & 0\\
    0 & 0 & 0 &-F_{13} & 0 & 0 & 0 & 0 & 0 & 0 & 0 & 0 & v_1\\
\end{smallmatrix}
\end{bmatrix}
,\\[5pt]
 &A_2(U) :=\begin{bmatrix}\begin{smallmatrix}
    v_2 & 0 & \rho & 0 & 0 & 0 & 0 & 0 & 0 & 0 & 0 & 0 & 0\\
    0 & v_2 & 0 & 0 & -F_{21} & 0 & 0 & -F_{22} & 0 & 0 & -F_{23} & 0 & 0\\
    \frac{p'}{\rho} & 0 & v_2 & 0 & 0 & -F_{21} & 0 & 0 & -F_{22} & 0 & 0 & -F_{23} & 0\\
    0 & 0 & 0 & v_2 & 0 & 0 & -F_{21} & 0 & 0 & -F_{22} & 0 & 0 & -F_{23}\\
    0 & -F_{21} & 0 & 0 & v_2 & 0 & 0 & 0 & 0 & 0 & 0 & 0 & 0\\
    0 & 0 & -F_{21} & 0 & 0 & v_2 & 0 & 0 & 0 & 0 & 0 & 0 & 0\\
    0 & 0 & 0 & -F_{21} & 0 & 0 & v_2 & 0 & 0 & 0 & 0 & 0 & 0\\
    0 & -F_{22} & 0 & 0 & 0 & 0 & 0 & v_2 & 0 & 0 & 0 & 0 & 0\\
    0 & 0 & -F_{22} & 0 & 0 & 0 & 0 & 0 & v_2 & 0 & 0 & 0 & 0\\
    0 & 0 & 0 &-F_{22} & 0 & 0 & 0 & 0 & 0 & v_2 & 0 & 0 & 0\\
    0 & -F_{23} & 0 & 0 & 0 & 0 & 0 & 0 & 0 & 0 & v_2 & 0 & 0\\
    0 & 0 & -F_{23} & 0 & 0 & 0 & 0 & 0 & 0 & 0 & 0 & v_2 & 0\\
    0 & 0 & 0 &-F_{23} & 0 & 0 & 0 & 0 & 0 & 0 & 0 & 0 & v_2\\
\end{smallmatrix}
\end{bmatrix}
,\\[5pt]
\text{ and }
&A_3(U):=\begin{bmatrix}\begin{smallmatrix}
    v_3 & 0 & 0 & \rho & 0 & 0 & 0 & 0 & 0 & 0 & 0 & 0 & 0\\
    0 & v_3 & 0 & 0 & -F_{31} & 0 & 0 & -F_{32} & 0 & 0 & -F_{33} & 0 & 0\\
    0 & 0 & v_3 & 0 & 0 & -F_{31} & 0 & 0 & -F_{32} & 0 & 0 & -F_{33} & 0\\
    \frac{p'}{\rho} & 0 & 0 & v_3 & 0 & 0 & -F_{31} & 0 & 0 & -F_{32} & 0 & 0 & -F_{33}\\
    0 & -F_{31} & 0 & 0 & v_3 & 0 & 0 & 0 & 0 & 0 & 0 & 0 & 0\\
    0 & 0 & -F_{31} & 0 & 0 & v_3 & 0 & 0 & 0 & 0 & 0 & 0 & 0\\
    0 & 0 & 0 & -F_{31} & 0 & 0 & v_3 & 0 & 0 & 0 & 0 & 0 & 0\\
    0 & -F_{32} & 0 & 0 & 0 & 0 & 0 & v_3 & 0 & 0 & 0 & 0 & 0\\
    0 & 0 & -F_{32} & 0 & 0 & 0 & 0 & 0 & v_3 & 0 & 0 & 0 & 0\\
    0 & 0 & 0 &-F_{32} & 0 & 0 & 0 & 0 & 0 & v_3 & 0 & 0 & 0\\
    0 & -F_{33} & 0 & 0 & 0 & 0 & 0 & 0 & 0 & 0 & v_3 & 0 & 0\\
    0 & 0 & -F_{33} & 0 & 0 & 0 & 0 & 0 & 0 & 0 & 0 & v_3 & 0\\
    0 & 0 & 0 &-F_{33} & 0 & 0 & 0 & 0 & 0 & 0 & 0 & 0 & v_3\\
\end{smallmatrix}
\end{bmatrix}
.\\
\end{split}
\end{equation}
This choice simplifies the expression of the nonlinear problem in the fixed domain and guarantees the constant rank property of boundary matrix in the whole domain.

It is obvious that the system of conservation laws \eqref{elastic2} admits trivial vortex sheet solutions consisting of two constant states separated by a planar front as follows:
\begin{align}\label{background.E}
U(t,x_1,x_2,x_3)=\begin{cases}
(\bar{\rho},\bar{v},0,0,\bar{F}^+_{11},\bar{F}^{+}_{21},0,\bar{F}^+_{12},\bar{F}^{+}_{22},0,\bar{F}^+_{13},\bar{F}^{+}_{23},0) &\text{ if } x_3>0,\\
(\bar{\rho},-\bar{v},0,0,\bar{F}^-_{11},\bar{F}^{-}_{21},0,\bar{F}^-_{12},\bar{F}^{-}_{22},0,\bar{F}^-_{13},\bar{F}^{-}_{23},0) &\text{ if } x_3<0.\\
\end{cases}
\end{align}
Every planar elastic vortex sheet {(namely piecewise-constant vortex sheet)} is of this form up to a  Galilean transformation.
For simplicity we assume that
$\bar{F}_{ij}^{+}=-\bar{F}_{ij}^{-}=\bar{F}_{ij},$
for $i\in \{1,2\}, \ j\in \{1,2,3\}.$

Then we need to solve the following
initial-boundary value problem for $U^{\pm}_{\sharp}$ in a fixed domain:
\begin{subequations}   \label{EVS}
\begin{alignat}{2}
\label{EVS.a}
&\mathbb{L}(U^{\pm},\varPhi^{\pm}) :=L(U^{\pm},\varPhi^{\pm})U^{\pm} =\mathbf{0}, \quad x_3>0,\\
\label{EVS.b}
&\mathbb{B}(U^{+},U^{-},\varphi)|_{x_3=0}=\mathbf{0}, \\
\label{EVS.c}
&(U^{+},U^{-},\varphi)|_{t=0}=(U^{+}_0,U^{-}_0,\varphi_0),
\end{alignat}
\end{subequations}
where we have dropped the index $``\sharp"$ for convenience,
  $L(U,\varPhi)$ and $\mathbb{B}$ are given by
\begin{align}
\label{L.def}
&L(U,\varPhi):=I \partial_t+A_1(U)\partial_1+A_2(U)\partial_2+\widetilde{A}_3(U,\varPhi)\partial_3,
\\ \label{B.def}
&\mathbb{B}(U^+,U^-,\varphi):=
\begin{bmatrix}
[v_1] \partial_1\varphi+[v_2] \partial_2\varphi-[v_3]\\[0.5mm]
\partial_t\varphi+v_1^+|_{x_3=0}\partial_1\varphi+v_2^+|_{x_3=0}\partial_2\varphi-v_3^+|_{x_3=0}\\[0.5mm]
[\rho]
\end{bmatrix},
\end{align}
with
\begin{align}  \notag
\widetilde{A}_3(U, \varPhi):=
\frac{1}{\partial_3\varPhi}\big(A_3(U)-\partial_t\varPhi I-\partial_1\varPhi A_1(U)-\partial_2\varPhi A_2(U)\big).
\end{align}

By \eqref{F} and \eqref{Phi.eq},  we obtain that
the boundary matrix
of problem \eqref{EVS}, {\it i.e.},
$$\mathrm{diag}\,\big(\!-\widetilde{A}_3(U^+, \varPhi^+),\,-\widetilde{A}_3(U^-, \varPhi^-)\big),$$
has constant rank on $\{x_3\geq 0\}$ if and only if
\begin{align} \label{inv5}
F_{3j}^{\pm}=F_{1j}^{\pm} \p_1\varPhi^{\pm}+F_{2j}^{\pm} \p_2\varPhi^{\pm}
\quad \textrm{for }\   j=1,2,3
\quad \textrm{if }\   x_3\geq 0.
\end{align}
In the new variables, \eqref{intrinsic} becomes
\begin{alignat}{4}
\label{inv2}&\partial_{\ell}^{\varPhi^{\pm}} (\rho^{\pm}{F}_{\ell  j}^{\pm})=0
\quad &&\textrm{for }\  j=1,2,3 & \quad&\textrm{if }\  x_3> 0,
\end{alignat}
where
we denote the partial derivatives with respect to the lifting function $\varPhi$ by
\begin{align} \label{differential}
\partial_t^{\varPhi}:=\partial_t-\frac{\partial_t\varPhi}{\partial_3\varPhi}\p_3,\quad
\partial_1^{\varPhi}:=\partial_1-\frac{\partial_1\varPhi}{\partial_3\varPhi}\partial_3,\quad
\partial_2^{\varPhi}:=\partial_2-\frac{\partial_2\varPhi}{\partial_3\varPhi}\partial_3,\quad
\partial_3^{\varPhi}:=\frac{1}{\partial_3\varPhi}\partial_3.
\end{align}
The following proposition shows
that identities \eqref{inv5}--\eqref{inv2} are involutions
for vortex sheet problem \eqref{Phi.eq}--\eqref{EVS}.
The proof follows from a straightforward computation and hence is omitted.

\begin{proposition}
\label{pro1.1}
For every sufficiently smooth solution of problem \eqref{Phi.eq}--\eqref{EVS}
on time interval $[0,T]$,
constraints \eqref{inv5}--\eqref{inv2} hold for all $t\in[0,T],$ if
they are satisfied initially.
\end{proposition}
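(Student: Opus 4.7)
The plan is to view both constraints as the statement that certain scalar quantities, smooth in $(t,\mathbf{x})$ and vanishing at $t=0$, satisfy homogeneous linear first-order PDEs with smooth coefficients determined by the given solution. Standard uniqueness along characteristics then propagates the zero initial value to the whole interval $[0,T]$.

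For \eqref{inv5}, the first step is to introduce, for each $j=1,2,3$,
$$
q_j^{\pm}:=F_{3j}^{\pm}-F_{1j}^{\pm}\,\partial_1\varPhi^{\pm}-F_{2j}^{\pm}\,\partial_2\varPhi^{\pm}=\mathbf{F}_j^{\pm}\cdot\nu^{\pm},\qquad \nu^{\pm}=(-\partial_1\varPhi^{\pm},-\partial_2\varPhi^{\pm},1),
$$
which is the component of $\mathbf{F}_j^{\pm}$ along the normal to the level surfaces of $\varPhi^{\pm}$. I would differentiate $q_j^{\pm}$ in $t$, insert the evolution equations for $F_{1j}^{\pm},F_{2j}^{\pm},F_{3j}^{\pm}$ read off from \eqref{U}--\eqref{A}, and eliminate $\partial_t\varPhi^{\pm}$ via the eikonal equation \eqref{Phi.eq}. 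After the algebraic cancellations, I expect an equation of the form
$$
\bigl(\partial_t+v_1^{\pm}\partial_1+v_2^{\pm}\partial_2+\widetilde{v}_3^{\pm}\partial_3\bigr)q_j^{\pm}=c_j^{\pm}\,q_j^{\pm},
$$
where $\widetilde{v}_3^{\pm}$ is the transformed normal velocity (vanishing at $x_3=0$ thanks to \eqref{Phi.eq}) and $c_j^{\pm}$ is a smooth coefficient built from $\nabla\mathbf{v}^{\pm}$ and $\nabla\varPhi^{\pm}$. Since the characteristic lines of this transport operator remain in $\{x_3\geq 0\}$ and $q_j^{\pm}|_{t=0}\equiv 0$, uniqueness forces $q_j^{\pm}\equiv 0$ on $[0,T]$.

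For \eqref{inv2}, I would rely on the classical Dafermos involution: taking the spatial divergence of the third block of \eqref{elastic2} and using the antisymmetry of the tensor $\rho\mathbf{F}_j\otimes\mathbf{v}-\mathbf{v}\otimes\rho\mathbf{F}_j$ together with commutativity of partial derivatives yields $\partial_t\,\mathrm{div}(\rho\mathbf{F}_j)=0$ in the Eulerian frame. Translating through the change of variables \eqref{transform} via the chain-rule formulas \eqref{differential}, the Eulerian divergence becomes precisely $r_j^{\pm}:=\partial_\ell^{\varPhi^{\pm}}(\rho^{\pm}F_{\ell j}^{\pm})$ and its Eulerian $\partial_t$-derivative becomes $\partial_t^{\varPhi^{\pm}}r_j^{\pm}$. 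Thus $\partial_t^{\varPhi^{\pm}}r_j^{\pm}=0$, which is again a homogeneous linear transport equation, and $r_j^{\pm}|_{t=0}\equiv 0$ propagates to all $t\in[0,T]$.

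The main obstacle is organizational rather than conceptual: the system is $13$-component, the matrices $A_1,A_2,A_3$ in \eqref{A} have an intricate block structure coupling the $F_{ij}$-rows through $\mathbf{v}$, and the cancellation that produces the homogeneous transport equation for $q_j^{\pm}$ requires consistently using the eikonal equation to rewrite $v_3^{\pm}$ in terms of $v_1^{\pm}\partial_1\varPhi^{\pm}+v_2^{\pm}\partial_2\varPhi^{\pm}+\partial_t\varPhi^{\pm}$ at every occurrence. Once no inhomogeneous source survives the cancellation, the rest is a textbook uniqueness argument, which explains why the paper omits the calculation.
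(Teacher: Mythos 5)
Your overall strategy -- recast each constraint as a scalar quantity satisfying a homogeneous linear transport equation and propagate the zero initial value -- is the standard one for such involutions, and for \eqref{inv5} it goes through exactly as you describe. In fact it is slightly cleaner than you state: since the eikonal equation \eqref{Phi.eq} is imposed on all of $\{x_3\geq 0\}$, the coefficient of $\partial_3$ in the transformed transport operator vanishes identically, not just on the boundary, so $q_j^{\pm}$ satisfies
$(\partial_t+v_1^{\pm}\partial_1+v_2^{\pm}\partial_2)q_j^{\pm}
=\big(\partial_3^{\varPhi^{\pm}}v_3^{\pm}-\partial_1\varPhi^{\pm}\,\partial_3^{\varPhi^{\pm}}v_1^{\pm}-\partial_2\varPhi^{\pm}\,\partial_3^{\varPhi^{\pm}}v_2^{\pm}\big)\,q_j^{\pm}$,
where the source terms proportional to $\partial_3 v_i^{\pm}$ in the $F_{ij}^{\pm}$-equations carry the factor $q_j^{\pm}$ itself and the remaining terms cancel against the tangentially differentiated eikonal equation. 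Characteristics are then confined to slices $\{x_3=\mathrm{const}\}$ and the conclusion is immediate.

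There is, however, a genuine circularity in your treatment of \eqref{inv2}. The identity $\partial_t\,\mathrm{div}(\rho\mathbf{F}_j)=0$ follows from the \emph{conservation} form of the third block of \eqref{elastic2}, but Proposition \ref{pro1.1} concerns solutions of the \emph{quasilinear} problem \eqref{EVS}, whose $F$-equations read $\partial_tF_{ij}+v\cdot\nabla F_{ij}-F_{kj}\partial_kv_i=0$ (this is what the matrices in \eqref{A} encode). The two forms differ precisely by the term $v_i\,\mathrm{div}(\rho\mathbf{F}_j)/\rho$, so passing from the quasilinear solution back to the conservation form presupposes the very involution you are proving. The repair is to derive the transport equation for $r_j:=\partial_{\ell}^{\varPhi}(\rho F_{\ell j})$ directly from the quasilinear equations for $\rho$ and $F_{ij}$: the second-order derivative terms cancel by symmetry of mixed partials and the first-order terms cancel after relabeling indices, leaving
$(\partial_t+v_1\partial_1+v_2\partial_2)r_j+\big(\partial_{\ell}^{\varPhi}v_{\ell}\big)\,r_j=0$
rather than $\partial_t^{\varPhi}r_j=0$. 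This is still a homogeneous linear transport equation with purely tangential characteristics, so $r_j|_{t=0}=0$ still forces $r_j\equiv 0$ on $[0,T]$; the conclusion survives, but the zero-order term $(\partial_{\ell}^{\varPhi}v_{\ell})\,r_j$ cannot be dispensed with, and the appeal to the double-divergence antisymmetry argument must be replaced by this direct computation.
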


\subsection{Main Result and Discussion}

In the straightened variables, the piecewise constant vortex sheet \eqref{background.E} corresponds to the stationary solution of \eqref{EVS.a}-\eqref{EVS.c} and \eqref{inv5},\eqref{inv2} as follows:
\begin{align}    \label{background}
\bar{U}^{\pm}:=
\big(\bar{\rho},\,\pm\bar{v},\,0,\,0,\,\pm\bar{F}_{11},\pm\bar{F}_{21},\,0,\,\pm\bar{F}_{12},\pm\bar{F}_{22},\,0,\pm\bar{F}_{13},\pm\bar{F}_{23},\,0\big)^{\top},
\quad  \bar{\varphi}:=0,
\,  \bar{\varPhi}^{\pm}:=\pm x_3.
\end{align}

For $j = 1, 2, 3,$ we denote
\begin{equation}\label{row vecs}
\mathrm{F}_{j} := \text{ the $j$th row of the deformation matrix } \mathbf{F}.
\end{equation}
Note that this differs from the column vectors $\mathcal{F}_j$ in \eqref{elastic2}.
From \eqref{background} we know that $\mathrm{F_3} = \mathbf{0}$. We further define the vector projections (see Fig. \ref{fig vectors})
\begin{equation}\label{proj}
\begin{split}
\Pi_a(b) & := \textup{ the parallel projection of $b$ onto $a$}, \\
\pe_a(b) & := b - \Pi_a(b) = \textup{ the perpendicular projection of $b$ onto $a$}.
\end{split}
\end{equation}
\begin{figure}[h!]
  \centering
  \vspace{-1ex}
  \includegraphics[scale=0.4]{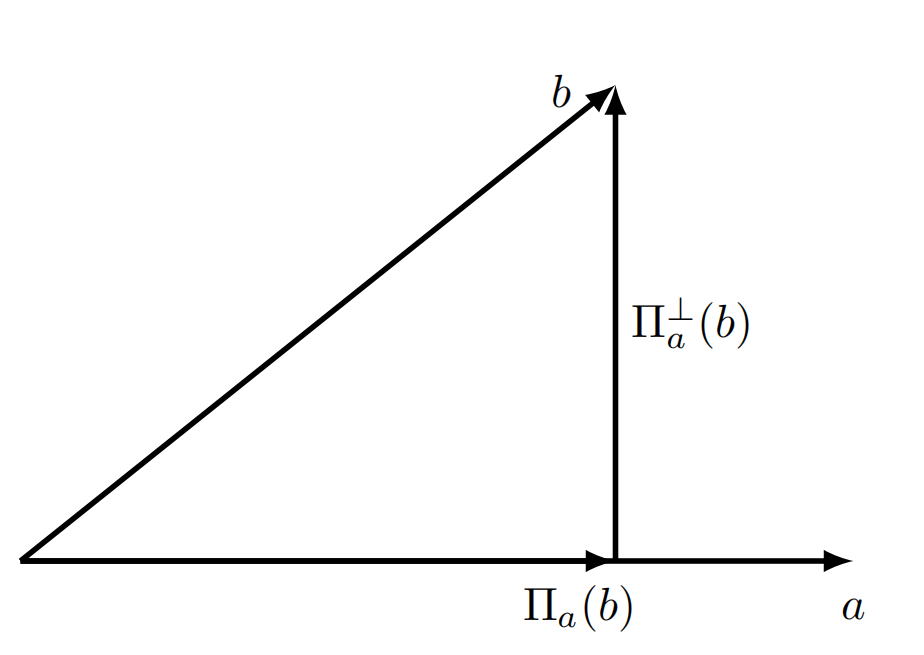}
  \vspace{-1ex}
  \caption{Vector projections}
  \label{fig vectors}
\end{figure}

In order to prove the nonlinear stability of elastic vortex sheets,
we only need to show the existence of solutions to
problem \eqref{Phi.eq}--\eqref{EVS}
on account of transform \eqref{transform}.
The main result of this paper is stated as follows:

\begin{theorem}
 \label{thm}
Let $T>0$ and $s_0\geq 14$ be an integer. Suppose that the background state \eqref{background} satisfies $\mathrm{F}_1\times\mathrm{F}_2\neq\mathbf{0},$ and the following stability conditions{\rm:}
\begin{align}
\label{H1}
\bar{v}^2<\frac{|\pe_{\mathrm{F}_2}(\mathrm{F}_1)|^2}{4},
\end{align}
and
\begin{align}\label{stability4}
\begin{aligned}
&\bar{v}^2<\mathrm{G}(\mathrm{F}_1,\mathrm{F}_2),
\end{aligned}
\end{align}
where $\mathrm{G}(\mathrm{F}_1,\mathrm{F}_2)$ is defined in \eqref{G}.
Suppose further that the initial data $U^{\pm}_0$ and $\varphi_0$ satisfy
constraints \eqref{inv5}--\eqref{inv2} and
the compatibility conditions up to order $s_0$ {\rm(}{\it cf.}\;{\rm Definition \ref{def.compa}}{\rm)},
and that $(U^{\pm}_0-\bar{U}^{\pm},\varphi_0)
\in H^{s_0+1/2}(\mathbb{R}^3_+)\times H^{s_0+1}(\mathbb{R}^2)$
has a compact support.
Then there exists a positive constant $\epsilon$ such that,
if
$$\|U^{\pm}_0-\bar{U}^{\pm}\|_{H^{s_0+1/2}(\mathbb{R}^3_+)}
+\|\varphi_0\|_{H^{s_0+1}(\mathbb{R}^2)}\leq \epsilon,$$
then problem \eqref{Phi.eq}--\eqref{EVS}
admits a solution $(U^{\pm},\varPhi^{\pm},\varphi)$ on the time interval $[0,T]$
satisfying
$$(U^{\pm}-\bar{U}^{\pm},\varPhi^{\pm}-\bar{\varPhi}^{\pm})
\in H^{s_0-8}((0,T)\times\mathbb{R}^3_+), \quad
\varphi\in H^{s_0-7}((0,T)\times\mathbb{R}^2).
$$
\end{theorem}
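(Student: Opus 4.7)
The plan is to establish Theorem \ref{thm} through a Nash--Moser iteration scheme in the spirit of \cite{Coulombel2004,CS08MR2423311,RChen2020}. The overall architecture is threefold: (i) derive tame a priori estimates for an effective linearized problem about a state close to the background, with a fixed loss of tangential derivatives; (ii) modify the initial data to produce a smooth approximate solution that reduces the problem to one with vanishing initial data and interior residual supported in positive time; (iii) run a Nash--Moser scheme with Friedrichs smoothing operators, using the tame estimates to control the quadratic and substitution errors at each step. Throughout the iteration, the involutive constraints \eqref{inv5}--\eqref{inv2} are propagated via Proposition \ref{pro1.1}, and the geometric assumption $\mathrm{F}_1\times\mathrm{F}_2\neq\mathbf{0}$ is preserved by continuity for small perturbations.

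First I would formulate the effective linear problem (Section \ref{variablecoe}) by linearizing \eqref{EVS} about a basic state $(U^{\pm,a}+V^{\pm},\varPhi^{\pm,a}+\Psi^{\pm})$ close to the background and passing to Alinhac's good unknowns so that the terms containing $\Psi^{\pm}$ are absorbed into the symmetrizable part of the interior equations, leaving the front $\psi$ only in the boundary conditions. Paralinearization yields a variable-coefficient symbolic system with a characteristic free boundary whose boundary matrix has constant rank by \eqref{inv5}. I would then apply the upper triangularization of \cite{RChen2017,RChen2018} to isolate the outgoing modes into a closed subsystem, deriving enhanced regularity for them that compensates the derivative loss near the poles. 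The remaining task is to estimate the incoming trace through the Lopatinski$\breve{\mathrm{i}}$ matrix $L:=\beta(E^r\ E^l)$, using the projection \eqref{PI} built from the non-parallel structure of $\mathrm{F}_1,\mathrm{F}_2$ to eliminate the front from the boundary condition while preserving the front ellipticity needed to recover one derivative for $\psi$ via \eqref{bfront}.

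The main obstacle is the microlocal analysis on the co-dimension one frequency set where a double root of the Lopatinski$\breve{\mathrm{i}}$ determinant collides with a double pole. Near such points the system cannot be put in non-characteristic form, and classical Kreiss symmetrization is unavailable. To resolve this I would construct, microlocally, invertible symbols $P_1,P_2\in\Gamma^0_2$ that conjugate $L$ into the diagonal form
\[
\beta_{\rm in}=P_1 L P_2=\begin{bmatrix} 1 & 0 \\ 0 & \Lambda^{-2}(\gamma+i\sigma_1)(\gamma+i\sigma_2)\end{bmatrix},
\]
with $\Lambda,\sigma_1,\sigma_2\in\Gamma^1_2$ real-valued, so that the second-order vanishing of the determinant is cleanly captured by $(\gamma+i\sigma_1)(\gamma+i\sigma_2)$. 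This reduces the problem to the framework of \cite{Coulombel2004shocks} and allows a localized G${\rm\mathring a}$rding-type inequality to deliver a trace estimate for the incoming modes with a fixed, finite loss of derivatives. The stability conditions \eqref{H1}--\eqref{stability4} enter precisely here: \eqref{H1} rules out low-frequency growth in the front symbol, and \eqref{stability4} excludes real neutral modes apart from the identified double roots, so that a Kreiss symmetrizer can be built on the complement of the microlocal bad set and patched with the localized pieces via a frequency partition of unity to yield the full tame estimate in Section \ref{sec.well-posed}.

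With the linear tame estimates in hand, the remainder is largely procedural. In Section \ref{sec.compa} I would construct, via Taylor expansion at $t=0$ and the compatibility conditions of Definition \ref{def.compa}, an approximate solution $(U^{\pm,a},\varPhi^{\pm,a},\varphi^{a})$ whose residual vanishes to high order at $t=0$, thereby reducing \eqref{EVS} to a problem with zero initial data and source supported in $\{t>0\}$. The Nash--Moser scheme of Section \ref{sec.Nash} then introduces smoothing operators $S_{\theta_n}$ with $\theta_n\nearrow\infty$, solves the modified effective linearized problem at each step, and controls the quadratic, substitution and modified-state errors through Moser-type product and composition inequalities combined with the tame estimates. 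Tuning $\theta_n$ and the regularity indices as in \cite{Coulombel2004,RChen2020} produces convergence in $H^{s_0-8}$ for $(U^{\pm}-\bar U^{\pm},\varPhi^{\pm}-\bar\varPhi^{\pm})$ and $H^{s_0-7}$ for $\varphi$, matching the regularity claimed in the theorem; the loss of eight derivatives is exactly the sum of the finite loss in the linear estimate and the loss inherent in Nash--Moser iteration.
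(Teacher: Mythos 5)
Your proposal follows essentially the same route as the paper: Alinhac good unknowns and paralinearization of the effective linearized problem, upper triangularization to close the outgoing modes, the projection built from $\mathrm{F}_1\nparallel\mathrm{F}_2$ to eliminate the front, the conjugation $\beta_{\rm in}=P_1LP_2$ with localized G\aa rding estimates at the double root/double pole collision, followed by the approximate-solution reduction and Nash--Moser iteration with the same derivative count. The only minor imprecision is the role you assign to the stability conditions --- in the paper \eqref{H1} and \eqref{stability4} serve to keep the various pole, root, and $\omega^{r,l}=0$ strips disjoint (except for the unavoidable $\Upsilon^{(1)}_p=\Upsilon^{(1)}_r$), while the front ellipticity comes from $\mathrm{F}_1\times\mathrm{F}_2\neq\mathbf{0}$ alone --- but this does not change the architecture.
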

\begin{remark}
The stability conditions \eqref{H1}--\eqref{stability4} are more restrictive than the full stability domain obtained in \cite{RChen2021} for the constant-coefficient linearized problem, where a necessary and sufficient condition for linear stability was established. In the present work, we derive a \emph{sufficient} condition ensuring \emph{nonlinear stability}. We do not expect these conditions to be necessary; extending the nonlinear stability to the full linear stability domain remains beyond the scope of the current paper.

The additional restrictions arise from the need to perform a delicate microlocal analysis within the para-differential framework. More precisely, the Nash--Moser iteration relies on a refined $L^2$ estimate for the effective variable-coefficient linearized system with only \emph{one} tangential derivative loss. To obtain such an estimate, it is necessary to exclude portions of the parameter domain where the Lopatinski$\breve{\mathrm{i}}$ determinant exhibits stronger degeneracies caused by root-pole collisions (see the discussion in Section \ref{subsec deg}). In particular, condition \eqref{H1} excludes certain collisions between roots and poles of the Lopatinski$\breve{\mathrm{i}}$ determinant, while condition \eqref{stability4} prevents the coincidence of roots of eigenvalues with poles or roots of the Lopatinski$\breve{\mathrm{i}}$ determinant (see more details at the end of Section \ref{microlocalization}). Such separations of critical frequencies are essential in the variable-coefficient setting, whereas they are not required in the constant-coefficient case, where Fourier analysis can be carried out pointwise in frequency space.
\end{remark}

\begin{remark}
The solution constructed in Theorem \ref{thm} is unique among sufficiently smooth solutions in a small neighborhood of the background state \eqref{background}. Indeed, if two such solutions share the same initial data, their difference satisfies a linearized system with source terms that are quadratic in the difference. Using the same $L^2$ energy estimate as in the Nash--Moser iteration, one can close the estimates for the difference with zero initial data, which implies that the two solutions coincide. We refer to \cite{CoulombelSecchi2009} for a similar uniqueness argument in related free boundary problems.
\end{remark}

\subsection{Functional Spaces}\label{notation}

Now we introduce some necessary functional spaces, $\it{i.e.}$, weighted Sobolev spaces in preparation for our main theorem. Let $\mathcal{D}'$ denote the distributions and define

$$H^{s}_{\gamma}(\R^3):=\{u(t,x_1,x_2)\in\mathcal{D}'(\R^3):e^{-\gamma t}u(t,x_1,x_2)\in H^s(\R^3)\},$$
$$H^{s}_{\gamma}(\R^4_+):=\{v(t,x_1,x_2,x_3)\in\mathcal{D}'(\R^4_+):e^{-\gamma t}v(t,x_1,x_2,x_3)\in H^s(\R^4_+)\},$$
for $s\in \R,\gamma\geq1.$

These spaces are endowed with the parameter-dependent norms
$$\| u \|_{H^s_{\gamma}(\R^3)} := \| e^{-\gamma t}u \|_{s,\gamma},$$
and, for integer $s$,
$$\| v \|_{H^s_{\gamma}(\R^4_+)} := \sum_{|\alpha|\le s}\gamma^{s-|\alpha|}
\|e^{-\gamma t} \partial^{\alpha}v\|_{L^2(\R^4_+)},$$
respectively, where
$$\R^4_+ :=\{(t,x_1,x_2,x_3)\in\R^4:x_3>0\}.$$
We define the norm
$$
\| u \|^2_{s,\gamma} :=\frac{1}{(2\pi)^3}\int_{\R^3}(\gamma^2+|\xi|^2)^s|\widehat{u}(\xi)|^2\,\dd \xi, \; \text{ for any } u\in H^s(\R^3),
$$
with $\widehat{u}(\xi)$ being the Fourier transform of $u$ with respect to $(t,x_1,x_2).$

Setting $\tilde{u}=e^{-\gamma t}u,$ we have by definition
$\| u \|_{H^s_{\gamma}(\R^3)}= \| \tilde{u} \|_{s,\gamma}$. Now, we can define the space $L^2(\R_+; H^s_{\gamma}(\R^3)),$ endowed with the norm
$$
\vertiii{v}^2_{L^2(H^s_{\gamma})} :=\int^{+\infty}_0 \| v(\cdot,x_3) \|^2_{H^s_{\gamma}(\R^3)}\,\dd x_3.
$$
We also have
$$
\vertiii{v}^2_{L^2(H^s_{\gamma})}\simeq \vertiii{\tilde{v}}^2_{s,\gamma} :=\int^{+\infty}_0 \| \tilde{v}(\cdot,x_3) \|^2_{s,\gamma}\,\dd x_3.
$$
It is easy to see that when $s=0,$ $\| \cdot \|_0 := \| \cdot \|_{0,\gamma} = \| \cdot \|_{L^2(\R^3)}$ and $\vertiii{\cdot}_{0,\gamma}$ ($\vertiii{\cdot}_{0}$ for simplicity) is the usual norm of $L^2(\R^4_+).$
We denote $\nabla:=(\p_t,\p_1,\p_2)$ when applying it to functions of $(t,x_1,x_2).$
	For multi-index $\alpha=(\alpha_0,\alpha_1,\alpha_2,\alpha_3)\in \mathbb{N}^4$, we define
	$
	\p^{\alpha}:=\p_t^{\alpha_0}\p_1^{\alpha_1}\p_2^{\alpha_2}\p_3^{\alpha_3}
	\quad  \textrm{and }\quad
	|\alpha|:=\alpha_0+\alpha_1+\alpha_2+\alpha_3.
	$
	For $m\in\mathbb{N}$, we denote $\nabla^{m} :=\{\p^{\alpha}: |\alpha|= m\}$.

For $T>0$, we set
\[
\Omega_T:=(0,T)\times\R^3_+,\qquad \omega_T:=(0,T)\times\R^2,
\]
where $\R^3_+:=\{(x_1,x_2,x_3)\in\R^3:\ x_3>0\}$. Moreover,
$$
H^m_{\gamma}(\Omega_T):=
\big\{u\in\mathcal{D}'(\Omega_T)\,:\, {e}^{-\gamma t}u\in H^{m}(\Omega_T)\big\}
$$
 {is introduced with} the norm
\begin{align}  \notag  
\|u\|_{H^m_{\gamma}(\Omega_T)}:=\sum_{|\alpha|\leq m}\gamma^{m-|\alpha|}\|{e}^{-\gamma t} \partial^{\alpha}u\|_{L^2(\Omega_T)}.
\end{align}
{Similarly,} the space $H^m_{\gamma}(\omega_T)$ and its norm are defined.

{Furthermore, we abbreviate} 
$L^2(\mathbb{R}_+;H^m_{\gamma}(\omega_T))$
{to} $L^2(H^m_{\gamma}(\omega_T))$,
which is equipped with the norm
\begin{align*}
\vertiii{u}_{L^2(H^m_{\gamma}(\omega_T))}:=
\sum_{\alpha_0+\alpha_1+\alpha_2\leq m}
\gamma^{m-\alpha_0-\alpha_1-\alpha_2}\|{e}^{-\gamma t} \partial_t^{\alpha_0}\partial_1^{\alpha_1}\partial_2^{\alpha_2}u\|_{L^2(\Omega_T)} .
\end{align*}
{and} $L^2_{\gamma}(\Omega_T):=L^2(H^0_{\gamma}(\omega_T)),$
$\|u\|_{L^2_{\gamma}(\Omega_T)}=\|{e}^{-\gamma t} u\|_{L^2(\Omega_T)}$.

We use the following notation: $A\lesssim B$ ($B\gtrsim A$) if $A\leq CB$ ($B\geq CA$) holds uniformly for some positive constant $C$ that is {\it independent} of $\gamma.$

In the following, we present the Moser-type calculus inequalities in weighted Sobolev spaces, which will be used in proving the higher-order tame estimates
and convergence of the Nash--Moser iterative scheme.

\begin{lemma}
	\label{lem.Moser}
	Let $m\in \mathbb{N}$, $\gamma\geq 1$, $T > 0$,
	and $u,v\in H_{\gamma}^m(\Omega_T)\cap L^{\infty}(\Omega_T)$.
	Let $b$ denote a $C^{\infty}$--function defined
	in a neighborhood of the origin.

	\noindent {\rm (a)}
	If $|\beta_1| + |\beta_2|\leq m$ and $b(0)=0$, then
	\begin{align}
	\big\|\p^{\beta_1} u\p^{\beta_2} v\big\|_{L_{\gamma}^2(\Omega_T)}+
	\big\| uv\big\|_{H_{\gamma}^m(\Omega_T)}
	 & \lesssim
	\|u\|_{L^{\infty}(\Omega_T)}\|v\|_{H_{\gamma}^m(\Omega_T)}
	+\|u\|_{H_{\gamma}^m(\Omega_T)}\|v\|_{L^{\infty}(\Omega_T)}, \label{Moser1}
\\
\label{Moser2}
	\|b(u)\|_{H_{\gamma}^m(\Omega_T)}
&	\leq  C\big(\|u\|_{L^{\infty}(\Omega_T)}\big)\|u\|_{H_{\gamma}^m(\Omega_T)};
	\end{align}
	
	\noindent {\rm (b)}
	If $|\beta_1| + |\beta_2| + |\beta_3|\leq m$, then
	\begin{align}
       \big\|\p^{\beta_1}[\p^{\beta_2},b(u)]\p^{\beta_3}v\big\|_{L^{2}_{\gamma}(\Omega_T)}
	\label{Moser3}  \leq
	C\big(\|u\|_{L^{\infty}(\Omega_T)} \big)
	\left(\|v\|_{H_{\gamma}^m(\Omega_T)}
	+\|u\|_{H_{\gamma}^m(\Omega_T)}\|v\|_{L^{\infty}(\Omega_T)}\right).
	\end{align}
	\mbox{\quad\ }
	Moreover, if $u\in W^{1,\infty}(\Omega_T)$, then
	\begin{align}
	  \big\|\p^{\beta_1}[\p^{\beta_2},b(u)]\p^{\beta_3}v\big\|_{L^{2}_{\gamma}(\Omega_T) }
	 \leq      C\big(\|u\|_{W^{1,\infty}(\Omega_T)}\big)
	\left(\|v\|_{H_{\gamma}^{m-1}(\Omega_T)}
	+\|u\|_{H_{\gamma}^m(\Omega_T)}\|v\|_{L^{\infty}(\Omega_T)}\right). \label{Moser4}
	\end{align}
	Here $\beta_i$ {\rm(}for $i=1,2,3${\rm)} are multi-indices,
	$[a,b]c:=a(bc)-b(ac)$ represents the standard commutator,
	and
the increasing function $C$ is independent of $u$, $v$, $\gamma$, and $T$.
	The same conclusions remain valid when $\Omega_T$ is replaced by $\omega_T$.
\end{lemma}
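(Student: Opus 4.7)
The plan is to derive all four inequalities from the classical unweighted Moser--Gagliardo--Nirenberg--Fa\`a-di-Bruno calculus, distributing the weight $e^{-\gamma t}$ so that the $\gamma$-factors generated balance exactly with the prefactors $\gamma^{m-|\alpha|}$ in the definition of $\|\cdot\|_{H^m_\gamma}$. The central algebraic identity is
\[
e^{-\gamma t}\partial^\beta u=\sum_{j\leq\beta_0}c_{j,\beta}\,\gamma^j\,\partial^{\beta-je_0}(e^{-\gamma t}u),\qquad e_0=(1,0,0,0),
\]
obtained by inverting the Leibniz expansion of $\partial^\beta(e^{-\gamma t}u)$; it allows us to rewrite any product $e^{-\gamma t}\partial^{\beta_1}u\,\partial^{\beta_2}v$ as a finite sum $\sum_{j_1,j_2}c\,\gamma^{j_1+j_2}\,\partial^{\beta_1-j_1 e_0}(e^{-\gamma t}u)\,\partial^{\beta_2-j_2 e_0}v$, and similarly for compositions and commutators.

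The first part of \eqref{Moser1} follows from the Gagliardo--Nirenberg inequality: for $|\beta_1|+|\beta_2|=k\leq m$ one has $\|\partial^{\beta_1}f\,\partial^{\beta_2}g\|_{L^2}\lesssim \|f\|_{L^\infty}\|g\|_{H^k}+\|f\|_{H^k}\|g\|_{L^\infty}$, obtained by H\"older with conjugate exponents $p=2k/|\beta_1|$, $q=2k/|\beta_2|$, the interpolation $\|\partial^{\beta_i}f\|_{L^{2k/|\beta_i|}}\lesssim\|f\|_{L^\infty}^{1-|\beta_i|/k}\|f\|_{H^k}^{|\beta_i|/k}$, and Young's inequality $a^\theta b^{1-\theta}\leq a+b$. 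Applying the central identity above and Gagliardo--Nirenberg termwise produces the weighted version, with the $\gamma^{j_1+j_2}$ factors absorbed into the $\gamma^{m-|\alpha|}$ weights. The bound on $\|uv\|_{H^m_\gamma}$ in \eqref{Moser1} then follows by applying Leibniz to $\partial^\alpha(uv)$ and using the product estimate term by term.

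For \eqref{Moser2}, the $|\alpha|=0$ contribution uses $b(0)=0$ and the representation $b(u)=u\int_0^1 b'(tu)\,dt$ to obtain $\|b(u)\|_{L^2_\gamma}\leq C(\|u\|_{L^\infty})\|u\|_{L^2_\gamma}$. For $|\alpha|\geq 1$, Fa\`a di Bruno's formula writes $\partial^\alpha b(u)$ as a sum of terms $b^{(\ell)}(u)\prod_{i=1}^\ell\partial^{\alpha_i}u$ with $\sum_i|\alpha_i|=|\alpha|$; here $\|b^{(\ell)}(u)\|_{L^\infty}\leq C(\|u\|_{L^\infty})$ by smoothness of $b$, and an iterated H\"older--interpolation gives $\|\prod_i\partial^{\alpha_i}u\|_{L^2}\lesssim\|u\|_{L^\infty}^{\ell-1}\|u\|_{H^{|\alpha|}}$. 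Weighting by $\gamma^{m-|\alpha|}$ and summing completes the proof. For the commutator, expanding by Leibniz twice gives
\[
\partial^{\beta_1}\bigl([\partial^{\beta_2},b(u)]\partial^{\beta_3}v\bigr)=\sum_{\substack{\delta_1\leq\beta_1\\ 0<\delta_2\leq\beta_2}}c_{\delta_1,\delta_2}\,\partial^{\beta_1-\delta_1+\delta_2}b(u)\,\partial^{\delta_1+\beta_2-\delta_2+\beta_3}v,
\]
each summand carrying at least one derivative on $b(u)$ and total derivative count at most $|\beta_1|+|\beta_2|+|\beta_3|\leq m$. Substituting the Fa\`a di Bruno expansion and applying the product estimate proves \eqref{Moser3}. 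The refined estimate \eqref{Moser4} exploits the hypothesis $u\in W^{1,\infty}$: a single first-order factor $\partial u$ is extracted and placed in $L^\infty$ rather than $L^2$, effectively shifting one derivative from $v$ to $u$ and reducing the required $v$-regularity from $H^m$ to $H^{m-1}$.

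The substantive technical difficulty lies in the careful bookkeeping of $\gamma$-powers produced when commuting $e^{-\gamma t}$ through derivatives: the $\gamma^{j_1+j_2}$ factors must balance the $\gamma^{m-|\alpha|}$ weights in the norms with a constant independent of both $\gamma\geq 1$ and $T\in\mathbb{R}$. Once this bookkeeping is carried out via the central identity above, each step reduces to a classical Moser inequality, and the identical argument applies verbatim with $\Omega_T$ replaced by the boundary-type space $\omega_T$.
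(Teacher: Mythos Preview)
Your sketch is correct and matches the approach the paper indicates: the paper does not actually prove this lemma but refers to \cite[Section 4.5]{M01MR1842775} and \cite[Appendix C]{CS08MR2423311} for \eqref{Moser1}--\eqref{Moser2}, and states that \eqref{Moser3}--\eqref{Moser4} follow from \eqref{Moser1}--\eqref{Moser2} ``through a direct calculation,'' which is exactly your commutator--Leibniz expansion combined with the product and composition estimates. Your handling of the $\gamma$-weights via the identity $e^{-\gamma t}\partial^{\beta}u=\sum_{j\le\beta_0}\binom{\beta_0}{j}\gamma^{j}\partial^{\beta-je_0}(e^{-\gamma t}u)$ is the standard device in those references, so there is nothing substantively different to compare.
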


We refer the proof of the inequalities \eqref{Moser1} and \eqref{Moser2} to \cite[Section 4.5]{M01MR1842775} and \cite[Appendix C]{CS08MR2423311}.
We also omit the proof of the inequalities \eqref{Moser3} and \eqref{Moser4}, which can be derived from  \eqref{Moser1} and \eqref{Moser2}  by a direct calculation.

 \section{Variable Coefficient Linearized Problem}\label{variablecoe}

 In this section we introduce the effective linear problem and its formulation with variable coefficients.
  We first write \eqref{EVS.a} as
 \begin{equation}\label{ee}
 \begin{split}
&\mathbb{L}(U^{\pm},\varPhi^{\pm}):=\partial_tU^{\pm}+A_1(U^{\pm})\partial_1U^{\pm}+A_2(U^{\pm})\partial_2U^{\pm}\\
&\qquad\qquad\qquad+\frac{1}{\partial_3\varPhi^{\pm}}\big(A_3(U^{\pm})-\partial_t\varPhi^{\pm}I-\partial_1\varPhi^{\pm}A_1(U^{\pm})-\partial_2\varPhi^{\pm}A_2(U^{\pm})\big)\partial_3U^{\pm}=\mathbf{0},
\end{split}
\end{equation}
for $x_3>0.$
Using Rankine-Hugoniot conditions, we derive that
\begin{equation}\label{bd}
\mathbb{B}(U|_{x_3=0},\varphi)=
\begin{cases}
(v^+_1-v^-_1)\partial_1\varphi+(v^+_2-v^-_2)\partial_2\varphi-(v^+_3-v^-_3)=0,\\
\partial_t\varphi+v^+_1\partial_1\varphi+v^+_2\partial_2\varphi-v^+_3=0,\\
\rho^+-\rho^-=0,
\end{cases}
\end{equation}
where $\varPhi^{\pm}=\varphi$ at $x_3=0.$
We also have that
\begin{equation}\nonumber
\begin{cases}
(F^+_{11}-F^-_{11})\partial_1\varphi+(F^+_{21}-F^-_{21})\partial_2\varphi-(F^+_{31}-F^-_{31})=0,\\
F^+_{11}\partial_1\varphi+F^+_{21}\partial_2\varphi-F^+_{31}=0,\\
(F^+_{12}-F^-_{12})\partial_1\varphi+(F^+_{22}-F^-_{22})\partial_2\varphi-(F^+_{32}-F^-_{32})=0,\\
F^+_{12}\partial_1\varphi+F^+_{22}\partial_2\varphi-F^+_{32}=0,\\
(F^+_{13}-F^-_{13})\partial_1\varphi+(F^+_{23}-F^-_{23})\partial_2\varphi-(F^+_{33}-F^-_{33})=0,\\
F^+_{13}\partial_1\varphi+F^+_{23}\partial_2\varphi-F^+_{33}=0,\\
\end{cases}
\end{equation}
Now, we consider the following background states:
\begin{equation}\label{background2}
\begin{split}
U^{r,l}&=(\rho^{r,l},v^{r,l}_1,v^{r,l}_2,v^{r,l}_3,F^{r,l}_{11},F^{r,l}_{21},F^{r,l}_{31},F^{r,l}_{12},F^{r,l}_{22},F^{r,l}_{32},F^{r,l}_{13},F^{r,l}_{23},F^{r,l}_{33})^{\top}=\bar{U}^{r,l}+\dot{U}^{r,l}\\
&=(\bar{\rho},\pm\bar{v},0,0,\pm\bar{F}_{11},\pm\bar{F}_{21},0,\pm\bar{F}_{12},\pm\bar{F}_{22},0,\pm\bar{F}_{13},\pm\bar{F}_{23},0)^{\top}+\\
&\quad \ (\dot{\rho}^{r,l},\dot{v}^{r,l}_1,\dot{v}^{r,l}_2,\dot{v}^{r,l}_3,\dot{F}^{r,l}_{11},\dot{F}^{r,l}_{21},\dot{F}^{r,l}_{31},\dot{F}^{r,l}_{12},\dot{F}^{r,l}_{22},\dot{F}^{r,l}_{32},\dot{F}^{r,l}_{13},\dot{F}^{r,l}_{23},\dot{F}^{r,l}_{33})^{\top},\\
\varPhi^{r,l}&(t,x_1,x_2,x_3):=\pm x_3+\dot{\varPhi}^{r,l},
\end{split}
\end{equation}
where $U^{r,l}$ and $\varPhi^{r,l}$ represent the states and changes of variables on each side of vortex sheets separately. $\bar{\rho}>0,\bar{v},\bar{F}_{11}, \bar{F}_{21}, \bar{F}_{12}, \bar{F}_{22}, \bar{F}_{13},\bar{F}_{23}$ are constants. $\dot{U}^{r,l}$ and $\dot{\varPhi}^{r,l}$ are functions which denote the perturbation around the constant states.
We assume the perturbation of the background states satisfying
\begin{equation}\label{pp}
\dot{U}^{r,l}\in W^{2,\infty}(\Omega),\quad\dot{\varPhi}^{r,l}\in W^{3,\infty}(\Omega),\quad \|\dot{U}^{r,l}\|_{W^{2,\infty}(\Omega)}+\|\dot{\varPhi}^{r,l}\|_{W^{3,\infty}(\Omega)} \leq K.
\end{equation}
Here $K$ is a positive constant. $\dot{U}^{r,l}$ and $\dot{\varPhi}^{r,l}$ have compact support in the domain $$\Omega := \{(t,x_1,x_2,x_3)\in \R^4:x_3>0\}.$$
 We also require the perturbed states \eqref{background2} to satisfy the Rankine-Hugoniot conditions:
\begin{equation}\label{boundary2}
\begin{cases}
(v^r_1-v^l_1)\partial_1\varphi+(v^r_2-v^l_2)\partial_2\varphi-(v^r_3-v^l_3)=0,\\
\partial_t\varphi+v^r_1\partial_1\varphi+v^r_2\partial_2\varphi-v^r_3=0,\\
\rho^r-\rho^l=0,
\end{cases}
\end{equation}
on $x_3=0,$ where $\varphi=\varPhi^r|_{x_3=0}=\varPhi^l|_{x_3=0}.$

Now, we assume that the following conditions on the perturbed states \eqref{background2} hold:
\begin{equation}\label{eikonal}
\partial_t\varPhi^{r,l}+v^{r,l}_1\partial_1\varPhi^{r,l}+v^{r,l}_2\partial_2\varPhi^{r,l}-v^{r,l}_3=0,
\end{equation}
\begin{equation}\label{lower}
\partial_3\varPhi^r\geq\kappa_0,\quad \partial_3\varPhi^l\leq -\kappa_0,
\end{equation}
for all $(t,x_1,x_2,x_3)\in\Omega$ and some positive constant $\kappa_0.$
We also assume from initial data that
\begin{equation}
\begin{cases}
F^{r,l}_{11}\partial_1\varPhi^{r,l}+F^{r,l}_{21}\partial_2\varPhi^{r,l}-F^{r,l}_{31}=0,\\
F^{r,l}_{12}\partial_1\varPhi^{r,l}+F^{r,l}_{22}\partial_2\varPhi^{r,l}-F^{r,l}_{32}=0,\\
F^{r,l}_{13}\partial_1\varPhi^{r,l}+F^{r,l}_{23}\partial_2\varPhi^{r,l}-F^{r,l}_{33}=0.
\end{cases}
\end{equation}
Now, we linearize the \eqref{ee} around the basic states \eqref{background2} and denote by $(V^{\pm},\Psi^{\pm})$ the perturbation of the states $(U^{r,l},\varPhi^{r,l}).$ Then, the linearized equations are
\begin{equation}\nonumber
\begin{split}
&\partial_t V^{\pm}+A_1(U^{r,l})\partial_1 V^{\pm}+A_2(U^{r,l})\partial_2 V^{\pm}\\
&\quad+\frac{1}{\partial_3\varPhi^{r,l}}\big(A_3(U^{r,l})-\partial_t\varPhi^{r,l}I-\partial_1\varPhi^{r,l}A_1(U^{r,l})-\partial_2\varPhi^{r,l}A_2(U^{r,l})\big)\partial_3V^{\pm}\\
&\quad+[dA_1(U^{r,l})V^{\pm}]\partial_1U^{r,l}+[dA_2(U^{r,l})V^{\pm}]\partial_2U^{r,l}\\
&\quad-\frac{\partial_3\Psi^{\pm}}{(\partial_3\varPhi^{r,l})^2}\big(A_3(U^{r,l})-\partial_t\varPhi^{r,l}I-\partial_1\varPhi^{r,l}A_1(U^{r,l})-\partial_2\varPhi^{r,l}A_2(U^{r,l})\big)\partial_3U^{r,l}\\
&\quad+\frac{1}{\partial_3\varPhi^{r,l}}\big(dA_3(U^{r,l})V^{\pm}-\partial_t\Psi^{\pm}I-\partial_1\Psi^{\pm}A_1(U^{r,l})-\partial_2\Psi^{\pm}A_2(U^{r,l})\\
&\quad-\partial_1\varPhi^{r,l}dA_1(U^{r,l})V^{\pm}-\partial_2\varPhi^{r,l}dA_2(U^{r,l})V^{\pm}\big)\partial_3U^{r,l}=f,
\end{split}
\end{equation}
for $x_3>0.$ We define the first-order linear operator
\begin{equation}\nonumber
\begin{split}
&L(U^{r,l},\nabla\varPhi^{r,l})V^{\pm}:=\partial_tV^{\pm}+A_1(U^{r,l})\partial_1V^{\pm}+A_2(U^{r,l})\partial_2V^{\pm}\\
&\qquad\qquad\qquad\qquad+\frac{1}{\partial_3\varPhi^{r,l}}\big(A_3(U^{r,l})-\partial_t\varPhi^{r,l}I-\partial_1\varPhi^{r,l}A_1(U^{r,l})-\partial_2\varPhi^{r,l}A_2(U^{r,l})\big)\partial_3V^{\pm},\\
\end{split}
\end{equation}
and introduce the Alinhac's``{\it good unknown}'' \cite{A89MR976971}:
\begin{equation}\nonumber
\begin{split}
\dot{V}^{\pm}=(\dot{\rho}^{\pm},\dot{v}^{\pm}_1,\dot{v}^{\pm}_2,\dot{v}^{\pm}_3,\dot{F}^{\pm}_{11},\dot{F}^{\pm}_{21},\dot{F}^{\pm}_{31},\dot{F}^{\pm}_{12},\dot{F}^{\pm}_{22},\dot{F}^{\pm}_{32},\dot{F}^{\pm}_{13},\dot{F}^{\pm}_{23},\dot{F}^{\pm}_{33})^{\top}:=V^{\pm}-\frac{\Psi^{\pm}}{\partial_3\varPhi^{r,l}}\partial_3U^{r,l}.
\end{split}
\end{equation}
Then, we can rewrite the above equations as
\begin{equation}\nonumber
L(U^{r,l},\nabla\varPhi^{r,l})\dot{V}^{\pm}+C(U^{r,l},\nabla U^{r,l},\nabla \varPhi^{r,l})\dot{V}^{\pm}+\frac{\Psi^{\pm}}{\partial_3\varPhi^{r,l}}\partial_3[L(U^{r,l},\nabla \varPhi^{r,l})U^{r,l}]=f^{r,l},
\end{equation}
where
\begin{equation}\nonumber
\begin{split}
&C(U^{r,l},\nabla U^{r,l},\nabla \varPhi^{r,l})\dot{V}^{\pm}:=[dA_1(U^{r,l})\dot{V}^{\pm}]\partial_1U^{r,l}+[dA_2(U^{r,l})\dot{V}^{\pm}]\partial_2U^{r,l}\\
&\qquad\qquad\qquad\qquad+\frac{1}{\partial_3\varPhi^{r,l}}[dA_3(U^{r,l})\dot{V}^{\pm}-\partial_1\varPhi^{r,l}dA_1(U^{r,l})\dot{V}^{\pm}-\partial_2\varPhi^{r,l}dA_2(U^{r,l})\dot{V}^{\pm}]\partial_3U^{r,l}.
\end{split}
\end{equation}
Neglecting the zeroth-order terms of $\Psi^{\pm}$ and considering the following equations:
\begin{equation}\nonumber
\begin{split}
L'_{r,l}\dot{V}^{\pm}:=L(U^{r,l},\nabla\varPhi^{r,l})\dot{V}^{\pm}+C(U^{r,l},\nabla U^{r,l},\nabla \varPhi^{r,l})\dot{V}^{\pm}=f^{r,l}.
\end{split}
\end{equation}
Since $U^{r,l} \in W^{2,\infty}(\Omega),$ we have $L(U^{r,l},\nabla\varPhi^{r,l}) \in W^{2,\infty}(\Omega)$ and $C(U^{r,l},\nabla U^{r,l},\nabla \varPhi^{r,l}) \in W^{1,\infty}(\Omega).$
Now, we linearize the boundary conditions around the same perturbed states and obtain that
\begin{equation}\nonumber
\begin{cases}
(v^r_1-v^l_1)\partial_1\psi +(v^+_1-v^-_1)\partial_1 \varphi+(v^r_2-v^l_2)\partial_2 \psi +(v^+_2-v^-_2)\partial_2 \varphi-(v^+_3-v^-_3)=g_1,\\
\partial_t \psi + v^r_1\partial_1\psi +v^+_1\partial_1\varphi +v^r_2\partial_2\psi +v^+_2\partial_2\varphi -v^+_3=g_2,\\
\rho^+-\rho^-=g_3,
\end{cases}
\end{equation}
at $x_3=0, \psi =\Psi^+|_{x_3=0}=\Psi^-|_{x_3=0}.$
We also have that
\begin{equation}\nonumber
\begin{cases}
(F^r_{11}-F^l_{11})\partial_1\psi +(F^+_{11}-F^-_{11})\partial_1\varphi+(F^r_{21}-F^{l}_{21})\partial_2 \psi +(F^{+}_{21}-F^{-}_{21})\partial_2\varphi -(F^+_{31}-F^{-}_{31})=g_4,\\
F^r_{11}\partial_1\psi +F^+_{11}\partial_1\varphi +F^r_{21}\partial_2 \psi +F^+_{21}\partial_2\varphi -F^+_{31}=g_5,\\
(F^r_{12}-F^l_{12})\partial_1\psi +(F^+_{12}-F^-_{12})\partial_1\varphi +(F^r_{22}-F^{l}_{22})\partial_2\psi +(F^{+}_{22}-F^{-}_{22})\partial_2\varphi -(F^+_{32}-F^{-}_{32})=g_6,\\
F^r_{12}\partial_1\psi +F^+_{12}\partial_1\varphi +F^r_{22}\partial_2\psi +F^+_{22}\partial_2\varphi -F^+_{32}=g_7,\\
(F^r_{13}-F^l_{13})\partial_1\psi +(F^+_{13}-F^-_{13})\partial_1\varphi +(F^r_{23}-F^{l}_{23})\partial_2\psi +(F^{+}_{23}-F^{-}_{23})\partial_2\varphi -(F^+_{33}-F^{-}_{33})=g_8,\\
F^r_{13}\partial_1\psi +F^+_{13}\partial_1\varphi +F^r_{23}\partial_2\psi +F^+_{23}\partial_2\varphi -F^+_{33}=g_9.\\
\end{cases}
\end{equation}
We write the above system of equations into the following {\it enlarged} form:
$$\underline{\mathbf{b}}\nabla \psi +\underline{M}V|_{x_3=0}=g,$$
where $V=(V^+,V^-)^{\top},\nabla\psi=(\partial_t \psi ,\partial_1 \psi,\partial_2 \psi)^{\top},g=(g_1,g_2,g_3,g_4,g_5,g_6,g_7,g_8,g_9)^{\top},$
\begin{equation}\nonumber
\underline{\mathbf{b}}(t,x_1,x_2)=\left[\begin{array}{ccc}
0 & (v^r_1-v^l_1)|_{x_3=0} & (v^r_2-v^l_2)|_{x_3=0}\\
1 & v^r_1|_{x_3=0} &  v^r_2|_{x_3=0}\\
0 &  0 & 0\\
0 & (F^r_{11}-F^l_{11})|_{x_3=0} & (F^r_{21}-F^l_{21})|_{x_3=0}\\
0 & F^r_{11}|_{x_3=0} & F^r_{21}|_{x_3=0}\\
0 & (F^r_{12}-F^l_{12})|_{x_3=0} & (F^r_{22}-F^l_{22})|_{x_3=0}\\
0 & F^r_{12}|_{x_3=0} & F^r_{22}|_{x_3=0}\\
0 & (F^r_{13}-F^l_{13})|_{x_3=0} & (F^r_{23}-F^l_{23})|_{x_3=0}\\
0 & F^r_{13}|_{x_3=0} & F^r_{23}|_{x_3=0}\\
\end{array}\right]
:=[\mathbf{b}_0,\mathbf{b}_1,\mathbf{b}_2],
\end{equation}
\begin{equation}\nonumber
\begin{split}
 &\underline{M}(t,x_1,x_2):=\\
 &\left[\begin{smallmatrix}
 0 & \partial_1\varphi & \partial_2\varphi & -1 & 0 & 0 & 0 & 0 & 0 & 0 & 0 & 0 & 0 & 0 & -\partial_1\varphi & -\partial_2\varphi & 1 & 0 & 0 & 0 & 0 & 0 & 0 & 0 & 0 & 0\\
 0 & \partial_1\varphi & \partial_2\varphi & -1 & 0 & 0 & 0 & 0 & 0 & 0 & 0 & 0 & 0 & 0 & 0 & 0 & 0 & 0 & 0 & 0 & 0 & 0 & 0 & 0 & 0 & 0\\
 1 & 0 & 0 & 0 & 0 & 0 & 0 & 0 & 0 & 0 & 0 & 0 & 0 & -1 & 0 & 0 & 0 & 0 & 0 & 0 & 0 & 0 & 0 & 0 & 0 & 0\\
 0 & 0 & 0 & 0 & \partial_1\varphi & \partial_2\varphi & -1 & 0 & 0 & 0 & 0 & 0 & 0 & 0 & 0 & 0 & 0 & -\partial_1\varphi & -\partial_2\varphi & 1 & 0 & 0 & 0 & 0 & 0 & 0\\
  0 & 0 & 0 & 0 & \partial_1\varphi & \partial_2\varphi & -1 & 0 & 0 & 0 & 0 & 0 & 0 & 0 & 0 & 0 & 0 & 0 & 0 & 0 & 0 & 0 & 0 & 0 & 0 & 0\\
 0 & 0 & 0 & 0 & 0 & 0 & 0 & \partial_1\varphi & \partial_2\varphi & -1 & 0 & 0 & 0 & 0 & 0 & 0 & 0 & 0 & 0 & 0 & -\partial_1\varphi & -\partial_2\varphi & 1 & 0 & 0 & 0\\
 0 & 0 & 0 & 0 & 0 & 0 & 0 & \partial_1\varphi & \partial_2\varphi & -1 & 0 & 0 & 0 & 0 & 0 & 0 & 0 & 0 & 0 & 0 & 0 & 0 & 0 & 0 & 0 & 0\\
 0 & 0 & 0 & 0 & 0 & 0 & 0 & 0 & 0 & 0 & \partial_1\varphi & \partial_2\varphi & -1 & 0 & 0 & 0 & 0 & 0
 & 0 & 0 & 0 & 0 & 0 & -\partial_1\varphi & -\partial_2\varphi & 1\\
  0 & 0 & 0 & 0 & 0 & 0 & 0 & 0 & 0 & 0 & \partial_1\varphi & \partial_2\varphi & -1 & 0 & 0 & 0 & 0 & 0
 & 0 & 0 & 0 & 0 & 0 & 0 & 0 & 0\\
 \end{smallmatrix}\right].
 \end{split}
\end{equation}
Using the Alinhac's ``{\it good unknown}'' \cite{A89MR976971}, we obtain that
\begin{equation}\label{lb}
B'(\dot{V},\psi):=\underline{\mathbf{b}}\nabla\psi+\underline{M}\left[\begin{array}{c}
\frac{\partial_3U^r}{\partial_3\varPhi^r}\\
\frac{\partial_3U^l}{\partial_3\varPhi^l}\\
\end{array}\right]\psi+\underline{M}\dot{V}|_{x_3=0}=g.
\end{equation}
Therefore, we have the following linearized problem:
\begin{equation}\label{le}
\begin{cases}
L'_{r,l}\dot{V}^{\pm}=f^{r,l},&x_3>0,\\
B'(\dot{V},\psi)=g,&x_3=0.
\end{cases}
\end{equation}
Note that the boundary condition in \eqref{le} does not contain the tangential components of $\dot{V}|_{x_3=0}.$
We write the components of $\dot{V}|_{x_3=0}$ that are contained in \eqref{le} by $\dot{V}^{n}|_{x_3=0}$ as
\begin{equation}\label{nc}
\begin{split}
\dot{V}^{n}|_{x_3=0}=(\dot{V}^{n+},\dot{V}^{n-})|_{x_3=0},
\end{split}
\end{equation}
where
\begin{align}\nonumber
\dot{V}^{n+} = & (\dot{\rho}^+,\dot{v}^+_3-\dot{v}^+_1\partial_1\varPhi^r-\dot{v}^+_2\partial_2\varPhi^r,\dot{F}^+_{31}-\dot{F}^+_{11}\partial_1\varPhi^r-\dot{F}^+_{21}\partial_2\varPhi^r,\dot{F}^+_{32}-\dot{F}^+_{12}\partial_1\varPhi^r-\dot{F}^+_{22}\partial_2\varPhi^r, \nonumber \\ 
& \ \dot{F}^+_{33}-\dot{F}^+_{13}\partial_1\varPhi^r-\dot{F}^+_{23}\partial_2\varPhi^r)^{\top},\nonumber\\
\dot{V}^{n-} = & (\dot{\rho}^-,\dot{v}^-_3-\dot{v}^-_1\partial_1\varPhi^l-\dot{v}^-_2\partial_2\varPhi^l,\dot{F}^-_{31}-\dot{F}^-_{11}\partial_1\varPhi^l-\dot{F}^-_{21}\partial_2\varPhi^l, \dot{F}^-_{32}-\dot{F}^-_{12}\partial_1\varPhi^l-\dot{F}^-_{22}\partial_2\varPhi^l, \nonumber \\
& \ \dot{F}^-_{33} - \dot{F}^-_{13}\partial_1\varPhi^l - \dot{F}^-_{23}\partial_2\varPhi^l)^{\top}.\nonumber
\end{align}
We obtain the main theorem for the variable coefficients.
\begin{theorem}\label{vt}
Suppose that the background solution defined by \eqref{background2} satisfies $\mathrm{F}_1\times\mathrm{F}_2\neq\mathbf{0},$ and \eqref{H1}--\eqref{stability4} hold.
Moreover, the perturbation $\dot{U}^{r,l}$ and $\dot{\varPhi}^{r,l}$ have compact support, and $K$ in \eqref{pp} is small enough.  Then, there are two constants $C_0$ and $\gamma_0$ which are determined by particular solution, such that for all $\dot{V}$ and $\psi$ and all $\gamma\geq\gamma_0,$ the following estimate holds{\rm:}
\begin{equation}\nonumber
\begin{split}
&\gamma \vertiii{\dot{V}}^2_{L^2(H^0_{\gamma})} + \|\dot{V}^{n}|_{x_3=0} \|^2_{L^2_{\gamma}(\R^3)} + \| \psi \|^2_{H^1_{\gamma}(\R^3)}\\
&\quad\leq C_0\left(\frac{1}{\gamma^3} \vertiii{L'\dot{V}}^2_{L^2(H^1_{\gamma})}+\frac{1}{\gamma^2} \| B'(\dot{V},\psi) \|^2_{H^1_{\gamma}(\R^3)}\right),
\end{split}
\end{equation}
where $L'\dot{V} := (L'_r\dot{V}^+,L'_{l}\dot{V}^-).$
\end{theorem}

\subsection{Reduction of the System}\label{reduction}
In this section, we will transform the system \eqref{le} into ODEs. This is achieved through linear transformation on the unknown variables and a paralinearization on the equations of the transformed variables. For the system \eqref{le}, we can find the symmetrizer
$$
S^{r,l}:= \mathrm{diag} \left\{\frac{p'(\rho^{r,l})}{\rho^{r,l}},\rho^{r,l},\rho^{r,l},\rho^{r,l},1,1,1,1,1,1,1,1,1 \right\}.
$$
We multiply $(S^{r,l}\dot{V}^{\pm})^{\top}$ to the interior equations of \eqref{le} and then integrate by parts to obtain the following Lemma \ref{estimate3}.
\begin{lemma}\label{estimate3}
There are two positive constants $C$ and $\gamma_1\geq1$ such that for any $\gamma\geq\gamma_1,$ the following estimate holds{\rm:}
$$
\gamma \vertiii{\dot{V}^{\pm}}^2_{L^2(H^0_{\gamma})}\leq C\left(\frac{1}{\gamma} \vertiii{L'_{r,l}\dot{V}^{\pm}}^2_{L^2_{\gamma}(\R^4_+)} + \left\|\dot{V}^{n}|_{x_3=0} \right\|^2_{L^2_{\gamma}(\R^3)}\right).
$$
\end{lemma}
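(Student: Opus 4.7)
The plan is to establish the estimate by a classical Friedrichs-type weighted energy argument for the symmetric hyperbolic operator $L'_{r,l}$, using the exponential weight $e^{-\gamma t}$ as the large parameter that will absorb every lower-order contribution. First I would set $\tilde V^{\pm} := e^{-\gamma t}\dot V^{\pm}$. Since the coefficient of $\partial_t$ in $L'_{r,l}$ is $I$, conjugation by the weight produces $e^{-\gamma t}L'_{r,l}\dot V^{\pm} = L'_{r,l}\tilde V^{\pm} + \gamma\tilde V^{\pm}$, so the weighted $L^2$ estimate reduces to an unweighted $L^2(\R^4_+)$ estimate for $\tilde V^{\pm}$ with the extra coercive term $\gamma\tilde V^{\pm}$. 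Next I would check that $S^{r,l}$ is a Friedrichs symmetrizer: it is uniformly positive definite on the support of the coefficients by the lower bound on $\rho$ and the smallness from \eqref{pp}, and $S^{r,l}A_i(U^{r,l})$ is symmetric for $i=1,2,3$ (possibly after adjusting the diagonal weights on the $F$-block so that the velocity-deformation cross-terms match). Since $\widetilde A_3(U^{r,l},\nabla\Phi^{r,l})$ is a scalar linear combination of $A_1,A_2,A_3$ and $I$ with coefficients in $\nabla\Phi^{r,l}$, symmetry of $S^{r,l}\widetilde A_3$ follows.

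Then I would form the energy identity by taking the $L^2(\R^4_+)$ inner product of
$L'_{r,l}\tilde V^{\pm} + \gamma\tilde V^{\pm} = e^{-\gamma t}L'_{r,l}\dot V^{\pm}$
with $S^{r,l}\tilde V^{\pm}$ and integrating by parts in each of $t,x_1,x_2,x_3$. The $t,x_1,x_2$ integrations produce no boundary contributions (the $t$-infinities are killed by $e^{-\gamma t}$), while the $\partial_3$ integration yields a boundary term on $\{x_3=0\}$. The principal part thus reads
\begin{align*}
\gamma\!\int_{\R^4_+}(\tilde V^{\pm})^{\top}S^{r,l}\tilde V^{\pm}
\,-\,\tfrac12\!\int_{\R^3}(\tilde V^{\pm})^{\top}S^{r,l}\widetilde A_3\tilde V^{\pm}\big|_{x_3=0}
\,=\,\int_{\R^4_+}(S^{r,l}\tilde V^{\pm})^{\top}e^{-\gamma t}L'_{r,l}\dot V^{\pm} + \textup{l.o.t.},
\end{align*}
where the lower-order terms come from $\partial_t S^{r,l}$, $\partial_i(S^{r,l}A_i)$, $\partial_3(S^{r,l}\widetilde A_3)$ and the zeroth-order operator $C(U^{r,l},\nabla U^{r,l},\nabla\Phi^{r,l})$, and are bounded by $C(K)\|\tilde V^{\pm}\|_{L^2(\R^4_+)}^2$ thanks to \eqref{pp}. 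Applying Cauchy-Schwarz to the source term gives
$|\int(S^{r,l}\tilde V^{\pm})^{\top}e^{-\gamma t}L'_{r,l}\dot V^{\pm}|\leq \tfrac{c\gamma}{2}\|\tilde V^{\pm}\|_{L^2}^2 + \tfrac{C}{\gamma}\|e^{-\gamma t}L'_{r,l}\dot V^{\pm}\|_{L^2}^2$,
and choosing $\gamma\geq\gamma_1$ large enough absorbs both the $\tfrac{c\gamma}{2}$ term and the $C(K)\|\tilde V^{\pm}\|_{L^2}^2$ lower-order contribution into $c\gamma\int(\tilde V^{\pm})^{\top}S^{r,l}\tilde V^{\pm}$, which is bounded below by $c'\gamma\|\tilde V^{\pm}\|_{L^2(\R^4_+)}^2$.

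The remaining task is to show that the boundary quadratic form is controlled by $\|\dot V^{nc}|_{x_3=0}\|_{L^2_\gamma(\R^3)}^2$, which I regard as the main obstacle. A direct computation of each row of $(\partial_3\Phi^{r,l})\widetilde A_3|_{x_3=0} = A_3(U^{r,l}) - \partial_t\Phi^{r,l}I - \partial_1\Phi^{r,l}A_1(U^{r,l}) - \partial_2\Phi^{r,l}A_2(U^{r,l})\big|_{x_3=0}$ shows the following: wherever a row has $v_3^{r,l}$ (resp.\ $-F_{3j}^{r,l}$) on the diagonal, the eikonal equation \eqref{eikonal} (resp.\ the involution \eqref{boundary2}) forces the entry to reduce to $v_3^{r,l}-\partial_t\Phi^{r,l}-\partial_1\Phi^{r,l}v_1^{r,l}-\partial_2\Phi^{r,l}v_2^{r,l}=0$ (resp.\ $-F_{3j}^{r,l}+\partial_1\Phi^{r,l}F_{1j}^{r,l}+\partial_2\Phi^{r,l}F_{2j}^{r,l}=0$). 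Applying this to every one of the nine $F$-equations (rows $5$ through $13$) shows they vanish identically on $\{x_3=0\}$, while the continuity and momentum rows reduce to a $4\times 4$ block coupling only $\dot\rho^{\pm}$ and the normal velocity $\dot v_3^{\pm} - \dot v_1^{\pm}\partial_1\Phi^{r,l} - \dot v_2^{\pm}\partial_2\Phi^{r,l}$. Multiplication by the diagonal $S^{r,l}$ preserves this pattern and (after the symmetrization above) produces a symmetric $4\times 4$ block in the density/normal-velocity variables, so
\begin{align*}
\Big|\int_{\R^3}(\tilde V^{\pm})^{\top}S^{r,l}\widetilde A_3\tilde V^{\pm}\big|_{x_3=0}\Big|
\ \leq\ C\,\|\dot V^{nc}|_{x_3=0}\|_{L^2_\gamma(\R^3)}^2 .
\end{align*}

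The delicate point is that without the involutions \eqref{eikonal}-\eqref{boundary2} the $F$-rows of $\widetilde A_3|_{x_3=0}$ would not be annihilated and the boundary term would require control of the full trace of $\dot V^{\pm}$; it is precisely the constant-rank property built into \eqref{inv5} that reduces the boundary matrix to the non-characteristic block $\dot V^{nc}$. A secondary point worth verifying carefully is that the diagonal weights chosen in $S^{r,l}$ indeed symmetrize the velocity-$F$ cross-terms of the $A_i$'s (which may require using $\rho$ rather than $1$ on the $F$-entries); this is only a cosmetic adjustment that does not affect any of the other steps. Once these structural identities are in place, combining all the inequalities above, dividing by the small constant in front of $\|\tilde V^{\pm}\|^2$, and restating the result in the original variable $\dot V^{\pm}$ yields the stated estimate with $\gamma_1$ depending on $K$ and $\kappa_0$.
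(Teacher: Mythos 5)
Your proposal is correct and follows essentially the same route the paper indicates (the paper's entire "proof" is the one-line remark that one multiplies the interior equations by $(S^{r,l}\dot V^{\pm})^{\top}$ and integrates by parts); your write-up simply fills in the standard details, including the correct observation that the eikonal equation \eqref{eikonal} and the involutions \eqref{boundary2} annihilate the $F$-rows of the boundary matrix so that the trace term only sees $\dot\rho$ and the normal velocity, and the correct caveat that the $F$-block weights of $S^{r,l}$ must be taken as $\rho^{r,l}$ rather than $1$ for $S^{r,l}A_i$ to be symmetric. No gaps.
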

First, we transform the linearized problem \eqref{le} into a problem with a constant and diagonal boundary matrix. This is essential since the boundary matrix has constant rank on the whole half-plane $x_3\geq0.$
$$
\tilde{A}^{r,l}_3 := \frac{1}{\partial_3\varPhi^{r,l}} \left(A_3(U^{r,l})-\partial_t\varPhi^{r,l}I-\partial_1\varPhi^{r,l}A_1(U^{r,l})-\partial_2\varPhi^{r,l}A_2(U^{r,l}) \right).$$
Recall that $c(\rho)$ is the sound speed defined in \eqref{c.def}. We consider the following transformation
\begin{equation}\nonumber\\
\begin{split}
&T(U^{r,l},\nabla\varPhi^{r,l}):=\\
&\left[\begin{smallmatrix}
0 & 0 & \langle\partial_{\rm tan}\varPhi^{r,l}\rangle & \langle\partial_{\rm tan}\varPhi^{r,l}\rangle & 0 & 0 & 0 & 0 & 0 & 0 & 0 & 0 & 0\\
 1 & 0 & -\frac{c(\rho^{r,l})}{\rho^{r,l}}\partial_1\varPhi^{r,l} & \frac{c(\rho^{r,l})}{\rho^{r,l}}\partial_1\varPhi^{r,l} & 0 & 0 & 0 & 0 & 0 & 0 & 0 & 0 & 0\\
 0 & 1 & -\frac{c(\rho^{r,l})}{\rho^{r,l}}\partial_2\varPhi^{r,l} & \frac{c(\rho^{r,l})}{\rho^{r,l}}\partial_2\varPhi^{r,l} & 0 & 0 & 0 & 0 & 0 & 0 & 0 & 0 & 0\\
\partial_1\varPhi^{r,l} & \partial_2\varPhi^{r,l} & \frac{c(\rho^{r,l})}{\rho^{r,l}} & -\frac{c(\rho^{r,l})}{\rho^{r,l}} & 0 & 0 & 0 & 0 & 0 & 0 & 0 & 0 & 0\\
0 & 0 & 0 & 0 & 1 & 0 & -\partial_1\varPhi^{r,l} & 0 & 0 & 0 & 0 & 0 & 0\\
0 & 0 & 0 & 0 & 0 & 1 & -\partial_2\varPhi^{r,l} & 0 & 0&  0 & 0 & 0 & 0\\
0 & 0 & 0 & 0 & \partial_1\varPhi^{r,l} & \partial_2\varPhi^{r,l} & 1 & 0 & 0 & 0 & 0 & 0 & 0\\
0 & 0 & 0 & 0 & 0 & 0 & 0 & 1 & 0 & -\partial_1\varPhi^{r,l} & 0 & 0 & 0\\
0 & 0 & 0 & 0 & 0 & 0 & 0 & 0 & 1 & -\partial_2\varPhi^{r,l} & 0 & 0 & 0\\
0 & 0 & 0 & 0 & 0 & 0 & 0 & \partial_1\varPhi^{r,l} & \partial_2\varPhi^{r,l} & 1 & 0 & 0 & 0\\
0 & 0 & 0 & 0 & 0 & 0 & 0 & 0 & 0 & 0 & 1 & 0 & -\partial_1\varPhi^{r,l}\\
0 & 0 & 0 & 0 & 0 & 0 & 0 & 0 & 0 & 0 & 0 & 1 & -\partial_2\varPhi^{r,l}\\
0 & 0 & 0 & 0 & 0 & 0 & 0 & 0 & 0 & 0 & \partial_1\varPhi^{r,l} & \partial_2\varPhi^{r,l} & 1\\
\end{smallmatrix}\right],
\end{split}
\end{equation}
where $\langle\partial_{\rm tan}\varPhi^{r,l}\rangle := \sqrt{1+(\partial_1\varPhi^{r,l})^2+(\partial_2\varPhi^{r,l})^2}.$
Then, we can obtain
\begin{equation}\nonumber
\begin{split}
&T^{-1}(U^{r,l},\nabla\varPhi^{r,l})\tilde{A}^{r,l}_3T(U^{r,l},\nabla\varPhi^{r,l})\\
&=\mathrm{diag} \left\{0,0,\frac{c(\rho^{r,l})\langle\partial_{\rm tan}\varPhi^{r,l}\rangle}{\partial_3\varPhi^{r,l}},-\frac{c(\rho^{r,l})\langle\partial_{\rm tan}\varPhi^{r,l}\rangle}{\partial_3\varPhi^{r,l}},0,0,0,0,0,0,0,0,0 \right\}.
\end{split}
\end{equation}
Multiplying the above by the following matrix:
\begin{equation}\label{def A0}
A^{r,l}_0 := \mathrm{diag} \left\{1,1,\frac{\partial_3\varPhi^{r,l}}{c(\rho^{r,l})\langle\partial_{\rm tan}\varPhi^{r,l}\rangle},-\frac{\partial_3\varPhi^{r,l}}{c(\rho^{r,l})\langle\partial_{\rm tan}\varPhi^{r,l}\rangle},1,1,1,1,1,1,1,1,1 \right\},
\end{equation}
we obtain the interior equations of \eqref{le} for the new unknowns $W^{\pm}:=T^{-1}(U^{r,l},\nabla\varPhi^{r,l})\dot{V}^{\pm}$ are
\begin{equation}\label{equation2}
A^{r,l}_0\partial_tW^{\pm}+A^{r,l}_{1}\partial_1W^{\pm}+A^{r,l}_2\partial_2W^{\pm}+I_2\partial_3W^{\pm}+A^{r,l}_0C^{r,l}W^{\pm}=F^{r,l},
\end{equation}
where
\begin{equation}\label{def I2}
I_2 := \mathrm{diag}\{0,0,1,1,0,0,0,0,0,0,0,0,0\}.
\end{equation}
\begin{equation}\label{def A1A2}
\begin{split}
&A^{r,l}_1=A^{r,l}_0T^{-1}(U^{r,l},\nabla\varPhi^{r,l})A_1(U^{r,l})T(U^{r,l},\nabla\varPhi^{r,l}),\\
&A^{r,l}_2=A^{r,l}_0T^{-1}(U^{r,l},\nabla\varPhi^{r,l})A_2(U^{r,l})T(U^{r,l},\nabla\varPhi^{r,l}),\\
&C^{r,l}=T^{-1}(U^{r,l},\nabla\varPhi^{r,l})\partial_tT(U^{r,l},\nabla\varPhi^{r,l})+T^{-1}(U^{r,l},\nabla\varPhi^{r,l})A_1(U^{r,l})\partial_1T(U^{r,l},\nabla\varPhi^{r,l})\\
&\quad\qquad+T^{-1}(U^{r,l},\nabla\varPhi^{r,l})A_2(U^{r,l})\partial_2T(U^{r,l},\nabla\varPhi^{r,l})\\
&\quad\qquad+T^{-1}(U^{r,l},\nabla\varPhi^{r,l})C(U^{r,l},\nabla U^{r,l},\nabla\varPhi^{r,l})T(U^{r,l},\nabla\varPhi^{r,l})\\
&\quad\qquad+T^{-1}(U^{r,l},\nabla\varPhi^{r,l})\tilde{A}^{r,l}_3\partial_3T(U^{r,l},\nabla\varPhi^{r,l}),\\
&F^{r,l}=A^{r,l}_0T^{-1}(U^{r,l},\nabla\varPhi^{r,l})f^{r,l}.
\end{split}
\end{equation}
We consider  the weighted unknown $\tilde{W}^{\pm}=e^{-\gamma t}W^{\pm}$ and rewrite \eqref{equation2} as
\begin{equation}\nonumber
\begin{split}
&\mathcal{L}^{\gamma}_{r,l}\tilde{W}^{\pm}:=
\gamma A^{r,l}_0\tilde{W}^{\pm}+A^{r,l}_0\partial_t\tilde{W}^{\pm}+A_1^{r,l}\partial_1\tilde{W}^{\pm}+A^{r,l}_2\partial_2\tilde{W}^{\pm}+I_2\partial_3\tilde{W}^{\pm}+A^{r,l}_0C^{r,l}\tilde{W}^{\pm}=e^{-\gamma t}F^{r,l},\\
\end{split}
\end{equation}
where $A^{r,l}_{j}\in W^{2,\infty}(\Omega)$ and $C^{r,l}\in W^{1,\infty}(\Omega).$
Then, we obtain the equivalent form of the boundary condition \eqref{le}:
\begin{equation}\label{le2}
\underline{\mathbf{b}}\nabla\psi+\underline{M}\left[\begin{array}{c}
\frac{\partial_3U^r}{\partial_3\varPhi^r}\\
\frac{\partial_3U^l}{\partial_3\varPhi^l}\\
\end{array}\right]\psi
+\underline{M}\left[\begin{array}{cc}
T(U^r,\nabla\varPhi^r)& \mathbf{0}\\
\mathbf{0} & T(U^l,\nabla\varPhi^l)\\
\end{array}\right]W|_{x_3=0}=g.
\end{equation}
Note that $\tilde{W}=e^{-\gamma t}W$ and $\tilde{\psi}=e^{-\gamma t}\psi$. It follows that
\begin{equation}\nonumber
\begin{split}
&\mathcal{B}^{\gamma}(\tilde{W}|_{x_3=0},\tilde{\psi})\\
&:=\gamma \mathbf{b}_0\tilde{\psi}+\underline{\mathbf{b}}\nabla\tilde{\psi}+\underline{M}\left[\begin{array}{c}
\frac{\partial_3U^r}{\partial_3\varPhi^r}\\
\frac{\partial_3U^l}{\partial_3\varPhi^l}\\
\end{array}\right]\tilde{\psi}
+\underline{M}\left[\begin{array}{cc}
T(U^r,\nabla\varPhi^r)& \mathbf{0}\\
\mathbf{0} & T(U^l,\nabla\varPhi^l)\\
\end{array}\right]\tilde{W}|_{x_3=0}=e^{-\gamma t}g,
\end{split}
\end{equation}
where $\mathbf{\mathbf{b}}_0$ is the first column of $\underline{\mathbf{b}}.$ Then, we have $\underline{\mathbf{b}},\underline{M}$ and $T \in W^{2,\infty}(\Omega),$
and $$\check{\mathbf{b}}:=\underline{M}\left[\begin{array}{c}
\frac{\partial_3U^r}{\partial_3\varPhi^r}\\
\frac{\partial_3U^l}{\partial_3\varPhi^l}\\
\end{array}\right]\in W^{1,\infty}(\Omega).$$
We first denote the ``noncharacteristic'' components of $\tilde{W},$ which will be used in Section \ref{Paralinearization}:
\begin{equation}\label{noncharacter2}
\tilde{W}^{nc}|_{x_3=0}:=(\tilde{W}_3,\tilde{W}_4,\tilde{W}_{16},\tilde{W}_{17}).
\end{equation}
Then, we denote the normal components of $\tilde{W}$ by checking the matrix coefficient in front of $\tilde{W}$ in the boundary conditions,
\begin{equation}\label{normalcom}
\tilde{W}^{n}|_{x_3=0}:=(\tilde{W}_3,\tilde{W}_4,\tilde{W}_{7},\tilde{W}_{10},\tilde{W}_{13},\tilde{W}_{16},\tilde{W}_{17},\tilde{W}_{20},\tilde{W}_{23},\tilde{W}_{26}).
\end{equation}
Thus, we write the boundary conditions
$$\mathcal{B}(\tilde{W}^{n}|_{x_3=0},\tilde{\psi})=e^{-\gamma t}g,$$
By defining $\tilde{F}^{r,l}=e^{-\gamma t}F^{r,l}$ and $\tilde{g}=e^{-\gamma t}g,$
the system \eqref{le2} can be rewritten as
\begin{equation}\label{le3}
\begin{cases}
\mathcal{L}^{\gamma}_{r,l}\tilde{W}^{\pm}=\tilde{F}^{r,l},\\
\mathcal{B}^{\gamma}(\tilde{W}^{n}|_{x_3=0},\tilde{\psi})=\tilde{g}.
\end{cases}
\end{equation}

We will perform the paralinearization of the interior equations and the boundary conditions for \eqref{le3}.

\subsection{Some Results on Paradifferential Calculus}\label{paradifferentialcalculus}

For the sake of self-containedness, we present some necessary definitions and results concerning paradifferential calculus with parameters, as utilized in this paper. For rigorous proofs, refer to \cite[Appendix C]{Gavage2007} and the references therein.

\begin{definition}\label{definition para}
Let $m\in \R$ and $k\in \mathbb{N},$ we define the following notions{\rm:}
\begin{enumerate}
   \item[(i)]  A function $a(\mathbf{x},\xi,\gamma):\R^3\times\R^3\times[1,\infty)\rightarrow \mathbb{C}^{N\times N}$ is called a paradifferential symbol of degree $m$ and regularity $k$ if $a$ is $C^{\infty}$ in $\xi$ and satisfies
       $$\left\| \partial^{\alpha}_{\xi}a(\cdot,\xi,\gamma)\right\|_{W^{k,\infty}(\R^3)}\leq C_{\alpha}\lambda^{m-|\alpha|,\gamma}(\xi),$$
       for all $(\xi,\gamma)\in \R^3\times[1,\infty),$ where $\lambda^{s,\gamma}(\xi):=(\gamma^2+|\xi|^2)^{\frac{s}{2}},$ $s\in \R,$ and $C_{\alpha}$ is a constant.
      
   \item[(ii)] The set of paradifferential symbols of degree $m$ and regularity $k$ is denoted by $\Gamma^m_k.$
   \item[(iii)] A family of operators $\{\mathcal{P}^{\gamma}\}_{\gamma\geq1}$ is said to be of order$\leq m$ if, for all $s\in\R$ and $\gamma\geq1,$ there
       exists a constant $C(s,m)$ such that
       $$\left\| \mathcal{P}^{\gamma}u \right\|_{s,\gamma}\leq C(s,m) \| u \|_{s+m,\gamma},$$
       for all $u\in H^{s+m}(\R^3).$ A generic family of such operators is denoted by $\mathcal{R}_m.$
     \item[(iv)] For $s\in\R,$ define the operator $\Lambda^{s,\gamma}$ by
         $$\Lambda^{s,\gamma}u(\mathbf{x}):=\frac{1}{(2\pi)^3}\int_{\R^3}e^{i\mathbf{x}\cdot\xi}\lambda^{s,\gamma}(\xi)\hat{u}(\xi)\,\dd \xi$$
         for all $u\in \mathcal{S}$ (the Schwartz class).
     \item[(v)] To any symbol $a\in \Gamma^m_0,$ we associate a family of paradifferential operators $\{T^{\gamma}_{a}\}_{\gamma\geq1},$ defined by
         $$T^{\gamma}_au(\mathbf{x}):=\frac{1}{(2\pi)^3}\int_{\R^3}\int_{\R^3}e^{i\mathbf{x}\cdot\xi}K^{\psi}(\mathbf{x}-\mathbf{y},\xi,\gamma)a(\mathbf{y},\xi,\gamma)\hat{u}(\xi)\,\dd \mathbf{y}\dd \xi,$$
         where $K^{\psi}(\cdot,\xi,\gamma)$ is the inverse Fourier transform of $\psi(\cdot,\xi,\gamma).$ The function $\psi$ is defined as
         $$\psi(\mathbf{x},\xi,\gamma):=\sum_{q\in \mathbb{N}}\chi(2^{2-q}\mathbf{x},0)\varPhi(2^{-q}\xi,2^{-q}\gamma),$$
         where $\varPhi(\xi,\gamma):=\chi(2^{-1}\xi,2^{-1}\gamma)-\chi(\xi,\gamma),$ and $\chi$ being a $C^{\infty}$-function on $\R^4$ satisfying
         \begin{align}\nonumber
         \chi(\mathbf{z})=\begin{cases}
         1, & \text{ if } |\mathbf{z}|\leq\frac{1}{2},\\
         0, & \text{ if } |\mathbf{z}|\geq1,
         \end{cases} \quad \text{ and } \chi(\mathbf{z})\geq\chi(\mathbf{z}'), \text{ if } |\mathbf{z}|\leq|\mathbf{z}'|.
         \end{align}
\end{enumerate}
\end{definition}

We have the following properties for the paradifferential calculus:
\begin{lemma}\label{lemmapara}
The following statements hold{\rm:}
\begin{enumerate}
  \item[\rm{(i)}] If $a\in W^{1,\infty}(\R^3),$ $u\in L^2(\R^3),$ and $\gamma\geq1,$ then
      $$\gamma \|au-T^{\gamma}_au \|_0 + \|a\partial_ju-T^{\gamma}_{i\xi_ja}u \|_0 + \|au-T^{\gamma}_au \|_{1,\gamma} \lesssim \|a\|_{W^{1,\infty}(\R^3)} \|u\|_0;$$
  \item[\rm{(ii)}] If $a\in W^{2,\infty}(\R^3),$ $u\in L^2(\R^3),$ and $\gamma\geq1,$ then
      $$\gamma \|au-T^{\gamma}_a u \|_{1,\gamma} + \|a\partial_j u-T^{\gamma}_{i\xi_j a}u \|_{1,\gamma} \lesssim \|a\|_{W^{2,\infty}(\R^3)} \|u \|_0;$$
  \item[\rm{(iii)}] If $a\in \Gamma^m_k,$ then $T^{\gamma}_a$ is of order $\leq m.$ In particular, if $a\in L^{\infty}(\R^3)$ and is independent of $\xi,$ then
      $$\|T^{\gamma}_au \|_{s,\gamma} \lesssim \|a\|_{L^{\infty}(\R^3)} \|u\|_{s,\gamma}, \text{ for all } s\in \R, u\in H^s(\R^3);$$
  \item[\rm{(iv)}] If $a\in\Gamma^m_1$ and $b\in \Gamma^{m'}_1,$ then $ab\in \Gamma^{m+m'}_1,$ the family $\{T^{\gamma}_aT^{\gamma}_b-T^{\gamma}_{ab}\}_{\gamma\geq1}$ is of order $\leq m+m'-1,$ and the family $\{(T^{\gamma}_a)^{\ast}-T^{\gamma}_{a^{\ast}}\}_{\gamma\geq1}$ is of order $\leq m-1;$
  \item[\rm{(v)}] If $a\in \Gamma^m_2$ and $b\in \Gamma^{m'}_2,$ then $\{T^{\gamma}_aT^{\gamma}_b-T^{\gamma}_{ab}-T^{\gamma}_{-i\sum_j\partial_{\xi_j}a\partial_{x_j}b}\}_{\gamma\geq1}$ is of order $\leq m+m'-2;$
  \item[\rm{(vi)}] G${\rm\mathring a}$rding's inequality{\rm:} If $a\in \Gamma^{2m}_1$ is a square matrix symbol satisfying
      $$\Re a(\mathbf{x},\xi,\gamma)\geq c(\gamma^2+|\xi|^2)^mI \quad\text{ for all } (\mathbf{x},\xi,\gamma)\in\R^6\times[1,\infty),$$
      for some constant $c,$ then there exists $\gamma_0\geq1$ such that
      $$\Re\langle T^{\gamma}_au,u\rangle\geq\frac{c}{4} \|u\|^2_{m,\gamma} \quad\text{ for all }u\in H^m(\R^3) \text{ and } \gamma\geq\gamma_0;$$
  \item[\rm{(vii)}] Microlocalized G${\rm\mathring a}$rding's inequality{\rm:}
      Let $a\in \Gamma^{2m}_1$ be a square matrix symbol and $\chi\in\Gamma^0_1.$ If there exist a scalar real symbol $\tilde{\chi}\in \Gamma^0_1$ and a constant $c>0$ such that $\tilde{\chi}\geq0,$ $\chi\tilde{\chi}\equiv\chi,$ and
      $$\tilde{\chi}^2(\mathbf{x},\xi,\gamma)\Re a(\mathbf{x},\xi,\gamma)\geq c\tilde{\chi}^2(\mathbf{x},\xi,\gamma)(\gamma^2+|\xi|^2)^mI$$
      for all $(\mathbf{x},\xi,\gamma)\in \R^6\times[1,\infty),$ then there exist $\gamma_0\geq 1$ and $C>0$ such that
      $$\Re\langle T^{\gamma}_aT^{\gamma}_{\chi}u,T^{\gamma}_{\chi}u \rangle\geq\frac{c}{2} \|T^{\gamma}_{\chi}u \|^2_{m,\gamma}-C \|u\|^2_{m-1,\gamma}$$
      for all $u\in H^m(\R^3),$ $\gamma\geq\gamma_0.$
      Here, $\Re B :=\frac{B+B^{\ast}}{2}$ for any complex square matrix $B,$ $B^{\ast}$ denotes its conjugate transpose.
\end{enumerate}
\end{lemma}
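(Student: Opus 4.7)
\medskip

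\noindent\textbf{Proof proposal for Lemma \ref{lemmapara}.}
The plan is to reduce every item to the standard Littlewood–Paley / dyadic decomposition underlying the paradifferential symbol class $\Gamma^{m}_k$, so that all estimates come from uniform bounds on the kernel $K^{\psi}(\cdot,\xi,\gamma)$ and its smooth frequency‐localizers $\phi(2^{-q}\xi,2^{-q}\gamma)$. First I will record two preliminary facts that underlie (i)--(v): (a) for any $q\in\mathbb{N}$, the operator with symbol $\phi(2^{-q}\xi,2^{-q}\gamma)$ is convolution by an $L^{1}$ kernel with norm uniformly bounded in $q$ and $\gamma$; (b) the spectral localization forces Bernstein-type inequalities $\|\Lambda^{s,\gamma}u_q\|_{0}\simeq 2^{qs}\|u_q\|_0$ for a block $u_q$ whose spectrum sits in $\{2^{q-1}\leq(\gamma^2+|\xi|^2)^{1/2}\leq 2^{q+1}\}$. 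With these in hand, (iii) is immediate: decomposing $T^{\gamma}_a u=\sum_q T^{\gamma}_{a,q}u$ and using $\|T^{\gamma}_{a,q}\|\lesssim 2^{qm}\|a\|_{L^{\infty}}$ gives the claimed continuity $H^{s+m}_{\gamma}\to H^{s}_{\gamma}$; the simpler case of an $\xi$-independent $L^{\infty}$ symbol reduces to checking that paraproduct by a bounded function is bounded on every $H^{s,\gamma}$.

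Next I would treat (i) and (ii), which are the core paraproduct/remainder estimates. The difference $au-T^{\gamma}_a u$ is, by Bony's decomposition, a sum of a ``symmetric'' paraproduct $T^{\gamma}_u a$ and a remainder $R(a,u)=\sum_{|q-q'|\le 2} \Delta_q a\,\Delta_{q'}u$. For $a\in W^{1,\infty}$ the blocks $\Delta_q a$ satisfy $\|\Delta_q a\|_{L^{\infty}}\lesssim 2^{-q}\|a\|_{W^{1,\infty}}$, which gives one derivative gain: combining with Bernstein yields $\|au-T^{\gamma}_a u\|_{1,\gamma}\lesssim\|a\|_{W^{1,\infty}}\|u\|_0$ and an analogous bound weighted by $\gamma$ via the standard trick of replacing $2^q$ by $(\gamma^2+|\xi|^2)^{1/2}$ in the dyadic thresholds. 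The derivative commutator estimate $\|a\partial_j u-T^{\gamma}_{i\xi_j a}u\|_0$ follows by writing $a\partial_j u=T^{\gamma}_a(\partial_j u)+(a\partial_j u-T^{\gamma}_a\partial_j u)$ and applying the previous bound together with the elementary identity $T^{\gamma}_a\partial_j=T^{\gamma}_{i\xi_j a}+[\text{order }\le 0]$. Item (ii) is the same argument with one more derivative budget coming from $a\in W^{2,\infty}$.

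For the symbolic calculus (iv)--(v), I would compare $T^{\gamma}_a T^{\gamma}_b$ with $T^{\gamma}_{ab}$ block by block. The double integral representing $T^{\gamma}_a T^{\gamma}_b u$ has a symbol $\sigma(\mathbf{x},\xi,\gamma)$ whose asymptotic expansion at a fixed frequency scale is obtained by Taylor expanding $b(\mathbf{y},\eta,\gamma)$ in $\mathbf{y}$ around $\mathbf{x}$; the zeroth order term is $ab$ and the first correction is $-i\sum_j\partial_{\xi_j}a\,\partial_{x_j}b$, with Schwartz-type remainders estimated via the $W^{k,\infty}$ norms of $a,b$. Keeping only one order of regularity yields (iv); using the full $\Gamma^m_2\times\Gamma^{m'}_2$ hypothesis yields (v). The adjoint statement in (iv) is analogous: expanding $(T^{\gamma}_a)^{\ast}$ as a paradifferential operator with symbol $a^{\ast}+\text{l.o.t.}$ and bounding the remainder in operator norm via (iii).

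The main obstacle is (vi)--(vii), the Gårding inequalities, since they require a genuine positivity argument rather than pure symbol book-keeping. For (vi) I would first reduce to $m=0$ by conjugating with $\Lambda^{m,\gamma}$, using (iv) to absorb the commutator into an operator of order $\le -1$. Writing $a=(\Re a-\tfrac{c}{2}I)+\tfrac{c}{2}I$ and taking a smooth square root $q\in\Gamma^0_1$ of $\Re a-\tfrac{c}{2}I\geq\tfrac{c}{2}I$, the symbolic calculus (iv) gives
\begin{equation}\nonumber
T^{\gamma}_{\Re a}=(T^{\gamma}_q)^{\ast}T^{\gamma}_q+\tfrac{c}{2}\mathrm{Id}+\mathcal{R}_{-1},
\end{equation}
so $\Re\langle T^{\gamma}_a u,u\rangle\ge\tfrac{c}{2}\|u\|_0^2-C\gamma^{-1}\|u\|_0^2$, which is $\ge\tfrac{c}{4}\|u\|_0^2$ for $\gamma\ge\gamma_0$ large. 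The microlocalized version (vii) is the delicate refinement: because $\Re a$ is only positive on $\operatorname{supp}\tilde\chi$, I cannot take a global square root; instead I would construct $q\in\Gamma^m_1$ with $q^{\ast}q=\tilde\chi^2\Re a+\text{acceptable remainder}$ supported where $\tilde\chi$ is, use $\chi\tilde\chi\equiv\chi$ to rewrite $T^{\gamma}_{\chi}u=T^{\gamma}_{\tilde\chi}T^{\gamma}_{\chi}u+\mathcal{R}_{-1}u$, and apply the non-localized inequality to the symbol $\tilde\chi^2\Re a$. The loss term $C\|u\|^2_{m-1,\gamma}$ absorbs both the commutators in $[T^{\gamma}_{\chi},T^{\gamma}_{\tilde\chi}]$ and the symbol approximation error in building $q$; the precise bookkeeping of these remainders is the only real technical work and is exactly where the regularity hypothesis $\Gamma^{2m}_1$ (one derivative of the symbol) is consumed.
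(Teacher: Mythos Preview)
Your outline is the standard route to these results and is essentially correct, but you should know that the paper does not actually prove Lemma~\ref{lemmapara} at all: it is stated as background material with a pointer to \cite[Appendix C]{Gavage2007} for items (i)--(vi) and to \cite[Theorem~B.18]{Meltivier-Zumbrun} for the microlocalized G{\aa}rding inequality (vii). So there is no ``paper's own proof'' to compare against beyond those citations.

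That said, the strategy you sketch---dyadic/Littlewood--Paley reduction for (iii), Bony's paraproduct decomposition $au = T^{\gamma}_a u + T^{\gamma}_u a + R(a,u)$ with the $W^{k,\infty}$ decay of $\|\Delta_q a\|_{L^\infty}$ for (i)--(ii), Taylor expansion in the spatial variable for the composition and adjoint calculus (iv)--(v), and the smooth square-root construction for G{\aa}rding (vi) followed by the localized version via $\chi\tilde\chi\equiv\chi$ for (vii)---is exactly the line of argument carried out in those references. One small point worth tightening in your write-up of (vii): the square root $q$ you build should be a symbol in $\Gamma^m_1$ of the globally elliptic symbol $\tilde\chi^2\Re a + c(1-\tilde\chi^2)(\gamma^2+|\xi|^2)^m$ rather than of $\tilde\chi^2\Re a$ alone, since the latter degenerates off $\supp\tilde\chi$ and one cannot take a smooth square root there; this is how the loss term $C\|u\|_{m-1,\gamma}^2$ arises cleanly after using $\chi\tilde\chi\equiv\chi$ to kill the artificial $(1-\tilde\chi^2)$ contribution modulo a lower-order commutator.
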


We remark that the proof of Lemma \ref{lemmapara} {\rm (vii)} can be found in \cite[Theorem B.18]{Meltivier-Zumbrun}.

\smallskip
Similar to the constant coefficient case, the key idea in proving Theorem \ref{vt} is to transform the variable-coefficient linear problem \eqref{le3} into an ODE. However, instead of applying a Fourier transform as in the constant-coefficient case, we employ paralinearization for \eqref{le3}. In the following section, we derive the para-linearized form of \eqref{le3} and estimate the errors introduced by replacing the original system with its para-linearized counterpart.

\subsection{Paralinearization}\label{Paralinearization}
For the frequency space defined in the variable coefficient case:
$$\Pi:= \left\{(\tau,\eta,\tilde{\eta}):\tau=\gamma+i\delta\in \mathbb{C}, \eta, \tilde{\eta}\in \R, |\tau|^2+\eta^2+\tilde{\eta}^2\neq0, \Re\tau\geq0 \right\},$$
where $\delta,\eta,\tilde{\eta}$ represents the Fourier variables with respect to $t,x_1,x_2$ separately.
Using the homogeneity structure of the system, we will focus on the following unit hemisphere in the frequency space
$$\Sigma=\{(\tau,\eta,\tilde{\eta}):|\tau|^2+\eta^2+\tilde{\eta}^2=1, \Re\tau\geq0\}.$$
This argument will later be extended to the entire frequency space $\Pi.$
For simplicity, we omit the tilde notation in the system. We denote
\begin{equation}\nonumber
\begin{split}
&\mathbf{b}_0:=\left[\begin{array}{c}
0\\
1\\
0\\
0\\
0\\
0\\
0\\
0\\
0\\
\end{array}\right]
,\quad \mathbf{b}_1(t,x_1,x_2):=\left[\begin{array}{c}
v^r_1-v^l_1\\
v^r_1\\
0\\
F^r_{11}-F^l_{11}\\
F^r_{11}\\
F^r_{12}-F^l_{12}\\
F^r_{12}\\
F^r_{13}-F^l_{13}\\
F^r_{13}\\
\end{array}
\right](t,x_1,x_2,0),\\
&\mathbf{b}_2(t,x_1,x_2):=\left[\begin{array}{c}
v^r_2-v^l_2\\
v^r_2\\
0\\
F^r_{21}-F^l_{21}\\
F^r_{21}\\
F^r_{22}-F^l_{22}\\
F^r_{22}\\
F^r_{23}-F^l_{23}\\
F^r_{23}\\
\end{array}
\right](t,x_1,x_2,0),\quad \mathbf{b}=\tau \mathbf{b}_0+i\eta \mathbf{b}_1+i\tilde{\eta}\mathbf{b}_2.
\end{split}
\end{equation}
Using Lemma \ref{lemmapara} {\rm (i)-(iii)}, we perform paralinearization and obtain that
\begin{equation}\nonumber
\begin{split}
&\gamma \mathbf{b}_0\psi+\mathbf{b}_0\partial_t\psi=T^{\gamma}_{\tau \mathbf{b}_0}\psi,\\
&\|\mathbf{b}_1\partial_1\psi-T^{\gamma}_{i\eta \mathbf{b}_1}\psi \|_{1,\gamma}\leq C \|\mathbf{b}_1 \|_{W^{2,\infty}(\R^3)} \|\psi \|_0 \leq \frac{C}{\gamma} \|\psi \|_{1,\gamma},\\
&\|\mathbf{b}_2\partial_2\psi-T^{\gamma}_{i\tilde{\eta} \mathbf{b}_2}\psi \|_{1,\gamma}\leq C \|\mathbf{b}_2 \|_{W^{2,\infty}(\R^3)} \|\psi \|_0 \leq \frac{C}{\gamma} \|\psi \|_{1,\gamma},\\
&\|\check{\mathbf{b}}\psi-T^{\gamma}_{\check{\mathbf{b}}}\psi \|_{1,\gamma}\leq C \|\check{\mathbf{b}} \|_{W^{1,\infty}(\R^3)} \|\psi \|_0\leq\frac{C}{\gamma} \|\psi \|_{1,\gamma},\\
&\|T^{\gamma}_{\check{\mathbf{b}}}\psi \|_{1,\gamma}\leq C \|\check{\mathbf{b}} \|_{L^{\infty}(\R^3)} \|\psi \|_{1,\gamma}\leq C \|\psi \|_{1,\gamma},\\
\end{split}
\end{equation}
where $C$ are positive constants. Then, we consider the coefficients of $W^{n},$
\begin{equation}\nonumber\\
\begin{split}
&\underline{M} \text{diag}\{T^r,T^l\}W=: \mathbf{M}W^{n}\\
&=\left[\begin{smallmatrix}
0 & 0 &-m_r & m_r & 0 & 0 & 0 & 0& 0 & 0 & 0 & 0 & 0 & 0 & 0 & m_l & -m_l & 0 & 0 & 0 & 0 & 0 & 0 & 0 & 0 & 0\\
0 & 0 & -m_r &m_r & 0 & 0 & 0 & 0& 0 & 0 & 0 & 0 & 0 & 0 & 0 & 0 & 0 & 0 & 0 & 0 & 0 & 0 & 0 & 0 & 0 & 0\\
0 & 0 & k & k & 0 & 0 & 0 & 0& 0 & 0 & 0 & 0 & 0 & 0 & 0 & -k & -k & 0 & 0 & 0 & 0 & 0 & 0 & 0 & 0 & 0\\
0 & 0 & 0 & 0 & 0 & 0 & -k^2 & 0 & 0 & 0 & 0 & 0 & 0 & 0 & 0 & 0 & 0 & 0 & 0 & k^2 & 0 & 0 & 0 & 0 & 0 & 0\\
0 & 0 & 0 & 0 & 0 & 0 & -k^2 & 0 & 0 & 0 & 0 & 0 & 0 & 0 & 0 & 0 & 0 & 0 & 0 & 0 & 0 & 0 & 0 & 0 & 0 & 0\\
0 & 0 & 0 & 0 & 0 & 0 & 0 & 0 & 0 & -k^2 & 0 & 0 & 0 & 0 & 0 & 0 & 0 & 0 & 0 & 0 & 0 & 0 & k^2 & 0 & 0 & 0\\
0 & 0 & 0 & 0 & 0 & 0 & 0 & 0 & 0 & -k^2 & 0 & 0 & 0 & 0 & 0 & 0 & 0 & 0 & 0 & 0 & 0 & 0 & 0 & 0 & 0 & 0\\
0 & 0 & 0 & 0 & 0 & 0 & 0 & 0 & 0 & 0 & 0 & 0 & -k^2 & 0 & 0 & 0 & 0 & 0 & 0 & 0 & 0 & 0 & 0 & 0 & 0 & k^2\\
0 & 0 & 0 & 0 & 0 & 0 & 0 & 0 & 0 & 0 & 0 & 0 & -k^2 & 0 & 0 & 0 & 0 & 0 & 0 & 0 & 0 & 0 & 0 & 0 & 0 & 0\\
\end{smallmatrix}
\right]W^{n}.
\end{split}
\end{equation}
We denote $m_r=\frac{c^r}{\rho^r}\langle\partial_{\rm tan}\varphi\rangle^2,$ $m_l=\frac{c^l}{\rho^l}\langle\partial_{\rm tan}\varphi\rangle^2,$
$k=\langle\partial_{\rm tan}\varphi\rangle$ and
\begin{equation*}
\| \mathbf{M}W^{n}|_{x_3=0}-T^{\gamma}_{\mathbf{M}}W^{n}|_{x_3=0} \|_{1,\gamma}\leq\frac{C}{\gamma} \| \mathbf{M} \|_{W^{2,\infty}(\R^3)} \| W^{n}|_{x_3=0} \|_0\leq\frac{C}{\gamma} \| W^{n}|_{x_3=0} \|_0,
\end{equation*}
where $C$ are some positive constants. Combing the above estimates, we have
\begin{equation}\label{estimate4}
\| \mathcal{B}^{\gamma}(W^{n}|_{x_3=0},\psi)-T^{\gamma}_\mathbf{b}\psi-T^{\gamma}_{\mathbf{M}}W^{n}|_{x_3=0} \|_{1,\gamma} \leq C(\|\psi \|_{1,\gamma} + \frac{1}{\gamma} \|W^{n}|_{x_3=0}\|_0).
\end{equation}
Next, for the interior differential equations, using Lemma \ref{lemmapara} {\rm (i)-(ii)}, we have
\begin{equation*}
\begin{split}
\vertiii{\gamma A^r_0W^+-T^{\gamma}_{\gamma A^r_0}W^+}^2_{1,\gamma} & =\int^{\infty}_0\gamma^2 \| A^r_0W^+(\cdot,x_3)-T^{\gamma}_{A^r_0}W^+(\cdot,x_3) \|^2_{1,\gamma}\,\dd x_3\\
& \leq C \|A^r_0 \|^2_{W^{2,\infty}(\Omega)} \vertiii{W^+}^2_0\leq C \vertiii{W^+}^2_0, \\
\vertiii{A^r_0\partial_tW^+-T^{\gamma}_{i\delta A^r_0}W^+}_{1,\gamma} & \leq C\vertiii{W^+}_0, \\
\vertiii{A^r_1\partial_1W^+-T^{\gamma}_{i\eta A^r_1}W^+}_{1,\gamma} & \leq C\vertiii{W^+}_0, \\
\vertiii{A^r_2\partial_2W^+-T^{\gamma}_{i\tilde{\eta} A^r_2}W^+}_{1,\gamma} & \leq C\vertiii{W^+}_0, \\
\vertiii{A^r_0C^rW^+-T^{\gamma}_{A^r_0C^r}W^+}_{1,\gamma} & \leq C\vertiii{W^+}_0.
\end{split}
\end{equation*}
Similar estimates also holds for $W^-$. Hence, we obtain that
\begin{equation}\label{estimate5}
\vertiii{\mathcal{L}^{\gamma}_{r,l}W^{\pm}-T^{\gamma}_{\tau A^{r,l}_0+i\eta A^{r,l}_1+i\tilde{\eta}A^{r,l}_2+A^{r,l}_0C^{r,l}}W^{\pm}-I_2\partial_3W^{\pm}}_{1,\gamma}\leq C \vertiii{W^{\pm}}_{0}.
\end{equation}
Now, we start to derive the specific expression for the paralinearized system.
We write
\begin{equation}\label{def tildeF}
\mathbf{b}=\begin{bmatrix}
i\eta(v^r_1-v^l_1)+i\tilde{\eta}(v^r_2-v^l_2)\\
\tau+i\eta v^r_1+i\tilde{\eta}v^r_2\\
0\\
i\eta(F^r_{11}-F^l_{11})+i\tilde{\eta}(F^r_{21}-F^l_{21})\\
i\eta F^r_{11}+i\tilde{\eta}F^r_{21}\\
i\eta(F^r_{12}-F^l_{12})+i\tilde{\eta}(F^r_{22}-F^l_{22})\\
i\eta F^r_{12}+i\tilde{\eta}F^r_{22}\\
i\eta(F^r_{13}-F^l_{13})+i\tilde{\eta}(F^r_{23}-F^l_{23})\\
i\eta F^r_{13}+i\tilde{\eta}F^r_{23}\\
\end{bmatrix}:=\begin{bmatrix}
ib\\
a\\
0\\
iF_a\\
iF_b\\
iF_c\\
iF_d\\
iF_e\\
iF_f
\end{bmatrix}.
\end{equation}
Using the fact that $\mathrm{F}_1\times\mathrm{F}_2\neq\mathbf{0},$ we have
\begin{align}\label{bfront}
&|\mathbf{b}(t,x_1,x_2,\delta,\eta,\tilde{\eta},\gamma)|^2\nonumber\\
&= |\eta(v^r_1-v^l_1)+\tilde{\eta}(v^r_2-v^l_2)|^2 +\gamma^2+|\delta+\eta v^r_1+\tilde{\eta}v^r_2|^2\nonumber\\
&\quad+|\eta(F^r_{11}-F^l_{11})+\tilde{\eta}(F^r_{21}-F^l_{21})|^2+|\eta F^r_{11}+\tilde{\eta}F^r_{21}|^2\nonumber\\
&\quad+|\eta(F^r_{12}-F^l_{12})+\tilde{\eta}(F^r_{22}-F^l_{22})|^2+|\eta F^r_{12}+\tilde{\eta}F^r_{22}|^2\nonumber\\
&\quad+|\eta(F^r_{13}-F^l_{13})+\tilde{\eta}(F^r_{23}-F^l_{23})|^2+|\eta F^r_{13}+\tilde{\eta}F^r_{23}|^2\nonumber\\
&\geq C(\gamma^2+\delta^2+\eta^2+\tilde{\eta}^2),
\end{align}
for some positive constant $C.$
Then, using G${\rm \mathring a}$rding's inequality (Lemma \ref{lemmapara} {\rm (vi)}), we obtain that
$$\Re\langle T^{\gamma}_{\mathbf{b}^{\ast}\mathbf{b}}\psi,\psi\rangle_{L^2(\R^3)}\geq\frac{c}{2} \| \psi \|^2_{1,\gamma},$$
for all $\gamma\geq\gamma_0,$ where $\gamma_0$ depends only on $K.$ Using the properties of paradifferential operators (Lemma \ref{lemmapara} {\rm (iv)}), we obtain that
$T^{\gamma}_{\mathbf{b}^{\ast}\mathbf{b}}=(T^{\gamma}_\mathbf{b})^{\ast}T^{\gamma}_\mathbf{b}+\mathcal{R}_1,$ where $\mathcal{R}_1$ is an operator of order 1, then we have
$$
\| \psi \|_{1,\gamma}\leq C \| T^{\gamma}_\mathbf{b}\psi \|_0,
$$
for all $\gamma\geq\gamma_0.$ Using \eqref{estimate4} and Lemma \ref{lemmapara}, we have
\begin{equation}\label{estimate6}
\begin{split}
\| \psi \|_{1,\gamma}&\leq C \left( \left\| T^{\gamma}_\mathbf{b}\psi+T^{\gamma}_{\mathbf{M}}W^{n}|_{x_3=0} \right\|_0 + \left\| T^{\gamma}_{\mathbf{M}}W^{n}|_{x_3=0} \right\|_0 \right)\\
&\leq C \left( \frac{1}{\gamma}\left\| T^{\gamma}_\mathbf{b}\psi+T^{\gamma}_{\mathbf{M}}W^{n}|_{x_3=0} \right\|_{1,\gamma} + \| W^{n}|_{x_3=0} \|_0 \right)\\
&\leq C\left( \frac{1}{\gamma} \left\| \mathcal{B}^{\gamma}(W^{n}|_{x_3=0},\psi)-T^{\gamma}_\mathbf{b}\psi-T^{\gamma}_{\mathbf{M}}W^{n}|_{x_3=0} \right\|_{1,\gamma} \right. \\
&\qquad \  \left. +\frac{1}{\gamma} \left\| \mathcal{B}^{\gamma}(W^{n}|_{x_3=0},\psi) \right\|_{1,\gamma}+\| W^{n}|_{x_3=0} \|_0 \right)\\
&\leq C\left( \frac{1}{\gamma}\left\| \mathcal{B}^{\gamma}(W^{n}|_{x_3=0},\psi) \right\|_{1,\gamma}+\| W^{n}|_{x_3=0} \|_0 \right).
\end{split}
\end{equation}

Now, we want to analyze the part of the boundary condition where the front function $\psi$ is not involved.
We recall from \eqref{def tildeF} that 
$$
a =\tau+iv^r_1\eta+iv^r_2\tilde{\eta},\qquad b =\eta(v^r_1-v^l_1)+\tilde{\eta}(v^r_2-v^l_2).
$$
It is worth pointing out that $a$ and $b$ could equal to 0 for 3D elastic flow, due to the possible frequency interactions. However, in 2D elastic flow, it is natural to obtain the ellipticity for the front symbol and no such kind of degeneracy of $a$ and $b$ happen.
Now, we consider the following matrix:
\begin{equation}\label{PI}
\mathbb{P}(t,x_1,x_2,\tau,\eta,\tilde{\eta}):=\left[\begin{array}{ccccccccc}
0 & 0 & 1 & 0 & 0 & 0 & 0 & 0 & 0\\
a & -ib & 0 & 0 & 0 & 0 & 0 & 0 & 0 \\
-\tilde{F}_a & 0 & 0 & b_1 & 0 & 0 & 0 & 0 & 0\\
-\tilde{F}_b & 0 & 0 & 0 & b_2 & 0 & 0 & 0 & 0\\
-\tilde{F}_c & 0 & 0 & 0 & 0 & b_3 & 0 & 0 & 0\\
-\tilde{F}_d & 0 & 0 & 0 & 0 & 0 & b_4 & 0 & 0\\
-\tilde{F}_e & 0 & 0 & 0 & 0 & 0 & 0 & b_5 & 0\\
-\tilde{F}_f & 0 & 0 & 0 & 0 & 0 & 0 & 0 & b_6\\
\end{array}\right],
\end{equation}
for $(\tau,\eta,\tilde{\eta})\in \Sigma$, and extend it as a homogeneous mapping of degree of 0 with respect to $(\tau,\eta,\tilde{\eta}).$ Here $\tilde F_a, \ldots, \tilde F_f$, and $b_1, \ldots, b_6$ will be determined microlocally as below. It is easy to check that
\begin{align}\nonumber
\mathbb{P} \mathbf{b}=\begin{bmatrix}
0\\
0\\
i(-\tilde{F}_ab+F_ab_1)\\
i(-\tilde{F}_b b+F_bb_2)\\
i(-\tilde{F}_c b+F_cb_3)\\
i(-\tilde{F}_d b+F_db_4)\\
i(-\tilde{F}_e b+F_eb_5)\\
i(-\tilde{F}_f b+F_fb_6)\\
\end{bmatrix}.
\end{align}
The construction of $\mathbb{P}$ is to be understood \emph{microlocally}. The
cancellations producing $\mathbb{P} \mathbf{b} = \mathbf{0}$ amount to identities of the form
$-\tilde F_\bullet\, b+F_\bullet\, b_j=0$, and hence involve a division by
one of the elastic components $F_a, F_c, F_e$. Since $\mathrm{F}_1\times \mathrm{F}_2\neq\mathbf{0}$, these three components never vanish simultaneously for $(\eta,\tilde\eta)\neq
(0,0)$, so at every point of $\Sigma$ at least one of them is non-zero; on the
other hand, it is not clear that any of them remains bounded away from zero globally on all of $\Sigma$. Therefore $\mathbb{P}$, and likewise the symbols $\mathbb{P} \mathbf{M}$, $\mathbb{P}_6$, the auxiliary matrix $B$ and the cut-off $\chi_B$ derived from it below, are defined microlocally: near each direction one divides by whichever of $F_a,F_c,F_e$ is non-vanishing there, and the resulting local symbols, all of class $\Gamma^0_2$, are patched together. On the pure temporal region $(\eta,\tilde{\eta})=(0,0)$, it is immediately seen from \eqref{def tildeF} that $\mathbb{P} \mathbf{b}=\mathbf{0}$.

Simple calculation yields that
\begin{equation*}
\begin{split}
&\mathbb{P} \mathbf{M}=\left[\begin{smallmatrix}
k& k & 0 & 0 & 0 & -k & -k & 0 & 0 & 0 \\
-(a-ib)m_r & (a-ib)m_r & 0 & 0 & 0 & am_l & -am_l & 0 & 0 & 0 \\
\tilde{F}_a m_r & -\tilde{F}_am_r & -b_1 k^2 & 0 & 0 & -\tilde{F}_am_l & \tilde{F}_a m_l & b_1 k^2 & 0 & 0\\
F_bm_r & -F_bm_r & bk^2 & 0 & 0 & -F_bm_l & F_bm_l & 0 & 0 & 0\\
F_cm_r & -F_cm_r & 0 & -bk^2 & 0 & -F_cm_l & F_cm_l & 0 & bk^2 & 0\\
F_dm_r & -F_dm_r & 0 & -bk^2 & 0 & -F_dm_l & F_dm_l & 0 & 0 & 0\\
F_em_r & -F_em_r & 0 & 0 & -bk^2 & -F_em_l& F_em_l & 0 & 0 & bk^2\\
F_fm_r & -F_fm_r & 0 & 0 & -bk^2 & -F_fm_l& F_fm_l & 0 & 0 & 0\\
\end{smallmatrix}\right]
\end{split}
\end{equation*}
for $(\tau,\eta,\tilde{\eta})\in \Sigma$. It is easily seen that $\mathbb{P} \mathbf{M}$ is homogeneous of degree $0$ with respect to $(\tau,\eta,\tilde{\eta}).$

We denote the last six rows in $\mathbb{P}$ by $\mathbb{P}_6.$ Then we can separate the $\mathbb{P}\mathbf{M}$. We have
 $$
 T^{\gamma}_{\mathbb{P}_6\mathbf{M}}W^n|_{x_3=0}=T^{\gamma}_{A}W^{nc}|_{x_3=0}+T^{\gamma}_{B}(W_7,W_{10},W_{13},W_{20},W_{23},W_{26})^{\top}|_{x_3=0},
 $$
{where $B$ is a $6\times 6$ matrix symbol homogeneous of degree $0$. Note that in general $B$ is \emph{not} invertible on the whole $\Sigma$: a direct computation gives $\det B=b^5 b_1$, hence $B$ degenerates on $\{b=0\}\cup\{b_1=0\}$. Therefore we will only use $B$ on a microlocal region where it is elliptic, and we will apply the microlocal G${\rm\mathring a}$rding inequality in Lemma \ref{lemmapara}(vii).
For simplicity, we write
\begin{align*}
B=\begin{bmatrix}
-b_1 & 0 & 0 & b_1 & 0 & 0\\
b & 0 & 0 & 0 & 0 & 0\\
0 & -b & 0 & 0 & b& 0\\
0 & -b& 0 & 0 & 0 & 0\\
0 & 0 & -b & 0 & 0 & b\\
0 & 0 & -b & 0 & 0 & 0
\end{bmatrix}.
\end{align*}
Let $\mathcal{W}:=(W_7,W_{10},W_{13},W_{20},W_{23},W_{26})^{\top}$.  Choose a cut-off symbol $\chi_B\in\Gamma^0_k$ supported in an open set where $|b|\ge\delta_B$ and $|b_1|\ge\delta_B$ for some $\delta_B>0$.
Then, since the coefficients are bounded by a constant depending only on $K$ ({\it cf.} \eqref{pp}) and $\chi_B$ is supported away from $\{b=0\}\cup\{b_1=0\}$, there exists $c_B=c_B(\delta_B,K)>0$ such that
\[
\chi_B^2\, B^{\ast}B\ \ge\ c_B\, \chi_B^2\,I\qquad\text{on }\mathbb{R}^3_{(t,x_1, x_2)}\times\Sigma.
\]
Using the microlocal G${\rm\mathring a}$rding inequality (Lemma \ref{lemmapara} {\rm (vii)}) with $m=0$, $a=B^{\ast}B$ and $\chi=\chi_B$, we obtain
\begin{align}\label{Blower}
& \Re \left\langle T^{\gamma}_{B^{\ast}B}T^{\gamma}_{\chi_B}\mathcal{W},\, T^{\gamma}_{\chi_B}\mathcal{W} \right\rangle \Big|_{x_3=0}
\geq \frac{c_B}{2}\left\| T^{\gamma}_{\chi_B}\mathcal{W}|_{x_3=0} \right\|^2_0
- C\left\| \mathcal{W}|_{x_3=0} \right\|^2_{-1,\gamma}
\end{align}
for all $\gamma\geq \gamma_0$. In particular, since $\|\mathcal{W}|_{x_3=0}\|_{-1,\gamma}\lesssim \gamma^{-1}\|\mathcal{W}|_{x_3=0}\|_0$, for $\gamma$ large enough we have
\[
\left\| T^{\gamma}_{\chi_B} \mathcal{W}|_{x_3=0} \right\|_0
\ \lesssim\ \left\| T^{\gamma}_{B}T^{\gamma}_{\chi_B} \mathcal{W}|_{x_3=0} \right\|_0+\frac{1}{\gamma}\left\| \mathcal{W}|_{x_3=0} \right\|_0.
\]
Then, using \eqref{Blower} and applying $T^{\gamma}_{\chi_B}$ to the decomposition of $T^{\gamma}_{\mathbb{P}_6\mathbf{M}}W^n|_{x_3=0}$, we have
\begin{align*}
\left\| T^{\gamma}_{\chi_B}\mathcal{W}|_{x_3=0} \right\|_0
\lesssim \left\| T^{\gamma}_{\chi_B}T^{\gamma}_{\mathbb{P}_6\mathbf{M}}W^n|_{x_3=0} \right\|_0 + \left\| T^{\gamma}_{\chi_B}T^{\gamma}_{A}W^{nc}|_{x_3=0} \right\|_0 + \frac{1}{\gamma}\left\| \mathcal{W}|_{x_3=0} \right\|_0.
\end{align*}}
Using Lemma \ref{lemmapara} and \eqref{estimate4}, we can also obtain the estimate for the front function $\psi,$ one has
\begin{align*}
\left\| T^{\gamma}_{\mathbb{P}_6\mathbf{M}}W^n|_{x_3=0} \right\|_0 & \leq \left\| T^{\gamma}_{\mathbb{P}_6}\mathcal{B}^{\gamma}(W^n|_{x_3=0},\psi)-T^{\gamma}_{\mathbb{P}_6}T^{\gamma}_{\mathbf{M}}W^n|_{x_3=0}-T^{\gamma}_{\mathbb{P}_6}T^{\gamma}_\mathbf{b}\psi \right\|_0 \\
&\quad + \left\| T^{\gamma}_{\mathbb{P}_6}\mathcal{B}^{\gamma}(W^n|_{x_3=0},\psi) \right\|_0 + \left\| W^n|_{x_3=0} \right\|_{-1,\gamma}+\| \psi \|_0 \\
& \leq \frac{1}{\gamma} \left\| \mathcal{B}^{\gamma}(W^n|_{x_3=0},\psi)-T^{\gamma}_{\mathbf{M}}W^n|_{x_3=0}-T^{\gamma}_{\mathbf{b}} \psi\right\|_{1,\gamma}+\frac{1}{\gamma} \left\| \mathcal{B}^{\gamma}(W^n|_{x_3=0},\psi) \right\|_{1,\gamma} \\
&\quad + \left\| W^n|_{x_3=0} \right\|_{-1,\gamma} + \| \psi \|_0 \\
&\leq\frac{1}{\gamma}\| \psi \|_{1,\gamma} + \frac{1}{\gamma}\left\| \mathcal{B}^{\gamma}(W^n|_{x_3=0},\psi) \right\|_{1,\gamma}+\frac{1}{\gamma}\left\| W^n|_{x_3=0} \right\|_0.
\end{align*}
Taking $\gamma$ sufficiently large, we obtain 
\begin{equation}\label{We}
\begin{split}
\left\| T^{\gamma}_{\chi_B}\mathcal{W}|_{x_3=0} \right\|_0 & \lesssim\left( \frac{1}{\gamma}\| \psi \|_{1,\gamma}+\frac{1}{\gamma}\left\| \mathcal{B}^{\gamma}(W^n|_{x_3=0},\psi) \right\|_{1,\gamma} + \left\| W^{nc}|_{x_3=0} \right\|_{0} \right) \\
& \quad + \frac{1}{\gamma}\left\| T^{\gamma}_{1-\chi_B}\mathcal{W}|_{x_3=0} \right\|_0.
\end{split}
\end{equation}
Indeed, compared with the ``global'' estimate in the non-degenerate case, the microlocal G${\rm\mathring a}$rding inequality (Lemma \ref{lemmapara} (vii)) yields an $H^{-1}$ remainder. Using symbolic calculus, one has
\[
\|\mathcal{W}|_{x_3=0}\|_0\ \lesssim\ \|T^{\gamma}_{\chi_B} \mathcal{W}\|_0+\|T^{\gamma}_{1-\chi_B} \mathcal{W}\|_0,
\]
and hence the lower-order term can be split into a part supported on $\supp\chi_B$ (absorbed into the left-hand side for $\gamma$ large) and a complementary part supported on $\supp(1-\chi_B)$, which is exactly the last term of \eqref{We}.

Combining \eqref{estimate6} with the microlocal estimate \eqref{We} gives, on $\supp\chi_B$,
\begin{equation}\label{Wephi}
\begin{split}
\left\| T^{\gamma}_{\chi_B} \mathcal{W}|_{x_3=0} \right\|_0+\| \psi \|_{1,\gamma} & 
\lesssim \frac{1}{\gamma}\left\| \mathcal{B}^{\gamma}(W^n|_{x_3=0},\psi) \right\|_{1,\gamma}+\left\| W^{nc}|_{x_3=0} \right\|_0 \\
& \quad +\ \frac{1}{\gamma}\left\| T^{\gamma}_{1-\chi_B} \mathcal{W}|_{x_3=0} \right\|_0.
\end{split}
\end{equation}

\begin{remark}[Control of $T^\gamma_{1-\chi_B} \mathcal{W}$ by the case estimates]\label{rem:control-1mchiB}
The last term in \eqref{Wephi} reflects the possible degeneracy of the auxiliary matrix $B$ on $\supp(1-\chi_B)$
({\it e.g.} when $b=0$ or $b_1=0$, so that $\det B=b^5b_1 = 0$).
To control $T^\gamma_{1-\chi_B} \mathcal{W}$, we use the microlocal partition of unity on $\Sigma$ introduced in Sections \ref{subsec est on each}--\ref{subsec proof}:
there exist smooth homogeneous cut-offs $\bar{\chi}_{p_1},\bar{\chi}_{p_2},\bar{\chi}_{rt},\chi_{re}$ such that
\begin{equation}\label{chi.partition}
\bar{\chi}_{p_1}+\bar{\chi}_{p_2}+\bar{\chi}_{rt}+\chi_{re}\equiv 1\qquad\text{on }\Sigma.
\end{equation}
Choose them so that $\supp\chi_B\subset\supp\chi_{re}$ and set $\chi_{re,B}:=(1-\chi_B)\chi_{re}$.
Then
\begin{equation}\label{chiB.decomp}
1-\chi_B=\chi_{re,B}+\bar{\chi}_{p_1}+\bar{\chi}_{p_2}+\bar{\chi}_{rt}\qquad\text{on }\Sigma,
\end{equation}
and consequently, up to lower-order commutators,
\[
T^\gamma_{1-\chi_B}\mathcal{W}
=
T^\gamma_{\chi_{re,B}}\mathcal{W}
+T^\gamma_{\bar{\chi}_{p_1}}\mathcal{W}
+T^\gamma_{\bar{\chi}_{p_2}}\mathcal{W}
+T^\gamma_{\bar{\chi}_{rt}}\mathcal{W}
+\mathcal{R}_{-1}\mathcal{W}.
\]

We now state the boundary-trace estimates which will be proved in each microlocal region, together with the corresponding change of unknowns:
\begin{enumerate}[leftmargin=*, label=(\arabic*)]
\item On $\supp\bar{\chi}_{p_1}$ (Case~1, poles possibly \emph{coinciding} for $W^+$ and $W^-$), we introduce the transformed unknowns
$Z^+$ and $Z^-$ by \eqref{def:Zplus.p1} and \eqref{def:Zminus.p1}, where the leading-order transformation matrix $Q^r_0$ and the
symmetrizer $R^r_0$ are defined by \eqref{def:Q0r.p1} and \eqref{def:R0r.p1}.
The boundary reduction uses the matrices $P_1,P_2$ in \eqref{P1P2} (hence $\beta_{\mathrm{in}}$ in \eqref{beta}) to prove the key boundary estimate \eqref{para28}, and combining it with the interior energy estimates yields \eqref{para30}.

\item On $\supp\bar{\chi}_{rt}$ (Case~2, roots), we define the microlocal variables by \eqref{def:Zplus.rt} and \eqref{def:Zminus.rt}.
The associated matrices $Q^{r,l}_0,Q^{r,l}_{-1}$ and symmetrizers $R^{r,l}_0,R^{r,l}_{-1}$ are constructed exactly as in Case~1, but the boundary reduction is {\it not} elliptic on this region: the boundary symbol is only weakly stable on $\supp\bar{\chi}_{rt}$. However, the stable eigenvectors are still smooth because $\supp\bar{\chi}_{rt}$ is disjoint from the pole set $\Upsilon_p$, which allows us to use the localized G$\rm\mathring{a}$rding inequality to obtain \eqref{para35}.

\item On $\supp\bar{\chi}_{p_2}$ (Case 3, poles away from roots), we use the change of unknowns \eqref{def:Zplus.p2} and \eqref{def:Zminus.p2}
(again built from the same $Q$/$R$ construction), and the pole analysis gives \eqref{para37}.
\item On $\supp\chi_{re,B}$ (Other case, which corresponds to a regular region away from poles and roots), we work directly with $W^{nc}_{re}$ solving \eqref{para38}; the Kreiss-type symmetrizer yields \eqref{para39}, which controls $\|W^{nc}_{re}|_{x_3=0}\|_{1,\gamma}$ and therefore $\|T^\gamma_{\chi_{re,B}}\mathcal{W}\|_0$.
\end{enumerate}

In the singular microlocal regions ($\supp\bar{\chi}_{p_1}$, $\supp\bar{\chi}_{rt}$ and $\supp\bar{\chi}_{p_2}$), the change of unknowns is elliptic on the support of the corresponding cut-off:
for instance, on $\supp\bar{\chi}_{p_1}$ the symbols $\chi_1Q^r_0$ and $\chi_1R^r_0$ are invertible (see \eqref{def:Q0r.p1}--\eqref{def:R0r.p1}),
hence Lemma \ref{lemmapara} (iv) yields a parametrix and we may pass between traces of $W$ and traces of $Z$ up to $\mathcal{R}_{-1}$ remainders.
Since $\mathcal{W}$ is a fixed subvector of $W^{n}|_{x_3=0}$, we have $\|T^\gamma_{\chi}\mathcal{W}\|_0\le C\|T^\gamma_{\chi}W^{n}|_{x_3=0}\|_0$ for each cut-off $\chi$.
Therefore, \eqref{para30}, \eqref{para35}, \eqref{para37} and \eqref{para39} provide a bound for $\|T^\gamma_{1-\chi_B}\mathcal{W}\|_0$,
and after summing the microlocal inequalities associated with \eqref{chi.partition} and taking $\gamma$ sufficiently large,
all $H^{-1}$ remainders (including the last term in \eqref{Wephi}) can be absorbed.
\end{remark}

Denote that
\begin{equation}\label{def k1}
k^{r,l}_1:=\tau+i\eta v^{r,l}_1+i\tilde{\eta}v^{r,l}_2.
\end{equation}
Now we estimate $W^{nc}|_{x_3=0}$ using the remaining boundary conditions,
$$T^{\gamma}_{\beta}W^{nc}|_{x_3=0}=\tilde{G},$$
where
\begin{equation}\label{def beta}
\begin{split}
&\beta := \left[\begin{matrix}
k & k & -k & -k\\\\
\displaystyle -\frac{c^r}{\rho^r}k^2k^l_1 & \displaystyle \frac{c^r}{\rho^r}k^2k^l_1  & \displaystyle \frac{c^l}{\rho^l}k^2k^r_1 & \displaystyle -\frac{c^l}{\rho^l}k^2k^r_1 \\
\end{matrix}\right].
\end{split}
\end{equation}
Note that $\beta \in \Gamma^0_2.$
We define the para-linearized system
\begin{equation}\label{para}
\begin{cases}
T^{\gamma}_{\tau A^r_0+i\eta A^r_1+i\tilde{\eta}A^r_2+A^r_0C^r}W^++I_2\partial_3W^+=\tilde{F}^+,\\
T^{\gamma}_{\tau A^l_0+i\eta A^l_1+i\tilde{\eta}A^l_2+A^l_0C^l}W^-+I_2\partial_3W^-=\tilde{F}^-,\\
T^{\gamma}_{\beta}W^{nc}|_{x_3=0}=\tilde{G}.
\end{cases}
\end{equation}
We remain to prove the following estimate:
\begin{equation}\label{estimate7}
\left\| W^{nc}|_{x_3=0} \right\|^2_0\leq C_0 \left(\frac{1}{\gamma^3} \left\| \tilde{F}^{\pm} \right\|^2_{1,\gamma}+\frac{1}{\gamma^2} \|\tilde{G}\|^2_{1,\gamma} \right).
\end{equation}

Once \eqref{estimate7} is obtained, we insert it into the microlocal trace bounds for $(W^n|_{x_3=0},\psi)$: on $\supp\chi_B$ we use \eqref{Wephi}, and on the complementary cut-offs we use the boundary estimates proved in Sections \ref{subsec est on each}--\ref{subsec proof}. Summing by a partition of unity on $\Sigma$ and absorbing lower-order remainders for $\gamma$ large yields the full-frequency estimate for $(W^n|_{x_3=0},\psi)$ required in Theorem~\ref{vt}.

First, we have
\begin{equation*}\nonumber
\begin{split}
&\vertiii{\mathcal{L}^{\gamma}_{r,l}W^{\pm}-T^{\gamma}_{\tau A^{r,l}_0+i\eta A^{r,l}_1+i\tilde{\eta} A^{r,l}_2+A^{r,l}_0 C^{r,l}}W^{\pm}-I_2\partial_3W^{\pm}}_{1,\gamma}\leq C\vertiii{W^{\pm}}_{0},\\
&\left\| \mathcal{B}^{\gamma}(W^{n}|_{x_3=0},\psi)-T^{\gamma}_{\mathbf b}\psi-T^{\gamma}_{\mathbf{M}}W^{n}|_{x_3=0} \right\|_{1,\gamma }\leq C \left(\| \psi \|_{1,\gamma}+\frac{1}{\gamma}\left\| W^{n}|_{x_3=0} \right\|_0 \right),\\
&\|\tilde{G}\|_{1,\gamma} = \left\|T^{\gamma}_{\beta}W^{nc}|_{x_3=0} \right\|_{1,\gamma} = \left\|T^{\gamma}_{\mathbb{P}_2 \mathbf{M}}W^{nc}|_{x_3=0}\right\|_{1,\gamma}
=\left\|T^{\gamma}_{\mathbb{P}_2 {\mathbf b}}\psi+T^{\gamma}_{\mathbb{P}_2 \mathbf{M}}W^{nc}|_{x_3=0} \right\|_{1,\gamma},
\end{split}
\end{equation*}
where $\mathbb{P}_2$ denotes the first two rows of $\mathbb{P}.$
Then
\begin{equation}\nonumber
\begin{split}
\|\tilde{G}\|_{1,\gamma}&\leq\left\|T^{\gamma}_{\mathbb{P}_2}(T^{\gamma}_{\mathbf b}\psi+T^{\gamma}_{\mathbf{M}}W^{nc}|_{x_3=0})\right\|_{1,\gamma}+\| \psi \|_{1,\gamma}+\left\| W^{nc}|_{x_3=0} \right\|_0\\
&\leq\left\|\mathcal{B}^{\gamma}(W^{n}|_{x_3=0},\psi)-T^{\gamma}_{\mathbf b}\psi-T^{\gamma}_{\mathbf{M}}W^{n}|_{x_3=0}\right\|_{1,\gamma}
+\left\| \mathcal{B}^{\gamma}(W^{n}|_{x_3=0},\psi) \right\|_{1,\gamma}+\| \psi \|_{1,\gamma}+\left\| W^{nc}|_{x_3=0} \right\|_0\\
&\leq \| \psi \|_{1,\gamma}+\frac{1}{\gamma}\| W^{n}|_{x_3=0} \|_0+\left\| W^{nc}|_{x_3=0} \right\|_0+\left\| \mathcal{B}^{\gamma}(W^{n}|_{x_3=0},\psi) \right\|_{1,\gamma}.
\end{split}
\end{equation}
Moreover,
\begin{equation*}
\begin{split}
\vertiii{ \tilde{F}^{\pm} }_{1,\gamma} & \lesssim \vertiii{\mathcal{L}^{\gamma}_{r,l}W^{\pm}}_{1,\gamma} + \vertiii{\tilde{F}^{\pm}-\mathcal{L}^{\gamma}_{r,l}W^{\pm}}_{1,\gamma}\\
&\leq \vertiii{\mathcal{L}^{\gamma}_{r,l}W^{\pm}}_{1,\gamma} + C \vertiii{W^{\pm}}_0.
\end{split}
\end{equation*}
Therefore, from \eqref{estimate7}, we obtain
\begin{align*}
\left\| W^{nc}|_{x_3=0} \right\|^2_0 & \leq C_0 \left( \frac{1}{\gamma^3}\vertiii{\mathcal{L}^{\gamma}_{r,l}W^{\pm}}^2_{1,\gamma} + \frac{1}{\gamma^3}\vertiii{W^{\pm}}^2_0+\frac{1}{\gamma^2}\left\| \mathcal{B}^{\gamma}(W^n|_{x_3=0},\psi) \right\|^2_{1,\gamma} \right. \\
& \qquad \ + \left. \frac{1}{\gamma^4}\left\| W^n|_{x_3=0} \right\|^2_0+\frac{1}{\gamma^2}\| \psi \|^2_{1,\gamma} \right).
\end{align*}

Finally, combining the above bounds with the microlocal trace inequalities for $(W^n|_{x_3=0},\psi)$ ({\it i.e.,} \eqref{Wephi} on $\supp\chi_B$ and the estimates in Sections \ref{subsec est on each}--\ref{subsec proof} on the remaining cut-offs) and summing by a partition of unity on $\Sigma$, we obtain that for $\gamma$ large
\begin{equation*}
\begin{split}
\| W^{n}|_{x_3=0} \|^2_0+\| \psi \|^2_{1,\gamma}
&\leq C \left( \frac{1}{\gamma^3}\vertiii{\mathcal{L}^{\gamma}_{r,l}W^{\pm}}^2_{1,\gamma}+\frac{1}{\gamma^3}\vertiii{W^{\pm}}^2_0 + \frac{1}{\gamma^2}\left\| \mathcal{B}^{\gamma}(W^n|_{x_3=0},\psi) \right\|^2_{1,\gamma} \right. \\
&\qquad \ \left. + \frac{1}{\gamma^4}\left\| W^n|_{x_3=0} \right\|^2_0+\frac{1}{\gamma^2}\| \psi \|^2_{1,\gamma} \right) \\
&\leq C \left(\frac{1}{\gamma^3} \vertiii{\mathcal{L}^{\gamma}_{r,l}W^{\pm} }^2_{1,\gamma}+\frac{1}{\gamma^3}\vertiii{W^\pm}^2_0 + \frac{1}{\gamma^2}\left\| \mathcal{B}^{\gamma}(W^{n}|_{x_3=0},\psi) \right\|^2_{1,\gamma}\right),
\end{split}
\end{equation*}
where in the last step we have absorbed the terms $\frac{1}{\gamma^4}\|W^n|_{x_3=0}\|_0^2$ and $\frac{1}{\gamma^2}\|\psi\|_{1,\gamma}^2$ into the left-hand side by choosing $\gamma$ large enough. Therefore, the key step is to obtain \eqref{estimate7} from \eqref{para}. Using Lemma~\ref{estimate3} together with the microlocal estimates in the next Sections, we obtain Theorem \ref{vt}.

\subsection{Microlocalization}\label{microlocalization}

For simplicity, we concentrate our analysis on the unit hemisphere $\Sigma=\{(\tau,\eta,\tilde{\eta}):|\tau|^2+\eta^2+\tilde{\eta}^2=1 \text{ and } \Re\tau\geq0 \}.$
\subsubsection{Poles}
Considering the following differential equation:
\begin{equation}\label{para2}
\begin{cases}
(\tau A^r_0+i\eta A^r_1+i\tilde{\eta}A^r_2)W^++I_2\partial_3W^+={\mathbf 0},\\
(\tau A^l_0+i\eta A^l_1+i\tilde{\eta}A^l_2)W^-+I_2\partial_3W^-={\mathbf 0},\\
\beta W^{nc}|_{x_3=0}={\mathbf 0},
\end{cases}
\end{equation}
where $A^{r,l}_0, A^{r,l}_1, A^{r,l}_2, I_2,\beta$ are defined in \eqref{def A0}, \eqref{def I2}, \eqref{def A1A2}, and \eqref{def beta}.  Denote
\begin{equation}\label{def k2}
k^{r,l}_2 := (\eta F^{r,l}_{11}+\tilde{\eta} F^{r,l}_{21})^2+(\eta F^{r,l}_{12}+\tilde{\eta} F^{r,l}_{22})^2+(\eta F^{r,l}_{13}+\tilde{\eta} F^{r,l}_{23})^2.
\end{equation}
Now, we consider the algebraic equations $\mathcal{T}W^+={\mathbf 0}$ for $W^+$, where
\begin{equation}
\mathcal{T} := \left[\begin{smallmatrix}
k^r_1 & 0 & \mathcal{T}_{1,3}  & \mathcal{T}_{1,4} & \mathcal{T}_{1,5} & 0 & 0 & \mathcal{T}_{1,8} & 0 & 0 & \mathcal{T}_{1,11} & 0 & 0\\
0 & k^r_1 & \mathcal{T}_{2,3} & \mathcal{T}_{2,4} & 0 & \mathcal{T}_{2,6} & 0 & 0 & \mathcal{T}_{2,9} & 0 & 0 & \mathcal{T}_{2,12} & 0\\
\mathcal{T}_{3,1} & \mathcal{T}_{3,2} & \mathcal{T}_{3,3} & 0 & 0 & 0 & \mathcal{T}_{3,7} & 0 & 0 & \mathcal{T}_{3,10} & 0 & 0 & \mathcal{T}_{3,13}\\
\mathcal{T}_{4,1} & \mathcal{T}_{4,2}  & 0 & \mathcal{T}_{4,4} & 0 & 0 & \mathcal{T}_{4,7} & 0 & 0 & \mathcal{T}_{4,10} & 0 & 0 & \mathcal{T}_{4,13}\\
\mathcal{T}_{5,1} & 0 & 0 & 0 & k^r_1 & 0 & 0 & 0 & 0 & 0 & 0 & 0 & 0\\
0 & \mathcal{T}_{6,2} & 0 & 0 & 0 & k^r_1 & 0 & 0 & 0 & 0 & 0 & 0 & 0\\
0 & 0 & \mathcal{T}_{7,3} & \mathcal{T}_{7,4} &0 & 0 & k^r_1 & 0 & 0 & 0 & 0 & 0 & 0\\
\mathcal{T}_{8,1} & 0 & 0 & 0 & 0 & 0 & 0 & k^r_1 & 0 & 0 & 0 & 0 & 0\\
0 & \mathcal{T}_{9,2} & 0 & 0 & 0 & 0 & 0 & 0 & k^r_1 & 0 & 0 & 0 & 0\\
0 & 0 & \mathcal{T}_{10,3} & \mathcal{T}_{10,4} &0 & 0 & 0 & 0 & 0 & k^r_1 & 0 & 0 & 0\\
\mathcal{T}_{11,1} & 0 & 0 & 0 & 0 & 0 & 0 & 0 & 0 & 0 & k^r_1 & 0 & 0\\
0 & \mathcal{T}_{12,2} & 0 & 0 & 0 & 0 & 0 & 0 & 0 & 0 & 0 & k^r_1 & 0\\
0 & 0 & \mathcal{T}_{13,3} & \mathcal{T}_{13,4} &0 & 0 & 0 & 0 & 0 & 0 & 0 & 0 & k^r_1\\
\end{smallmatrix}
\right],
\end{equation}
where
$$
\mathcal{T}_{1,3}=\mathcal{T}_{1,4}=\frac{c^2}{\rho^r\langle\partial_{\rm tan}\varPhi^r\rangle}\{i\eta[(\partial_2\varPhi^{r})^2+1]-i\tilde{\eta}\partial_1\varPhi^r\partial_2\varPhi^r\},
$$
$$\mathcal{T}_{2,3}=\mathcal{T}_{2,4}=\frac{c^2}{\rho^r\langle\partial_{\rm tan}\varPhi^r\rangle}\{-i\eta\partial_1\varPhi^r\partial_2\varPhi^r+i\tilde{\eta}[(\partial_1\varPhi^{r})^2+1]\},
$$
$$\mathcal{T}_{1,5}=\mathcal{T}_{2,6}=\mathcal{T}_{5,1}=\mathcal{T}_{6,2}=-i\eta F^r_{11}-i\tilde{\eta}F^r_{21},$$
$$\mathcal{T}_{1,8}=\mathcal{T}_{2,9}=\mathcal{T}_{8,1}=\mathcal{T}_{9,2}=-i\eta F^r_{12}-i\tilde{\eta}F^r_{22},$$
$$\mathcal{T}_{1,11}=\mathcal{T}_{2,12}=\mathcal{T}_{11,1}=\mathcal{T}_{12,2}=-i\eta F^{r}_{13}-i\tilde{\eta} F^r_{23},$$
$$\mathcal{T}_{3,7}=\mathcal{T}_{4,7}=-\frac{\rho^r}{2(c^r)^2\langle\partial_{\rm tan}\varPhi^r\rangle}\mathcal{T}_{1,5},$$
$$\mathcal{T}_{3,10}=\mathcal{T}_{4,10}=-\frac{\rho^r}{2(c^r)^2\langle\partial_{\rm tan}\varPhi^r\rangle}\mathcal{T}_{1,8},$$
$$\mathcal{T}_{3,13}=\mathcal{T}_{4,13}=-\frac{\rho^r}{2(c^r)^2\langle\partial_{\rm tan}\varPhi^r\rangle}\mathcal{T}_{1,11},$$
$$\mathcal{T}_{7,3}=-\mathcal{T}_{7,4}=\frac{c^r}{\rho^r}\mathcal{T}_{1,5},\quad\mathcal{T}_{10,3}=-\mathcal{T}_{10,4}=\frac{c^r}{\rho^r}\mathcal{T}_{1,8},\quad\mathcal{T}_{13,3}=-\mathcal{T}_{13,4}=\frac{c^r}{\rho^r}\mathcal{T}_{1,11},$$
$$\mathcal{T}_{3,1}=-\mathcal{T}_{4,1}=\frac{\rho^r\partial_3\varPhi^ri\eta}{2c^r\langle\partial_{\rm tan}\varPhi^r\rangle^2},\quad\mathcal{T}_{3,2}=-\mathcal{T}_{4,2}=\frac{\rho^r \partial_3\varPhi^ri\tilde{\eta}}{2c^r\langle\partial_{\rm tan}\varPhi^r\rangle^2},$$
$$\mathcal{T}_{3,3}=\frac{\partial_3\varPhi^r}{c^r\langle\partial_{\rm tan}\varPhi^r\rangle}\tau+i\eta \partial_3\varPhi^r \left( \frac{-\partial_1\varPhi^r}{\langle\partial_{\rm tan}\varPhi^r\rangle^2}+\frac{v^r_1}{c^r\langle\partial_{\rm tan}\varPhi^r\rangle} \right)+i\tilde{\eta} \partial_3\varPhi^r \left( \frac{-\partial_2\varPhi^r}{\langle\partial_{\rm tan}\varPhi^r\rangle^2}+\frac{v^r_2}{c^r\langle\partial_{\rm tan}\varPhi^r\rangle} \right),$$
$$\mathcal{T}_{4,4}=\frac{\partial_3\varPhi^r}{c^r\langle\partial_{\rm tan}\varPhi^r\rangle}\tau+i\eta \partial_3\varPhi^r \left( \frac{-\partial_1\varPhi^r}{\langle\partial_{\rm tan}\varPhi^r\rangle^2}-\frac{v^r_1}{c^r\langle\partial_{\rm tan}\varPhi^r\rangle} \right) + i\tilde{\eta} \partial_3\varPhi^r \left( \frac{-\partial_2\varPhi^r}{\langle\partial_{\rm tan}\varPhi^r\rangle^2}-\frac{v^r_2}{c^r\langle\partial_{\rm tan}\varPhi^r\rangle} \right).$$
Similar equations also hold for $W^-.$ Similar to the constant coefficient case, if $(k^r_1)^7((k^r_1)^2+k^r_2)\neq0,$ we can solve
$W_1, W_2, W_5,\cdots,W_{13}$ by $W_3, W_{4},$
Then, using differential equations \eqref{para2}, we can obtain the differential equations only involve $W_3$ and $W_4,$
\begin{equation}\nonumber
\partial_3\left[\begin{array}{c}
W_3\\
W_4
\end{array}
\right]=\mathbb{A}^r
\left[\begin{array}{c}
W_3\\
W_4
\end{array}
\right],
\end{equation}
where
\begin{equation*}
\mathbb{A}^r:=\left[\begin{array}{cc}
\mu^r & -m^r\\
m^r & -\mu^r\\
\end{array}
\right]+i\eta\frac{\partial_1\varPhi^r\partial_3\varPhi^r}{\langle\partial_{\rm tan}\varPhi^r\rangle^2}\left[\begin{array}{cc}
1 & 0 \\
0 & 1\\
\end{array}
\right]
+i\tilde{\eta}\frac{\partial_2\varPhi^r\partial_3\varPhi^r}{\langle\partial_{\rm tan}\varPhi^r\rangle^2}\left[\begin{array}{cc}
1 & 0 \\
0 & 1\\
\end{array}
\right],
\end{equation*}
\begin{equation*}
\begin{split}
\mu^r=-\frac{\partial_3\varPhi^rk^r_1}{c^r\langle\partial_{\rm tan}\varPhi^r\rangle}-\frac{\partial_3\varPhi^rk^r_2}{2\langle\partial_{\rm tan}\varPhi^r\rangle c^rk^r_1}-\frac{\partial_3\varPhi^rc^rk^r_1[(\eta\partial_2\varPhi^r-\tilde{\eta}\partial_1\varPhi^r)^2+\eta^2+\tilde{\eta}^2]}{2\langle\partial_{\rm tan}\varPhi^r\rangle^3[(k^r_1)^2+k^r_2]},\\
m^r=-\frac{\partial_3\varPhi^rk^r_2}{2\langle\partial_{\rm tan}\varPhi^r\rangle c^rk^r_1}+\frac{\partial_3\varPhi^rc^rk^r_1[(\eta\partial_2\varPhi^r-\tilde{\eta}\partial_1\varPhi^r)^2+\eta^2+\tilde{\eta}^2]}{2\langle\partial_{\rm tan}\varPhi^r\rangle^3[(k^r_1)^2+k^r_2]}.\\
\end{split}
\end{equation*}
Similar arguments hold for $W^-.$ Then the points in frequency space at which the system cannot be reduced to the non-characteristic form are the points:
\begin{equation}\label{pole}
(k^{r,l}_1)^7[(k^{r,l}_1)^2+k^{r,l}_2]=0,
\end{equation}
where $k^{r,l}_1$ and $k^{r,l}_2$ are defined in \eqref{def k1} and \eqref{def k2} respectively. These points are exactly the poles of the system \eqref{para2} and we denote
\begin{equation*}
\Upsilon_p := \left\{(t,x_1,x_2,x_3,\tau,\eta,\tilde{\eta})\in \R^4_+\times \Sigma: (k^{r,l}_1)^7[(k^{r,l}_1)^2+k^{r,l}_2]=0 \right\}.
\end{equation*}

\subsubsection{Roots of the Lopatinski$\breve{\mathrm{i}}$ determinant}

Now, we derive the Lopatinski$\breve{\mathrm{i}}$ determinant. We write the eigenvalue of $\mathbb{A}^r$ with negative real part by $\omega^r+i\frac{\partial_1\varPhi^r\partial_3\varPhi^r}{\langle\partial_{\rm tan}\varPhi^r\rangle^2}\eta+i\frac{\partial_2\varPhi^r\partial_3\varPhi^r}{\langle\partial_{\rm tan}\varPhi^r\rangle^2}\tilde{\eta},$ which satisfies
\begin{equation*}
\begin{split}
(\omega^r)^2 & = (\mu^r)^2-(m^r)^2 \\
& =\frac{(\partial_3\varPhi^r)^2}{(c^r)^2\langle\partial_{\rm tan}\varPhi^r\rangle^4} \left\{ \langle\partial_{\rm tan}\varPhi^r\rangle^2 \left( (k^r_1)^2+k^r_2 \right) + (c^r)^2 \left[ (\eta\partial_2\varPhi^r-\tilde{\eta}\partial_1\varPhi^r)^2+\eta^2+\tilde{\eta}^2 \right] \right\}.
\end{split}
\end{equation*}
The corresponding eigenvector is
\begin{equation*}
E^r=\left[\begin{array}{c}
-\alpha^r(\mu^r+\omega^r)\\
-\alpha^rm^r
\end{array}
\right],
\end{equation*}
where $$\alpha^r=k^r_1[(k^r_1)^2+k^r_2].$$

The case is similar for $W^-.$ Denote the eigenvalue of $\mathbb{A}^l$ with negative real part by $\omega^l+i\frac{\partial_1\varPhi^l\partial_3\varPhi^l}{\langle\partial_{\rm tan}\varPhi^l\rangle^2}\eta+i\frac{\partial_2\varPhi^l\partial_3\varPhi^l}{\langle\partial_{\rm tan}\varPhi^l\rangle^2}\tilde{\eta},$
which satisfies
\begin{equation*}
\begin{split}
(\omega^l)^2 & = (\mu^l)^2-(m^l)^2 \\
& = \frac{(\partial_3\varPhi^l)^2}{(c^l)^2\langle\partial_{\rm tan}\varPhi^l\rangle^4} \left\{ \langle\partial_{\rm tan}\varPhi^l\rangle^2 \left( (k^l_1)^2+k^l_2 \right) +(c^l)^2 \left[ (\eta\partial_2\varPhi^l-\tilde{\eta}\partial_1\varPhi^l)^2+\eta^2+\tilde{\eta}^2 \right] \right\}.
\end{split}
\end{equation*}
The corresponding eigenvector is
\begin{equation}\nonumber
E^l=\left[\begin{array}{c}
-\alpha^l(\mu^l+\omega^l)\\
-\alpha^lm^l
\end{array}
\right],
\end{equation}
where $$\alpha^l=k^l_1[(k^l_1)^2+k^l_2].$$
The above eigenvalues are well-defined and smooth on $(\R^4_+\times \Sigma)\setminus \Upsilon_p$, while the eigenvectors are well-defined and smooth on $\R^4_+\times \Sigma$, similar to the constant coefficient case. Hence, the Lopatinski$\breve{\mathrm{i}}$ determinant given below is well-defined for all points in $\R^4_+\times\Sigma$,
\begin{equation}\label{lopatinskii2}
\begin{split}
\text{det} \left(\beta\left[\begin{array}{cc}
E^r & \mathbf{0}\\
\mathbf{0} & E^l
\end{array}
\right]\right)\Big|_{x_3=0}
= & \frac{c^4k^2}{\rho}k^r_1k^l_1\cdot \left[ \frac{k^4}{\iota^r_3\iota^l_3}\omega^r\omega^l+(\eta \iota_2-\tilde{\eta}\iota_1)^2+\eta^2+\tilde{\eta}^2 \right] \left( \frac{\omega^r}{\iota^r_3}-\frac{\omega^l}{\iota^l_3} \right) \\
&\cdot \left[ \frac{\iota^r_3}{kc}((k^r_1)^2+k^r_2)-k^r_1\omega^r \right] \cdot \left[ \frac{\iota^l_3}{kc}\left( (k^l_1)^2+k^l_2 \right)+k^l_1\omega^l \right].
\end{split}
\end{equation}
It is homogeneous of degree 0 with respect to $(\tau,\eta,\tilde{\eta}),$ where
$$
\iota_1=\partial_1\psi=\partial_1\varPhi^{r,l}|_{x_3=0}, \quad \iota_2=\partial_2\psi=\partial_2\varPhi^{r,l}|_{x_3=0}, \quad \iota^{r,l}_3=\partial_3\varPhi^{r,l}|_{x_3=0}, \quad c=c^r|_{x_3=0}=c^l|_{x_3=0}.
$$
On $\R^4_+\times\Sigma$ and for $K$ sufficiently small (where $K$ is introduced in \eqref{pp}),  $\frac{c^4k^2}{\rho}$ and the last two factors in \eqref{lopatinskii2} do not vanish. Hence the possible zeros of the Lopatinski$\breve{\mathrm{i}}$ determinant only come from the remaining factors; in particular, the factors $k^{r}_1$ and $k^{l}_1$ yield the roots $\tau=-iv^{r,l}_1\eta-iv^{r,l}_2\tilde{\eta}$. Therefore, it remains to discuss the zeros of the third and fourth factors.
All the coefficients in the factors of the Lopatinski$\breve{\mathrm{i}}$ determinant are continuous with respect to the background state $U|_{x_3=0}:=(U^r|_{x_3=0},U^l|_{x_3=0})$ and $\varPhi|_{x_3=0}:=(\varPhi^r|_{x_3=0},\varPhi^l|_{x_3=0}).$
These factors reduce to the corresponding factors in the constant coefficient case, if the perturbation in \eqref{background2} is zero. Assuming that $K$ in \eqref{pp} is small enough and using a continuity argument, we obtain that the number of the roots in the third and fourth factors in \eqref{lopatinskii2} is the same as the number of the roots in the corresponding factors of the constant coefficient case. Hence, there are two roots in the third factor and we denote $\tau=iV_1\sqrt{\eta^2+\tilde{\eta}^2}$ and $\tau=iV_2\sqrt{\eta^2+\tilde{\eta}^2}.$
 \begin{remark}\nonumber
 The term on the right-hand side of $\omega^r$, $(\eta\partial_2\varPhi^r-\tilde{\eta}\partial_1\varPhi^r)^2$, is small under the small perturbation assumption. The same argument holds for $\omega^l.$
 \end{remark}

 It is easy to see that $V_1(U|_{x_3=0},\nabla\varPhi|_{x_3=0})$ and $V_2(U|_{x_3=0},\nabla\varPhi|_{x_3=0})$ are real and depend continuously on the background states $U|_{x_3=0}$ and $\nabla\varPhi|_{x_3=0}.$
The roots of the Lopatinski$\breve{\mathrm{i}}$ determinants can be written by the following set according to the information provided by the boundary:
\begin{equation}\label{root}
\Upsilon^0_r:= \left\{ (t,x_1,x_2,\tau,\eta,\tilde{\eta})\in \R^3\times\Sigma: \Re\tau=0 \text{ and } \sigma=0 \right\},
\end{equation}
where
\begin{equation*}
\sigma := \frac{(\delta+v^r_1|_{x_3=0}\eta+v^r_2|_{x_3=0}\tilde{\eta})(\delta+v^l_1|_{x_3=0}\eta+v^l_2|_{x_3=0}\tilde{\eta})(\delta-V_1\sqrt{\eta^2+\tilde{\eta}^2})(\delta-V_2\sqrt{\eta^2+\tilde{\eta}^2})}{(\delta^2+\eta^2+\tilde{\eta}^2)^{\frac{3}{2}}}
\end{equation*}
on $\Sigma.$ We can extend the set $\Upsilon^0_r$ and $\sigma$ defined by the data of the boundary into the interior of the domain $x_3>0.$ The coefficients in $\sigma$ for $x_3>0$ can be defined by continuity of $V_1$ and $V_2$ on the background state $U$ and $\nabla\varPhi.$
Denote the extended set
$$
\Upsilon_r:= \left\{(t,x_1,x_2,x_3,\tau,\eta,\tilde{\eta})\in \R^4_+\times\Sigma: \Re\tau=0 \text{ and } \sigma=0 \right\}.
$$
Similar to the poles, the roots of $\sigma$ can be viewed as the strip in the frequency space $\Sigma$ parameterized by $x_3.$ It originates from the boundary $x_3=0$ and propagates into the interior domain $x_3>0.$
Moreover, we need that the roots of the eigenvalues $\omega^r=0$ and $\omega^l=0$ do not coincide with the poles and the roots of the Lopatinski$\breve{\mathrm{i}}$ determinant.

For simplicity, we write
$$
\cos \theta=\frac{\eta}{\sqrt{\eta^2+\tilde{\eta}^2}},\quad \sin \theta=\frac{\tilde{\eta}}{\sqrt{\eta^2+\tilde{\eta}^2}}, \quad \text{for } (\eta,\tilde{\eta})\neq (0,0),
$$
and define
\begin{equation}\label{eq g}
g_{r,l}(\theta) = \left( \cos \theta F^{r,l}_{11}+\sin\theta  F^{r,l}_{21} \right)^2 + \left( \cos\theta F^{r,l}_{12}+\sin \theta F^{r,l}_{22} \right)^2 + \left( \cos \theta F^{r,l}_{13}+\sin\theta F^{r,l}_{23} \right)^2.
\end{equation}

Now, we consider the following frequency sets:\\
\begin{enumerate}
\item[\rm (1)] $\Upsilon^{(1)}_p = \Upsilon^{(1)}_r := \left\{(t,x_1,x_2,x_3,\tau,\eta,\tilde{\eta}):\tau=-iv^{r,l}_1\eta-iv^{r,l}_2\tilde{\eta} \right\},$\\
\item[\rm (2)] $\Upsilon^{(2)}_r := \left\{(t,x_1,x_2,x_3,\tau,\eta,\tilde{\eta}): \tau=iV_1\sqrt{\eta^2+\tilde{\eta}^2} \text{ or } \tau=iV_2\sqrt{\eta^2+\tilde{\eta}^2} \right\},$\\
\item[\rm (3)] $\Upsilon^{(2)}_p := \left\{(t,x_1,x_2,x_3,\tau,\eta,\tilde{\eta}):\tau=-i\Big(v^{r,l}_1\eta+v^{r,l}_2\tilde{\eta}\pm\sqrt{(\eta^2+\tilde{\eta}^2) g_{r,l}(\theta)} \Big) \right\}$.
\end{enumerate}
Here, $(1)$ denotes the roots of the first factor in $\Upsilon_p$ and the roots of the first two factors of $\sigma$ in $\Upsilon_r$; $(2)$ represents the last two factors of $\sigma$ in $\Upsilon_r$; and $(3)$ represents the roots of second factors in $\Upsilon_p.$

Note that $\Upsilon_p=\Upsilon^{(1)}_p\cup \Upsilon^{(2)}_p$ and $\Upsilon_r=\Upsilon^{(1)}_r\cup\Upsilon^{(2)}_r.$
As mentioned before, we write
$$
\Upsilon_{\omega} := \left\{(t,x_1,x_2,x_3,\tau,\eta,\tilde{\eta}): \omega^{r,l}=0 \right\},
$$
which clearly does not intersect with $\Upsilon_p$ and $\Upsilon_r$, {\it i.e.,} $\Upsilon_{\omega}\cap(\Upsilon_p\cup\Upsilon_r)=\emptyset.$

Under the assumption \eqref{stability4}, $\bar{v}^2<\mathrm{G}(\mathrm{F}_1,\mathrm{F}_2),$ where
\begin{align}\label{G}
\mathrm{G}(\mathrm{F}_1,\mathrm{F}_2) = \frac{1}{4}\inf_{\cos \theta\neq0}\frac{1}{\cos^2\theta}\left( \sqrt{g(\theta)+c^2}-\sqrt{g(\theta)} \right)^2,
\end{align}
and $g(\theta)$ is defined analogously to \eqref{eq g}, we obtain that $$\Upsilon^{(2)}_p\cap \Upsilon_{\omega}=\emptyset.$$
Notice that
\begin{equation*}
\frac{g(\theta)}{\cos^2\theta}=|\mathrm{F}_1|^2+|\mathrm{F}_2|^2\tan^2\theta+2(\mathrm{F}_1\cdot \mathrm{F}_2)\tan \theta.
\end{equation*}
We can introduce
$$
t:=\frac{|\mathrm{F}_2|}{|\mathrm{F}_1|},\quad \alpha:=\text{the angle between $\mathrm{F}_1$ and $\mathrm{F}_2$}.
$$
Then, one has
\begin{align*}
g(\theta)& = \cos^2\theta \left[ |\mathrm{F}_1|^2+|\mathrm{F}_2|^2\tan^2\theta+2 (\mathrm{F}_1\cdot\mathrm{F}_2)\tan\theta \right] \\
&=\cos^2\theta |\mathrm{F}_1|^2 \left( 1+t^2\tan^2\theta+2t\cos \alpha\tan \theta \right) \\
&=\cos^2\theta |\mathrm{F}_1|^2 \left[ \sin^2\alpha+(\cos\alpha+t\tan \theta)^2 \right] \\
&\leq|\mathrm{F}_1|^2 \left[ \sin^2\alpha+(\cos\theta\cos\alpha+t\sin \theta)^2 \right] \\
&\leq|\mathrm{F}_1|^2(1+t^2)\leq |\mathrm{F}_1|^2+|\mathrm{F}_2|^2.
\end{align*}
From this it follows that
\begin{align*}
\mathrm{G}(\mathrm{F}_1,\mathrm{F}_2)&=\frac{1}{4}\inf_{\cos \theta\neq 0}
\frac{g(\theta)}{\cos^2\theta}\frac{c^4}{g(\theta) \left(\sqrt{g(\theta)+c^2}+\sqrt{g(\theta)} \right)^2} \\
&\geq\frac{1}{4}\inf_{\cos\theta\neq0}\frac{g(\theta)}{\cos^2\theta}\frac{c^4}{(|\mathrm{F}_1|^2+|\mathrm{F}_2|^2) \left( \sqrt{|\mathrm{F}_1|^2+|\mathrm{F}_2|^2+c^2}+\sqrt{|\mathrm{F}_1|^2+|\mathrm{F}_2|^2} \right)^2} \\
&\geq\frac{|\pe_{\mathrm{F}_2}(\mathrm{F}_1)|^2}{4}\frac{c^4}{(|\mathrm{F}_1|^2+|\mathrm{F}_2|^2) \left( \sqrt{|\mathrm{F}_1|^2+|\mathrm{F}_2|^2+c^2}+\sqrt{|\mathrm{F}_1|^2+|\mathrm{F}_2|^2} \right)^2}.
\end{align*}
Moreover, the condition
$$
\bar{v}^2<\frac{|\pe_{\mathrm{F}_2}(\mathrm{F}_1)|^2}{4}
$$
guarantees that
$$
\Upsilon^{(1)}_p\cap\Upsilon^{(2)}_p=\Upsilon^{(1)}_r\cap \Upsilon^{(2)}_p=\Upsilon^{(1)}_p\cap \Upsilon_{\omega}=\Upsilon^{(1)}_r\cap\Upsilon_{\omega}=\Upsilon^{(1)}_p\cap\Upsilon^{(2)}_r=\Upsilon^{(1)}_r\cap\Upsilon^{(2)}_r=\emptyset.
$$
Note that $$\Upsilon^{(2)}_p\cap\Upsilon^{(2)}_r=\Upsilon^{(2)}_r\cap\Upsilon_{\omega}=\emptyset$$ holds with no restriction on the background solutions. Hence, except for the special case $\Upsilon^{(1)}_p=\Upsilon^{(1)}_r$ in which there is always interaction between the poles (roots), any two of the remaining strips in $\Upsilon_p,\Upsilon_r,\Upsilon_{\omega}$ do not intersect with each other in the whole domain $\R^4\times\Sigma,$ unless they are identical.

\subsection{Estimates in Each Case}\label{subsec est on each}

Recall from Section \ref{Paralinearization} that on $\supp\chi_a$ the auxiliary $6\times 6$ matrix $B$ satisfies $\det B=b^5 b_1$, and on $\supp\chi_c$ and $\supp\chi_e$ the corresponding matrix satisfies the analogous formulas obtained from the same construction on those regions. Hence whenever we use estimate \eqref{Blower}, it should be understood after applying a cut-off $\chi_B\in\Gamma^0_k$ supported on a microlocal region where the corresponding matrix is invertible; on such a support one has $B^\ast B\ge c_B I$ for some $c_B>0$. On the complementary region given by $1-\chi_B$ we do not need to invert $B$; the estimates in Sections \ref{subsec est on each}--\ref{subsec proof} are obtained directly from Kreiss symmetrizers and the full boundary operator. In this section, we derive the estimates for each case and obtain the desired estimate for the paralinearized system. The relation among $\tau,\eta,\tilde{\eta}$ corresponds to a strip on $\Sigma$ with fixed $(\tau,\eta,\tilde{\eta}).$
Now, we focus on the situation where these strips do not intersect with each other and construct neighbourhoods around them, except for Case 1, in which the poles and roots always intersect.

After possibly shrinking these neighborhoods, they are pairwise disjoint and contain no point in $\Upsilon_{\omega}$.

Denote, on $\R^4\times\Sigma,$
\begin{equation*}
\begin{split}
 & \mathcal{V}^r_{p_1} := \text{ the open neighborhood around } \tau=-iv^r_1\eta-iv^r_2\tilde{\eta}, \\
 & \mathcal{V}^l_{p_1} := \text{ the open neighborhood around } \tau=-iv^l_1\eta-iv^l_2\tilde{\eta}, \\
 & \mathcal{V}^1_{p_2} := \text{ the open neighborhood around } \tau=-i\left(v^r_1\eta+v^r_2\tilde{\eta}+\sqrt{(\eta^2+\tilde{\eta}^2)g_{r}(\theta)}\right), \\
 & \mathcal{V}^2_{p_2} := \text{ the open neighborhood around } \tau=-i\left(v^r_1\eta+v^r_2\tilde{\eta}-\sqrt{(\eta^2+\tilde{\eta}^2)g_{r}(\theta)}\right), \\
 & \mathcal{V}^3_{p_2} := \text{ the open neighborhood around } \tau=-i\left(v^l_1\eta+v^l_2\tilde{\eta}+\sqrt{(\eta^2+\tilde{\eta}^2)g_{l}(\theta)}\right), \\
 & \mathcal{V}^4_{p_2} := \text{ the open neighborhood around } \tau=-i\left(v^l_1\eta+v^l_2\tilde{\eta}-\sqrt{(\eta^2+\tilde{\eta}^2)g_{l}(\theta)}\right), \\
 & \mathcal{V}^1_r := \text{ the open neighborhood around } \tau=iV_1\sqrt{\eta^2+\tilde{\eta}^2}, \\
 & \mathcal{V}^2_r := \text{ the open neighborhood around } \tau=iV_2\sqrt{\eta^2+\tilde{\eta}^2}.
\end{split}
\end{equation*}
\begin{remark}\label{nonintersection}
Due to the stability condition imposed on the background solutions for non-parallel elastic deformation gradients, certain neighborhoods remain disjoint. For example, the neighborhood of $\tau=-iv^r_1\eta-iv^r_2\tilde{\eta}$, denoted as $\mathcal{V}^r_{p_1}$, cannot intersect with the neighborhood of $\tau=-i(v^r_1\eta+v^r_2\tilde{\eta}+\sqrt{(\eta^2+\tilde{\eta}^2)g_{r}(\theta)})$, denoted as $\mathcal{V}^1_{p_2}$. This separation indicates the stabilizing effect of elasticity, which is consistent with the linear analysis of constant coefficients, see \cite{RChen2021}.
\end{remark}

\subsection{Case 1: Points in $\Upsilon^{(1)}_{p}=\Upsilon^{(1)}_r$}\label{subsec case1}
We consider the kind of frequencies that are both poles and roots of the Lopatinski$\breve{\mathrm{i}}$ determinant. Consider $\mathcal{V}^r_{p_1}$ as an example, since the other cases can be discussed similarly.

Different from 2D case, $\mathcal{V}^r_{p_1}$ not only contains the poles of the equations $W^+$, but also contains the poles for $W^-$ in \eqref{para2}. Hence we derive the estimates for $W^+$. The estimates for $W^-$ will follow the same way.
Introducing the smooth cut-off function $\chi_{p_1}$ whose range is $[0,1].$ On $\R^4\times\Sigma,$ the support of $\chi_{p_1}$ is contained in $\mathcal{V}^r_{p_1}$ and equals $1$ on a smaller neighborhood of the strip satisfying $\tau=-iv^r_1\eta-iv^r_2\tilde{\eta}.$ We can extend $\chi_{p_1}$ by homogeneity of degree 0 with respect to $(\tau, \eta,\tilde{\eta})$ into the whole domain $\R^4_+\times\Pi$. We know that $\chi_{p_1}\in \Gamma^0_k$ for any integer $k.$ Define
$$
W^+_{p_1}:=T^{\gamma}_{\chi_{p_1}}W^+.
$$
From \eqref{para2}, we have
$$
I_2\partial_3W^+_{p_1}=I_2T^{\gamma}_{\partial_3\chi_{p_1}}W^++T^{\gamma}_{\chi_{p_1}}F^+-T^{\gamma}_{\chi_{p_1}}T^{\gamma}_{\tau A^r_0+i\eta A^r_1+i\tilde{\eta}A^r_2+A^r_0C^r} W^+.
$$
 Then
 \begin{equation}\label{para3}
 T^{\gamma}_{\mathcal{A}^r}W^+_{p_1}+T^{\gamma}_{A^r_0C^r}W^+_{p_1}+T^{\gamma}_rW^++I_2\partial_3W^+_{p_1}=T^{\gamma}_{\chi_{p_1}}F^++\mathcal{R}_{-1}W^+,
 \end{equation}
 where $\mathcal{A}^r:=\tau A^r_0+i\eta A^r_1+i\tilde{\eta}A^r_2$ and $r\in\Gamma^0_1$ whose support is where $\chi_{p_1}\in(0,1)$. 
 Consider two cut-off functions $\chi_1$ and $\chi_2$ in $\Gamma^0_1,$ such that both of the supports are in
 \begin{equation*}
 \begin{split}
\mathcal{V}^r_{p_1}\cdot \R_+:= & \Big\{(t,x_1,x_2,x_3,\tau,\eta,\tilde{\eta})\in\Omega\times \Pi:\\
& \quad \left. \left( t,x_1,x_2,x_3,\frac{\tau}{\sqrt{|\tau|^2+\eta^2+\tilde{\eta}^2}},\frac{\eta}{\sqrt{|\tau|^2+\eta^2+\tilde{\eta}^2}},\frac{\tilde{\eta}}{\sqrt{|\tau|^2+\eta^2+\tilde{\eta}^2}} \right) \in \mathcal{V}^r_{p_1} \right\},
\end{split}
\end{equation*}
where $\chi_1=1$ on $\text{supp}\chi_{p_1}$ and $\chi_2=1$ on $\text{supp}\chi_1.$ Now we multiply \eqref{para3} by $\chi_2$ and obtain that
\begin{equation}\label{para4}
T^{\gamma}_{\chi_2\mathcal{A}^r}W^+_{p_1}+ T^{\gamma}_{\chi_2A^r_0C^r}W^+_{p_1}+T^{\gamma}_rW^++I_2\partial_3W^+_{p_1}=\mathcal{R}_0F^++\mathcal{R}_{-1}W^+.
\end{equation}
Here the support of $\chi_2\mathcal{A}^r$ is in $\text{supp}\chi_2,$ which is the subset of  $\mathcal{V}^r_{p_1}\cdot \R_+.$ Now we can uppertriangularize the first symbol $\chi_2\mathcal{A}^r$ on $\text{supp}\chi_2.$ Define the transformation matrix $Q^r_0$ on $\mathcal{V}^r_{p_1}:$
\begin{equation}\label{def:Q0r.p1}
Q^r_0 = \left[\begin{smallmatrix}
1 & 0 & \hat{W}^r_1 & 0 & 0 & 0 & 0 & 0 & 0 & 0 & 0 & 0 & 0\\
0 & 1 & \hat{W}^r_2 & 0 & 0 & 0 & 0 & 0 & 0 & 0 & 0 & 0 & 0\\
0 & 0 & -\alpha^r(\mu^r+\omega^r) & U^r_3 &  0 & 0 & 0 & 0 & 0 & 0 & 0 & 0& 0\\
0 & 0 & -\alpha^rm^r & U^r_4 &  0 & 0 & 0 & 0 & 0 & 0 & 0 & 0& 0\\
0 & 0 & \hat{W}^r_5 & 0 &  1 & 0 & 0 & 0 & 0 & 0 & 0 & 0& 0\\
0 & 0 & \hat{W}^r_6 & 0 &  0 & 1 & 0 & 0 & 0 & 0 & 0 & 0& 0\\
0 & 0 & \hat{W}^r_7 & 0 &  0 & 0 & 1 & 0 & 0 & 0 & 0 & 0& 0\\
0 & 0 & \hat{W}^r_8 & 0 &  0 & 0 & 0 & 1 & 0 & 0 & 0 & 0& 0\\
0 & 0 & \hat{W}^r_9 & 0 &  0 & 0 & 0 & 0 & 1 & 0 & 0 & 0 & 0\\
0 & 0 & \hat{W}^r_{10} & 0 &  0 & 0 & 0 & 0& 0 & 1 & 0 & 0 & 0\\
0 & 0 & \hat{W}^r_{11} & 0 &  0 & 0 & 0 & 0 & 0 & 0 & 1 & 0 & 0\\
0 & 0 & \hat{W}^r_{12} & 0 &  0 & 0 & 0 & 0 & 0 & 0 & 0 & 1 & 0\\
0 & 0 & \hat{W}^r_{13} & 0 &  0 & 0 & 0 & 0 & 0 & 0 & 0 & 0 & 1\\
\end{smallmatrix}
\right].
\end{equation}

Thus $Q^r_0$ is homogeneous of degree $0$ with respect to $(\tau,\eta,\tilde{\eta}).$
The entries at the third column in the third and fourth rows are from the eigenvector $E^r$, while $\hat{W}^r_i,i=1,2,5,6,\cdots,13$, are chosen to make the third column of $\mathcal{A}^rQ^r_0$ zero except for the third and fourth rows.
 $$
 \mathcal{T} \big( \hat{W}^r_1,\hat{W}^r_2,-\alpha^r(\mu^r+\omega^r),-\alpha^rm^r,\hat{W}^{r}_5,\hat{W}^{r}_6,\hat{W}^{r}_7,\hat{W}^{r}_8,\hat{W}^{r}_9,\hat{W}^{r}_{10},\hat{W}^{r}_{11},\hat{W}^{r}_{12},\hat{W}^{r}_{13} \big)^{\top}={\mathbf 0}.
 $$
 It is noted that $\hat{W}^r_i,i=1,2,5,\cdots,13$, can be solved at all points in $\R^4_+\times\Pi.$ In the following, we introduce $\chi_1Q^r_0$ to exclude the frequencies at which $\omega^r$ degenerates. To ensure the invertibility of $Q^r_0$, we can take $U^r_3=1,U^r_4=0$ for simplicity (a more precise argument would require microlocalization). So $\chi_1Q^r_0\in \Gamma^0_2.$
 In order to uppertriangularize the first order operator $\mathcal{A}^r$ in $\mathcal{V}^r_{p_1}\cdot \R_+,$ we need to construct $R^r_0$ in $\mathcal{V}^r_{p_1}:$
\begin{equation}\label{def:R0r.p1}
 R^r_0=\left[
 \begin{smallmatrix}
 1 & 0 & 0 & 0 & 0 & 0 & 0 & 0 & 0 & 0 & 0 & 0 & 0\\
0 & 1 & 0 & 0 & 0 & 0 & 0 & 0 & 0 & 0 & 0 & 0 & 0\\
0 & 0 & 0 & -\frac{1}{\xi}& 0 & 0 & 0 & 0 & 0 & 0 & 0 & 0 & 0\\
\bar{W}^r_1 & \bar{W}^r_2 & \frac{\alpha^rm^r}{\xi} & -\frac{\alpha^r(\mu^r+\omega^r)}{\xi} & \bar{W}^r_5 & \bar{W}^r_6 & \bar{W}^r_7 & \bar{W}^r_8 & \bar{W}^r_9 & \bar{W}^r_{10} & \bar{W}^r_{11} & \bar{W}^r_{12} & \bar{W}^r_{13} \\
0 & 0 & 0 & 0 & 1 & 0 & 0 & 0 & 0 & 0 & 0 & 0 & 0\\
0 & 0 & 0 & 0 & 0 & 1 & 0 & 0 & 0 & 0 & 0 & 0 & 0\\
0 & 0 & 0 & 0 & 0 & 0 & 1 & 0 & 0 & 0 & 0 & 0 & 0\\
0 & 0 & 0 & 0 & 0 & 0 & 0 & 1 & 0 & 0 & 0 & 0 & 0\\
0 & 0 & 0 & 0 & 0 & 0 & 0 & 0 & 1 & 0 & 0 & 0 & 0\\
0 & 0 & 0 & 0 & 0 & 0 & 0 & 0 & 0 & 1 & 0 & 0 & 0\\
0 & 0 & 0 & 0 & 0 & 0 & 0 & 0 & 0 & 0 & 1 & 0 & 0\\
0 & 0 & 0 & 0 & 0 & 0 & 0 & 0 & 0 & 0 & 0 & 1 & 0\\
0 & 0 & 0 & 0 & 0 & 0 & 0 & 0 & 0 & 0 & 0 & 0 & 1\\
\end{smallmatrix}
\right],
 \end{equation}
where $\xi=\alpha^rm^r$ which equals the determinant of $Q^r_0.$

Since $\Upsilon_{\omega}\cap\mathcal{V}^r_{p_1}=\emptyset$, the eigenvalue $\omega^r$ does not vanish on $\mathcal{V}^r_{p_1}$. Moreover, although $m^r$ and $\mu^r$ contain terms of the form $k^r_2/k^r_1$, the prefactor $\alpha^r=k^r_1((k^r_1)^2+k^r_2)$ cancels the potential $1/k^r_1$ singularity; hence $\xi=\alpha^rm^r$ extends smoothly across $k^r_1=0$. Shrinking $\mathcal{V}^r_{p_1}$ if necessary, we may assume $|\xi|\ge c_0>0$ on $\supp\chi_1$, so that $\chi_1Q^r_0$ is elliptic and $\chi_1{Q^r_0}^{-1}\in\Gamma^0_2$.
 Here $\bar{W}^r_1,\bar{W}^r_2,\bar{W}^r_5,\cdots,\bar{W}^r_{13}$ are defined to be homogeneous of degree 0.
 Then we have
 \begin{equation}\nonumber
 \left[\begin{smallmatrix}
 \bar{W}^r_1\\
  \bar{W}^r_2\\
   \frac{\alpha^rm^r}{\xi}\\
    -\frac{\alpha^r(\mu^r+\omega^r)}{\xi} \\
     \bar{W}^r_5 \\
      \bar{W}^r_6 \\
       \bar{W}^r_7\\
        \bar{W}^r_8 \\
         \bar{W}^r_9 \\
          \bar{W}^r_{10}\\
         \bar{W}^r_{11} \\
          \bar{W}^r_{12} \\
          \bar{W}^r_{13} \end{smallmatrix}\right]^{\top}
          \left[\begin{smallmatrix}
   k^r_1 & 0 & \mathcal{T}_{1,5} & 0 & 0 & \mathcal{T}_{1,8} & 0 & 0 & \mathcal{T}_{1,11} & 0 & 0\\
0 & k^r_1  & 0 & \mathcal{T}_{2,6} & 0 & 0 & \mathcal{T}_{2,9} & 0 & 0 & \mathcal{T}_{2,12} & 0\\
\mathcal{T}_{3,1} & \mathcal{T}_{3,2}  & 0 & 0 & \mathcal{T}_{3,7} & 0 & 0 & \mathcal{T}_{3,10} & 0 & 0 & \mathcal{T}_{3,13}\\
\mathcal{T}_{4,1} & \mathcal{T}_{4,2}  & 0 & 0 & \mathcal{T}_{4,7} & 0 & 0 & \mathcal{T}_{4,10} & 0 & 0 & \mathcal{T}_{4,13}\\
\mathcal{T}_{5,1} & 0 & k^r_1 & 0 & 0 & 0 & 0 & 0 & 0 & 0 & 0\\
0 & \mathcal{T}_{6,2} & 0 & k^r_1 & 0 & 0 & 0 & 0 & 0 & 0 & 0\\
0 & 0 &0 & 0 & k^r_1 & 0 & 0 & 0 & 0 & 0 & 0\\
\mathcal{T}_{8,1} & 0  & 0 & 0 & 0 & k^r_1 & 0 & 0 & 0 & 0 & 0\\
0 & \mathcal{T}_{9,2} & 0 & 0 & 0 & 0 & k^r_1 & 0 & 0 & 0 & 0\\
0 & 0  &0 & 0 & 0 & 0 & 0 & k^r_1 & 0 & 0 & 0\\
\mathcal{T}_{11,1} & 0  & 0 & 0 & 0 & 0 & 0 & 0 & k^r_1 & 0 & 0\\
0 & \mathcal{T}_{12,2} & 0 & 0 & 0 & 0 & 0 & 0 & 0 & k^r_1 & 0\\
0 & 0 &0 & 0 & 0 & 0 & 0 & 0 & 0 & 0 & k^r_1\\
 \end{smallmatrix}
\right]=\mathbf{0}.
 \end{equation}
It follows that $\chi_1R^r_0\in \Gamma^0_2.$ We can finally obtain the first-order symbol for the upper triangularization.
 \begin{equation*}
 \begin{split}
 \tilde{A}^r&:=R^r_0\mathcal{A}^rQ^r_0\\
 &=\left[\begin{smallmatrix}
k^r_1 & 0 & 0  & \Theta_1 & \mathcal{T}_{1,5} & 0 & 0 & \mathcal{T}_{1,8} & 0 & 0 & \mathcal{T}_{1,11} & 0 & 0\\
0 & k^r_1 & 0 & \Theta_1 & 0 & \mathcal{T}_{2,6} & 0 & 0 & \mathcal{T}_{2,9} & 0 & 0 & \mathcal{T}_{2,12} & 0\\
\Theta_1 & \Theta_1 & \tilde{A}^r_{3,3} & 0 & 0 & 0 & \Theta_1 & 0 & 0 & \Theta_1 & 0 & 0 & \Theta_1\\
0 & 0  & 0 & \tilde{A}^r_{4,4} & 0 & 0 & 0 & 0 & 0 & 0 & 0 & 0 & 0\\
\mathcal{T}_{5,1} & 0 & 0 & 0 & k^r_1 & 0 & 0 & 0 & 0 & 0 & 0 & 0 & 0\\
0 & \mathcal{T}_{6,2} & 0 & 0 & 0 & k^r_1 & 0 & 0 & 0 & 0 & 0 & 0 & 0\\
0 & 0 & 0 & \Theta_1 &0 & 0 & k^r_1 & 0 & 0 & 0 & 0 & 0 & 0\\
\mathcal{T}_{8,1} & 0 & 0 & 0 & 0 & 0 & 0 & k^r_1 & 0 & 0 & 0 & 0 & 0\\
0 & \mathcal{T}_{9,2} & 0 & 0 & 0 & 0 & 0 & 0 & k^r_1 & 0 & 0 & 0 & 0\\
0 & 0 & 0 & \Theta_1 &0 & 0 & 0 & 0 & 0 & k^r_1 & 0 & 0 & 0\\
\mathcal{T}_{11,1} & 0 & 0 & 0 & 0 & 0 & 0 & 0 & 0 & 0 & k^r_1 & 0 & 0\\
0 & \mathcal{T}_{12,2} & 0 & 0 & 0 & 0 & 0 & 0 & 0 & 0 & 0 & k^r_1 & 0\\
0 & 0 & 0 & \Theta_1 &0 & 0 & 0 & 0 & 0 & 0 & 0 & 0 & k^r_1\\
\end{smallmatrix}
\right],
 \end{split}
 \end{equation*}
 where $$\tilde{A}^r_{3,3}=-\omega^r-\frac{i\partial_3\varPhi^r\partial_1\varPhi^r\eta}{\langle\partial_{\rm tan}\varPhi^r\rangle^2}-\frac{i\partial_3\varPhi^r\partial_2\varPhi^r\tilde{\eta}}{\langle\partial_{\rm tan}\varPhi^r\rangle^2},$$
  $$\tilde{A}^r_{4,4}=\omega^r-\frac{i\partial_3\varPhi^r\partial_1\varPhi^r\eta}{\langle\partial_{\rm tan}\varPhi^r\rangle^2}-\frac{i\partial_3\varPhi^r\partial_2\varPhi^r\tilde{\eta}}{\langle\partial_{\rm tan}\varPhi^r\rangle^2}.$$
   Here, $\Theta_1\in \Gamma^1_2$ whose exact expression is not important for our analysis.

\medskip

The following Lemma \ref{paralemma}, which plays a key role in paradifferential calculus, can be proved similarly to the approach in \cite{Coulombel2002}.
 \begin{lemma}\label{paralemma}
 With the appropriate choice of $Q^r_{-1}$ and $R^r_{-1}$ in $\Gamma^{-1}_1,$ there is a symbol $D_0=(d_{i,j})_{13\times13}$
 in $\Gamma^0_1$ satisfying $d_{3,4}=d_{4,3}=0,$ such that
 $$R^r_{-1}{R^{r}_0}^{-1}\tilde{A}^r-\tilde{A}^rQ^r_{-1}Q^r_0-(\partial_3 (Q^{r}_0)^{-1}-R^rA^r_0C^r-[R^r,\chi_2A^r]+[\chi_2A^r,Q^r])Q^r_0-D_0$$
is a symbol in $\Gamma^{-1}_1$ on $\chi_2=1;$
moreover, $$R^r_{-1}I_2=I_2Q^r_{-1},$$
where $[\cdot,\cdot]$ is defined as
$$[A,B]:=\frac{1}{i}\left(\frac{\partial A}{\partial\delta}\frac{\partial B}{\partial t}+\frac{\partial A}{\partial\eta}\frac{\partial B}{\partial x_1}+\frac{\partial A}{\partial\tilde{\eta}}\frac{\partial B}{\partial x_2}\right)$$
 for any symbols $A$ and $B.$
 \end{lemma}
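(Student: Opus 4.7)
\medskip

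\noindent\textbf{Proof proposal.} The plan is a standard second-order refinement of the leading-order block-diagonalization. At the principal level we already have $R^r_0\mathcal{A}^rQ^r_0=\tilde{A}^r$, where the $(3,4)$ and $(4,3)$ entries of $\tilde{A}^r$ vanish, so the outgoing/incoming modes are uncoupled to leading order. The task is to construct the subprincipal corrections $R^r_{-1},Q^r_{-1}\in\Gamma^{-1}_1$ so that, after conjugating by $R^r=R^r_0+R^r_{-1}$ and $Q^r=Q^r_0+Q^r_{-1}$, the order-$0$ symbol of the composite operator also has vanishing $(3,4)$ and $(4,3)$ entries; the remaining entries are collected into the symbol $D_0\in\Gamma^0_1$.

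First, I would apply the composition rule of Lemma \ref{lemmapara}(v) to expand
$$T^\gamma_{R^r}\bigl(T^\gamma_{\chi_2\mathcal{A}^r}+T^\gamma_{A^r_0C^r}+I_2\partial_3\bigr)T^\gamma_{Q^r}$$
modulo $\mathcal{R}_{-1}$. The leading symbol is $R^r_0(\chi_2\mathcal{A}^r)Q^r_0=\chi_2\tilde{A}^r$. The subprincipal symbol collects four contributions: (a)~the cross terms $R^r_{-1}\mathcal{A}^rQ^r_0+R^r_0\mathcal{A}^rQ^r_{-1}$ from the unknown correctors; (b)~the zeroth-order piece $R^r_0A^r_0C^rQ^r_0$; (c)~the Poisson-type brackets $[R^r,\chi_2\mathcal{A}^r]Q^r_0+R^r_0[\chi_2\mathcal{A}^r,Q^r]$ arising from the composition formula; (d)~the $x_3$-derivative term $I_2\partial_3Q^r_0=\partial_3Q^{r-1}_0\cdot Q^r_0\cdot\ldots$ produced when moving $\partial_3$ across $T^\gamma_{Q^r_0}$. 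Using the identity $R^r_0\mathcal{A}^r=\tilde{A}^r(Q^r_0)^{-1}$ to rewrite (a), the full order-$0$ symbol takes exactly the form displayed in the statement, with $\Xi:=\partial_3Q^{r-1}_0-R^rA^r_0C^r-[R^r,\chi_2\mathcal{A}^r]+[\chi_2\mathcal{A}^r,Q^r]$ gathering the \emph{known} pieces (b)--(d).

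Second, the decoupling requirement at entries $(3,4)$ and $(4,3)$ reduces to a pair of scalar Sylvester-type equations of the form
$$\tilde{A}^r_{3,3}\,y_{3,4}-y_{3,4}\,\tilde{A}^r_{4,4}=\text{(known)},\qquad \tilde{A}^r_{4,4}\,y_{4,3}-y_{4,3}\,\tilde{A}^r_{3,3}=\text{(known)},$$
where the unknowns $y_{3,4},y_{4,3}$ are the relevant entries of $R^r_{-1}(R^r_0)^{-1}$ and $Q^r_{-1}Q^r_0$. Since $\tilde{A}^r_{3,3}-\tilde{A}^r_{4,4}=-2\omega^r$, solvability hinges on $\omega^r$ staying bounded away from zero on $\mathrm{supp}\,\chi_2\subset \mathcal{V}^r_{p_1}\cdot\mathbb{R}_+$. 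This is precisely the purpose of the microlocalization in Section \ref{microlocalization}: the neighbourhood $\mathcal{V}^r_{p_1}$ was constructed so that $\mathcal{V}^r_{p_1}\cap\Upsilon_\omega=\emptyset$. Hence we may divide by $2\omega^r$ and obtain explicit solutions lying in $\Gamma^{-1}_1$ (the loss of one unit of regularity is due to the single $\partial_x$ appearing in the bracket terms of $\Xi$). All remaining entries of the order-$0$ symbol are simply absorbed into $D_0\in\Gamma^0_1$, which by construction satisfies $d_{3,4}=d_{4,3}=0$.

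Third, one must reconcile the construction with the structural constraint $R^r_{-1}I_2=I_2Q^r_{-1}$. Because $I_2=\mathrm{diag}(0,0,1,1,0,\dots,0)$ projects onto rows/columns $3,4$, this is a compatibility condition on the $2\times 2$ subblock formed by rows $3,4$ of $R^r_{-1}$ and columns $3,4$ of $Q^r_{-1}$. The decoupling step fixes only the two off-diagonal entries; the two diagonal entries remain free, and the compatibility relation can be solved algebraically by prescribing them consistently. The construction of $Q^r_{-1},R^r_{-1}$ off rows/columns $3,4$ is entirely free and can be taken to be zero, so the constraint is met. I expect the principal obstacle to be the careful bookkeeping of signs and the identification of which bracket terms produce the $(3,4),(4,3)$ source data; apart from this, solvability is guaranteed by the non-vanishing of $\omega^r$ on $\mathrm{supp}\,\chi_2$, which is exactly the geometric information encoded in the separation of $\Upsilon^{(1)}_{p}=\Upsilon^{(1)}_r$ from $\Upsilon_\omega$ secured by the stability conditions \eqref{H1}--\eqref{stability4}.
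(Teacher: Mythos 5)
Your proposal is essentially the argument the paper has in mind: the paper omits the proof entirely, stating only that it ``can be proved similarly to the approach in \cite{Coulombel2002},'' and the standard Coulombel-type construction is exactly what you describe — expand the conjugated operator to subprincipal order via Lemma \ref{lemmapara}(v), solve for the $(3,4)$ and $(4,3)$ entries of the correctors using the ellipticity of $2\omega^r=\tilde{A}^r_{4,4}-\tilde{A}^r_{3,3}$ on $\supp\chi_2$ (guaranteed by $\mathcal{V}^r_{p_1}\cap\Upsilon_\omega=\emptyset$), absorb the rest into $D_0$, and check the $I_2$-compatibility on the $3,4$ block. Your sketch is correct and matches that approach; the only bookkeeping you gloss over is that the $\Theta_1$ entries in row $3$ and column $4$ of $\tilde{A}^r$ contribute extra terms to the $(3,4)$ and $(4,3)$ equations, which one eliminates by setting the remaining free entries of $R^r_{-1}(R^r_0)^{-1}$ and $Q^r_{-1}Q^r_0$ to zero so that the equations genuinely reduce to the scalar form you wrote.
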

Now we set
\begin{equation}\label{def:Zplus.p1}
Z^+:=T^{\gamma}_{\chi_1((Q^{r}_0)^{-1}+Q^r_{-1})}W^{+}_{p_1}.
\end{equation}
Define $Q^r={Q^{r}_0}^{-1}+Q^r_{-1}$ and $R^r=R^r_0+R^r_{-1},$ we obtain
\begin{equation}\nonumber
\begin{split}
I_2\partial_3Z^+&=I_2T^{\gamma}_{(\partial_3\chi_1)Q^r}W^+_{p_1}+I_2T^{\gamma}_{\chi_1\partial_3Q^r}W^+_{p_1}+I_2T^{\gamma}_{\chi_1Q^r}\partial_3W^+_{p_1}\\
&=I_2T^{\gamma}_{(\partial_3\chi_1)Q^r}W^+_{p_1}+I_2T^{\gamma}_{\chi_1\partial_3Q^r}W^+_{p_1}+T^{\gamma}_{\chi_1R^r}I_2\partial_3W^+_{p_1}.
\end{split}
\end{equation}
$\partial_3\chi_1$ is supported where $\chi_1\in(0,1)$ and it is disjoint with the support of $\chi_{p_1}.$
Then,  from asymptotic expansion of the symbols,  we have
$$T^{\gamma}_{(\partial_3\chi_1)Q^r}W^+_{p_1}=\mathcal{R}_{-1}W^+.$$
Using \eqref{para4}, we have
\begin{equation}\nonumber
\begin{split}
I_2\partial_3Z^+&=I_2T^{\gamma}_{\chi_1\partial_3 (Q_0^r)^{-1}} W^+_{p_1}-T^{\gamma}_{\chi_1R^rA^r}W^+_{p_1}-T^{\gamma}_{[\chi_1R^r,\chi_2A^r]}W^+_{p_1}-T^{\gamma}_{\chi_1R^rA^r_0C^r}W^+_{p_1}\\
&\quad+T^{\gamma}_rW^++\mathcal{R}_0F+\mathcal{R}_{-1}W^+\\
&=I_2T^{\gamma}_{\chi_1\partial_3 (Q_0^r)^{-1}} W^+_{p_1}-T^{\gamma}_{\chi_1R^rA^r}W^+_{p_1}+T^{\gamma}_{\chi_2\tilde{A}^r}T^{\gamma}_{\chi_1Q^r}W^+_{p_1}-T^{\gamma}_{\chi_2\tilde{A}^r}T^{\gamma}_{\chi_1Q^r}W^+_{p_1}\\
&\quad-T^{\gamma}_{[\chi_1R^r,\chi_2A^r]}W^+_{p_1}-T^{\gamma}_{\chi_1R^rA^r_0C^r}W^+_{p_1}+T^{\gamma}_rW^++\mathcal{R}_0F+\mathcal{R}_{-1}W^+\\
&=-T^{\gamma}_{\chi_1(R^r_{-1}{R^{r}_0}^{-1}\tilde{A}^r-\tilde{A}^rQ^r_{-1}Q^r_0)}Z^+-T^{\gamma}_{\chi_2\tilde{A}^r}Z^++I_2T^{\gamma}_{\chi_1\partial_3{Q^{r}_0}^{-1}}W^+_{p_1}-T^{\gamma}_{\chi_1R^rA^r_0C^r}W^+_{p_1}\\
&\quad-T^{\gamma}_{\chi_1([R^r,\chi_2\mathcal{A}^r]-[\chi_2\tilde{A}^r,Q^r])}W^+_{p_1}+T^{\gamma}_rW^++\mathcal{R}_0F+\mathcal{R}_{-1}W^+.\\
\end{split}
\end{equation}
Then, we obtain that
\begin{equation}\nonumber
\begin{split}
I_2\partial_3Z^+&=-T^{\gamma}_{\chi_1(R^r_{-1}{R^{r}_0}^{-1}\tilde{A}^r-\tilde{A}^rQ^r_{-1}Q^r_0)}Z^+-T^{\gamma}_{\chi_2\tilde{A}^r}Z^++I_2T^{\gamma}_{\chi_1\partial_3{Q^{r}_0}^{-1}Q^r_0}Z^+-T^{\gamma}_{\chi_1R^rA^r_0C^rQ^r_0}Z^+\\
&\quad-T^{\gamma}_{\chi_1([R^r,\chi_2A^r]-[\chi_2\tilde{A}^r,Q^r])Q^r_0}Z^++T^{\gamma}_rW^++\mathcal{R}_0F^++\mathcal{R}_{-1}W^+.\\
\end{split}
\end{equation}
From Lemma \ref{lemmapara} and Lemma \ref{paralemma}, we know
\begin{equation}\label{para5}
\begin{split}
I_2\partial_3Z^+=-T^{\gamma}_{\chi_2\tilde{A}^r}Z^++T^{\gamma}_{D_0}Z^++T^{\gamma}_rW^++\mathcal{R}_0F^++\mathcal{R}_{-1}W^+.
\end{split}
\end{equation}
Since the support of the Fourier transform of $Z^+$ is in the support of $\chi_{p_1},$ we have
\begin{equation}\label{para6}
I_2\partial_3Z^+=-T^{\gamma}_{\tilde{D}_1}Z^++T^{\gamma}_{\tilde{D}_0}Z^++T^{\gamma}_rW^++\mathcal{R}_0F^++\mathcal{R}_{-1}W^+,
\end{equation}
where $\tilde{D}_1$ is the same as $\tilde{A}^r$ except replacing $\omega^r$ in each element by $\tilde{\omega}^r,$ $\tilde{\omega}^r\in\Gamma^1_2.$ It is equal to $\omega^r$ on the support of $\chi_2,$ $\tilde{D}_0$ is an extension of $D_0$ with $d_{3,4}=d_{4,3}=0$ to the whole space. Moreover, we see that $\Re\omega^r\leq-c\Lambda$ in $\mathcal{V}^r_{p_1}$, where $\Lambda \in \Gamma^1_2$ is defined as
\begin{equation}\label{def Lambda}
\Lambda := \sqrt{\gamma^2+\delta^2+\eta^2+\tilde{\eta}^2}.
\end{equation}
This suggests that we can extend as $\Re\tilde{\omega}^r\leq -c\Lambda$ to the whole space.
For simplicity, we will write $\omega^r$ instead of $\tilde{\omega}^r$ in later arguments.

Denote $Z^+ := (Z_1,Z_2,\cdots,Z_{13})^{\top}$. From the fourth equation in \eqref{para6} we find
$$\partial_3Z_4=T^{\gamma}_{-\omega^r+i\bar{\omega}^r}Z_4+T^{\gamma}_{\Theta_0}Z_4+\sum_{i\neq3,4}T^{\gamma}_{\Theta_0} Z_i+T^{\gamma}_rW^++\mathcal{R}_0F^++\mathcal{R}_{-1}W^+,$$
where $$\bar{\omega}^r=\frac{\partial_3\varPhi^r\partial_1\varPhi^r}{\langle\partial_{\rm tan}\varPhi^r\rangle^2}\eta+\frac{\partial_3\varPhi^r\partial_2\varPhi^r}{\langle\partial_{\rm tan}\varPhi^r\rangle^2}\tilde{\eta}.$$

Consider two symmetrizers $(T^{\gamma}_{\sigma})^{\ast}T^{\gamma}_{\Lambda}T^{\gamma}_{\sigma}$ and $(T^{\gamma}_{\Lambda})^{\ast}T^{\gamma}_{\Lambda},$ where
$\sigma$ is defined in $\R^4_+\times\Sigma$ by \eqref{root} and extended to $\R^4_{+}\times\Pi$ by homogeneity of degree 1. Thus $\sigma\in \Gamma^1_2$, and
\begin{equation}\label{para7}
\begin{split}
\Re\langle T^{\gamma}_{\sigma}\partial_3Z_4,T^{\gamma}_{\Lambda}T^{\gamma}_{\sigma}Z_4\rangle&
=\Re\langle T^{\gamma}_{\Lambda}T^{\gamma}_{\sigma}Z_4,T^{\gamma}_{\sigma}\partial_3Z_4\rangle\\
&=\Re\langle T^{\gamma}_{\Lambda}T^{\gamma}_{\sigma}Z_4,T^{\gamma}_{\sigma}T^{\gamma}_{-\omega^r+i\bar{\omega}^r}Z_4\rangle+\Re\langle T^{\gamma}_{\Lambda}T^{\gamma}_{\sigma}Z_4,T^{\gamma}_{\sigma}T^{\gamma}_{\Theta_0}Z_4\rangle\\
&\quad+\sum_{i\neq3,4}\Re\langle T^{\gamma}_{\Lambda}T^{\gamma}_{\sigma}Z_4,T^{\gamma}_{\sigma}T^{\gamma}_{\Theta_0}Z_i\rangle+\Re\langle T^{\gamma}_{\Lambda}T^{\gamma}_{\sigma}Z_4,T^{\gamma}_{\sigma}T^{\gamma}_rW^+\rangle\\
&\quad+\Re\langle T^{\gamma}_{\Lambda}T^{\gamma}_{\sigma}Z_4,T^{\gamma}_{\sigma}\mathcal{R}_{-1}W^+\rangle+\Re\langle T^{\gamma}_{\Lambda}T^{\gamma}_{\sigma}Z_4,T^{\gamma}_{\sigma}F^+\rangle.
\end{split}
\end{equation}
For the first term on the right-hand side of \eqref{para7}, using Lemma \ref{lemmapara}, we have
$$\Re\langle T^{\gamma}_{\Lambda}T^{\gamma}_{\sigma}Z_4,T^{\gamma}_{\sigma}T^{\gamma}_{-\omega^r+i\bar{\omega}^r}Z_4\rangle=
\Re\langle T^{\gamma}_{\Lambda}T^{\gamma}_{\sigma}Z_4,T^{\gamma}_{\frac{-\omega^r+i\bar{\omega}^r}{\Lambda}}T^{\gamma}_{\Lambda}T^{\gamma}_{\sigma}Z_4\rangle\\
+\Re\langle T^{\gamma}_{\Lambda}T^{\gamma}_{\sigma}Z_4,\,\mathcal{R}_1 Z_4\rangle.\\
$$
From the extension of $\omega^r,$ we obtain that
$$\Re\frac{-\omega^r+i\bar{\omega}^r}{\Lambda}\geq c,$$ for some positive $c$ depending on the background states.
Using G${\rm \mathring a}$rding's inequality (Lemma \ref{lemmapara}(vi)), we have
$$
\Re\langle T^{\gamma}_{\Lambda}T^{\gamma}_{\sigma}Z_4,T^{\gamma}_{\frac{-\omega^r+i\bar{\omega}^r}{\Lambda}}T^{\gamma}_{\Lambda}T^{\gamma}_{\sigma}Z_4\rangle \geq c \|T^{\gamma}_{\Lambda}T^{\gamma}_{\sigma}Z_4\|^2_0 = c \|T^{\gamma}_{\sigma}Z_4\|^2_{1,\gamma}.
$$
Using Lemma \ref{lemmapara}
 (iii)-(iv), for the rest of the terms on the right-hand side of \eqref{para7},
\begin{equation*}
\begin{split}
\Re\langle T^{\gamma}_{\Lambda}T^{\gamma}_{\sigma}Z_4,\mathcal{R}_1Z_4\rangle & \leq \varepsilon\|T^{\gamma}_{\Lambda}T^{\gamma}_{\sigma}Z_4\|^2_0+\frac{1}{\varepsilon} \|Z_4\|^2_{1,\gamma}, \\
\Re\langle T^{\gamma}_{\Lambda}T^{\gamma}_{\sigma}Z_4,T^{\gamma}_{\sigma}T^{\gamma}_{\Theta_0}Z_4\rangle & \leq \varepsilon\|T^{\gamma}_{\Lambda}T^{\gamma}_{\sigma}Z_4\|^2_0+\frac{1}{\varepsilon}\|Z_4\|^2_{1,\gamma}, \\
\Re\langle T^{\gamma}_{\Lambda}T^{\gamma}_{\sigma}Z_4,T^{\gamma}_{\sigma}T^{\gamma}_{\Theta_0}Z_i\rangle & = \Re\langle T^{\gamma}_{\Lambda}T^{\gamma}_{\sigma}Z_4,T^{\gamma}_{\Theta_0}T^{\gamma}_{\sigma}Z_i\rangle+\Re\langle T^{\gamma}_{\Lambda}T^{\gamma}_{\sigma}Z_4,\mathcal{R}_0Z_i\rangle \\
& \leq \varepsilon \|T^{\gamma}_{\Lambda}T^{\gamma}_{\sigma}Z_4\|^2_0 + \frac{1}{\varepsilon}\Big( \|T^{\gamma}_{\sigma}Z_i\|^2_{0} + \|Z_i \|^2_0\Big), \\
\Re\langle T^{\gamma}_{\Lambda}T^{\gamma}_{\sigma}Z_4, T^{\gamma}_{\sigma}T^{\gamma}_rW^+\rangle & \leq \varepsilon \|T^{\gamma}_{\Lambda}T^{\gamma}_{\sigma}Z_4\|^2_0+\frac{1}{\varepsilon} \|T^{\gamma}_rW^+ \|^2_{1,\gamma}, \\
\Re\langle T^{\gamma}_{\Lambda}T^{\gamma}_{\sigma}Z_4, T^{\gamma}_{\sigma}\mathcal{R}_{-1}W^+\rangle & \leq \varepsilon \|T^{\gamma}_{\Lambda}T^{\gamma}_{\sigma}Z_4\|^2_0+\frac{1}{\varepsilon} \| W^+ \|^2_{0}, \\
\Re\langle T^{\gamma}_{\Lambda}T^{\gamma}_{\sigma}Z_4, T^{\gamma}_{\sigma}F^+\rangle & \leq \varepsilon \|T^{\gamma}_{\Lambda}T^{\gamma}_{\sigma}Z_4\|^2_0+\frac{1}{\varepsilon} \| F^+ \|^2_{1,\gamma},
\end{split}
\end{equation*}
where $\varepsilon>0$ is taken to be small enough.

Note that
\begin{equation}\label{para8}
\begin{split}
\partial_3\Re\langle Z_4,(T^{\gamma}_{\sigma})^{\ast}T^{\gamma}_{\Lambda}T^{\gamma}_{\sigma}Z_4\rangle&=\Re\partial_3\langle T^{\gamma}_{\sigma}Z_4, T^{\gamma}_{\Lambda}T^{\gamma}_{\sigma}Z_4\rangle\\
&=\Re\langle T^{\gamma}_{\partial_3\sigma}Z_4,T^{\gamma}_{\Lambda}T^{\gamma}_{\sigma}Z_4\rangle+\Re\langle T^{\gamma}_{\sigma}Z_4,T^{\gamma}_{\Lambda}T^{\gamma}_{\partial_3\sigma}Z_4\rangle\\
&\quad+\Re\langle T^{\gamma}_{\sigma}Z_4,T^{\gamma}_{\Lambda}T^{\gamma}_{\sigma}\partial_3Z_4\rangle+\Re\langle T^{\gamma}_{\sigma}\partial_3Z_4,T^{\gamma}_{\Lambda}T^{\gamma}_{\sigma}Z_4\rangle.
\end{split}
\end{equation}
For the first three terms on the right-hand side of \eqref{para8},
\begin{equation*}
\begin{split}
\Re\langle T^{\gamma}_{\partial_3\sigma}Z_4,T^{\gamma}_{\Lambda}T^{\gamma}_{\sigma}Z_4\rangle & \leq \varepsilon\|T^{\gamma}_{\Lambda}T^{\gamma}_{\sigma}Z_4\|^2_0+\frac{1}{\varepsilon}\|Z_4\|^2_{1,\gamma}, \\
\Re\langle T^{\gamma}_{\sigma}Z_4,T^{\gamma}_{\Lambda}T^{\gamma}_{\partial_3\sigma}Z_4\rangle & \leq \varepsilon\|T^{\gamma}_{\Lambda}T^{\gamma}_{\sigma}Z_4\|^2_0+\frac{1}{\varepsilon}\|Z_4\|^2_{1,\gamma}, \\
\Re\langle T^{\gamma}_{\sigma}Z_4,T^{\gamma}_{\Lambda}T^{\gamma}_{\sigma}\partial_3Z_4\rangle & = \Re\langle T^{\gamma}_{\Lambda}T^{\gamma}_{\sigma}Z_4,T^{\gamma}_{\sigma}\partial_3Z_4\rangle+\Re\langle T^{\gamma}_{\sigma}Z_4,\mathcal{R}_0T^{\gamma}_{\sigma}\partial_3Z_4\rangle.
\end{split}
\end{equation*}
The terms $\Re\langle T^{\gamma}_{\Lambda}T^{\gamma}_{\sigma}Z_4,T^{\gamma}_{\sigma}\partial_3Z_4\rangle,$ $\Re\langle T^{\gamma}_{\sigma}Z_4,\mathcal{R}_0T^{\gamma}_{\sigma}\partial_3Z_4\rangle$ can be treated similarly.
Summing up \eqref{para7} and \eqref{para8} and integrating with respect to $x_3,$ we have
\begin{equation*}
\begin{split}
\vertiii{T^{\gamma}_{\Lambda}T^{\gamma}_{\sigma}Z_4}^2_0+\Re\left\langle T^{\gamma}_{\sigma}Z_4,T^{\gamma}_{\Lambda}T^{\gamma}_{\sigma}Z_4 \right\rangle|_{x_3=0}
& \lesssim \left(C+\frac{1}{\varepsilon} \right) \vertiii{Z_4}^2_{1,\gamma} + \sum_{i\neq 3,4}\frac{1}{\varepsilon} \left( \vertiii{T^{\gamma}_{\sigma}Z_i}^2_0 + \vertiii{Z_i }^2_0 \right) \\
&\quad\ +\frac{1}{\varepsilon} \left( \vertiii{T^{\gamma}_{r}W^+}^2_{1,\gamma} + \vertiii{W^+}^2_0 + \vertiii{ F^+ }^2_{1,\gamma} \right).
\end{split}
\end{equation*}
Using the fact that
$$
\Re \left\langle T^{\gamma}_{\sigma}Z_4,T^{\gamma}_{\Lambda}T^{\gamma}_{\sigma}Z_4 \right\rangle|_{x_3=0} = \Re \left. \left\langle T^{\gamma}_{\Lambda^{\frac{1}{2}}}T^{\gamma}_{\sigma}Z_4,T^{\gamma}_{\Lambda^{\frac{1}{2}}}T^{\gamma}_{\sigma}Z_4 \right\rangle \right|_{x_3=0} + \Re \left. \left\langle T^{\gamma}_{\Lambda^{\frac{1}{2}}}T^{\gamma}_{\sigma}Z_4,\mathcal{R}_0T^{\gamma}_{\sigma}Z_4 \right\rangle \right|_{x_3=0},
$$
we can obtain that
\begin{equation}\label{para9}
\begin{split}
&\vertiii{T^{\gamma}_{\Lambda}T^{\gamma}_{\sigma}Z_4}^2_0 + \| T^{\gamma}_{\Lambda^{\frac{1}{2}}}T^{\gamma}_{\sigma}Z_4|_{x_3=0} \|^2_0\\
&\quad\lesssim \| T^{\gamma}_{\sigma}Z_4|_{x_3=0} \|^2_0 + \left( C+\frac{1}{\varepsilon} \right)\vertiii{Z_4}^2_{1,\gamma}+\sum_{i\neq 3,4}\frac{1}{\varepsilon} \left(\vertiii{T^{\gamma}_{\sigma}Z_i}^2_0+\vertiii{Z_i }^2_0 \right)\\
&\quad\quad+\frac{1}{\varepsilon} \left(\vertiii{T^{\gamma}_{r}W^+}^2_{1,\gamma}+\vertiii{W^+}^2_0+\vertiii{ F^+ }^2_{1,\gamma} \right).
\end{split}
\end{equation}
In the following we apply the second symmetrizer $(T^{\gamma}_{\Lambda})^{\ast}T^{\gamma}_{\Lambda}$ to have
\begin{equation*}
\begin{split}
\partial_3\Re\langle T^{\gamma}_{\Lambda}Z_4,T^{\gamma}_{\Lambda}Z_4\rangle & = 2\Re\langle T^{\gamma}_{\Lambda}Z_4,T^{\gamma}_{\Lambda}\partial_3Z_4\rangle \\
& = 2\Re\langle T^{\gamma}_{\Lambda}Z_4, T^{\gamma}_{\Lambda}T^{\gamma}_{-\omega^r+i\bar{\omega}^r}Z_4\rangle + 2\Re\langle T^{\gamma}_{\Lambda}Z_4,T^{\gamma}_{\Lambda}T^{\gamma}_{\Theta_0}Z_4\rangle+2\sum_{i\neq3,4}\Re\langle T^{\gamma}_{\Lambda}Z_4,T^{\gamma}_{\Lambda}T^{\gamma}_{\Theta_0}Z_i\rangle\\
&\quad+2\Re\langle T^{\gamma}_{\Lambda}Z_4,T^{\gamma}_{\Lambda}T^{\gamma}_{r}W^+\rangle+2\Re\langle T^{\gamma}_{\Lambda}Z_4,T^{\gamma}_{\Lambda}\mathcal{R}_{-1}W^+\rangle+2\Re\langle T^{\gamma}_{\Lambda}Z_4,T^{\gamma}_{\Lambda}F^+\rangle.\\
\end{split}
\end{equation*}
Hence, we get
\begin{equation}\label{para10}
\begin{split}
\vertiii{Z_4}^2_{\frac{3}{2},\gamma} + \| Z_4|_{x_3=0} \|^2_{1,\gamma} & \lesssim C\vertiii{Z_4}^2_{1,\gamma}+\frac{1}{\varepsilon}\vertiii{Z_4}^2_{\frac{1}{2},\gamma}+\sum_{i\neq 3,4} \left(\frac{1}{\varepsilon}\vertiii{Z_i }^2_{\frac{1}{2},\gamma}+\frac{1}{\varepsilon\gamma}\vertiii{Z_i }^2_0 \right)\\
&\quad + \frac{1}{\varepsilon\gamma} \left( \vertiii{T^{\gamma}_{r}W^+}^2_{1,\gamma}+\vertiii{W^+}^2_0+\vertiii{ F^+ }^2_{1,\gamma} \right).
\end{split}
\end{equation}

Then, we consider 1st, 2nd, 5th, 6th, 8th, 9th, 11th, 12th of the system \eqref{para6} which can be written as
\begin{equation}\label{para11}
T^{\gamma}_{\mathbf{a}}(Z_1,Z_2,Z_5,Z_6,Z_8,Z_9,Z_{11},Z_{12})^{\top}+T^{\gamma}_{\Theta_1}Z_4+T^{\gamma}_{\Theta_0}Z^++T^{\gamma}_rW^++\mathcal{R}_{-1}W^+=\mathcal{R}_0F^+,
\end{equation}
where $\Theta_1$ is an $8\times1$ matrix symbol and belongs to $\Gamma^1_2,$ $\Theta_0$ is an $8\times13$ matrix symbol and belongs to $\Gamma^0_1,$ and $\mathbf{a} \in \Gamma^1_2$ is an $8\times8$ matrix symbol given as follows
\begin{equation}\nonumber
\mathbf{a}=\left[\begin{array}{cccccccc}
k^r_1 & 0 & \mathbf{a}_{1,3} & 0 & \mathbf{a}_{1,5} & 0 & \mathbf{a}_{1,7}  & 0\\
0 & k^r_1 & 0 & \mathbf{a}_{2,4} & 0 &\mathbf{a}_{2,6} & 0 & \mathbf{a}_{2,8}\\
\mathbf{a}_{3,1} & 0 & k^r_1 & 0  & 0 & 0 & 0 & 0\\
0 & \mathbf{a}_{4,2} & 0 & k^r_1 &0 & 0 & 0 & 0\\
\mathbf{a}_{5,1} & 0& 0 & 0 & k^r_1  & 0 & 0 &0\\
0 & \mathbf{a}_{6,2} & 0 & 0& 0 &k^r_1 & 0 & 0\\
\mathbf{a}_{7,1} & 0 & 0 & 0& 0 & 0 & k^r_1 & 0\\
0 & \mathbf{a}_{8,2} & 0 & 0& 0 & 0 & 0 & k^r_1\\
\end{array}
\right].
\end{equation}
Here, we denote that
\begin{equation*}
\begin{split}
\mathbf{a}_{1,3} & =\mathbf{a}_{2,4}=\mathbf{a}_{3,1}=\mathbf{a}_{4,2}=-i\eta F^r_{11}-i\tilde{\eta} F^r_{21}, \\
\mathbf{a}_{1,5} & =\mathbf{a}_{2,6}=\mathbf{a}_{5,1}=\mathbf{a}_{6,2}=-i\eta F^r_{12}-i\tilde{\eta} F^r_{22}, \\
\mathbf{a}_{1,7} & =\mathbf{a}_{2,8}=\mathbf{a}_{7,1}=\mathbf{a}_{8,2}=-i\eta F^r_{13}-i\tilde{\eta} F^r_{23}.
\end{split}
\end{equation*}
Now, we apply the symbol $\frac{\mathbf{a}^{\ast}}{\Lambda^5}\in \Gamma^0_2$ to \eqref{para11} with $\mathbf{a}^{\ast}$ the adjoint of $\mathbf{a}.$
Hence, we obtain that
$$
T^{\gamma}_aZ_j+T^{\gamma}_{\Theta_1}Z_4+ \sum_i T^{\gamma}_{\Theta_0}Z_i+T^{\gamma}_rW^++\mathcal{R}_{-1}W^+=\mathcal{R}_0F^+,
$$
where $j=1,2,5,6,8,9,11,12$ and $a=(k^r_1)^4\frac{(k^r_1)^2+k^r_2}{\Lambda^5}.$
From the definition of cut-off function, $\frac{(k^r_1)^2+k^r_2}{\Lambda^5}$ is non-zero in the support of $\chi_2.$
Hence, we have
$$a=(1-\chi_2)a+\chi_2(k^r_1)^4\frac{(k^r_1)^2+k^r_2}{\Lambda^5}.$$
Since $Z^+=T^{\gamma}_{\chi_1(Q^r_0+Q^r_{-1})}W^+_{p_1},$
we have
$$T^{\gamma}_{(1-\chi_2)a}Z_j=T^{\gamma}_{(1-\chi_2)a}T^{\gamma}_{\chi_1 Q^r_j}T^{\gamma}_{\chi_{p_1}}W^+=\mathcal{R}_{-1}T^{\gamma}_{\chi_{p_1}}W^+.$$
Here, the support of $(1-\chi_2)a$ is disjoint with $\chi_{1} Q^r_j$. Also $\sigma=0$ holds at the frequency points where $k^r_1=0$ (also possibly $k^l_1=0$) in the support of $\chi_{2}.$ We can write $\chi_2a=\chi_2\Theta_0  (\gamma+i\sigma),$ and hence
\begin{equation}\label{para12}
T^{\gamma}_{\chi_2\Theta_0 (\gamma+i\sigma)}Z_j+T^{\gamma}_{\Theta_1}Z_4 + \sum_i T^{\gamma}_{\Theta_0}Z_i+T^{\gamma}_rW^++\mathcal{R}_{-1}W^+=\mathcal{R}_0F^+.
\end{equation}
Applying the symmetrizer $(T^{\gamma}_{\sigma})^{\ast}T^{\gamma}_{\sigma},$ we obtain that
\begin{equation}\label{para122}
\begin{split}
&\Re\langle T^{\gamma}_{\sigma}Z_j,T^{\gamma}_{\sigma}T^{\gamma}_{\chi_2\Theta_0 (\gamma+i\sigma)}Z_j\rangle+\Re\langle T^{\gamma}_{\sigma}Z_j,T^{\gamma}_{\sigma}T^{\gamma}_{\Theta_1}Z_4\rangle + \sum_i \Re\langle T^{\gamma}_{\sigma}Z_j, T^{\gamma}_{\sigma}T^{\gamma}_{\Theta_0}Z_i\rangle\\
&\quad+\Re\langle T^{\gamma}_{\sigma}Z_j, T^{\gamma}_{\sigma}T^{\gamma}_rW^+\rangle+\Re\langle T^{\gamma}_{\sigma}Z_j, T^{\gamma}_{\sigma}\mathcal{R}_{-1}W^+\rangle=\Re\langle T^{\gamma}_{\sigma}Z_j,T^{\gamma}_{\sigma}F^+\rangle.
\end{split}
\end{equation}
Now, using Lemma \ref{lemmapara} (iii)-(iv), we estimate the above terms one by one,
\begin{equation*}
\begin{split}
\Re\langle T^{\gamma}_{\sigma}Z_j,T^{\gamma}_{\sigma}T^{\gamma}_{\Theta_1}Z_4\rangle & = \Re\langle T^{\gamma}_{\sigma}Z_j,T^{\gamma}_{\frac{\Theta_1}{\Lambda}}T^{\gamma}_{\Lambda}T^{\gamma}_{\sigma}Z_4\rangle+\Re\langle T^{\gamma}_{\sigma}Z_j,\mathcal{R}_1Z_4\rangle \\
& \leq \varepsilon\gamma \|T^{\gamma}_{\sigma}Z_j \|^2_0+\frac{1}{\varepsilon\gamma}\|T^{\gamma}_{\Lambda}T^{\gamma}_{\sigma}Z_4\|^2_0+\frac{1}{\varepsilon\gamma}\|Z_4\|^2_{1,\gamma}, \\
\Re\langle T^{\gamma}_{\sigma}Z_j,T^{\gamma}_{\sigma}T^{\gamma}_{\Theta_0}Z_i\rangle & = \Re\langle T^{\gamma}_{\sigma}Z_j,T^{\gamma}_{\Theta_0}T^{\gamma}_{\sigma}Z_i\rangle+\Re\langle T^{\gamma}_{\sigma}Z_j,\mathcal{R}_0Z_i\rangle\\
& \leq\varepsilon\gamma \|T^{\gamma}_{\sigma}Z_j \|^2_0+\frac{1}{\varepsilon\gamma}\|T^{\gamma}_{\sigma}Z_i\|^2_0+\frac{1}{\varepsilon\gamma}\|Z_i \|^2_{0}, \\
\Re\langle T^{\gamma}_{\sigma}Z_j, T^{\gamma}_{\sigma}T^{\gamma}_rW^+\rangle & \leq \varepsilon\gamma\|T^{\gamma}_{\sigma}Z_j \|^2_0+\frac{1}{\varepsilon\gamma}\|T^{\gamma}_rW^+ \|^2_{1,\gamma}, \\
\Re\langle T^{\gamma}_{\sigma}Z_j, T^{\gamma}_{\sigma}\mathcal{R}_{-1}W^+\rangle & \leq \varepsilon\gamma\|T^{\gamma}_{\sigma}Z_j \|^2_0+\frac{1}{\varepsilon\gamma}\| W^+ \|^2_{0}, \\
\Re\langle T^{\gamma}_{\sigma}Z_j, T^{\gamma}_{\sigma}F^+\rangle & \leq \varepsilon\gamma\|T^{\gamma}_{\sigma}Z_j \|^2_0+\frac{1}{\varepsilon\gamma}\| F^+ \|^2_{1,\gamma}.
\end{split}
\end{equation*}
For the first term on the left-hand side of \eqref{para122}, we obtain that
\begin{equation*}
\Re\langle T^{\gamma}_{\sigma}Z_j, T^{\gamma}_{\sigma}T^{\gamma}_{\chi_2\Theta_0 (\gamma+i\sigma)}Z_j\rangle =\Re\langle T^{\gamma}_{\sigma}Z_j, T^{\gamma}_{\sigma}T^{\gamma}_{\chi_2\Theta_0}T^{\gamma}_{\gamma+i\sigma}Z_j\rangle+\Re\langle T^{\gamma}_{\sigma}Z_j, T^{\gamma}_{\Theta_0}T^{\gamma}_{\sigma}Z_j\rangle+\Re\langle T^{\gamma}_{\sigma}Z_j, \mathcal{R}_0Z_j\rangle.
\end{equation*}
Then, we have
\begin{equation*}
\begin{split}
\Re\langle T^{\gamma}_{\sigma}Z_j, \mathcal{R}_0Z_j\rangle & \leq \varepsilon \gamma\|T^{\gamma}_{\sigma}Z_j \|^2_0+\frac{1}{\varepsilon\gamma}\| Z_j \|^2_0,\\
\Re\langle T^{\gamma}_{\sigma}Z_j, T^{\gamma}_{\Theta_0}T^{\gamma}_{\sigma}Z_j\rangle & \leq C\|T^{\gamma}_{\sigma}Z_j \|^2_0.
\end{split}
\end{equation*}
Note that
\begin{equation}\label{para13}
\Re\langle T^{\gamma}_{\sigma}Z_j, T^{\gamma}_{\sigma}T^{\gamma}_{\chi_2\Theta_0}T^{\gamma}_{\gamma+i\sigma}Z_j\rangle=\Re\left\langle T^{\gamma}_{\sigma}Z_j, T^{\gamma}_{\chi_2\Theta_0}T^{\gamma}_{\sigma}T^{\gamma}_{\gamma+i\sigma}Z_j \right\rangle+\Re\langle T^{\gamma}_{\sigma}Z_j, \mathcal{R}_0T^{\gamma}_{\gamma+i\sigma}Z_j\rangle.\\
\end{equation}
For the second term on the right-hand side of \eqref{para13}, we have
\begin{equation*}
\begin{split}
\Re\langle T^{\gamma}_{\sigma}Z_j, \mathcal{R}_0T^{\gamma}_{\gamma+i\sigma}Z_j\rangle&=
\Re\langle T^{\gamma}_{\sigma}Z_j, \mathcal{R}_0T^{\gamma}_{\gamma}Z_j\rangle+
\Re\langle T^{\gamma}_{\sigma}Z_j, \mathcal{R}_0T^{\gamma}_{i\sigma}Z_j\rangle\\
&\leq\varepsilon \gamma\|T^{\gamma}_{\sigma}Z_j \|^2_0+\frac{1}{\varepsilon\gamma} \|T^{\gamma}_{\gamma}Z_j\|^2_0 + C\|T^{\gamma}_{\sigma}Z_j \|^2_0.
\end{split}
\end{equation*}
The first term on the right-hand side of \eqref{para13} can be written into
\begin{equation}\label{para14}
\begin{split}
&\Re\left\langle T^{\gamma}_{\sigma}Z_j, T^{\gamma}_{\chi_2\Theta_0}T^{\gamma}_{\sigma}T^{\gamma}_{\gamma+i\sigma}Z_j \right\rangle =
\Re \left\langle T^{\gamma}_{\sigma}Z_j, T^{\gamma}_{\chi_2\Theta_0}T^{\gamma}_{\sigma}T^{\gamma}_{\gamma}Z_j \right\rangle +
\Re \left\langle T^{\gamma}_{\sigma}Z_j, T^{\gamma}_{\chi_2\Theta_0}T^{\gamma}_{\sigma}T^{\gamma}_{i\sigma}Z_j \right\rangle \\
&=\gamma\Re \left\langle T^{\gamma}_{\sigma}Z_j, T^{\gamma}_{\chi_2\Theta_0}\mathcal{R}_0Z_j \right\rangle +
\Re\langle T^{\gamma}_{\sigma}Z_j, T^{\gamma}_{\chi_2\Theta_0}T^{\gamma}_{\gamma+i\sigma}T^{\gamma}_{\sigma}Z_j\rangle\\
&=\gamma\Re\left\langle T^{\gamma}_{\sigma}Z_j, T^{\gamma}_{\chi_2\Theta_0}\mathcal{R}_0Z_j \right\rangle +
\Re\left\langle T^{\gamma}_{\sigma}Z_j, T^{\gamma}_{\chi_2\Theta_0(\gamma+i\sigma)}T^{\gamma}_{\sigma}Z_j \right\rangle + \Re \left\langle T^{\gamma}_{\sigma}Z_j,\mathcal{R}_0T^{\gamma}_{\sigma}Z_j \right\rangle.\\
\end{split}
\end{equation}
The first and third terms can be estimated by Cauchy-Schwarz inequality, for the second term
\begin{equation}\label{para15}
\begin{split}
\Re \left\langle T^{\gamma}_{\sigma}Z_j, T^{\gamma}_{\chi_2\Theta_0(\gamma+i\sigma)}T^{\gamma}_{\sigma}Z_j \right\rangle & = \Re \left\langle T^{\gamma}_{\sigma}Z_j, T^{\gamma}_{\chi_2 a}T^{\gamma}_{\sigma}Z_j \right\rangle \\
& = \Re \langle T^{\gamma}_{\sigma}Z_j, T^{\gamma}_{\tilde{a}}T^{\gamma}_{\sigma}Z_j\rangle + \Re\left\langle T^{\gamma}_{\sigma}Z_j, T^{\gamma}_{(\chi_2-1)\tilde{a}}T^{\gamma}_{\sigma}Z_j \right\rangle,
\end{split}
\end{equation}
where $\tilde{a}$ is the extension of $\chi_2 a$ to the whole space with $|\Re\tilde{a}|\geq c\gamma,$ for some fixed positive constant $c.$
For the second term on the right-hand side of \eqref{para15},
\begin{equation}\nonumber
\begin{split}
T^{\gamma}_{(\chi_2-1)\tilde{a}}T^{\gamma}_{\sigma}Z_j&=
T^{\gamma}_{(\chi_2-1)\tilde{a}}T^{\gamma}_{\sigma}T^{\gamma}_{\chi_1 Q^r_j}T^{\gamma}_{\chi_{p_1}}W^+\\
&=T^{\gamma}_{(\chi_2-1)\tilde{a}}T^{\gamma}_{\chi_1 Q^r_j}T^{\gamma}_{\sigma}T^{\gamma}_{\chi_{p_1}}W^++T^{\gamma}_{(\chi_2-1)\tilde{a}}T^{\gamma}_{O_0}T^{\gamma}_{\chi_{p_1}}W^+\\
&\quad+T^{\gamma}_{(\chi_2-1)\tilde{a}}T^{\gamma}_{O_{-1}}T^{\gamma}_{\chi_{p_1}}W^++T^{\gamma}_{(\chi_2-1)\tilde{a}}\mathcal{R}_{-2}T^{\gamma}_{\chi_{p_1}}W^+,\\
\end{split}
\end{equation}
where $O_0$ and $O_{-1}$ are only supported on the support of $\chi_1$ which is disjoint with the support of $(\chi_2-1)\tilde{a}.$
Hence, we obtain that
\begin{equation}\nonumber
T^{\gamma}_{(\chi_2-1)\tilde{a}}T^{\gamma}_{O_0}T^{\gamma}_{\chi_{p_1}}W^+=\mathcal{R}_{-1}W^+,
\end{equation}
\begin{equation}\nonumber
T^{\gamma}_{(\chi_2-1)\tilde{a}}T^{\gamma}_{O_{-1}}T^{\gamma}_{\chi_{p_1}}W^+=\mathcal{R}_{-1}W^+,
\end{equation}
\begin{equation}\nonumber
T^{\gamma}_{(\chi_2-1)\tilde{a}}T^{\gamma}_{\chi_1 Q^r_j}T^{\gamma}_{\sigma}T^{\gamma}_{\chi_{p_1}}W^+=\mathcal{R}_{-1}W^+.
\end{equation}
Then, we obtain that
\begin{equation}\label{para16}
\Re \left\langle T^{\gamma}_{\sigma}Z_j, T^{\gamma}_{(\chi_2-1)\tilde{a}}T^{\gamma}_{\sigma}Z_j \right\rangle \leq\varepsilon\gamma\|T^{\gamma}_{\sigma}Z_j \|^2_0+\frac{1}{\varepsilon\gamma}\| W^+ \|^2_0.
\end{equation}
Note that $|\Re \tilde{a}|\geq c\gamma,$ we have
\begin{equation}\label{para17}
\qquad |\Re\langle T^{\gamma}_{\sigma}Z_j,T^{\gamma}_{\tilde{a}}T^{\gamma}_{\sigma}Z_j\rangle|\geq c\gamma\|T^{\gamma}_{\sigma}Z_j \|^2_0.
\end{equation}
Estimates \eqref{para15}-\eqref{para17} yield
\begin{equation}\label{para18}
\begin{split}
\gamma\|T^{\gamma}_{\sigma}Z_j \|^2_0 & \leq\frac{1}{\varepsilon\gamma} \| Z_j \|^2_0+C\|T^{\gamma}_{\sigma}Z_j \|^2_0+\frac{1}{\varepsilon}\| Z_j \|^2_0+\frac{1}{\varepsilon\gamma} \left(\|T^{\gamma}_{\Lambda}T^{\gamma}_{\sigma}Z_4\|^2_0+\|Z_4\|^2_{1,\gamma} \right)\\
&\quad+\sum_i\frac{1}{\varepsilon\gamma} \left(\|T^{\gamma}_{\sigma}Z_i\|^2_0+\|Z_i \|^2_0 \right) + \frac{1}{\varepsilon\gamma} \left( \|T^{\gamma}_rW^+ \|^2_{1,\gamma}+\| W^+ \|^2_0+\| F^+ \|^2_{1,\gamma} \right),
\end{split}
\end{equation}
for $j=1,2,5,6,8,9,11,12.$

Similar to the outgoing mode $Z_4,$ we apply symmetrizer $T^{\gamma}_{\Lambda}$ to \eqref{para12},
\begin{equation*}
\begin{split}
&\Re \left\langle Z_j,T^{\gamma}_{\Lambda}T^{\gamma}_{\chi_2\Theta_0\times(\gamma+i\sigma)}Z_j \right\rangle + \Re \left\langle Z_j,T^{\gamma}_{\Lambda}T^{\gamma}_{\Theta_1}Z_4 \right\rangle + \sum_i \Re\left\langle Z_j, T^{\gamma}_{\Lambda}T^{\gamma}_{\Theta_0}Z_i \right\rangle\\
&\quad + \Re \left\langle Z_j, T^{\gamma}_{\Lambda}T^{\gamma}_rW^+ \right\rangle + \Re\left\langle Z_j, T^{\gamma}_{\Lambda}\mathcal{R}_{-1}W^+\right\rangle = \Re\left\langle Z_j,T^{\gamma}_{\Lambda}F^+ \right\rangle.
\end{split}
\end{equation*}
In the above, the first term can be written as
\begin{equation*}
\begin{split}
\Re \left\langle Z_j, T^{\gamma}_{\Lambda}T^{\gamma}_{\chi_2\Theta_0\times(\gamma+i\sigma)}Z_j \right\rangle & = \Re \left\langle T^{\gamma}_{\Lambda^{\frac{1}{2}}}Z_j, T^{\gamma}_{\Lambda^{\frac{1}{2}}}T^{\gamma}_{\chi_2\Theta_0\times(\gamma+i\sigma)}Z_j \right\rangle\\
&\quad + \Re \left\langle Z_j, \mathcal{R}_0T^{\gamma}_{\chi_2\Theta_0\times(\gamma+i\sigma)}Z_j \right\rangle\\
& = \Re \left\langle T^{\gamma}_{\Lambda^{\frac{1}{2}}}Z_j, T^{\gamma}_{\chi_2\Theta_0\times(\gamma+i\sigma)}T^{\gamma}_{\Lambda^{\frac{1}{2}}}Z_j \right\rangle + \Re \left\langle T^{\gamma}_{\Lambda^{\frac{1}{2}}} Z_j, \mathcal{R}_{\frac{1}{2}}Z_j \right\rangle\\
&\quad + \Re \left\langle Z_j, \mathcal{R}_0T^{\gamma}_{\chi_2\Theta_0\times(\gamma+i\sigma)}Z_j \right\rangle.
\end{split}
\end{equation*}
For $ \Re\left\langle T^{\gamma}_{\Lambda^{\frac{1}{2}}}Z_j, T^{\gamma}_{\chi_2\Theta_0\times(\gamma+i\sigma)}T^{\gamma}_{\Lambda^{\frac{1}{2}}}Z_j \right\rangle,$ we can split into two terms:
\begin{equation*}
\begin{split}
\Re\left\langle T^{\gamma}_{\Lambda^{\frac{1}{2}}}Z_j, T^{\gamma}_{\chi_2\Theta_0\times(\gamma+i\sigma)}T^{\gamma}_{\Lambda^{\frac{1}{2}}}Z_j \right\rangle & = \Re \left\langle T^{\gamma}_{\Lambda^{\frac{1}{2}}}Z_j, T^{\gamma}_{\tilde{a}}T^{\gamma}_{\Lambda^{\frac{1}{2}}}Z_j \right\rangle + \Re \left\langle T^{\gamma}_{\Lambda^{\frac{1}{2}}}Z_j, T^{\gamma}_{(\chi_2-1)\tilde{a}}T^{\gamma}_{\Lambda^{\frac{1}{2}}}Z_j \right\rangle.
\end{split}
\end{equation*}
We can estimate that
\begin{equation*}
\begin{split}
\Re\left\langle T^{\gamma}_{\Lambda^{\frac{1}{2}}}Z_j, T^{\gamma}_{(\chi_2-1)\tilde{a}}T^{\gamma}_{\Lambda^{\frac{1}{2}}}Z_j \right\rangle & \leq C\| Z_j \|^2_{\frac{1}{2},\gamma}+\varepsilon\gamma\| Z_j \|^2_{\frac{1}{2},\gamma}+\frac{1}{\varepsilon\gamma}\| W^+ \|^2_{-\frac{1}{2},\gamma}, \\
\left| \Re\left\langle T^{\gamma}_{\Lambda^{\frac{1}{2}}}Z_j, T^{\gamma}_{\tilde{a}}T^{\gamma}_{\Lambda^{\frac{1}{2}}}Z_j \right\rangle \right| & \geq c\gamma\| Z_j \|^2_{\frac{1}{2},\gamma}, \\
\Re\left\langle Z_j, \mathcal{R}_0T^{\gamma}_{\chi_2\Theta_0\times(\gamma+i\sigma)}Z_j \right\rangle & \leq C\| Z_j \|^2_{\frac{1}{2},\gamma}, \\
\Re\langle Z_j,T^{\gamma}_{\Lambda}T^{\gamma}_{\Theta_1}Z_4\rangle & \leq \varepsilon\gamma\| Z_j \|^2_{\frac{1}{2},\gamma}+\frac{1}{\varepsilon\gamma}\|Z_4\|^2_{\frac{3}{2},\gamma}, \\
\Re \left\langle Z_j,T^{\gamma}_{\Lambda}T^{\gamma}_{\Theta_0}Z_i \right\rangle & \leq \varepsilon\gamma\| Z_j \|^2_{\frac{1}{2},\gamma}+\frac{1}{\varepsilon\gamma}\|Z_i \|^2_{\frac{1}{2},\gamma}, \\
\Re \left\langle Z_j,T^{\gamma}_{\Lambda}T^{\gamma}_{r}W^+ \right\rangle & \leq \varepsilon\gamma^2\| Z_j \|^2_{0}+\frac{1}{\varepsilon\gamma^2}\|T^{\gamma}_rW^+ \|^2_{1,\gamma}, \\
\Re \left\langle Z_j,T^{\gamma}_{\Lambda}\mathcal{R}_{-1}W^+ \right\rangle & \leq \varepsilon\gamma^2\| Z_j \|^2_{0}+\frac{1}{\varepsilon\gamma^2}\| W^+ \|^2_{0}, \\
\Re \left\langle Z_j,T^{\gamma}_{\Lambda}F^+ \right\rangle & \leq \varepsilon\gamma^2\| Z_j \|^2_{0}+\frac{1}{\varepsilon\gamma^2}\| F^+ \|^2_{1,\gamma}.
\end{split}
\end{equation*}
Thus we obtain
\begin{equation}\label{para20}
\begin{split}
\gamma\| Z_j \|^2_{\frac{1}{2},\gamma}&\leq C\| Z_j \|^2_{\frac{1}{2},\gamma}+\frac{1}{\varepsilon\gamma}\|Z_4\|^2_{\frac{3}{2},\gamma}\\
&\quad + \frac{1}{\varepsilon\gamma} \sum_i \|Z_i \|^2_{\frac{1}{2},\gamma}+\frac{1}{\varepsilon\gamma^2} \left(\|T^{\gamma}_rW^+ \|^2_{1,\gamma} + \| W^+ \|^2_0+\| F^+ \|^2_{1,\gamma} \right),
\end{split}
\end{equation}
for $j=1,2,5,6,8,9,11,12,$ by taking $\varepsilon$ small enough.

For $j = 7,10,13$ in \eqref{para6}, we have
$$
T^{\gamma}_{\tau+iv^r_1\eta+iv^r_2\tilde{\eta}}Z_j+T^{\gamma}_{\Theta_1}Z_4 + \sum_i  T^{\gamma}_{\Theta_0}Z_i+T^{\gamma}_rW^++\mathcal{R}_{-1}W^+=\mathcal{R}_0F^+.$$
Following the same estimates for $Z_j$ with $j=1,2,5,6,8,9,11,12,$ we have
\begin{equation}\label{para21}
\begin{split}
\gamma\|T^{\gamma}_{\sigma}Z_j \|^2_0 & \leq\frac{1}{\varepsilon\gamma}\| Z_j \|^2_0+C\|T^{\gamma}_{\sigma}Z_j \|^2_0+\frac{1}{\varepsilon}\| Z_j \|^2_0+\frac{1}{\varepsilon\gamma} \left(\|T^{\gamma}_{\Lambda}T^{\gamma}_{\sigma}Z_4\|^2_0+\|Z_4\|^2_{1,\gamma} \right)\\
&\quad + \sum_i\frac{1}{\varepsilon\gamma} \left(\|T^{\gamma}_{\sigma}Z_i\|^2_0+\|Z_i \|^2_0 \right)\\
&\quad+\frac{1}{\varepsilon\gamma} \left(\|T^{\gamma}_rW^+ \|^2_{1,\gamma}+\| W^+ \|^2_0+\| F^+ \|^2_{1,\gamma} \right),
\end{split}
\end{equation}
and
\begin{equation}\label{para22}
\begin{split}
\gamma\| Z_j \|^2_{\frac{1}{2},\gamma}&\leq C\| Z_j \|^2_{\frac{1}{2},\gamma}+\frac{1}{\varepsilon\gamma}\|Z_4\|^2_{\frac{3}{2},\gamma}\\
&\quad+\frac{1}{\varepsilon\gamma} \sum_i \|Z_i \|^2_{\frac{1}{2},\gamma}+\frac{1}{\varepsilon\gamma^2}(\|T^{\gamma}_rW^+ \|^2_{1,\gamma}+\| W^+ \|^2_0+\| F^+ \|^2_{1,\gamma})
\end{split}
\end{equation}
for $j=7,10,13$. Note from \eqref{para18}--\eqref{para20} and \eqref{para21}--\eqref{para22} that the estimates for the terms with $j = 1,2,5,6,7,8,9,10,11,12,13$ are exactly the same.

For the incoming mode $Z_3$ of \eqref{para6},
\begin{equation}\label{para23}
\begin{split}
\partial_3 Z_3&=T^{\gamma}_{\omega^r+i\bar{\omega}^r}Z_3+T^{\gamma}_{\Theta_1}Z_1+T^{\gamma}_{\Theta_1}Z_2+T^{\gamma}_{\Theta_1}Z_7+T^{\gamma}_{\Theta_1}Z_{10}+T^{\gamma}_{\Theta_1}Z_{13}\\
&\quad+T^{\gamma}_{\Theta_0}Z_3+\sum_{i\neq3,4}T^{\gamma}_{\Theta_0}Z_i+T^{\gamma}_rW^++\mathcal{R}_{-1}W^++F^+.
\end{split}
\end{equation}
First, we apply symmetrizer $(T^{\gamma}_{\sigma})^{\ast}T^{\gamma}_{\frac{1}{\Lambda}}T^{\gamma}_{\sigma}$ to \eqref{para23},
\begin{equation*}
\begin{split}
\Re \left\langle T^{\gamma}_{\sigma}Z_3, T^{\gamma}_{\frac{1}{\Lambda}}T^{\gamma}_{\sigma}\partial_3Z_3 \right\rangle & = \Re \left\langle T^{\gamma}_{\sigma}Z_3, T^{\gamma}_{\frac{1}{\Lambda}}T^{\gamma}_{\sigma}T^{\gamma}_{\omega^r+i\bar{\omega}^r}Z_3 \right\rangle + \sum_{j=1,2,7,10,13} \Re \left\langle T^{\gamma}_{\sigma}Z_3, T^{\gamma}_{\frac{1}{\Lambda}}T^{\gamma}_{\sigma}T^{\gamma}_{\Theta_1}Z_j \right\rangle \\
&\quad + \Re \left\langle T^{\gamma}_{\sigma}Z_3, T^{\gamma}_{\frac{1}{\Lambda}}T^{\gamma}_{\sigma}T^{\gamma}_{\Theta_0}Z_3 \right\rangle + \sum_{i\neq3,4} \Re \left\langle T^{\gamma}_{\sigma}Z_3, T^{\gamma}_{\frac{1}{\Lambda}}T^{\gamma}_{\sigma}T^{\gamma}_{\Theta_0}Z_i \right\rangle \\
&\quad + \Re \left\langle T^{\gamma}_{\sigma}Z_3, T^{\gamma}_{\frac{1}{\Lambda}}T^{\gamma}_{\sigma}T^{\gamma}_rW^+ \right\rangle + \Re \left\langle T^{\gamma}_{\sigma}Z_3, T^{\gamma}_{\frac{1}{\Lambda}}T^{\gamma}_{\sigma}\mathcal{R}_{-1}W^+ \right\rangle \\
&\quad + \Re \left\langle T^{\gamma}_{\sigma}Z_3, T^{\gamma}_{\frac{1}{\Lambda}}T^{\gamma}_{\sigma}F^+ \right\rangle.
\end{split}
\end{equation*}
Similar to the case for the outgoing modes, we obtain
\begin{equation*}
\begin{split}
\partial_3 \Re \left\langle T^{\gamma}_{\sigma}Z_3,T^{\gamma}_{\frac{1}{\Lambda}}T^{\gamma}_{\sigma}Z_3 \right\rangle & = \Re \left\langle T^{\gamma}_{\partial_3\sigma}Z_3,T^{\gamma}_{\frac{1}{\Lambda}}T^{\gamma}_{\sigma}Z_3 \right\rangle + \Re \left\langle T^{\gamma}_{\sigma}Z_3,T^{\gamma}_{\frac{1}{\Lambda}}T^{\gamma}_{\partial_3\sigma}Z_3 \right\rangle\\
&\quad + \Re \left\langle T^{\gamma}_{\sigma}\partial_3Z_3,T^{\gamma}_{\frac{1}{\Lambda}}T^{\gamma}_{\sigma}Z_3 \right\rangle + \Re \left\langle T^{\gamma}_{\sigma}Z_3,T^{\gamma}_{\frac{1}{\Lambda}}T^{\gamma}_{\sigma}\partial_3Z_3 \right\rangle,\\
\Re\left\langle T^{\gamma}_{\sigma}Z_3, T^{\gamma}_{\frac{1}{\Lambda}}T^{\gamma}_{\sigma}T^{\gamma}_{\omega^r+i\bar{\omega}^r}Z_3 \right\rangle & = \Re \left\langle T^{\gamma}_{\sigma}Z_3, T^{\gamma}_{\frac{\omega^r+i\bar{\omega}^r}{\Lambda}}T^{\gamma}_{\sigma}Z_3 \right\rangle + \Re \left\langle T^{\gamma}_{\sigma}Z_3,\mathcal{R}_0Z_3 \right\rangle, \\
\Re\left\langle T^{\gamma}_{\partial_3\sigma}Z_3,T^{\gamma}_{\frac{1}{\Lambda}}T^{\gamma}_{\sigma}Z_3 \right\rangle & \leq \varepsilon \|T^{\gamma}_{\sigma}Z_3\|^2_0+\frac{1}{\varepsilon} \|Z_3\|^2_0, \\
\Re\left\langle T^{\gamma}_{\sigma}Z_3,T^{\gamma}_{\frac{1}{\Lambda}}T^{\gamma}_{\partial_3\sigma}Z_3 \right\rangle & \leq \varepsilon\|T^{\gamma}_{\sigma}Z_3\|^2_0+\frac{1}{\varepsilon}\|Z_3\|^2_0, \\
\Re\left\langle T^{\gamma}_{\sigma}\partial_3Z_3,T^{\gamma}_{\frac{1}{\Lambda}}T^{\gamma}_{\sigma}Z_3 \right\rangle & =
\Re \left\langle T^{\gamma}_{\frac{1}{\Lambda}}T^{\gamma}_{\sigma}\partial_3Z_3,T^{\gamma}_{\sigma}Z_3 \right\rangle + \Re \left\langle T^{\gamma}_{\sigma}\partial_3Z_3,\mathcal{R}_{-2}T^{\gamma}_{\sigma}Z_3 \right\rangle, \\
\Re\left\langle T^{\gamma}_{\sigma}Z_3, T^{\gamma}_{\frac{\omega^r+i\bar{\omega}^r}{\Lambda}}T^{\gamma}_{\sigma}Z_3 \right\rangle & \leq -c \|T^{\gamma}_{\sigma}Z_3\|^2_0, \\
\Re\left\langle T^{\gamma}_{\sigma}Z_3,\mathcal{R}_0Z_3 \right\rangle & \leq \varepsilon \|T^{\gamma}_{\sigma}Z_3\|^2_0 + \frac{1}{\varepsilon}\|Z_3\|^2_0, \\
\Re\left\langle T^{\gamma}_{\sigma}Z_3, T^{\gamma}_{\frac{1}{\Lambda}}T^{\gamma}_{\sigma}T^{\gamma}_{\Theta_1}Z_j \right\rangle & =
\Re \left\langle T^{\gamma}_{\sigma}Z_3, T^{\gamma}_{\frac{1}{\Lambda}}T^{\gamma}_{\Theta_1}T^{\gamma}_{\sigma}Z_j \right\rangle + \Re \left\langle T^{\gamma}_{\sigma}Z_3, \mathcal{R}_0Z_j \right\rangle, \\
& \leq \varepsilon \|T^{\gamma}_{\sigma}Z_3\|^2_0+\frac{1}{\varepsilon}\|T^{\gamma}_{\sigma}Z_j \|^2_0+\frac{1}{\varepsilon}\| Z_j \|^2_0, \quad j = 1, 2, 7, 10, 13, \\
\Re\left\langle T^{\gamma}_{\sigma}Z_3, T^{\gamma}_{\frac{1}{\Lambda}}T^{\gamma}_{\sigma}T^{\gamma}_{\Theta_0}Z_3 \right\rangle & =
\Re \left\langle T^{\gamma}_{\sigma}Z_3, T^{\gamma}_{\frac{1}{\Lambda}}T^{\gamma}_{\Theta_0}T^{\gamma}_{\sigma}Z_3 \right\rangle + \Re \left\langle T^{\gamma}_{\sigma}Z_3, \mathcal{R}_{-1}Z_3 \right\rangle \\
& \leq \varepsilon \|T^{\gamma}_{\sigma}Z_3\|^2_0 + \frac{1}{\varepsilon}\|T^{\gamma}_{\sigma}Z_3\|^2_{-1,\gamma} + \frac{1}{\varepsilon}\|Z_3\|^2_{-1,\gamma}, 
\end{split}
\end{equation*}
\begin{equation*}
\begin{split}
\Re\left\langle T^{\gamma}_{\sigma}Z_3, T^{\gamma}_{\frac{1}{\Lambda}}T^{\gamma}_{\sigma}T^{\gamma}_{\Theta_0}Z_i \right\rangle & \leq \varepsilon \|T^{\gamma}_{\sigma}Z_3\|^2_0 + \frac{1}{\varepsilon}\|T^{\gamma}_{\sigma}Z_i\|^2_{-1,\gamma} + \frac{1}{\varepsilon}\|Z_i \|^2_{-1,\gamma}, \quad i \ne 3, 4, \\
\Re \left\langle T^{\gamma}_{\sigma}Z_3, T^{\gamma}_{\frac{1}{\Lambda}}T^{\gamma}_{\sigma}T^{\gamma}_{r}W^+ \right\rangle & \leq \varepsilon\|T^{\gamma}_{\sigma}Z_3\|^2_0+\frac{1}{\varepsilon} \|T^{\gamma}_{r}W^+ \|^2_{0}, \\
\Re\left\langle T^{\gamma}_{\sigma}Z_3, T^{\gamma}_{\frac{1}{\Lambda}}T^{\gamma}_{\sigma}\mathcal{R}_{-1}W^+ \right\rangle & \leq \varepsilon\|T^{\gamma}_{\sigma}Z_3\|^2_0+\frac{1}{\varepsilon}\| W^+ \|^2_{-1,\gamma}, \\
\Re\left\langle T^{\gamma}_{\sigma}Z_3, T^{\gamma}_{\frac{1}{\Lambda}}T^{\gamma}_{\sigma}F^+ \right\rangle & \leq \varepsilon\|T^{\gamma}_{\sigma}Z_3\|^2_0+\frac{1}{\varepsilon}\| F^+ \|^2_{0}.
\end{split}
\end{equation*}
Then, we have
\begin{equation}\label{para24}
\begin{split}
\vertiii{T^{\gamma}_{\sigma}Z_3 }^2_0 & \leq \Re \left\langle T^{\gamma}_{\sigma}Z_3, T^{\gamma}_{\frac{1}{\Lambda}}T^{\gamma}_{\sigma}Z_3 \right\rangle\Big|_{x_3=0} + \frac{1}{\varepsilon}\vertiii{ Z_3 }^2_0 + \frac{1}{\varepsilon} \sum_{j = 1,2,7,10,13} \left(\vertiii{ T^{\gamma}_{\sigma}Z_j }^2_0+\vertiii{ Z_j }^2_0 \right) \\
&\quad + \frac{1}{\varepsilon} \left(\vertiii{T^{\gamma}_{\sigma}Z_3 }^2_{-1,\gamma}+\vertiii{ Z_3 }^2_{-1,\gamma} \right) + \sum_{i\neq3,4}\frac{1}{\varepsilon} \left(\vertiii{T^{\gamma}_{\sigma}Z_i}^2_{-1,\gamma}+\vertiii{Z_i }^2_{-1,\gamma} \right)\\
&\quad + \frac{1}{\varepsilon} \left(\vertiii{ T^{\gamma}_rW^+ }^2_0+\vertiii{W^+}^2_{-1,\gamma}+\vertiii{ F^+ }^2_0 \right).
\end{split}
\end{equation}
Now, we apply symmetrizer $1$ to obtain that
\begin{equation*}
\begin{split}
\partial_3\Re\langle Z_3,Z_3\rangle & = 2\Re \langle Z_3,\partial_3Z_3\rangle\\
& = 2 \Re \left\langle Z_3, T^{\gamma}_{\omega^r+i\bar{\omega}^r}Z_3 \right\rangle + \sum_{j=1,2,7,10,13} 2 \Re \left\langle Z_3,T^{\gamma}_{\Theta_1}Z_j \right\rangle + 2 \Re \left\langle Z_3, T^{\gamma}_{\Theta_0}Z_3 \right\rangle \\
&\quad + \sum_{i\neq3,4} 2\Re \left\langle Z_3,T^{\gamma}_{\Theta_0}Z_i \right\rangle + 2\Re\left\langle Z_3,T^{\gamma}_rW^+ \right\rangle \\
&\quad + 2\Re \left\langle Z_3,\mathcal{R}_{-1}W^+ \right\rangle + 2\Re \left\langle Z_3,F^+ \right\rangle.
\end{split}
\end{equation*}
Note that
\begin{equation*}
\begin{split}
\partial_3\Re\langle Z_3,Z_3\rangle & =2\Re\langle Z_3,\partial_3Z_3\rangle \\
& = 2\Re \left\langle Z_3,T^{\gamma}_{\omega^r+i\bar{\omega}^r}Z_3 \right\rangle + \sum_{j=1,2,7,10,13}  2\Re\left\langle Z_3,T^{\gamma}_{\Theta_1}Z_j \right\rangle + 2\Re \left\langle Z_3,T^{\gamma}_{\Theta_0}Z_3 \right\rangle\\
&\quad + \sum_{i\neq3,4} 2\Re\left\langle Z_3,T^{\gamma}_{\Theta_0}Z_i \right\rangle+2\Re\left\langle Z_3,T^{\gamma}_rW^+ \right\rangle\\
&\quad + 2\Re\left\langle Z_3,\mathcal{R}_{-1}W^+ \right\rangle+2\Re\left\langle Z_3,F^+ \right\rangle,
\end{split}
\end{equation*}
and
\vspace{-1ex}
\begin{equation*}
\begin{split}
2\Re\left\langle Z_3,T^{\gamma}_{\omega^r+i\bar{\omega}^r}Z_3 \right\rangle & = 2\Re \left\langle Z_3,(T^{\gamma}_{\Lambda^{\frac{1}{2}}})^{\ast}T^{\gamma}_{\frac{\omega^r+i\bar{\omega}^r}{\Lambda^{\frac{1}{2}}}}Z_3 \right\rangle + 2\Re \langle Z_3,\mathcal{R}_0Z_3\rangle\\
& = 2\Re \left\langle T^{\gamma}_{\Lambda^{\frac{1}{2}}}Z_3,T^{\gamma}_{\frac{\omega^r+i\bar{\omega}^r}{\Lambda^{\frac{1}{2}}}}T^{\gamma}_{\Lambda^{\frac{1}{2}}}Z_3 \right\rangle + 2 \Re \left\langle T^{\gamma}_{\Lambda^{\frac{1}{2}}}Z_3,\mathcal{R}_{-\frac{1}{2}}Z_3 \right\rangle + 2\Re\langle Z_3,\mathcal{R}_0Z_3\rangle.
\end{split}
\end{equation*}
We can obtain that
\begin{equation*}
\begin{split}
\Re\left\langle T^{\gamma}_{\Lambda^{\frac{1}{2}}}Z_3,T^{\gamma}_{\frac{\omega^r+i\bar{\omega}^r}{\Lambda^{\frac{1}{2}}}}T^{\gamma}_{\Lambda^{\frac{1}{2}}}Z_3 \right\rangle & \leq -c\|Z_3\|^2_{\frac{1}{2},\gamma},\\
\Re\left\langle T^{\gamma}_{\Lambda^{\frac{1}{2}}}Z_3,\mathcal{R}_{-\frac{1}{2}}Z_3 \right\rangle & \leq \varepsilon\|Z_3\|^2_{\frac{1}{2},\gamma}+\frac{1}{\varepsilon}\|Z_3\|^2_{-\frac{1}{2},\gamma},\\
\Re\langle Z_3,\mathcal{R}_0Z_3\rangle & \leq C\|Z_3\|^2_0.
\end{split}
\end{equation*}
Hence it follows that
\begin{equation}\label{para25}
\begin{split}
\vertiii{ Z_3 }^2_{\frac{1}{2},\gamma} & \leq \|Z_3|_{x_3=0} \|^2_0 + \left(C+\frac{1}{\varepsilon} \right)\vertiii{ Z_3 }^2_0+\frac{1}{\varepsilon} \sum_{j=1,2,7,10,13} \vertiii{ Z_j }^2_{\frac{1}{2},\gamma}+\sum_{i\neq3,4}\frac{1}{\varepsilon}\vertiii{Z_i }^2_{-\frac{1}{2},\gamma}\\
&\quad+\frac{1}{\varepsilon} \left( \vertiii{ T^{\gamma}_rW^+ }^2_{-\frac{1}{2},\gamma}+\vertiii{W^+}^2_{-\frac{3}{2},\gamma}+\vertiii{ F^+ }^2_{-\frac{1}{2},\gamma} \right).
\end{split}
\end{equation}

Considering \eqref{para9},\eqref{para10},\eqref{para18},\eqref{para20},\eqref{para24},\eqref{para25}, dividing them by the appropriate power of $\gamma,$ we obtain that
\begin{equation*}
\begin{split}
\frac{1}{\gamma}\vertiii{T^{\gamma}_{\Lambda}T^{\gamma}_{\sigma}Z_4}^2_0+\frac{1}{\gamma}\vertiii{ T^{\gamma}_{\Lambda^{\frac{1}{2}}}T^{\gamma}_{\sigma}Z_4|_{x_3=0} }^2_0 & \lesssim \frac{1}{\gamma}\| T^{\gamma}_{\sigma}Z_4|_{x_3=0} \|^2_0 + \left(C+\frac{1}{\varepsilon} \right) \frac{1}{\gamma}\vertiii{Z_4}^2_{1,\gamma} \\
&\quad + \sum_{i\neq 3,4}\frac{1}{\varepsilon\gamma} \left( \vertiii{T^{\gamma}_{\sigma}Z_i}^2_0+\vertiii{Z_i }^2_0 \right)\\
&\quad +\frac{1}{\varepsilon\gamma} \left( \vertiii{T^{\gamma}_{r}W^+}^2_{1,\gamma}+\vertiii{W^+}^2_0+\vertiii{ F^+ }^2_{1,\gamma} \right), \\
\vertiii{Z_4}^2_{\frac{3}{2},\gamma} + \vertiii{ Z_4|_{x_3=0} }^2_{1,\gamma} & \lesssim C\vertiii{Z_4}^2_{1,\gamma}+\frac{1}{\varepsilon}\vertiii{Z_4}^2_{\frac{1}{2},\gamma} \\
& \quad +\sum_{i\neq 3,4} \left( \frac{1}{\varepsilon}\vertiii{Z_i }^2_{\frac{1}{2},\gamma}+\frac{1}{\varepsilon\gamma}\vertiii{Z_i }^2_0 \right)\\
&\quad + \frac{1}{\varepsilon\gamma} \left( \vertiii{T^{\gamma}_{r}W^+}^2_{1,\gamma}+\vertiii{W^+}^2_0+\vertiii{ F^+ }^2_{1,\gamma} \right), \\
\end{split}
\end{equation*}
\vspace{-1.5ex}
\begin{equation*}
\begin{split}
\gamma\vertiii{T^{\gamma}_{\sigma}Z_3 }^2_0 & \leq \gamma\Re\left\langle T^{\gamma}_{\sigma}Z_3, T^{\gamma}_{\frac{1}{\Lambda}}T^{\gamma}_{\sigma}Z_3 \right\rangle\Big|_{x_3=0}+\frac{\gamma}{\varepsilon}\vertiii{ Z_3 }^2_0 + \frac{\gamma}{\varepsilon} \left( \vertiii{ T^{\gamma}_{\sigma}Z_j }^2_0+\vertiii{ Z_j }^2_0 \right) \\
&\quad + \frac{\gamma}{\varepsilon}\left(\vertiii{T^{\gamma}_{\sigma}Z_3 }^2_{-1,\gamma}+\vertiii{ Z_3 }^2_{-1,\gamma}\right) + \sum_{i\neq3,4}\frac{\gamma}{\varepsilon}\left(\vertiii{T^{\gamma}_{\sigma}Z_i}^2_{-1,\gamma}+\vertiii{Z_i }^2_{-1,\gamma}\right)\\
&\quad + \frac{\gamma}{\varepsilon}(\vertiii{ T^{\gamma}_rW^+ }^2_0+\vertiii{W^+}^2_{-1,\gamma}+\vertiii{ F^+ }^2_0), \\
\gamma^2 \vertiii{ Z_3 }^2_{\frac{1}{2},\gamma}&\leq \gamma^2\|Z_3|_{x_3=0} \|^2_0 + \left( C+\frac{1}{\varepsilon} \right) \gamma^2\vertiii{ Z_3 }^2_0+\frac{\gamma^2}{\varepsilon}\vertiii{ Z_j }^2_{\frac{1}{2},\gamma}+\sum_{i\neq3,4}\frac{\gamma^2}{\varepsilon}\vertiii{Z_i }^2_{-\frac{1}{2},\gamma}\\
& \quad + \frac{\gamma^2}{\varepsilon} \left( \vertiii{ T^{\gamma}_rW^+ }^2_{-\frac{1}{2},\gamma}+\vertiii{W^+}^2_{-\frac{3}{2},\gamma}+\vertiii{ F^+ }^2_{-\frac{1}{2},\gamma} \right).
\end{split}
\end{equation*}
For $j \ne 3,4$,
\begin{equation*}
\begin{split}
\gamma\|T^{\gamma}_{\sigma}Z_j \|^2_0 & \leq\frac{1}{\varepsilon\gamma}\| Z_j \|^2_0+C\|T^{\gamma}_{\sigma}Z_j \|^2_0+\frac{1}{\varepsilon}\| Z_j \|^2_0 + \frac{1}{\varepsilon\gamma}\left(\|T^{\gamma}_{\Lambda}T^{\gamma}_{\sigma}Z_4\|^2_0+\|Z_4\|^2_{1,\gamma}\right)\\
&\quad + \sum_i\frac{1}{\varepsilon\gamma} \left(\|T^{\gamma}_{\sigma}Z_i\|^2_0+\|Z_i \|^2_0 \right) + \frac{1}{\varepsilon\gamma} \left(\vertiii{ T^{\gamma}_rW^+ }^2_{1,\gamma}+\vertiii{W^+}^2_0+\vertiii{ F^+ }^2_{1,\gamma} \right), \\
\gamma^2\| Z_j \|^2_{\frac{1}{2},\gamma} & \leq C\gamma\| Z_j \|^2_{\frac{1}{2},\gamma}+\frac{1}{\varepsilon} \left( \|Z_4\|^2_{\frac{3}{2},\gamma} +\frac{1}{\varepsilon}\|Z_i \|^2_{\frac{1}{2},\gamma} \right) + \frac{1}{\varepsilon\gamma} \left( \vertiii{ T^{\gamma}_rW^+ }^2_{1,\gamma}+\vertiii{W}^+_0+\vertiii{ F^+ }^2_{1,\gamma} \right).
\end{split}
\end{equation*}
Summing up the above estimates and taking $\gamma$ sufficiently large, we have for $1 \le j \le 13$ and $j \ne 4$,
\begin{equation}\label{para26}
\begin{split}
&\frac{1}{\gamma} \left( \vertiii{T^{\gamma}_{\Lambda}T^{\gamma}_{\sigma}Z_4}^2_0 + \left\| T^{\gamma}_{\Lambda^{\frac{1}{2}}}T^{\gamma}_{\sigma}Z_4|_{x_3=0} \right\|^2_0 \right) +\vertiii{Z_4}^2_{\frac{3}{2},\gamma} +\| Z_4|_{x_3=0} \|^2_{1,\gamma} +\gamma\vertiii{T^{\gamma}_{\sigma}Z_j }^2_0 + \gamma^2\vertiii{ Z_j }^2_{\frac{1}{2},\gamma} \\
&\  \leq \gamma\Re\left\langle T^{\gamma}_{\sigma}Z_3,T^{\gamma}_{\frac{1}{\Lambda}}T^{\gamma}_{\sigma}Z_3 \right\rangle \Big|_{x_3=0}+\gamma^2\|Z_3|_{x_3=0} \|^2_0+\frac{1}{\gamma}\vertiii{Z_4}^2_{1,\gamma}\\
&\ \quad + \sum_i \left( \vertiii{Z_i }^2_{\frac{1}{2},\gamma}+\frac{1}{\gamma}\vertiii{T^{\gamma}_{\sigma}Z_i}^2_0 \right) + \frac{1}{\gamma} \left( \vertiii{T^{\gamma}_{r}W^+}^2_{1,\gamma}+\vertiii{W^+}^2_0+\vertiii{ F^+ }^2_{1,\gamma} \right)\\
&\ \leq \gamma\Re\left\langle T^{\gamma}_{\sigma}Z_3,T^{\gamma}_{\frac{1}{\Lambda}}T^{\gamma}_{\sigma}Z_3 \right\rangle \Big|_{x_3=0} + \gamma^2\|Z_3|_{x_3=0} \|^2_0 +\frac{1}{\gamma} \left(\vertiii{T^{\gamma}_{r}W^+}^2_{1,\gamma}+\vertiii{W^+}^2_0+\vertiii{ F^+ }^2_{1,\gamma} \right).\\
\end{split}
\end{equation}

We remark that the extra degree of freedom can cause complicated interaction between the poles of $W^+$ and $W^-.$
Hence, $\tau=-iv^r_1\eta-iv^r_2\tilde{\eta}$ is also the pole of the differential equation for $W^-$ in \eqref{para2}, as long as $(v^r_1-v^l_1,v^r_2-v^l_2)\cdot (\eta,\tilde{\eta})=0.$ This is a key point in 3D analysis, since it is possible that the poles for the two equations coincide. In a similar way as before, we obtain for $14 \le j \le 26$ and $j \ne 17$ that
\begin{equation}\label{para27}
\begin{split}
&\frac{1}{\gamma} \left( \vertiii{ T^{\gamma}_{\Lambda}T^{\gamma}_{\sigma}Z_{17} }^2_0 + \left\|T^{\gamma}_{\Lambda^{\frac{1}{2}}}T^{\gamma}_{\sigma}Z_{17}|_{x_3=0} \right\|^2_0 \right) + \vertiii{ Z_{17} }^2_{\frac{3}{2},\gamma} + \|Z_{17}|_{x_3=0}\|^2_{1,\gamma}  \\
& \quad\quad  + \gamma\vertiii{ T^{\gamma}_{\sigma}Z_j }^2_0+\gamma^2\vertiii{ Z_j }^2_{\frac{1}{2},\gamma} \\
&\quad\leq \gamma \Re \left\langle T^{\gamma}_{\sigma}Z_{16},T^{\gamma}_{\frac{1}{\Lambda}}T^{\gamma}_{\sigma}Z_{16} \right\rangle \Big|_{x_3=0} + \gamma^2 \|Z_{16}|_{x_3=0}\|^2_0\\
&\quad\quad + \frac{1}{\gamma} \left( \vertiii{ T^{\gamma}_{r}W^- }^2_{1,\gamma} + \vertiii{ W^- }^2_0 + \vertiii{ F^- }^2_{1,\gamma} \right),\\
\end{split}
\end{equation}
where
\begin{equation}\label{def:Zminus.p1}
(Z_{14},\cdots, Z_{26})^{\top}:=T^{\gamma}_{\chi_1 Q^l}T^{\gamma}_{\chi_{p_1}}W^-.
\end{equation}
and $Q^l$ is the transformation matrix for $W^-,$ which is defined in a similarly way as for $Q^r$.
We write $Z_4$ and $Z_{17}$ for the outgoing modes and $Z_3$ and $Z_{16}$ for the incoming modes. Then we have
$$
Z_{\mathrm{in}}=(Z_3,Z_{16})^{\top} \quad \text{ and } \quad Z_{\mathrm{out}}=(Z_4,Z_{17})^{\top}.
$$
So the last step is to use the boundary conditions in \eqref{para} to estimate the terms $\|Z_3|_{x_3=0} \|^2_0$,  $\|Z_{16}|_{x_3=0}\|^2_0$, $\gamma\Re\left\langle T^{\gamma}_{\sigma}Z_3, T^{\gamma}_{\frac{1}{\Lambda}}T^{\gamma}_{\sigma}Z_3 \right\rangle \Big|_{x_3=0}$, and $\gamma\Re \left\langle T^{\gamma}_{\sigma}Z_{16}, T^{\gamma}_{\frac{1}{\Lambda}}T^{\gamma}_{\sigma}Z_{16} \right\rangle \Big|_{x_3=0}$. Notice that
\begin{align*}
\gamma\Re\left\langle T^{\gamma}_{\sigma}Z_3, T^{\gamma}_{\frac{1}{\Lambda}}T^{\gamma}_{\sigma}Z_3 \right\rangle \Big|_{x_3=0 } & \lesssim \|T^{\gamma}_{\sigma}Z_3|_{x_3=0}\|^2_0, \\
\gamma\Re \left\langle T^{\gamma}_{\sigma}Z_{16}, T^{\gamma}_{\frac{1}{\Lambda}}T^{\gamma}_{\sigma}Z_{16} \right\rangle \Big|_{x_3=0} & \lesssim \|T^{\gamma}_{\sigma}Z_{16}|_{x_3=0}\|^2_0.
\end{align*}
Therefore we only need to estimate the boundary terms $T^{\gamma}_{\sigma}Z_{\mathrm{in}}|_{x_3=0}$ and $Z_{\mathrm{in}}|_{x_3=0}$. The goal is to use the boundary conditions \eqref{para}
to prove the following estimate:
\begin{equation}\label{para28}
\gamma^2 \|Z_{\mathrm{in}}|_{x_3=0}\|^2_0 + \left\|T^{\gamma}_{\sigma}Z_{\mathrm{in}}|_{x_3=0} \right\|^2_0 \lesssim \|G\|^2_{1,\gamma} + \|Z_{\mathrm{out}}|_{x_3=0}\|^2_{1,\gamma} + \left\| W^{nc}|_{x_3=0} \right\|^2_0.
\end{equation}

\medskip

Let us rewrite the Lopatinski$\breve{\mathrm{i}}$ matrix as
\begin{equation}\label{lop}
\begin{split}
\beta\left[\begin{matrix}
E^r & {\mathbf 0}\\
\mathbf {0} & E^l\\
\end{matrix}\right] =: \left[\begin{matrix}
\varsigma_1 & \varsigma_2\\
\varsigma_3 & \varsigma_4\\
\end{matrix}\right].
\end{split}
\end{equation}
We calculate its determinant at $x_3=0$ to satisfy
$$
\varsigma_1\varsigma_4-\varsigma_2\varsigma_3=k^r_1k^l_1h(t,x_1,x_2,\tau,\eta,\tilde{\eta}),\quad \text{where} \quad h(t,x_1,x_2,\tau,\eta,\tilde{\eta})\neq0
$$
in a neighborhood of $(-iv^r_1\eta-iv^r_2\tilde{\eta},\eta,\tilde{\eta})\in \Sigma$ and in a neighborhood of $(-iv^l_1\eta-iv^l_2\tilde{\eta},\eta,\tilde{\eta})\in \Sigma.$ Similar to the constant-coefficient case \cite[Lemma 3.6]{RChen2021}, let us assume without loss of generality that $\varsigma_1\neq0$.

Define the following matrices in a suitably small neighborhood of $\mathcal{V}^r_{p_1}\cup \mathcal{V}^l_{p_1}$:
\begin{equation}\label{P1P2}
P_1:=\left[\begin{matrix}
\frac{1}{\varsigma_1} & 0\\
-\frac{\varsigma_3}{\varsigma_1 h \Lambda^2} & \frac{1}{h \Lambda^2}
\end{matrix}\right],\qquad
P_2:= \left[\begin{matrix}
1 & -\varsigma_2\\
 0 & \varsigma_1\\
\end{matrix}\right].
\end{equation}
It is easily seen that $P_1, P_2 \in \Gamma^0_2.$

\begin{remark}
The definition \eqref{P1P2} is given under the temporary assumption $\varsigma_1\neq 0$.
If $\varsigma_1=0$ at some point of $\supp\chi_1$, we instead permute the roles of $(\varsigma_1,\varsigma_2)$ and
define $P_1,P_2$ using any non-vanishing component among $\varsigma_1,\varsigma_2$.
By shrinking the microlocal neighborhood if necessary, one can always ensure that the chosen component stays bounded away from $0$
on $\supp\chi_1$, so that $P_1,P_2\in\Gamma^0_2$ remain elliptic there.
\end{remark}
Shrinking further  $\mathcal{V}^r_{p_1}\cup \mathcal{V}^l_{p_1}$ if necessary, we have
\begin{equation}\label{beta}
\beta_{\rm{in}}:=P_1\beta\left[\begin{matrix}
E^r & {\mathbf 0}\\
{\mathbf 0} & E^l
\end{matrix}\right]P_2=\left[\begin{matrix}
1 & 0\\
0 & \Lambda^{-2}k_1^rk_1^l \end{matrix}\right],
\end{equation}
where, by \eqref{def k1},
\[
k_1^r=\gamma+i\sigma_r,\qquad k_1^l=\gamma+i\sigma_l,
\]
with
\[
\sigma_r:=\delta+v^r_1\eta+v^r_2\tilde{\eta},
\qquad
\sigma_l:=\delta+v^l_1\eta+v^l_2\tilde{\eta}.
\]
Thus the second diagonal entry of $\Lambda\beta_{\rm in}$ is a first-order symbol $\Lambda^{-1}k_1^rk_1^l=\Lambda^{-1}(\gamma+i\sigma_r)(\gamma+i\sigma_l)$. Write
\[
\bar{\Lambda}:=(\delta^2+\eta^2+\tilde{\eta}^2)^{1/2}.
\]
Shrinking the microlocal neighborhood if necessary, we may assume that $\Lambda\sim \bar{\Lambda}$ on $\supp\tilde{\chi}_2$. Since $\supp\tilde{\chi}_2$ is disjoint from $\Upsilon_r^{(2)}$, the last two factors in $\sigma$ (see \eqref{root}) are elliptic on $\supp\tilde{\chi}_2$. Hence
\[
e_{p_1}:=\Lambda\bar{\Lambda}^{-3}(\delta-V_1\sqrt{\eta^2+\tilde{\eta}^2})(\delta-V_2\sqrt{\eta^2+\tilde{\eta}^2})\in \Gamma^0_2
\]
is a real-valued elliptic symbol there, and therefore
\begin{equation}\label{sigma.factor.case1}
\sigma=e_{p_1}\Lambda^{-1}\sigma_r\sigma_l
\qquad\text{on }\supp\tilde{\chi}_2.
\end{equation}
In particular,
 \begin{equation}\label{sigma.bound.case1}
 |\sigma|\lesssim \Lambda,\qquad\text{on }\supp\tilde{\chi}_2.
 \end{equation}

We now fix the four cut-off functions $\tilde{\chi}_1,$ $\tilde{\chi}_2,$ $\tilde{\chi}_3$ and $\tilde{\chi}_4$ such that
\begin{align*}
&\tilde{\chi}_1\equiv1 \text{ in a neighborhood of } \supp \chi_1\cap\{x_3=0\}.\nonumber\\
&\tilde{\chi}_j\equiv1 \text{ in a neighborhood of } \supp \tilde{\chi}_{j-1}, \text{ for } j=2,3,4.\\
&\supp \tilde{\chi}_4\subseteq  \mathcal{V}^r_{p_1}\cup \mathcal{V}^l_{p_1}\cap \{x_3=0\}\times \Sigma.
\end{align*}
Following the argument of \cite[Section 3.4.3]{Coulombel2002}, we can obtain the following estimate by using the localized G${\rm\mathring a}$rding's inequality:
\begin{equation}\label{b1}
\left\|T^{\gamma}_{\tilde{\chi}_2\Lambda\beta_{\mathrm{in}}}T^{\gamma}_{\tilde{\chi}_1}T^{\gamma}_{\tilde{\chi}_4P^{-1}_2}Z_{\mathrm{in}}|_{x_3=0} \right\|_0 \lesssim \|G\|_{1,\gamma}+\|Z_{\mathrm{out}}|_{x_3=0}\|_{1,\gamma} + \left\| W^{nc}|_{x_3=0} \right\|_0.
\end{equation}
Now, we use the special structure of $\beta_{\mathrm{in}}$ to obtain a lower bound for the term on the left-hand side of \eqref{b1}.
Define
\begin{align}\label{b2}
(\vartheta_1,\vartheta_2)^{\top}:=T^{\gamma}_{\tilde{\chi}_4P^{-1}_2}Z_{\mathrm{in}}|_{x_3=0}.
\end{align}
From \eqref{beta}, we have
\begin{equation}\label{b3}
\left\|T^{\gamma}_{\tilde{\chi}_2\Lambda\beta_{\mathrm{in}}}T^{\gamma}_{\tilde{\chi}_1}T^{\gamma}_{\tilde{\chi}_4P^{-1}_2}Z_{\mathrm{in}}|_{x_3=0} \right\|^2_0 = \left\|T^{\gamma}_{\tilde{\chi}_2\Lambda}T^{\gamma}_{\tilde{\chi}_1}\vartheta_1 \right\|^2_0 + \left\|T^{\gamma}_{\tilde{\chi}_2\Lambda^{-1}k_1^rk_1^l}T^{\gamma}_{\tilde{\chi}_1}\vartheta_2 \right\|^2_0.
\end{equation}
Applying the localized G${\rm\mathring a}$rding's inequality (see Lemma \ref{lemmapara} {\rm (vii)}), we have
\begin{equation}\label{b4}
\begin{split}
\left\|T^{\gamma}_{\tilde{\chi}_2\Lambda}T^{\gamma}_{\tilde{\chi}_1}\vartheta_1 \right\|^2_0 & = \left\langle(T^{\gamma}_{\tilde{\chi}_2\Lambda})^{\ast}T^{\gamma}_{\tilde{\chi}_2\Lambda}T^{\gamma}_{\tilde{\chi}_1}\vartheta_1,T^{\gamma}_{\tilde{\chi}_1}\vartheta_1 \right\rangle \\
& \geq \Re \left\langle T^{\gamma}_{\tilde{\chi}^2_2\Lambda^2}T^{\gamma}_{\tilde{\chi}_1}\vartheta_1,T^{\gamma}_{\tilde{\chi}_1}\vartheta_1 \right\rangle - C \left\|T^{\gamma}_{\tilde{\chi}_1}\vartheta_1 \right\|_0 \left\|T^{\gamma}_{\tilde{\chi}_1}\vartheta_1 \right\|_{1,\gamma} \\
& \geq c \left\|T^{\gamma}_{\tilde{\chi}_1}\vartheta_1 \right\|^2_{1,\gamma} - C \|\vartheta_1\|^2_0 - C \left\|T^{\gamma}_{\tilde{\chi}_1}\vartheta_1 \right\|^2_0 \\
& \gtrsim \|\vartheta_1\|^2_{1,\gamma} - C\|Z_{\mathrm{in}}|_{x_3=0}\|^2_0 \\
& \gtrsim \gamma^2\|\vartheta_1\|^2_0 + \left\| T^{\gamma}_{\sigma}\vartheta_1 \right\|^2_0 - C\|Z_{\mathrm{in}}|_{x_3=0}\|^2_0,
\end{split}
\end{equation}
for sufficiently large $\gamma$. Similarly
\begin{equation}\label{b4bis}
\left\|T^{\gamma}_{\tilde{\chi}_2\Lambda^{-1}k_1^rk_1^l}T^{\gamma}_{\tilde{\chi}_1}\vartheta_2 \right\|^2_0
\gtrsim
\left\|T^{\gamma}_{\sigma}\vartheta_2 \right\|^2_0
-
C \|Z_{\mathrm{in}}|_{x_3=0}\|^2_0.
\end{equation}
Inserting the above two estimates into \eqref{b3}, we have
\begin{equation}\label{b5}
\left\|T^{\gamma}_{\tilde{\chi}_2\Lambda\beta_{\mathrm{in}}}T^{\gamma}_{\tilde{\chi}_1}T^{\gamma}_{\tilde{\chi}_4P^{-1}_2}Z_{\mathrm{in}}|_{x_3=0} \right\|^2_0 \gtrsim \gamma^2 \|\vartheta_1\|^2_0 + \left\|T^{\gamma}_{\sigma}(\vartheta_1,\vartheta_2)\right\|^2_0-C\|Z_{\mathrm{in}}|_{x_3=0}\|^2_0.
\end{equation}
Using the fact that $\tilde{\chi}_3\chi_1\equiv\chi_1,$ we obtain that
$$T^{\gamma}_{\tilde{\chi}_3}T^{\gamma}_{\sigma}Z_{\mathrm{in}}=T^{\gamma}_{\sigma}T^{\gamma}_{\tilde{\chi}_3}Z_{\mathrm{in}}+\mathcal{R}_0Z_{\mathrm{in}}=T^{\gamma}_{\sigma}Z_{\mathrm{in}}+\mathcal{R}_0Z_{\mathrm{in}}.$$
Then, we have
\begin{align}\nonumber
T^{\gamma}_{\sigma}(\vartheta_1,\vartheta_2)&=T^{\gamma}_{\tilde{\chi}_4P^{-1}_2}T^{\gamma}_{\sigma}Z_{\mathrm{in}}|_{x_3=0}+\mathcal{R}_0Z_{\mathrm{in}}|_{x_3=0} = T^{\gamma}_{\tilde{\chi}_4P^{-1}_2}T^{\gamma}_{\tilde{\chi}_3}T^{\gamma}_{\sigma}Z_{\mathrm{in}}|_{x_3=0}+\mathcal{R}_0Z_{\mathrm{in}}|_{x_3=0}.\nonumber
\end{align}
Using the ellipticity of $(P^{-1}_2)^{\ast}P^{-1}_2$ on the support of $\tilde{\chi}_4$ and that $\sigma \in \mathbb{R}$, we apply the localized G${\rm\mathring a}$rding's inequality (Lemma \ref{lemmapara} {\rm (vii)}) to obtain that, for sufficiently large $\gamma,$
\begin{equation*}
\begin{split}
&\left\|T^{\gamma}_{\sigma}(\vartheta_1,\vartheta_2)\right\|^2_0 \\
& \ \gtrsim \left\langle(T^{\gamma}_{\tilde{\chi}_4P^{-1}_2})^{\ast}T^{\gamma}_{\tilde{\chi}_4P^{-1}_2}T^{\gamma}_{\tilde{\chi}_3}T^{\gamma}_{\sigma}Z_{\mathrm{in}}|_{x_3=0}, \ T^{\gamma}_{\tilde{\chi}_3}T^{\gamma}_{\sigma}Z_{\mathrm{in}}|_{x_3=0} \right\rangle - C\|Z_{\mathrm{in}}|_{x_3=0}\|^2_0\nonumber\\
& \ \gtrsim \left\|T^{\gamma}_{\tilde{\chi}_3}T^{\gamma}_{\sigma}Z_{\mathrm{in}}|_{x_3=0} \right\|^2_0 - C \left\| T^{\gamma}_{\sigma}Z_{\mathrm{in}}|_{x_3=0} \right\|^2_{-1,\gamma} - C\left\|T^{\gamma}_{\tilde{\chi}_3}T^{\gamma}_{\sigma}Z_{\mathrm{in}}|_{x_3=0} \right\|^2_{-1,\gamma}-C\|Z_{\mathrm{in}}|_{x_3=0}\|_0^2.
\end{split}
\end{equation*}
Then, for sufficiently large $\gamma,$ we have
\begin{align}\label{b6}
\left\|T^{\gamma}_{\sigma}(\vartheta_1,\vartheta_2)\right\|^2_0\gtrsim\left\| T^{\gamma}_{\sigma}Z_{\mathrm{in}}|_{x_3=0} \right\|^2_0-C\|Z_{\mathrm{in}}|_{x_3=0}\|^2_0.
\end{align}
Similarly, we can prove
\begin{equation}\label{b7}
\begin{split}
\|(\vartheta_1,\vartheta_2)\|^2_0&\gtrsim\|Z_{\mathrm{in}}|_{x_3=0}\|^2_0-C\|Z_{\mathrm{in}}|_{x_3=0}\|^2_{-1,\gamma} \gtrsim \|Z_{\mathrm{in}}|_{x_3=0}\|^2_0-\frac{C}{\gamma^2}\|Z_{\mathrm{in}}|_{x_3=0}\|^2_0.
\end{split}
\end{equation}
Combining \eqref{b1},\eqref{b5}--\eqref{b7}, and taking $\gamma$ sufficiently large, we derive \eqref{para28}.

From \eqref{para10},
 we have
 \begin{equation}\label{para29}
 \|Z_{\mathrm{out}}|_{x_3=0}\|^2_{1,\gamma}\lesssim \sum_{i\neq3,4}\vertiii{Z_i }^2_{\frac{1}{2},\gamma}+\frac{1}{\gamma}\left(\vertiii{ T^{\gamma}_{r}W }^2_{1,\gamma}+\vertiii{ W }^2_0 + \vertiii{ F }^2_{1,\gamma}\right).
\end{equation}
Combining \eqref{para26}--\eqref{para28},\eqref{para29}, we obtain that
\begin{equation}\label{para30}
\begin{split}
&\frac{1}{\gamma} \vertiii{ T^{\gamma}_{\Lambda}T^{\gamma}_{\sigma}Z_{\mathrm{out}} }^2_0 + \vertiii{ Z_{\mathrm{out}} }^2_{\frac{3}{2},\gamma} + \gamma \vertiii{ T^{\gamma}_{\sigma}Z_{\rm c} }^2_0 + \gamma^2 \vertiii{ Z_{\rm c} }^2_{\frac{1}{2},\gamma} + \gamma \vertiii{ T^{\gamma}_{\sigma}Z_{\mathrm{in}} }^2_0\\
&\quad\quad + \gamma^2 \vertiii{ Z_{\mathrm{in}} }^2_{\frac{1}{2},\gamma} + \frac{1}{\gamma} \left\| T^{\gamma}_{\Lambda^{\frac{1}{2}}}T^{\gamma}_{\sigma}Z_{\mathrm{out}}|_{x_3=0} \right\|^2_0\\
&\quad\quad + \|Z_{\mathrm{out}}|_{x_3=0}\|^2_{1,\gamma} + \gamma^2 \|Z_{\mathrm{in}}|_{x_3=0}\|^2_0+\left\|T^{\gamma}_{\sigma}Z_{\mathrm{in}}|_{x_3=0} \right\|^2_0\\
&\quad \leq \|G\|^2_{1,\gamma}+\left\| W^{nc}|_{x_3=0} \right\|^2_0+\frac{1}{\gamma}\left( \vertiii{ T^{\gamma}_rW }^2_{1,\gamma} + \vertiii{W}^2_0+\vertiii{ F }^2_{1,\gamma}\right),
\end{split}
\end{equation}
where $Z_{\rm c}=(Z_1,Z_2,Z_5,\cdots,Z_{13},Z_{14}, Z_{15},Z_{18},\cdots,Z_{26})^{\top}.$
\subsection{Case 2: Points in $\Upsilon^{(2)}_r$}
We need to estimate the part of $W^{\pm}$ corresponding to $\mathcal{V}^1_r,\mathcal{V}^2_r.$ For simplicity, we discuss the differential equations for $W^+$ in $\mathcal{V}^1_r.$ The remaining neighborhood of $\mathcal{V}^2_r$ and the discussion for $W^-,$ we obtain the same estimates.
Now consider the cut-off function $\chi_{rt}$ in $\Gamma^0_k$ for any integer $k,$ whose support on $\R^4_+\times \Sigma$ is contained in $\mathcal{V}^1_r,$ and is equal to $1$ in a smaller neighborhood of the strip where $\tau=iV_1\sqrt{\eta^2+\tilde{\eta}^2}.$
Denote
$$
W^+_{rt} := T^{\gamma}_{\chi_{rt}}W^+.
$$
Hence, we obtain that
$$T^{\gamma}_{\tau A^r_0+i\eta A^r_1+i\tilde{\eta}A^r_2}W^+_{rt}+T^{\gamma}_{A^r_0C^r}W^{+}_{rt}+T^{\gamma}_rW^++I_2\partial_3W^{+}_{rt}=T^{\gamma}_{\chi_{rt}}F^++\mathcal{R}_{-1}W^+.$$
Here, $r$ is in $\Gamma^0_1$, bounded and supported only in the set where $\chi_{rt}\in(0,1).$ Then, we take two cut-off functions $\chi_1$ and $\chi_2$ in the class $\Gamma^0_k$ for any integer $k.$ Both of the functions are supported in $\mathcal{V}_{rt},$ $\chi_1=1$ on the support of $\chi_{rt}$ and $\chi_2=1$ on the support of $\chi_1.$
Similar to the previous discussion, after applying the cut-off symbol, we can find transformation matrices $Q^r_0$ and $Q^r_{-1}$ and symmetrizers $R^r_0$ and $R^r_{-1}$ to obtain
$$
I_2\partial_3Z^+=-T^{\gamma}_{\chi_2\tilde{A}^r}Z^++T^{\gamma}_{D_0}Z^++T^{\gamma}_rW^++\mathcal{R}_0F^++\mathcal{R}_{-1}W^+,
$$
where $\tilde{A}^r$ is the same as in \eqref{para5} in previous case, $\chi_1Q^r_0$ and $\chi_1R^r_0$ are invertible symbols in $\Gamma^0_2,$ and $Q^r_{-1}, R^r_{-1}\in\Gamma^{-1}_1.$
Define
\begin{equation}\label{def:Zplus.rt}
Z^+:=T^{\gamma}_{\chi_1({Q^{r}_0}^{-1}+Q^r_{-1})}W^+_{rt}.
\end{equation}

After same argument for $\chi_2\tilde{A}^r$ and $D_0,$
 we obtain that
 \begin{equation}\label{para31}
 I_2\partial_3Z^+=-T^{\gamma}_{\tilde{D_1}}Z^++T^{\gamma}_{\tilde{D}_0}Z^++T^{\gamma}_rW^++\mathcal{R}_0F^++\mathcal{R}_{-1}W^+.
 \end{equation}
 In $\tilde{D}_1$ we have $\omega^r\in\Gamma^1_2,$ $\Re \omega^r\leq -c\Lambda,$ and in $\tilde{D}_0$ we have $d_{3,4}=d_{4,3}=0$.
 Denote
 $$
 Z^+=(Z_1,Z_2,\cdots,Z_{13})^{\top}.
 $$
 It follows that
 $$\partial_3Z_4=T^{\gamma}_{-\omega^r+i\bar{\omega}^r}Z_4+T^{\gamma}_{\Theta_0}Z_4+\sum_{i\neq3,4}T^{\gamma}_{\Theta_0} Z_i+T^{\gamma}_rW^++\mathcal{R}_0F^++\mathcal{R}_{-1}W^+.$$
Applying the two symmetrizers $(T^{\gamma}_{\sigma})^{\ast}T^{\gamma}_{\Lambda}T^{\gamma}_{\sigma}$ and $(T^{\gamma}_{\Lambda})^{\ast}T^{\gamma}_{\Lambda}$, we have
\begin{equation*}
\begin{split}
&\vertiii{T^{\gamma}_{\Lambda}T^{\gamma}_{\sigma}Z_4}^2_0 + \vertiii{ T^{\gamma}_{\Lambda^{\frac{1}{2}}}T^{\gamma}_{\sigma}Z_4|_{x_3=0} }^2_0 \\
&\quad \lesssim \| T^{\gamma}_{\sigma}Z_4|_{x_3=0} \|^2_0 + \left( C+\frac{1}{\varepsilon} \right)\vertiii{Z_4}^2_{1,\gamma}+\sum_{i\neq 3,4}\frac{1}{\varepsilon} \left( \vertiii{T^{\gamma}_{\sigma}Z_i}^2_0+\vertiii{Z_i }^2_0 \right) \\
&\quad\quad + \frac{1}{\varepsilon} \left( \vertiii{T^{\gamma}_{r}W^+}^2_{1,\gamma}+\vertiii{W^+}^2_0+\vertiii{ F^+ }^2_{1,\gamma} \right), \\
&\vertiii{Z_4}^2_{\frac{3}{2},\gamma}+\vertiii{ Z_4|_{x_3=0} }^2_{1,\gamma}\\
&\quad\lesssim C\vertiii{Z_4}^2_{1,\gamma}+\frac{1}{\varepsilon}\vertiii{Z_4}^2_{\frac{1}{2},\gamma}+\sum_{i\neq 3,4} \left( \frac{1}{\varepsilon}\vertiii{Z_i }^2_{\frac{1}{2},\gamma}+\frac{1}{\varepsilon\gamma}\vertiii{Z_i }^2_0 \right)\\
&\quad\quad+\frac{1}{\varepsilon\gamma} \left( \vertiii{T^{\gamma}_{r}W^+}^2_{1,\gamma}+\vertiii{W^+}^2_0+\vertiii{ F^+ }^2_{1,\gamma} \right).
\end{split}
\end{equation*}
The roots of the Lopatinski$\breve{\mathrm{i}}$ determinant do not coincide with the poles of the differential equations. Thus we can estimate $Z_j$ for $j=1,2,5,7,\cdots,13$ in the same way. Multiplying \eqref{para31} with some appropriately chosen matrix symbol in $\Gamma^0_1,$ we obtain that
\begin{equation}\label{para32}
T^{\gamma}_aZ_j+T^{\gamma}_{\Theta_1}Z_4+\sum_i T^{\gamma}_{\Theta_0}Z_i+T^{\gamma}_rW^++\mathcal{R}_{-1}W^+=\mathcal{R}_0F^+.\\
\end{equation}
Here, $|\Re a|\geq c\Lambda$ on the support of $\chi_2.$
We can extend $a$ into a new symbol $\tilde{a}$ satisfying $|\Re \tilde{a}|\geq c\Lambda.$
Applying the two symmetrizers $(T^{\gamma}_{\sigma})^{\ast}T^{\gamma}_{\sigma}$ and $T^{\gamma}_{\Lambda},$
we have
\begin{equation*}
\begin{split}
\|T^{\gamma}_{\sigma}Z_j \|^2_{\frac{1}{2},\gamma} & \leq C\| Z_j \|^2_{1,\gamma}+\frac{1}{\varepsilon}\|Z_4\|^2_{\frac{3}{2},\gamma}+\sum_i \frac{1}{\varepsilon}\|Z_i \|^2_{\frac{1}{2},\gamma} \\
&\quad + \frac{1}{\varepsilon} \left( \|T^{\gamma}_rW^+ \|^2_{\frac{1}{2},\gamma}+\| W^+ \|^2_{-\frac{1}{2},\gamma}+\| F^+ \|^2_{\frac{1}{2},\gamma} \right), \\
\| Z_j \|^2_{1,\gamma} & \leq \frac{1}{\varepsilon}\| Z_j \|^2_{0}+\frac{1}{\varepsilon}\|Z_4\|^2_{1,\gamma}\\
&\quad + \sum_i \frac{1}{\varepsilon}\|Z_i \|^2_{0} + \frac{1}{\varepsilon} \Big( \|T^{\gamma}_rW^+ \|^2_{0}+\| W^+ \|^2_{-1,\gamma}+\| F^+ \|^2_{0} \Big),
\end{split}
\end{equation*}
\begin{equation*}
\begin{split}
\vertiii{T^{\gamma}_{\sigma}Z_3 }^2_0&\leq \Re\left\langle T^{\gamma}_{\sigma}Z_3, T^{\gamma}_{\frac{1}{\Lambda}}T^{\gamma}_{\sigma}Z_3 \right\rangle\Big|_{x_3=0} + \frac{1}{\varepsilon}\vertiii{ Z_3 }^2_0 + \frac{1}{\varepsilon} \left( \vertiii{ T^{\gamma}_{\sigma}Z_j }^2_0 + \vertiii{ Z_j }^2_0 \right) \\
&\quad+\frac{1}{\varepsilon}\left(\vertiii{T^{\gamma}_{\sigma}Z_3 }^2_{-1,\gamma}+\vertiii{ Z_3 }^2_{-1,\gamma}\right) +\sum_{i\neq3,4}\frac{1}{\varepsilon}\left(\vertiii{T^{\gamma}_{\sigma}Z_i}^2_{-1,\gamma}+\vertiii{Z_i }^2_{-1,\gamma}\right)\\
&\quad + \frac{1}{\varepsilon} \left( \vertiii{ T^{\gamma}_rW^+ }^2_0+\vertiii{W^+}^2_{-1,\gamma}+\vertiii{ F^+ }^2_0 \right), \\
\vertiii{ Z_3 }^2_{\frac{1}{2},\gamma}&\leq \|Z_3|_{x_3=0} \|^2_0 + \left( C+\frac{1}{\varepsilon} \right)\vertiii{ Z_3 }^2_0+\frac{1}{\varepsilon}\vertiii{ Z_j }^2_{\frac{1}{2},\gamma}+\sum_{i\neq3,4}\frac{1}{\varepsilon}\vertiii{Z_i }^2_{-\frac{1}{2},\gamma}\\
&\quad+\frac{1}{\varepsilon} \left( \vertiii{ T^{\gamma}_rW^+ }^2_{-\frac{1}{2},\gamma}+\vertiii{W^+}^2_{-\frac{3}{2},\gamma}+\vertiii{ F^+ }^2_{-\frac{1}{2},\gamma} \right).
\end{split}
\end{equation*}
Combining the above estimates and dividing by $\gamma$ to an appropriate power and then taking $\gamma$ large enough, we obtain that
\begin{equation}\label{para33}
\begin{split}
&\frac{1}{\gamma}\vertiii{T^{\gamma}_{\Lambda}T^{\gamma}_{\sigma}Z_4}^2_0+\vertiii{Z_4}^2_{\frac{3}{2},\gamma}+\vertiii{ T^{\gamma}_{\sigma}Z_j }^2_{\frac{1}{2},\gamma}+\gamma\vertiii{ Z_j }^2_{1,\gamma}+\gamma\vertiii{T^{\gamma}_{\sigma}Z_3 }^2_0\\
&\quad\quad+\gamma^2\vertiii{ Z_3 }^2_{\frac{1}{2},\gamma}+\frac{1}{\gamma} \left\| T^{\gamma}_{\Lambda^{\frac{1}{2}}}T^{\gamma}_{\sigma}Z_4|_{x_3=0} \right\|^2_0+\| Z_4|_{x_3=0} \|^2_{1,\gamma}\\
&\quad\leq \gamma\Re\left\langle T^{\gamma}_{\sigma}Z_3,T^{\gamma}_{\frac{1}{\Lambda}}T^{\gamma}_{\sigma}Z_3 \right\rangle|_{x_3=0}\\
&\quad\quad+\gamma^2\|Z_3|_{x_3=0} \|^2_0+\frac{1}{\gamma}\left(\vertiii{T^{\gamma}_{r}W^+}^2_{1,\gamma}+\vertiii{W^+}^2_0+\vertiii{ F^+ }^2_{1,\gamma}\right).\\
\end{split}
\end{equation}
\begin{equation}\label{def:Zminus.rt}
Z^-:=(Z_{14},\cdots, Z_{26})^{\top}:=T^{\gamma}_{\chi_1((Q^{l}_0)^{-1}+Q^l_{-1})}T^{\gamma}_{\chi_{rt}}W^-.
\end{equation}
For $Z^-$ we have
\begin{equation}\label{para34}
\begin{split}
&\frac{1}{\gamma}\vertiii{ T^{\gamma}_{\Lambda}T^{\gamma}_{\sigma}Z_{17} }^2_0+\vertiii{ Z_{17} }^2_{\frac{3}{2},\gamma}+\vertiii{ T^{\gamma}_{\sigma}Z_j }^2_{\frac{1}{2},\gamma}+\gamma\vertiii{ Z_j }^2_{1,\gamma}+\gamma\vertiii{ T^{\gamma}_{\sigma}Z_{16} }^2_0\\
&\quad\quad+\gamma^2\vertiii{ Z_{16} }^2_{\frac{1}{2},\gamma}+\frac{1}{\gamma}\|T^{\gamma}_{\Lambda^{\frac{1}{2}}}T^{\gamma}_{\sigma}Z_{17}|_{x_3=0}\|^2_0+\|Z_{17}|_{x_3=0}\|^2_{1,\gamma}\\
&\quad\leq \gamma\Re\left\langle T^{\gamma}_{\sigma}Z_{16},T^{\gamma}_{\frac{1}{\Lambda}}T^{\gamma}_{\sigma}Z_{16} \right\rangle|_{x_3=0}\\
&\quad\quad+\gamma^2\|Z_{16}|_{x_3=0}\|^2_0+\frac{1}{\gamma}\left(\vertiii{ T^{\gamma}_{r}W^- }^2_{1,\gamma}+\vertiii{ W^- }^2_0+\vertiii{ F^- }^2_{1,\gamma}\right).\\
\end{split}
\end{equation}
Similar to Case 1, the boundary terms in \eqref{para2} can be used to estimate $\|Z_3|_{x_3=0} \|^2_0$ and $\|Z_{16}|_{x_3=0}\|^2_0,$
and $\gamma\Re\left\langle T^{\gamma}_{\sigma}Z_3, T^{\gamma}_{\frac{1}{\Lambda}}T^{\gamma}_{\sigma}Z_3 \right\rangle \Big|_{x_3=0}$ and $\gamma\Re\left\langle T^{\gamma}_{\sigma}Z_{16}, T^{\gamma}_{\frac{1}{\Lambda}}T^{\gamma}_{\sigma}Z_{16} \right\rangle \Big|_{x_3=0}.$ By a similar argument as in \cite[Section 5.2, Case 2]{RChen2018}, one can obtain  
$$
\gamma^2\|Z_{\rm in}|_{x_3=0}\|_0^2+\|T^{\gamma}_{\sigma}Z_{\rm in}|_{x_3=0}\|_0^2
\lesssim \|G\|_{1,\gamma}^2+\|Z_{\rm out}|_{x_3=0}\|_{1,\gamma}^2+\|W^{nc}|_{x_3=0}\|_0^2.
$$

Using \eqref{para33} and \eqref{para34}, we have
\begin{equation}\label{para35}
\begin{split}
&\frac{1}{\gamma} \vertiii{ T^{\gamma}_{\Lambda}T^{\gamma}_{\sigma}Z_{\rm out} }^2_0 + \vertiii{ Z_{\rm out} }^2_{\frac{3}{2},\gamma} + \vertiii{ T^{\gamma}_{\sigma}Z_{\rm c} }^2_{\frac{1}{2},\gamma}+\gamma\vertiii{ Z_{\rm c} }^2_{1,\gamma}\\
&\quad\quad + \gamma \vertiii{ T^{\gamma}_{\sigma}Z_{\rm in} }^2_{0} + \gamma^2 \vertiii{ Z_{\rm in} }^2_{\frac{1}{2},\gamma} + \frac{1}{\gamma} \left\| T^{\gamma}_{\Lambda^{\frac{1}{2}}}T^{\gamma}_{\sigma}Z_{\rm out}|_{x_3=0} \right\|^2_0 + \| Z_{\rm out}|_{x_3=0} \|^2_{1,\gamma}\\
&\quad\quad + \gamma^2 \| Z_{\rm in}|_{x_3=0} \|^2_0 + \left\|T^{\gamma}_{\sigma}Z_{\rm in}|_{x_3=0} \right\|^2_0\\
&\quad\lesssim \|G\|^2_{1,\gamma}+\left\| W^{nc}|_{x_3=0} \right\|^2_0+\frac{1}{\gamma} \left( \vertiii{ T^{\gamma}_{r}W }^2_{1,\gamma} + \vertiii{W}^2_0+\vertiii{ F }^2_{1,\gamma} \right),\\
\end{split}
\end{equation}
where $Z_{\rm c}=(Z_1,Z_2,Z_5,\cdots,Z_{13},Z_{14},Z_{15},Z_{18},\cdots,Z_{26})^{\top}.$

\subsection{Case 3: Points in $\Upsilon^{(2)}_p$}
In this section, we discuss the poles that are  not the roots of Lopatinski$\breve{\mathrm{i}}$ determinant. Our discussion focuses on the neighborhoods $\mathcal{V}^1_{p_2},\mathcal{V}^2_{p_2},\mathcal{V}^3_{p_2}$ and $\mathcal{V}^4_{p_2}.$ As an example, consider $\mathcal{V}^1_{p_2}$.
This neighborhood contains a strip in the frequency space where $\tau=-i(v^r_1\eta+v^r_2\tilde{\eta}+\sqrt{(\eta^2+\tilde{\eta}^2)g_r(\theta)}).$ Here $\tau$ represents a pole of the differential equations for $W^+,$ but not for $W^-.$
In this case, the equation for $W^-$ can be reduced to a non-characteristic one.
The arguments apply to both $W^+$ and $W^-$, provided that the points where $\omega^{r,l}=0$ are excluded from the neighborhood. For simplicity, we restrict our discussion to the case of $W^{+}.$
First, we introduce the cut-off functions $\chi_{p_2},$ $\chi_1$ and $\chi_2,$ which are defined in the pole case. Using the matrices $Q^r_0$ and $Q^r_{-1}$ and symmetrizers $R^r_0$ and $R^r_{-1},$ along with appropriate adjustments to $\chi_2\tilde{A}^r$ and $D_0$, we derive the following equation:
\begin{equation}\label{para36}
I_2\partial_3Z^+=-T^{\gamma}_{\tilde{D}_1}Z^++T^{\gamma}_{\tilde{D}_0}Z^++T^{\gamma}_rW^++\mathcal{R}_0F^++\mathcal{R}_{-1}W^+,
\end{equation}
where
\begin{equation}\label{def:Zplus.p2}
Z^+:=T^{\gamma}_{\chi_1((Q^{r}_0)^{-1}+Q^r_{-1})}T^{\gamma}_{\chi_{p_1}}W^+.
\end{equation}

The symbols in this equation are consistent with those introduced in the previous cases.
The equation for $Z_4$ yields
$$
\partial_3Z_4=T^{\gamma}_{-\omega^r+i\bar{\omega}^r}Z_4+T^{\gamma}_{\Theta_0}Z_4+\sum_{i\neq3,4}T^{\gamma}_{\Theta_0} Z_i+T^{\gamma}_rW^++\mathcal{R}_0F^++\mathcal{R}_{-1}W^+.
$$
Consider the symmetrizer $(T^{\gamma}_{\Lambda})^{\ast}T^{\gamma}_{\Lambda}T^{\gamma}_{\Lambda},$ we obtain that
\begin{equation*}
\begin{split}
\Re \left\langle T^{\gamma}_{\Lambda}T^{\gamma}_{\Lambda}Z_4,T^{\gamma}_{\Lambda}\partial_3Z_4 \right\rangle & = \Re \left\langle T^{\gamma}_{\Lambda}T^{\gamma}_{\Lambda}Z_4,T^{\gamma}_{\Lambda}T^{\gamma}_{-\omega^r+i\bar{\omega}^r}Z_4 \right\rangle + \Re \left\langle T^{\gamma}_{\Lambda}T^{\gamma}_{\Lambda}Z_4,T^{\gamma}_{\Lambda}T^{\gamma}_{\Theta_0}Z_4 \right\rangle \\
&\quad + \sum_{i\neq3,4} \Re \left\langle T^{\gamma}_{\Lambda}T^{\gamma}_{\Lambda}Z_4,T^{\gamma}_{\Lambda}T^{\gamma}_{\Theta_0}Z_i \right\rangle + \Re \left\langle T^{\gamma}_{\Lambda}T^{\gamma}_{\Lambda}Z_4,T^{\gamma}_{\Lambda}T^{\gamma}_rW^+ \right\rangle \\
&\quad + \Re \left\langle T^{\gamma}_{\Lambda}T^{\gamma}_{\Lambda}Z_4,T^{\gamma}_{\Lambda}\mathcal{R}_{-1}W^+ \right\rangle + \Re \left\langle T^{\gamma}_{\Lambda}T^{\gamma}_{\Lambda}Z_4,T^{\gamma}_{\Lambda}F^+ \right\rangle.
\end{split}
\end{equation*}
Taking $\varepsilon$ small enough, we have
\begin{equation*}
\begin{split}
&\vertiii{Z_4}^2_{2,\gamma}+\vertiii{ Z_4|_{x_3=0} }^2_{\frac{3}{2},\gamma}\\
&\quad\lesssim \| Z_4|_{x_3=0} \|^2_{1,\gamma} + \left( \frac{1}{\varepsilon}+C \right) \vertiii{Z_4}^2_{1,\gamma}+\sum_{i\neq 3,4}\frac{1}{\varepsilon}\vertiii{Z_i }^2_{1,\gamma}\\
&\quad\quad + \frac{1}{\varepsilon} \left( \vertiii{T^{\gamma}_{r}W^+}^2_{1,\gamma}+\vertiii{W^+}^2_0+\vertiii{ F^+ }^2_{1,\gamma} \right).
\end{split}
\end{equation*}
For $j=1,2,5\cdots,13$ in \eqref{para36}, applying the symmetrizer $(T^{\gamma}_{\Lambda})^{\ast}T^{\gamma}_{\Lambda},$ we have
\begin{equation*}
\begin{split}
\gamma\| Z_j \|^2_{1,\gamma}&\leq C\| Z_j \|^2_{1,\gamma}+\frac{1}{\gamma}\|Z_4\|^2_{2,\gamma}\\
&\quad+ \sum_i \frac{1}{\gamma}\|Z_i \|^2_{1,\gamma}+\frac{1}{\gamma} \Big( \|T^{\gamma}_rW^+ \|^2_{1,\gamma}+\| W^+ \|^2_0+\| F^+ \|^2_{1,\gamma} \Big).
\end{split}
\end{equation*}
For the incoming mode $Z_3,$ we take the symmetrizer $T^{\gamma}_{\Lambda},$
\begin{equation*}
\begin{split}
\vertiii{ Z_3 }^2_{1,\gamma}&\leq \Re \left\langle Z_3, T^{\gamma}_{\Lambda}Z_3 \right\rangle\big|_{x_3=0}+\frac{1}{\varepsilon}\vertiii{ Z_3 }^2_0+\frac{1}{\varepsilon}\vertiii{ Z_j }^2_{1,\gamma}+\sum_{i\neq3,4}\frac{1}{\varepsilon}\vertiii{Z_i }^2_0\\
&\quad+\frac{1}{\varepsilon} \left( \vertiii{ T^{\gamma}_rW^+ }^2_0+\vertiii{W^+}^2_{-1,\gamma}+\vertiii{ F }^2_0 \right).
\end{split}
\end{equation*}
Combining all the estimates above, we obtain that
\begin{equation*}
\begin{split}
&\frac{1}{\gamma} \vertiii{ Z_{4} }^2_{1,\gamma} + \gamma\| Z_j \|^2_{1,\gamma} + \gamma\vertiii{ Z_3 }^2_{1,\gamma} + \frac{1}{\gamma}\| Z_4|_{x_3=0} \|^2_{\frac{3}{2},\gamma} \\
&\quad\lesssim \gamma \Re \left\langle Z_3,T^{\gamma}_{\Lambda}Z_3 \right\rangle|_{x_3=0} + \frac{1}{\gamma} \Big( \|T^{\gamma}_rW^+ \|^2_{1,\gamma}+\| W^+ \|^2_0+\| F^+ \|^2_{1,\gamma} \Big).
\end{split}
\end{equation*}
For
\begin{equation}\label{def:Zminus.p2}
(Z_{14},\cdots,Z_{26})^{\top}:=T^{\gamma}_{\chi_1Q^l}T^{\gamma}_{\chi_{p_2}}W^-,
\end{equation}
we obtain that
\begin{equation*}
\begin{split}
&\frac{1}{\gamma}\vertiii{ Z_{17} }^2_{1,\gamma}+\gamma\vertiii{ Z_j }^2_{1,\gamma}+\gamma\vertiii{ Z_{16} }^2_{1,\gamma}+\frac{1}{\gamma}\|Z_{17}|_{x_3=0}\|^2_{\frac{3}{2},\gamma} \\
&\quad\lesssim\gamma \Re \left\langle Z_{16},T^{\gamma}_{\Lambda}Z_{16} \right\rangle|_{x_3=0} + \frac{1}{\gamma} \Big( \|T^{\gamma}_rW^- \|^2_{1,\gamma} + \| W^- \|^2_0 + \| F^- \|^2_{1,\gamma} \Big).
\end{split}
\end{equation*}
Now, we estimate $\gamma \Re\langle Z_3,T^{\gamma}_{\Lambda}Z_3\rangle|_{x_3=0}$ and $\gamma \Re\langle Z_{16},T^{\gamma}_{\Lambda}Z_{16}\rangle|_{x_3=0}$.
These terms can be controlled by $\| Z_{\rm in}|_{x_3=0} \|^2_{1,\gamma}.$ 

Using the boundary conditions \eqref{para2} and using the fact that the Lopatinski$\breve{\mathrm{i}}$ determinant has a positive lower bound in the open neighourhood $\mathcal{V}^1_{p_2}$,
we have
$$\| Z_{\rm in}|_{x_3=0} \|^2_{1,\gamma}\lesssim \|G\|^2_{1,\gamma}+\| Z_{\rm out}|_{x_3=0} \|^2_{1,\gamma}+\left\| W^{nc}|_{x_3=0} \right\|^2_0.$$
Putting together, we have
\begin{equation}\label{para37}
\begin{split}
&\frac{1}{\gamma}\vertiii{ Z_{\rm out} }^2_{2,\gamma}+\gamma\vertiii{ Z_{\rm c} }^2_{1,\gamma}+\gamma\vertiii{ Z_{\rm in} }^2_{1,\gamma}+\frac{1}{\gamma}\| Z_{\rm out}|_{x_3=0} \|^2_{\frac{3}{2},\gamma}+\| Z_{\rm in}|_{x_3=0} \|^2_{1,\gamma}\\
&\quad\lesssim \|G\|^2_{1,\gamma}+\left\| W^{nc}|_{x_3=0} \right\|^2_0 + \frac{1}{\gamma} \Big( \|T^{\gamma}_rW \|^2_{1,\gamma} + \| W \|^2_0 + \| F \|^2_{1,\gamma} \Big),
\end{split}
\end{equation}
where $Z_{\rm c}=(Z_1,Z_2,Z_5,\cdots,Z_{13},Z_{14},Z_{15},Z_{18},\cdots,Z_{26})^{\top}.$

\subsection{Other Case}
The remaining points are those where the Lopatinski$\breve{\mathrm{i}}$ determinant is non-zero, allowing the system to be reduced into a non-characteristic form. In this case, a Kreiss's symmetrizer can be constructed. This corresponds to the {\it good frequency} case in \cite{Coulombel2004}. Consider the cut-off symbol $\chi_{re}=1-\bar{\chi}_{p_1}-\bar{\chi}_{p_2}-\bar{\chi}_{rt}$ in $\Gamma^0_k$ for any integer $k,$ where, as in Section \ref{subsec est on each}, $\bar{\chi}_{p_1}$ is the sum of two cut-off functions for two neighborhoods $\mathcal{V}^l_{p_1}$ and $\mathcal{V}^r_{p_1}$,  $\bar{\chi}_{p_2}$ is the sum of four cut-off functions for four neighborhoods $\mathcal{V}^i_{p_2}$ with $i = 1, 2, 3, 4$, and $\bar{\chi}_{rt}$ is the sum of two cut-off functions for the two neighborhoods $\mathcal{V}^i_{r},i=1,2.$ The cut-off function
$\chi_{re}$ is 0 near the roots of the Lopatinski$\breve{\mathrm{i}}$ determinant $\Upsilon_r$ and $\Upsilon_p.$
We can construct an open neighborhood $\mathcal{V}_{re}$ that contains the support of $\chi_{re}$ but does not contain a small neighborhood of $\Upsilon_r$ and $\Upsilon_p.$
Denote that
$$
W^{\pm}_{re}:=T^{\gamma}_{\chi_{re}}W^{\pm}, \text{ and } W_{re}:=(W^+_{re},W^-_{re})^{\top}.
$$
Following the approach in \cite{Coulombel2004}, we can eliminate all components of $W^{\pm}_{re}$ in the kernel of $I_2.$ This leads to a differential equation for $W^{nc}_{re}:=T^{\gamma}_{\chi_{re}}W^{nc}$ of the form
\begin{equation}\label{para38}
\partial_3 W^{nc}_{re}=T^{\gamma}_{\chi_2\mathbb{A}} W^{nc}_{re} +T^{\gamma}_{\mathbb{E}} W^{nc}_{re}+T^{\gamma}_rW+\mathcal{R}_0F+\mathcal{R}_{-1}W,
\end{equation}
where $\mathbb{A}=\mathrm{diag}\{\mathbb{A}^r,\mathbb{A}^l\}$ and $\mathbb{E}, r \in\Gamma^0_1$ which are supported in the place where $\chi_{re}\in(0,1).$ Using \eqref{para}, similar to \cite{Coulombel2002,Coulombel2004},
we have the following estimate
\begin{equation}\label{para39}
\begin{split}
&\gamma \vertiii{ W_{re} }^2_{1,\gamma} + \| W^{nc}_{re}|_{x_3=0} \|^2_{1,\gamma}\\
&\quad\lesssim\|G\|^2_{1,\gamma}+\left\| W^{nc}|_{x_3=0} \right\|^2_0+\frac{1}{\gamma}\left(\vertiii{ F }^2_{1,\gamma}+\vertiii{W}^2_0+\vertiii{ T^{\gamma}_rW }^2_{1,\gamma}\right).
\end{split}
\end{equation}

\subsection{Proof of Theorem \ref{vt}}\label{subsec proof}

\begin{proof}
We now summarize all the estimates from the four cases discussed above. Taking $\gamma$ sufficiently large and summing up \eqref{para30}, \eqref{para35}, \eqref{para37} and \eqref{para39}, we have that the left-hand side of the sum is bounded by
$$
\gamma^3\vertiii{W}^2_0+\gamma^2\left\| W^{nc}|_{x_3=0} \right\|^2_0.
$$
The support of $r$ is contained in the following set:
\begin{equation*}
\left\{ (t,x_1,x_2,x_3,\delta,\eta,\tilde{\eta})\in \R^4_+\times \Pi: \bar{\chi}_{p_1}\in(0,1) \text{ or } \bar{\chi}_{p_2}\in(0,1) \text{ or } \bar{\chi}_{rt}\in(0,1) \text{ or }\bar{\chi}_{re}\in(0,1) \right\}.
\end{equation*}
Note that $\bar{\chi}_{p_1}+\bar{\chi}_{p_2}+\bar{\chi}_{rt}+\chi_{re}=1$. Then $r=0$ when $\bar{\chi}_{p_1},\bar{\chi}_{p_2},\bar{\chi}_{rt}$, or $\chi_{re}$ equals $1.$ We also have that
$\sigma$ vanishes only at some points where $\bar{\chi}_{p_1}=1$ or $\bar{\chi}_{rt}=1.$ Thus
$\sigma$ has a lower bound on the support of $r$ and we write
\begin{equation*}
r=a_{p_2}\bar{\chi}_{p_2}+a_{re}\chi_{re}+a_{p_1}\sigma \chi^{p_1}_1\left[\begin{array}{cc}
Q^r & {\mathbf 0}\\
{\mathbf 0} & Q^l
\end{array}
\right]\bar{\chi}_{p_1}+
a_{rt}\sigma \chi^{rt}_1\left[\begin{array}{cc}
Q^r & {\mathbf 0}\\
{\mathbf 0} & Q^l
\end{array}
\right]\bar{\chi}_{rt},
\end{equation*}
where $a_{p_2},a_{re},a_{p_1},a_{rt}$ all have block diagonal structures in $\Gamma^0_1.$ Let $\chi^{p_1}_1$ and $\chi^{rt}_1$ denote the corresponding cut-off functions in Case 1 and 2. So the term $\frac{1}{\gamma}\vertiii{ T^{\gamma}_rW }^2_{1,\gamma}$ can be absorbed by
$$
\gamma \vertiii{ T^{\gamma}_{\sigma}T^{\gamma}_{\bar{\chi}_{p_1}}W }^2_0 + \gamma \vertiii{ T^{\gamma}_{\sigma}T^{\gamma}_{\bar{\chi}_{rt}}W }^2_0 + \gamma \vertiii{ T^{\gamma}_{\bar{\chi}_{p_2}}W }^2_{1,\gamma} + \gamma \vertiii{ T^{\gamma}_{\chi_{re}}W }^2_{1,\gamma}.
$$
This term can also be controlled by the left-hand side of the sum of  \eqref{para30},\eqref{para35},\eqref{para37},\eqref{para39}.

We choose the cut-offs so that $\supp\chi_B\subset\supp\chi_{re}$. Then the trace vector 
\[
\mathcal{W} := (W_7,W_{10},W_{13},W_{20},W_{23},W_{26})^{\top}|_{x_3=0} 
\]
decomposes as
\[
\mathcal{W}=T^{\gamma}_{\chi_B} \mathcal{W}+T^{\gamma}_{(1-\chi_B)\chi_{re}} \mathcal{W} +T^{\gamma}_{\bar{\chi}_{p_1}}\mathcal{W} +T^{\gamma}_{\bar{\chi}_{p_2}}\mathcal{W} +T^{\gamma}_{\bar{\chi}_{rt}} \mathcal{W}+\mathcal R_{-1} \mathcal{W}.
\]
The first term is controlled by \eqref{We}. For the remaining terms, Cases~1--3 yield coercive bounds for the corresponding microlocal traces after the changes of unknowns, while in the regular region (corresponding to the Other case) the term $T^{\gamma}_{(1-\chi_B)\chi_{re}}\mathcal{W}$ can be controlled. Hence the complementary cut-offs provide a bound for $T^{\gamma}_{1-\chi_B}\mathcal{W}$, and together with \eqref{estimate6} this controls $(W^n|_{x_3=0},\psi)$. Putting together the resulting estimates and absorbing the lower-order $H^{-1}$ remainders for $\gamma$ sufficiently large gives the full-frequency estimate for $(W^n|_{x_3=0},\psi)$ stated above.

Hence it follows that
$$
\left\| W^{nc}|_{x_3=0} \right\|^2_0\leq C_0 \left(\frac{1}{\gamma^3} \vertiii{ \tilde{F} }^2_{1,\gamma}+\frac{1}{\gamma^2}\|\tilde{G}\|^2_{1,\gamma} \right),
$$
which completes the proof of Theorem \ref{vt}.
\end{proof}

\section{Well-posedness of the Linearized Problem}\label{sec.well-posed}

In this section we analyze the  linearized problem for \eqref{EVS} and establish the well-posedness of solutions in the standard Sobolev spaces $H^m$.

\subsection{Variable Coefficient Linearized Problem}\label{sec.vari1}

We begin by linearizing problem \eqref{EVS} around a given basic
state $ (\check{U}^{\pm},\check{\varPhi}^{\pm} )$.
We suppose that
\begin{align}
&\mathrm{supp}\, (\check{V}^{\pm},\check{\varPsi}^{\pm} )\subset \{-T\le t\le 2T,\ x_3\geq 0,\ |x|\leq 2\},\label{bas.c1}\\ \label{bas.c2}
& \big\|  \check{V}^{\pm}  \big\|_{W^{2,\infty}(\Omega)}
+\big\|   \check{\varPsi}^{\pm} \big\|_{W^{3,\infty}(\Omega)}\leq K,
\end{align}
for $\check{V}^{\pm}:=\check{U}^{\pm}-\bar{U}^{\pm}$
and $\check{\varPsi}^{\pm}:=\check{\varPhi}^{\pm}-\bar{\varPhi}^{\pm}$,
where
$T$ and $K$ are positive constants,
and $(\bar{U}^{\pm},\bar{\varPhi}^{\pm})$ is the background {state} given by \eqref{background}.
Moreover, we assume the basic state
$(\check{U}^{\pm},\check{\varPhi}^{\pm} )$
satisfies \eqref{Phi.eq}, \eqref{EVS.b}, and \eqref{inv5}, {\it i.e.},
\begin{subequations}\label{bas}
	\begin{alignat}{2}
	\label{bas.1}&\pm\partial_3\check{\varPhi}^{\pm}\geq {\kappa_0}>0 &\qquad  \text{ if } x_3\geq 0,\\
	\label{bas.2}&\partial_t\check{\varPhi}^{\pm}+\check{v}_1^{\pm}\partial_1\check{\varPhi}^{\pm}+\check{v}_2^{\pm}\partial_2\check{\varPhi}^{\pm}-\check{v}_3^{\pm}=0& \text{ if } x_3\geq 0,\\
	\label{bas.5}&\check{F}_{3j}^{\pm}=\check{F}_{1j}^{\pm} \p_1\check\varPhi^{\pm}+\check{F}_{2j}^{\pm} \p_2\check\varPhi^{\pm}\quad \textrm{for }\   j=1,2,3& \text{ if } x_3\geq  0,\\
	\label{bas.3}&\check{\varPhi}^{+}=\check{\varPhi}^{-}=\check{\varphi} &\qquad \text{ if }  x_3= 0,\\
	\label{bas.4}&\mathbb{B}\big(\check{U}^+,\check{U}^-,\check{\varphi}\big)={\mathbf 0} &\qquad \text{ if }  x_3= 0,
	\end{alignat}
\end{subequations}
 for some constant $\kappa_0>0$.
Constraints \eqref{bas.2} and \eqref{bas.5} ensure that the rank of the boundary matrix for the linearized problem remains constant
on the domain $\bar{\Omega}$.
Denote
$\check{U}:=(\check{U}^+,\check{U}^-)^{\top}$,
$\check{V}:=(\check{V}^+,\check{V}^-)^{\top}$,
$\check{\varPhi}:=(\check{\varPhi}^+,\check{\varPhi}^-)^{\top}$,
and
$\check{\varPsi}:=(\check{\varPsi}^+,\check{\varPsi}^-)^{\top}$
for simplicity.

We need to absorb \eqref{bas.5} into \eqref{bas.4} and write the boundary conditions in an {\it enlarged} form, which will later be analyzed separately for the Nash-Moser iteration.
The linearized operators can be defined as follows:
\begin{align} \label{L'.bb}
&\mathbb{L}'(U,\varPhi)(V,\varPsi)
:=\left(L(U,\varPhi) +\mathcal{C}(U,\varPhi) \right)V -{{\frac{1}{\partial_3 \varPhi}(L(U,\varPhi)\varPsi)\partial_3U}},\\
\label{B'.bb}
&\mathbb{B}'\big(\check{U} ,\check{\varphi} \big)(V,\psi)
:=\check{b}\nabla\psi+\check{\textup{B}} V|_{x_3=0},
\end{align}
where $V:=(V^+,V^-)^{\top}$, and
$\mathcal{C}(U,\varPhi)$,
$\check{b}$ and $\check{\textup{B}}$ are defined separately by
\setlength{\arraycolsep}{2pt}
\begin{align}
\mathcal{C}(U,\varPhi)V&:=
\left(\p_{U_i}A_1(U) \partial_1 U+\p_{U_i}A_2(U) \partial_2 U+\p_{U_i}\widetilde{A}_3(U,\varPhi) \partial_3 U\right)V_i,
\label{C.cal}
\\
\label{b.ring}
\check{b}(t,x_1,x_2)&:=
\begin{bmatrix}
0  &(\check{v}_1^+-\check{v}_1^-)|_{x_3=0}& (\check{v}_2^+-\check{v}_2^-)|_{x_3=0}\\
1 &\check{v}_{1}^+|_{x_3=0} & \check{v}_{2}^+|_{x_3=0}\\
0 &0 & 0\\
0 & (\check{F}^+_{11}-\check{F}^-_{11})|_{x_3=0} & (\check{F}^+_{21}-\check{F}^-_{21})|_{x_3=0}\\
0 & \check{F}^+_{11}|_{x_3=0} & \check{F}^+_{21}|_{x_3=0}\\
0 & (\check{F}^+_{12}-\check{F}^-_{12})|_{x_3=0} & (\check{F}^+_{22}-\check{F}^-_{22})|_{x_3=0}\\
0 & \check{F}^+_{12}|_{x_3=0} & \check{F}^+_{22}|_{x_3=0}\\
0 & (\check{F}^+_{13}-\check{F}^-_{13})|_{x_3=0} & (\check{F}^+_{23}-\check{F}^-_{23})|_{x_3=0}\\
0 & \check{F}^+_{13}|_{x_3=0} & \check{F}^+_{23}|_{x_3=0}
\end{bmatrix},
\end{align}
and
\begin{equation}\label{B.ring}
\check{\textup{B}}(t,x_1,x_2) := [\check{\textup{B}}_1 \   \check{\textup{B}}_2],
\end{equation}
where
\begin{align}
\nonumber
&\check{\textup{B}}_1 := 
 \ \left[\begin{smallmatrix}
 0 & \p_1\check{\varphi} & \p_2\check{\varphi} &-1 &0 & 0&0 & 0& 0 & 0 & 0 & 0 & 0 \\
 0 & \p_1\check{\varphi} & \p_2\check{\varphi} &-1 &0 & 0&0 & 0& 0 & 0 & 0 & 0 & 0 \\
 1 & 0 & 0 & 0 &0 & 0&0 & 0& 0 & 0 & 0 & 0 & 0 \\
 0 & 0 & 0 & 0 & \partial_1\check{\varphi} & \partial_2\check{\varphi} & -1 & 0 & 0 & 0 & 0 & 0 & 0 \\
 0 & 0 & 0 & 0 & \partial_1\check{\varphi} & \partial_2\check{\varphi} & -1 & 0 & 0 & 0 & 0 & 0 & 0 \\ 
 0 & 0 & 0 & 0 & 0 & 0 & 0 & \partial_1\check{\varphi} & \partial_2\check{\varphi} & -1 & 0 & 0 & 0 \\
 0 & 0 & 0 & 0 & 0 & 0 & 0 & \partial_1\check{\varphi} & \partial_2\check{\varphi} & -1 & 0 & 0 & 0 \\
 0 & 0 & 0 & 0 & 0 & 0 & 0 & 0 & 0 & 0 & \partial_1\check{\varphi} & \partial_2\check{\varphi} & -1 \\
 0 & 0 & 0 & 0 & 0 & 0 & 0 & 0 & 0 & 0 & \partial_1\check{\varphi} & \partial_2\check{\varphi} & -1  
\end{smallmatrix}\right],
\end{align}

\begin{align*}
\check{\textup{B}}_2 := 
& \ \left[\begin{smallmatrix}
 0 & -\p_1\check{\varphi} & -\p_2\check{\varphi} &1 &0 & 0&0 & 0 & 0 & 0 & 0 & 0 & 0 \\
 0 & 0 & 0 & 0 &0 & 0&0 & 0 & 0 & 0 & 0 & 0 & 0 \\
-1 & 0 & 0 & 0 &0 & 0&0 & 0 & 0 & 0 & 0 & 0 & 0 \\
 0 & 0 & 0 & 0 & -\partial_1\check{\varphi} & -\partial_2\check{\varphi} & 1 & 0 & 0 & 0 & 0 & 0 & 0\\
 0 & 0 & 0 & 0 & 0 & 0 & 0 & 0 & 0 & 0 & 0 & 0 & 0\\
 0 & 0 & 0 & 0 & 0 & 0 & 0 & -\partial_1\check{\varphi} & -\partial_2\check{\varphi} & 1 & 0 & 0 & 0\\
 0 & 0 & 0 & 0 & 0 & 0 & 0 & 0 & 0 & 0 & 0 & 0 & 0\\
 0 & 0 & 0 & 0 & 0 & 0 & 0 & 0 & 0 & 0 & -\partial_1\check{\varphi} & -\partial_2\check{\varphi} & 1\\
 0 & 0 & 0 & 0 & 0 & 0 & 0 & 0 & 0 & 0 & 0 & 0 & 0
\end{smallmatrix}\right].
\end{align*}

Motivated by {\rm Alinhac} \cite{A89MR976971},
we get
\begin{align*}
\mathbb{L}'(\check{U}^{\pm}, \check{\varPhi}^{\pm})(V^{\pm},\varPsi^{\pm})
=L(\check{U}^{\pm}, \check{\varPhi}^{\pm})\dot{V}^{\pm}
+\mathcal{C}( \check{U}^{\pm},\check{\varPhi}^{\pm})\dot{V}^{\pm}
+\frac{\varPsi^{\pm}}{\partial_3\check{\varPhi}^{\pm}}\partial_3
\mathbb{L}(\check{U}^{\pm} ,\check{\varPhi}^{\pm} ),
\end{align*}
where
$\dot{V}^{\pm}$ are the ``{\it good unknowns}'':
\begin{align} \label{good}
\dot{V}^{\pm}:=
V^{\pm}-\frac{\partial_3\check{U}^{\pm}}{\partial_3 \check{\varPhi}^{\pm}}\varPsi^{\pm}.
\end{align}
We now consider the effective linear system: 
\begin{subequations} \label{effective}
\begin{alignat}{3}   \label{effective.1}
&\mathbb{L}'_e\big(\check{U}^{\pm},\check{\varPhi}^{\pm}\big)\dot{V}^{\pm}
:=
L\big(\check{U}^{\pm},\check{\varPhi}^{\pm}\big)\dot{V}^{\pm}
+\mathcal{C}( \check{U}^{\pm},\check{\varPhi}^{\pm})\dot{V}^{\pm}
=f^{\pm} \quad  & \text{ if } x_3>0,\\
&\mathbb{B}'_e\big(\check{U} ,\check{\varPhi} \big)(\dot{V},\psi)
:=\check{b}\nabla\psi
+\check{b}_{\natural}\psi
+\check{\textup{B}} \dot{V} |_{x_3=0}
=g \quad  & \text{ if } x_3=0, \label{effective.2}\\
&\varPsi^+=\varPsi^-=\psi \quad  & \text{ if } x_3=0, \label{effective.3}
\end{alignat}
\end{subequations}
where $L(\check{U}^{\pm},\check{\varPhi}^{\pm})$,
$\mathcal{C}(\check{U}^{\pm},\check{\varPhi}^{\pm})$,
$\check{b}$
and $\check{\textup{B}}$ are
given in \eqref{L.def}, \eqref{C.cal}, \eqref{b.ring} and \eqref{B.ring}, separately,
$\dot{V}:=(\dot{V}^+,\dot{V}^-)^{\top}$,
and
\begin{align}\label{b.natural}
\check{b}_{\natural}(t,x_1,x_2) := \check{\textup{B}}(t,x_1,x_2)
\left.\begin{bmatrix}
{\partial_3 \check{U}^+}/{\partial_3 \check{\varPhi}^+}\\[1mm]
{\partial_3 \check{U}^-}/{\partial_3 \check{\varPhi}^-}
\end{bmatrix} \right|_{x_3=0}.
\end{align}
{Here,}
$\mathcal{C}(\check{U}^{\pm},\check{\varPhi}^{\pm})$
{are two}  smooth functions
of $(\check{V}^{\pm},\nabla \check{V}^{\pm},\nabla \check{\varPsi}^{\pm})$ that vanish
at the origin, $\check{b}$ is a smooth function of trace $\check{V}|_{x_3=0},$ while
  $\check{b}_{\natural}$ is a smooth vector-function of $(\nabla \check{V}|_{x_3=0},
\nabla \check{\varPsi}|_{x_3=0})$, which also vanishes at the origin. Additionally, the matrix $\check{\textup{B}}$ is a smooth matrix function of
$\nabla \check{\varphi}$.
 It is important to note that the boundary condition \eqref{effective.2} depends on the traces of $\dot{V}$ solely through $\mathbb{P}(\check{\varphi})\dot{V}^{\pm}|_{x_3=0}$, where
\begin{align} \label{P.bb}
\mathbb{P}(\check{\varphi})V^{\pm}
:=\left(\dot{V}^{nc+},\quad \dot{V}^{nc-}\right)^{\top}.
\end{align}

To transform the linearized problem \eqref{effective}
into a form with a constant diagonal boundary matrix, we introduce the following matrices:
\begin{equation} \label{R.def}
R(U,\varPhi ):=\,
\left[\begin{smallmatrix}
0 & 0 & \langle \p_{\rm tan}\varPhi\rangle & \langle \p_{\rm tan}\varPhi\rangle & 0 & 0 & 0 & 0 & 0 & 0 & 0 & 0 & 0\\[1mm]
1 & 0 &  -\frac{c(\rho)}{\rho}\p_1\varPhi & \frac{c(\rho)}{\rho}\p_1\varPhi   & 0 & 0 & 0 & 0 & 0 & 0 & 0 & 0 & 0\\[1mm]
 0 & 1 & -\frac{c(\rho)}{\rho}\p_2\varPhi & \frac{c(\rho)}{\rho}\p_2\varPhi & 0 & 0  & 0 & 0 & 0 & 0 & 0 & 0 & 0 \\[1mm]
 \partial_1\varPhi & \partial_2\varPhi & \frac{c(\rho)}{\rho} &-\frac{c(\rho)}{\rho} &0& 0 & 0  & 0 & 0   & 0 & 0 & 0 & 0\\[1mm]
 0 & 0 & 0 &0 &1 &0  & 0 &  0 & 0  & 0 & 0 & 0 & 0\\[1mm]
0 & 0 & 0 &0 &0 &1  & 0 &  0 & 0  & 0 & 0 & 0 & 0\\[1mm]
0 & 0 & 0 &0 &0 &0  & 1&  0 & 0  & 0 & 0 & 0 & 0\\[1mm]
0 & 0 & 0&0 &0 & 0& 0  & 1& 0  & 0 & 0 & 0 & 0\\[1mm]
0 & 0 & 0 & 0 & 0 & 0 & 0 & 0  & 1  & 0 & 0 & 0 & 0\\[1mm]
0 & 0 & 0 & 0 & 0 & 0 & 0 & 0  & 0  & 1 & 0 & 0 & 0\\[1mm]
0 & 0 & 0 & 0 & 0 & 0 & 0 & 0  & 0  & 0 & 1 & 0 & 0\\[1mm]
0 & 0 & 0 & 0 & 0 & 0 & 0 & 0  & 0  & 0 & 0 & 1 & 0\\[1mm]
0 & 0 & 0 & 0 & 0 & 0 & 0 & 0  & 0  & 0 & 0 & 0 & 1\\
\end{smallmatrix}\right]
\end{equation}
and
\begin{align}
\label{A0t.def}
\widetilde{A}_0(U,\varPhi ):=\, &
\mathrm{diag} \left(1, 1,\, \frac{\p_3\varPhi}{c(\rho) \langle \p_{\rm tan}\varPhi\rangle},\,
-\frac{\p_3\varPhi}{c(\rho) \langle \p_{\rm tan}\varPhi\rangle},\,  1,\,  1,\,  1,\,  1,\,  1,\,  1,\,1,\,1,\, 1 \right),
\end{align}
where
$\langle \p_{\rm tan}\varPhi\rangle :=(1+(\p_1 \varPhi)^2+(\p_2 \varPhi)^2)^{1/2} $
and $c(\rho)$ is the sound speed given in \eqref{c.def}.
Then it follows from constraints \eqref{bas.2} and \eqref{bas.5} that
\begin{align*}
\widetilde{A}_0R^{-1}\widetilde{A}_3R\big(\check{U}^{\pm},\check{\varPhi}^{\pm}\big)
=I_2.
\end{align*}
Using the new variables
\begin{align}\label{W.def}
W^{\pm}:=R^{-1}\big(\check{U}^{\pm},\check{\varPhi}^{\pm}\big)\dot{V}^{\pm},
\end{align}
the problem \eqref{effective} can be equivalently reformulated as
\begin{subequations}\label{reformulation}
\begin{alignat}{3}
\label{reformulation.1}&\mathcal{A}_0^{\pm}\partial_t W^{\pm}+\mathcal{A}_1^{\pm}\partial_1 W^{\pm}+\mathcal{A}_2^{\pm}\partial_2 W^{\pm}+I_2\partial_3 W^{\pm}+\mathcal{A}_4^{\pm} W^{\pm}=F^{\pm} \quad  & \text{ if } x_3>0, \\
\label{reformulation.2}&\check{b}\nabla\psi+\check{b}_{\natural}\psi+\bm{B} W^{\rm nc}=g \quad  & \text{ if } x_3=0, \\
\label{reformulation.3}&\varPsi^+=\varPsi^-=\psi \quad  & \text{ if } x_3=0,
\end{alignat}
\end{subequations}
where
\begin{equation*}
\begin{split}  
\mathcal{A}_0^{\pm} & :=\widetilde{A}_0 \left( \check{U}^{\pm},\check{\varPhi}^{\pm} \right),\qquad\mathcal{A}_1^{\pm}:=\widetilde{A}_0R^{-1}A_1R \left( \check{U}^{\pm},\check{\varPhi}^{\pm} \right), \\
\mathcal{A}_2^{\pm} & := \widetilde{A}_0 R^{-1}A_2R \left( \check{U}^{\pm},\check{\varPhi}^{\pm} \right), \qquad F^{\pm} := \widetilde{A}_0 R^{-1} \left( \check{U}^{\pm},\check{\varPhi}^{\pm} \right)f^{\pm}, \\
\mathcal{A}_4^{\pm} & := \widetilde{A}_0 \left( R^{-1}\partial_tR
+R^{-1}A_1\partial_1R+R^{-1}A_2\partial_2R+R^{-1}{\widetilde{A}}_3\partial_3R+R^{-1}\mathcal{C}R \right) \left( \check{U}^{\pm},\check{\varPhi}^{\pm} \right).
\end{split}
\end{equation*}
In \eqref{reformulation.2}, the coefficients $\check{b}$ and $\check{b}_{\natural}$ are
defined by \eqref{b.ring} and \eqref{b.natural} respectively. The matrix is given by
\begin{align}
\label{B.bm}
\bm{B}(t,x_1,x_2) &:= \left. \left[
\begin{matrix}
 -\dfrac{c(\check{\rho})}{\check{\rho}}\langle\p_{\rm tan}\check{\varphi}\rangle^2 \phantom{\,}
&\phantom{\,}\dfrac{c(\check{\rho})}{\check{\rho}}\langle\p_{\rm tan}\check{\varphi}\rangle^2
 \phantom{\,}
 & \phantom{\,}\dfrac{c(\check{\rho})}{\check{\rho}}\langle\p_{\rm tan}\check{\varphi}\rangle^2 \phantom{\,}
&\phantom{\,}
-\dfrac{c(\check{\rho})}{\check{\rho}}\langle\p_{\rm tan}\check{\varphi}\rangle^2 \\[4mm]
 -\dfrac{c(\check{\rho})}{\check{\rho}}\langle\p_{\rm tan}\check{\varphi}\rangle^2
&\dfrac{c(\check{\rho})}{\check{\rho}}\langle\p_{\rm tan}\check{\varphi}\rangle^2
 & 0 &0 \\[4mm]
  \langle\p_{\rm tan}\check{\varphi}\rangle
& \langle\p_{\rm tan}\check{\varphi}\rangle
&   - \langle\p_{\rm tan}\check{\varphi}\rangle
&- \langle\p_{\rm tan}\check{\varphi}\rangle
\end{matrix}\right] \right|_{x_3=0},
\end{align}
and $W^{\mathrm{nc}}:=(W^{\mathrm{nc}}_+,W^{\mathrm{nc}}_-)^{\top}$
 represents the non-characteristic part of 
$W:=(W^+,W^-)^{\top}$
with $W^{\mathrm{nc}}_{\pm}:=(W^{\pm}_3, W^{\pm}_4)^{\top}$.
It is evident that
$\mathcal{A}_0^{\pm},$ $\mathcal{A}_1^{\pm}$ and $\mathcal{A}_2^{\pm}$ are smooth functions of
$(\check{V}^{\pm},\nabla \check{\varPsi}^{\pm})$,
$\mathcal{A}_4^{\pm}$ are  smooth matrix-functions of
$(\check{V}^{\pm},\nabla\check{V}^{\pm},\nabla \check{\varPsi}^{\pm},\nabla^2 \check{\varPsi}^{\pm})$,
and $\bm{B}$ is a smooth {matrix-}function of $(\check{V}|_{x_3=0},\nabla \check{\varphi})$.

We are now prepared to state the following theorem. The proof of the theorem will comprise the rest of the section.
\begin{theorem}
\label{thm2}
Let $T>0$ and $m\in\mathbb{N}$ with $ m\geq 2$ being fixed.
Suppose that the background state \eqref{background} satisfies \eqref{H1} and \eqref{stability4},
and that
$(\check{V}^{\pm},\check{\varPsi}^{\pm})$
belong to $H^{m+3}_{\gamma}(\Omega_T)$
for all $\gamma\geq 1$,
and satisfy \eqref{bas.c1}--\eqref{bas}
and
\begin{equation} \label{thm2.H}
\|(\check{V}^{\pm}, \check{\varPsi}^{\pm})\|_{H^6_{\gamma}(\Omega_T)}
+\|(\check{V}^{\pm}, \check{\varPsi}^{\pm}) \|_{H^5_{\gamma}(\omega_T)}\leq K.
\end{equation}
Suppose\ further that the source terms $(f,g)\in H^{m+1}(\Omega_T)\times H^{m+1}(\omega_T)$ vanish in the past. Then there exist constants $K_0>0$ and $\gamma_0\geq 1$ such that,
if $K\leq K_0$ and $\gamma\geq \gamma_0$, the problem \eqref{effective} has a unique solution
$(\dot{V}^{\pm},\psi)\in H^{m}(\Omega_T)\times H^{m+1}(\omega_T)$
vanishing in the past and satisfying the tame estimates
\begin{equation} \label{thm2.est}
\begin{split}
&\|\dot{V}\|_{H^{m}_{\gamma}(\Omega_T)} +\|\mathbb{P}(\check{\varphi})\dot{V}^{\pm}\|_{H^{m}_{\gamma}(\omega_T)}
+\|\psi\|_{H^{m+1}_{\gamma}(\omega_T)} \\
&\quad   \lesssim
\|f\|_{H^{m+1}_{\gamma}(\Omega_T)}
+\|g\|_{H^{m+1}_{\gamma}(\omega_T)}
+
\left( \|f\|_{H^3_{\gamma}(\Omega_T)} +\|g\|_{H^3_{\gamma}(\omega_T)}\right)
\|(\check{V}^{\pm},\check{\varPsi}^{\pm})\|_{H^{m+3}_{\gamma}(\Omega_T)}.
\end{split}
\end{equation}
\end{theorem}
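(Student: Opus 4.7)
\textbf{Proof Proposal for Theorem \ref{thm2}.}

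The plan is to derive a priori estimates in $H^m_\gamma$ for the reformulated problem \eqref{reformulation} and then obtain existence by the standard duality argument (since the basic estimate of Theorem \ref{vt} carries a loss of one tangential derivative, uniqueness together with a dual a priori estimate for the adjoint problem yields weak solutions, whose regularity is upgraded to $H^m$ via the a priori estimates). Working on the reformulation \eqref{reformulation} is convenient because the boundary matrix has been diagonalized to $I_2$ (constant rank), so normal derivatives of the non-characteristic components $W^{\mathrm{nc}}$ can be read off directly from \eqref{reformulation.1}. I will invoke the causality built into Theorem \ref{vt} (vanishing in the past) to handle the zero initial data in $\Omega_T$.

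First, I would derive higher-order \emph{tangential} estimates. Apply $\gamma^{m-|\alpha|}e^{-\gamma t}\partial_{\mathrm{tan}}^\alpha$ with $\partial_{\mathrm{tan}}^\alpha\in\{\partial_t^{\alpha_0}\partial_1^{\alpha_1}\partial_2^{\alpha_2}:|\alpha|\le m\}$ to the paralinearized system of Section \ref{Paralinearization}, and apply Theorem \ref{vt} to the resulting equations. The commutators $[\partial^\alpha,\mathcal{A}_j^\pm]$ and $[\partial^\alpha,\mathcal{A}_4^\pm]$ are controlled by the Moser inequalities \eqref{Moser3}--\eqref{Moser4}, producing terms of the schematic form
\begin{align*}
\|[\partial^\alpha,\mathcal{A}_j^\pm]W\|_{L^2_\gamma(\Omega_T)}
\lesssim C(K)\bigl(\|W\|_{H^m_\gamma(\Omega_T)}
+\|(\check V^{\pm},\check\varPsi^\pm)\|_{H^{m+2}_\gamma(\Omega_T)}\|W\|_{L^\infty}\bigr),
\end{align*}
and similarly for boundary commutators. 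The smallness condition \eqref{thm2.H} on $(\check V^\pm,\check\varPsi^\pm)$ (with $K\le K_0$) allows one to absorb the lower-order contributions into the left-hand side for $\gamma$ large. This yields control of $\|\mathbb{P}(\check\varphi)\dot V^\pm\|_{H^m_\gamma(\omega_T)}$, $\|\psi\|_{H^{m+1}_\gamma(\omega_T)}$, and the tangential portion of $\|\dot V\|_{H^m_\gamma(\Omega_T)}$ in terms of $H^{m+1}$ norms of $(f,g)$, modulo a linear dependence on $\|(\check V^\pm,\check\varPsi^\pm)\|_{H^{m+3}_\gamma(\Omega_T)}$ that appears via the coefficient $\mathcal{A}_4^\pm$, which itself involves $\nabla^2\check\varPsi^\pm$; this accounts precisely for the three-derivative offset in \eqref{thm2.est}.

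Next, I would recover the \emph{normal} derivatives by a bootstrap. Since $I_2=\mathrm{diag}(0,0,1,1,0,\dots,0)$ and $\mathcal{A}_0^\pm$ are invertible, equation \eqref{reformulation.1} can be rewritten as
\begin{align*}
\partial_3 W^{\mathrm{nc}}_\pm = F^\pm - \bigl(\mathcal{A}_0^\pm\partial_t + \mathcal{A}_1^\pm\partial_1 + \mathcal{A}_2^\pm\partial_2 + \mathcal{A}_4^\pm\bigr)W^\pm\Big|_{\mathrm{nc\ rows}},
\end{align*}
while the characteristic components satisfy transport-type relations solvable by integrating along their characteristics (or, equivalently, by algebraic manipulation of the remaining rows). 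Differentiating in $x_3$, applying tangential control already obtained, and using Moser's inequality once more, one inductively produces $\|\partial_3^k \dot V\|_{H^{m-k}_\gamma(\Omega_T)}$ for $k=1,\dots,m$, closing the full $H^m_\gamma$ estimate on $\dot V$ and establishing the tame bound \eqref{thm2.est}. Reverting from the good unknown \eqref{good} via $V^\pm=\dot V^\pm + (\partial_3\check U^\pm/\partial_3\check\varPhi^\pm)\varPsi^\pm$ preserves the tame structure because of the assumed smallness of $(\check V^\pm,\check\varPsi^\pm)$.

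Finally, existence. Because of the derivative loss, one cannot invert Theorem \ref{vt} directly; instead I would formulate the adjoint of $\mathbb{L}'_e$ paired with the adjoint boundary operator of $\mathbb{B}'_e$, and run the same paralinearization/microlocalization program of Section \ref{microlocalization}--\ref{proof} backward in time for the dual problem (the hyperbolic structure is symmetric under time-reversal up to sign changes in the background state). The dual estimate together with the Hahn--Banach / Riesz representation argument produces a weak solution $(\dot V,\psi)$; the a priori estimates then upgrade it to $H^m\times H^{m+1}$, and uniqueness follows from Theorem \ref{vt}. The main obstacle I anticipate is the simultaneous handling of the two types of singular frequencies identified in Section \ref{microlocalization} at the higher-derivative level: the commutator estimates must be compatible with the cutoffs $\chi_{p_1},\chi_{p_2},\chi_{rt},\chi_{re}$ so that the G\r{a}rding-based bounds for the case-by-case estimates \eqref{para30}, \eqref{para35}, \eqref{para37}, \eqref{para39} survive multiplication by $\partial^\alpha$. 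This is controlled by keeping careful track of the regularity class $\Gamma^m_k$ of each symbol, using Lemma \ref{lemmapara}(v) rather than (iv) whenever two symbols meet at the critical frequencies, and invoking Lemma \ref{lem.Moser} in its sharpest form \eqref{Moser4} to prevent spurious higher-derivative growth from $\check V^\pm$.
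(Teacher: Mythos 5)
Your overall architecture---tangential estimates derived from the $L^2$ bound, recovery of normal derivatives from the structure of the system, and existence by a duality argument---matches the paper's, and your treatment of $\p_3 W^{\mathrm{nc}}$ and of the dual problem is in order. The main gap is in the recovery of normal derivatives of the \emph{characteristic} components ($W_1,W_2,W_5,\dots,W_{13}$ and their counterparts for $W^-$). You propose to obtain them by ``integrating along characteristics'' or ``algebraic manipulation of the remaining rows.'' This cannot work: because the basic state satisfies the eikonal constraint \eqref{bas.2} and the involution \eqref{bas.5}, the operators $\p_t^{\check{\varPhi}}+\check{v}_\ell\p_\ell^{\check{\varPhi}}$ and $\check{F}_{\ell j}\p_\ell^{\check{\varPhi}}$ reduce on $\{x_3\geq 0\}$ to the purely tangential operators $\p_t+\check{v}_1\p_1+\check{v}_2\p_2$ and $\check{F}_{1j}\p_1+\check{F}_{2j}\p_2$, so the rows of \eqref{reformulation.1} corresponding to the characteristic unknowns contain \emph{no} $\p_3$ of those unknowns whatsoever; no rearrangement of the system produces $\p_3 W_1$, etc. The paper supplies the missing information by introducing the linearized divergences \eqref{zeta.def} and vorticities \eqref{xi.def}--\eqref{eta3.def}, proving (Lemmas \ref{lem.div} and \ref{lem.vor}) that they satisfy closed tangential transport systems and hence obey estimates with a gain of $\gamma$, and then inverting the identities \eqref{W1.id}--\eqref{F.dot.id} to express $\p_3$ of the characteristic components in terms of these quantities plus tangential derivatives. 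Without this (or an equivalent) device, the finite induction on the number of normal derivatives cannot even begin.

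A secondary but genuine issue is in your tangential step. The basic estimate \eqref{CHW.est} loses one tangential derivative: the interior source is measured in $L^2(H^1_\gamma)$ while the solution is controlled only in $L^2$. Consequently the first-order commutators $\p^{\beta}\mathcal{A}_j^{\pm}\,\p_j\p^{\alpha-\beta}W^{\pm}$ with $|\beta|=1$, which involve tangential derivatives of $W$ of the same order $|\alpha|$ as the quantity being estimated, cannot be moved to the right-hand side and ``absorbed for $\gamma$ large'': their $L^2(H^1_\gamma)$ norm involves $|\alpha|+1$ tangential derivatives of $W$, which is exactly what is unavailable. The paper's fix (Lemma \ref{lem.tan}, following Coulombel--Secchi) is to assemble all tangential derivatives of order $\ell$ into an enlarged unknown $W^{(\ell)}$ so that these commutators become zeroth-order coupling terms of the enlarged system, to which the $L^2$ estimate applies directly; only the $|\beta|\geq 2$ commutators and the derivatives of $\mathcal{A}_4^{\pm}$ are treated as sources via the Moser inequalities. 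Your proposal needs this modification to close.
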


When  $f$ and $g$ vanish in the past (which is equivalent to zero initial data), Theorem \ref{thm2} applies. The case of general initial data will be addressed in Section \ref{sec.compa}, where approximate solutions are constructed prior to applying the Nash-Moser iteration scheme.

\subsection{Well-Posedness in $L^2$}\label{sec.well-posed1}

We recall the following $L^2$ {\it a priori} energy estimate
for the linearized problem \eqref{effective}, as derived in Theorem \ref{vt}.

\begin{theorem} \label{thm.CHW}
Suppose that the background state $(\bar{U}^{\pm},\bar{\varPhi}^{\pm})$
defined by \eqref{background}
satisfies \eqref{H1} and \eqref{stability4},
and the basic state $\big(\check{U}^{\pm},\check{\varPhi}^{\pm}\big)$
satisfies \eqref{bas.c1}--\eqref{bas}.
Then there exist constants $K_0>0$ and $\gamma_0\geq 1$ such that,
if $K\leq K_0$ and $\gamma\geq \gamma_0$,
then
 \begin{equation} \label{CHW.est}
 \begin{split}
 &\gamma \|\dot{V}\|_{L^2_{\gamma}(\Omega)}^2 + \|\mathbb{P}(\check{\varphi})\dot{V} \|_{L^2_{\gamma}(\p\Omega)}^2 + \|\psi\|_{H^1_{\gamma}(\mathbb{R}^3)}^2  \\
 & \qquad \lesssim  \gamma^{-3} \vertiii{ \mathbb{L}'_e \left( \check{U}^{\pm},\check{\varPhi}^{\pm} \right)\dot{V}^{\pm} }_{L^2(H_{\gamma}^1)}^2 + \gamma^{-2} \left\| \mathbb{B}_e' \left( \check{U} ,\check{\varPhi} \right)(\dot{V} ,\psi) \right\|_{H^1_{\gamma}(\mathbb{R}^3)}^2
\end{split}
\end{equation}
for all $(\dot{V},\psi)\in H^2_{\gamma}(\Omega)\times H^2_{\gamma}(\mathbb{R}^3)$, where the operators $\mathbb{P}(\check{\varphi})$, $\mathbb{L}'_e$, and $\mathbb{B}'_e$ are defined by \eqref{P.bb}, \eqref{effective.1}, and \eqref{effective.2}, respectively.
\end{theorem}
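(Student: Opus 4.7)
The plan is to deduce Theorem \ref{thm.CHW} as essentially a reformulation of Theorem \ref{vt} in the notation of Section \ref{sec.vari1}, so the proof reduces to matching operators and hypotheses and then invoking Theorem \ref{vt} directly. First I would identify the basic state $(\check{U}^\pm,\check{\varPhi}^\pm)$ of \eqref{bas} with the perturbed background states $(U^{r,l},\varPhi^{r,l})$ of \eqref{background2}, noting that the eikonal relation \eqref{bas.2} matches \eqref{eikonal}, that \eqref{bas.1} corresponds to \eqref{lower}, and that \eqref{bas.5} together with \eqref{bas.4} encodes the Rankine-Hugoniot and involution constraints \eqref{boundary2}. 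The smallness bound $K\leq K_0$ in \eqref{bas.c2} then replicates the hypothesis \eqref{pp} required throughout the paralinearization and microlocal construction of Sections \ref{reduction}--\ref{proof}.

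Next I would verify that the interior operator $\mathbb{L}'_e(\check{U}^\pm,\check{\varPhi}^\pm)$ of \eqref{effective.1} coincides with the operator $L'_{r,l}$ in \eqref{le}, and that the boundary operator $\mathbb{B}'_e(\check{U},\check{\varPhi})$ of \eqref{effective.2} is precisely the enlarged linearized boundary operator $B'(\dot V,\varphi)$ of \eqref{lb}: the matrix $\check b$ in \eqref{b.ring} corresponds to $\underline{\mathbf b}$, the matrix $\check B$ in \eqref{B.ring} to $\underline M$, and the zeroth-order correction $\check b_\natural\psi$ in \eqref{b.natural} is exactly the term $\underline M\,[\partial_3 U^{r,l}/\partial_3\varPhi^{r,l}]^\top\varphi$ arising from the Alinhac good unknown. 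Moreover, the projector $\mathbb P(\check{\varphi})$ in \eqref{P.bb} extracts the same four-per-side noncharacteristic components $\dot V^{nc\pm}$ that appear as $\dot V^n|_{x_3=0}$ in the conclusion of Theorem \ref{vt}.

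Once this correspondence is set up, applying Theorem \ref{vt} with the stated smallness constants $K_0$ and $\gamma_0$ and rewriting the weighted norms using the equivalence $\|\cdot\|_{L^2_\gamma(\Omega)}\simeq\vertiii{\cdot}_{L^2(H^0_\gamma)}$ and $\|\cdot\|_{H^1_\gamma(\mathbb R^3)}\simeq\|\cdot\|_{1,\gamma}$ established in Section \ref{notation} yields \eqref{CHW.est} verbatim. No new energy estimate is needed here: the full case analysis (Cases 1--3 plus the good-frequency case) carried out in Sections \ref{subsec case1}--\ref{proof}, which combines the refined upper triangularization, the diagonalization \eqref{beta} of the Lopatinski\v{\i} matrix near double roots, and the localized G\r{a}rding estimates, has already delivered the required estimate for precisely this problem.

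The only subtle point -- and thus the main (minor) obstacle -- is that $\mathbb B'_e$ here is written in its enlarged nine-component form, incorporating the six involution-type boundary conditions from \eqref{bas.5} in addition to the three essential jump conditions \eqref{bd}. One must check that these redundant rows contribute no uncontrolled terms, which is guaranteed because Proposition \ref{pro1.1} shows they are preserved by the flow and they are absorbed into $\underline M$ when constructing the projector $\mathbb P$ in \eqref{PI}. With this verification in place, the conclusion \eqref{CHW.est} follows directly from the microlocalized estimates summed in Section \ref{proof}.
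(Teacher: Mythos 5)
Your proposal matches the paper's treatment exactly: the paper offers no separate argument for Theorem \ref{thm.CHW} and simply presents it as the $L^2$ a priori estimate "as derived in Theorem \ref{vt}", i.e.\ a restatement of the variable-coefficient estimate in the notation of Section \ref{sec.vari1}. Your identification of $(\check{U}^{\pm},\check{\varPhi}^{\pm})$ with $(U^{r,l},\varPhi^{r,l})$, of $\mathbb{L}'_e$, $\mathbb{B}'_e$, $\mathbb{P}(\check{\varphi})$ with $L'_{r,l}$, $B'$, $\dot{V}^{n}|_{x_3=0}$, and the norm equivalences is precisely the bookkeeping implicit in that citation, so the proposal is correct and follows the paper's route.
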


System \eqref{effective.1} is symmetrizable hyperbolic, with coefficients
satisfying the regularity assumptions by {\rm  Coulombel} \cite{C05MR2138641}. Consequently, it is necessary
 to construct a dual problem that satisfies an appropriate energy estimate.
To this end,  we define

\begin{equation*}
\check{\varsigma}_1^{\pm}
:=-\frac{\check{\rho}^{\pm}}{\p_3 \check{\varPhi}^{\pm}},\qquad
\check{\varsigma}_2^{\pm}
:=-\frac{c(\check{\rho}^{\pm})^2\p_1\check{\varphi}}
{2\check{\rho}^{\pm}\p_3 \check{\varPhi}^{\pm}},\qquad
\check{\varsigma}_3^{\pm}
:=-\frac{c(\check{\rho}^{\pm})^2\p_2\check{\varphi} }
{2\check{\rho}^{\pm}\p_3 \check{\varPhi}^{\pm}},\qquad \check{\varsigma}_4^{\pm}
:=\frac{c(\check{\rho}^{\pm})^2 }
{2\check{\rho}^{\pm}\p_3 \check{\varPhi}^{\pm}}.
\end{equation*}
We use \eqref{bas.2} and \eqref{bas.5} to calculate
\begin{equation*}  
\check{B}_1^{\top}\check{B}
+\check{D}_1^{\top}\check{D}
=\mathrm{diag}\,
\big(\widetilde{A}_3(\check{U}^{+},\check{\varPhi}^{+}),
\,\widetilde{A}_3(\check{U}^{-},\check{\varPhi}^{-})\big)
\big|_{x_3=0},
\end{equation*}
where $\check{B}$ is given in \eqref{B.ring}, $\check{D}$ is given as follows:
\begin{align}
\label{checkD}\nonumber
\check{D}:= \nonumber
&  \left[\begin{smallmatrix}
 0 & \p_1\check{\varphi} & \p_2\check{\varphi} &-1 &0 & 0&0 & 0& 0 & 0 & 0 & 0 & 0 &
0 & \p_1\check{\varphi} & \p_2\check{\varphi} &-1 &0 & 0&0 & 0 & 0 & 0 & 0 & 0 & 0 \\
 0 & \p_1\check{\varphi} & \p_2\check{\varphi} &-1 &0 & 0&0 & 0& 0 & 0 & 0 & 0 & 0 &
0 & 0 & 0 & 0 &0 & 0&0 & 0 & 0 & 0 & 0 & 0 & 0 \\
 1 & 0 & 0 & 0 &0 & 0&0 & 0& 0 & 0 & 0 & 0 & 0 &
1 & 0 & 0 & 0 &0 & 0&0 & 0 & 0 & 0 & 0 & 0 & 0 \\
\end{smallmatrix}\right],
\end{align}
Following \cite[Section 3.2]{M01MR1842775},
we define the dual problem for \eqref{effective} as:
\begin{align}\nonumber
\begin{cases}
\mathbb{L}'_e\big(\check{U}^{\pm},\check{\varPhi}^{\pm}\big)^{\ast}U^{\pm}=f^{\ast}_{\pm}, & \text{ if } x_3>0,\\
\check{D}^{\top}_1U={\mathbf 0},\quad \mathrm{div}(\check{b}^{\top}\check{B}_1U)-\check{b}_{\natural}\check{B}_1U={\mathbf 0}, & \text{ if } x_3=0,
\end{cases}
\end{align}
where $\check{b}$, $\check{b}_{\natural}$ are given in \eqref{b.ring}, \eqref{b.natural} respectively.
$\check{B}_1$ and  $\check{D}_1$ are defined as follows:
\begin{align*}
\check{B}_1& := \left[\begin{smallmatrix}
0 & 0 & 0 & 0 & 0 & 0& 0& 0& 0& 0& 0& 0& 0& -\check{\varsigma}_1^{-} & 0 & 0 & 0  & 0 & 0 & 0 & 0 & 0 & 0 & 0 & 0 & 0\\
\check{\varsigma}_1^{+} & 0& 0& 0& 0& 0& 0& 0& 0& 0& 0& 0& 0 & \check{\varsigma}_1^{-} & 0& 0& 0& 0& 0& 0& 0& 0& 0& 0& 0& 0\\
0& \check{\varsigma}_2^{+}& \check{\varsigma}_3^{+}& \check{\varsigma}_4^{+}& 0& 0& 0& 0& 0& 0& 0& 0& 0 & 0 &  -\check{\varsigma}_2^{-}& -\check{\varsigma}_3^{-}& -\check{\varsigma}_4^{-}& 0& 0& 0& 0& 0& 0& 0& 0& 0\\
0 & 0& 0& 0& 0& 0& 0& 0& 0& 0& 0& 0& 0 & 0& 0& 0& 0& 0& 0& 0& 0& 0& 0& 0& 0& 0\\
0 & 0& 0& 0& 0& 0& 0& 0& 0& 0& 0& 0& 0 & 0& 0& 0& 0& 0& 0& 0& 0& 0& 0& 0& 0& 0\\
0 & 0& 0& 0& 0& 0& 0& 0& 0& 0& 0& 0& 0 & 0& 0& 0& 0& 0& 0& 0& 0& 0& 0& 0& 0& 0\\
0 & 0& 0& 0& 0& 0& 0& 0& 0& 0& 0& 0& 0 & 0& 0& 0& 0& 0& 0& 0& 0& 0& 0& 0& 0& 0\\
0 & 0& 0& 0& 0& 0& 0& 0& 0& 0& 0& 0& 0 & 0& 0& 0& 0& 0& 0& 0& 0& 0& 0& 0& 0& 0\\
0 & 0& 0& 0& 0& 0& 0& 0& 0& 0& 0& 0& 0 & 0& 0& 0& 0& 0& 0& 0& 0& 0& 0& 0& 0& 0\\
0 & 0& 0& 0& 0& 0& 0& 0& 0& 0& 0& 0& 0 & 0& 0& 0& 0& 0& 0& 0& 0& 0& 0& 0& 0& 0\\
0 & 0& 0& 0& 0& 0& 0& 0& 0& 0& 0& 0& 0 & 0& 0& 0& 0& 0& 0& 0& 0& 0& 0& 0& 0& 0\\
0 & 0& 0& 0& 0& 0& 0& 0& 0& 0& 0& 0& 0 & 0& 0& 0& 0& 0& 0& 0& 0& 0& 0& 0& 0& 0\\
\end{smallmatrix}\right], \\
\check{D}_1 & := \left[\begin{smallmatrix}
0 & 0 & 0 & 0 & 0 & 0& 0& 0& 0& 0& 0& 0& 0& 0 & 0 & 0 & 0  & 0 & 0 & 0 & 0 & 0 & 0 & 0 & 0 & 0\\
0 & 0& 0& 0& 0& 0& 0& 0& 0& 0& 0& 0& 0 & 0& 0& 0& 0& 0& 0& 0& 0& 0& 0& 0& 0& 0\\
0& \check{\varsigma}_2^{+}& \check{\varsigma}_3^{+}& \check{\varsigma}_4^{+}& 0& 0& 0& 0& 0& 0& 0& 0& 0 & 0& \check{\varsigma}_2^{-}&\check{\varsigma}_3^{-}& \check{\varsigma}_4^{-}& 0& 0& 0& 0& 0& 0& 0& 0& 0\\
\end{smallmatrix}\right],
\end{align*}
and the symbol $\mathrm{div}$ represents the divergence operator in $\mathbb{R}^3$.
$\mathbb{L}_e'\big(\check{U}^{\pm},\check{\varPhi}^{\pm}\big)^*$
are the adjoints of $\mathbb{L}_e'\big(\check{U}^{\pm},\check{\varPhi}^{\pm}\big)$, respectively.

Using the same analysis as in \cite[Section 3.4]{CS08MR2423311},
we can obtain the well-posedness of the linearized problem
\eqref{effective} in $L^2$.

 \begin{theorem}
 \label{thm3}
Let $T>0$ be fixed. Suppose that
$f \in L^2(\mathbb{R}_+;H^1(\omega_T))$
and $g\in H^1(\omega_T)$ vanish in the past
and all the hypotheses in {\rm Theorem \ref{thm.CHW}} are satisfied.
Then there exist constants $K_0>0$ and $\gamma_0\geq 1$ such that, if $K\leq K_0$ and $\gamma\geq \gamma_0$, then there exists a unique solution $(\dot{V}^+,\dot{V}^-,\psi)\in L^2(\Omega_T)\times L^2(\Omega_T)\times H^1(\omega_T)$ to the problem \eqref{effective.1}--\eqref{effective.2} that vanishes in the past and satisfies
\begin{equation}\label{thm3.e}
\gamma\|\dot{V}\|^2_{L^2_{\gamma}(\Omega_t)}
+\|\mathbb{P}(\check{\varphi})\dot{V}  \|^2_{L^2_{\gamma}(\omega_t)}
+\|\psi\|^2_{H^1_{\gamma}(\omega_t)}
 \lesssim \gamma^{-3} \vertiii{ f }^2_{L^2(H_{\gamma}^1(\omega_t))}
+\gamma^{-2}\|g\|^2_{H^1_{\gamma}(\omega_t)}
\end{equation}
for all $\gamma\geq \gamma_0$ and $t\in[0,T]$.
\end{theorem}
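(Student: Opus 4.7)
The proof will follow the classical duality strategy for characteristic boundary value problems, as developed by Coulombel--Secchi \cite{CS08MR2423311}, adapting it to the present elastic two-phase setting. Uniqueness is immediate: the difference of two solutions to \eqref{effective.1}--\eqref{effective.2} solves the homogeneous problem, so the \textit{a priori} estimate \eqref{CHW.est} of Theorem \ref{thm.CHW} forces $(\dot V,\psi) \equiv 0$. The entire work therefore lies in existence. The plan is to obtain a weak solution via the Hahn--Banach/Riesz representation theorem, by proving an \textit{a priori} estimate for the dual problem that mirrors \eqref{CHW.est} on a suitable test-function space, and then to upgrade the weak solution to an $L^2$ solution satisfying the trace and boundary regularity claimed in \eqref{thm3.e}.

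The first step is to introduce a weight $e^{-\gamma t}$ and work on a causal cylinder $\Omega_t$, so that $f$ and $g$ vanish for $t<0$. Setting $\widetilde U:=e^{-\gamma t}U$, the dual operator becomes a first-order system with the same principal part as $\mathbb{L}'_e(\check U,\check\varPhi)$ up to sign, together with the conormal boundary conditions $\check D_1 U=\mathbf 0$ and $\mathrm{div}(\check b^\top \check B_1 U)-\check b_\natural\check B_1 U=\mathbf 0$ that were deliberately engineered (via the identities satisfied by $\check B_1,\check D_1$) so that integration by parts over $\Omega_t$ couples only through the \textit{non-characteristic} combination $\mathbb P(\check\varphi)\dot V$ that already appears in \eqref{CHW.est}. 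Running exactly the same microlocal/paradifferential machinery of Sections \ref{reduction}--\ref{proof} on this dual system, I expect to obtain a dual estimate of the form
\begin{equation*}
\gamma\|U\|_{L^2_\gamma(\Omega_t)}^2 + \|\check B_1 U\|_{L^2_\gamma(\omega_t)}^2 + \|\chi\|_{H^1_\gamma(\omega_t)}^2
\lesssim \gamma^{-3}\vertiii{f^\ast}_{L^2(H^1_\gamma(\omega_t))}^2 + \gamma^{-2}\|g^\ast\|_{H^1_\gamma(\omega_t)}^2,
\end{equation*}
at least after shrinking $K_0$ and enlarging $\gamma_0$; the only structural change compared to the direct problem is the sign of the outgoing/incoming mode classification near the roots of the Lopatinski\u{\i} determinant, which leaves the functional estimates invariant because the hemisphere $\Sigma$ is stable under the substitution $\tau\mapsto\bar\tau$ and the determinant \eqref{lopatinskii2} is real-symmetric up to conjugation.

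Once the dual estimate is available, define the linear functional
\begin{equation*}
\ell(f^\ast,g^\ast) := \langle f, U\rangle_{L^2_\gamma(\Omega_t)} + \langle g, \chi\rangle_{H^1_\gamma(\omega_t)},
\end{equation*}
on the range of the dual map $U\mapsto(\mathbb L'_e{}^\ast U,\text{boundary data})$. The dual estimate shows $\ell$ is bounded with norm controlled by the right-hand side of \eqref{thm3.e}; Hahn--Banach extends $\ell$ to all of $L^2(\mathbb R_+;H^{-1}_{-\gamma}(\omega_t))\times H^{-1}_{-\gamma}(\omega_t)$, and Riesz representation produces $(\dot V,\psi)$ satisfying \eqref{effective.1}--\eqref{effective.2} in the weak sense, with \eqref{thm3.e} already built in. Regularity of the trace $\mathbb P(\check\varphi)\dot V|_{x_3=0}$ in $L^2(\omega_t)$ is not automatic from the weak formulation, so the final step is to mollify $(f,g)$ in the tangential variables, apply the existence for the smoothed problem (where classical strictly hyperbolic theory applied via a vanishing-viscosity approximation gives a smooth solution), and pass to the limit using \eqref{CHW.est} together with causality to preserve vanishing in the past. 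The expected main obstacle is the construction and estimate of the dual problem under the \emph{characteristic} boundary: the boundary matrix has constant but non-maximal rank, and the dual boundary conditions mix the conservation-law divergence $\mathrm{div}(\check b^\top\check B_1U)$ with a zeroth-order term, so the paralinearization in Section \ref{Paralinearization} must be redone for a boundary operator that is itself first-order and non-local in the tangential variables; here I expect the non-parallel elasticity condition $\mathrm{F}_1\times\mathrm{F}_2\neq\mathbf 0$ to once again supply the ellipticity \eqref{bfront} needed to recover one tangential derivative of $\chi$ in the dual estimate.
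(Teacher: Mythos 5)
Your proposal follows essentially the same route as the paper, which constructs exactly the dual problem you describe (with the conormal conditions $\check D_1 U=\mathbf 0$ and $\mathrm{div}(\check b^{\top}\check B_1 U)-\check b_{\natural}\check B_1 U=\mathbf 0$, justified by the identity $\check B_1^{\top}\check B+\check D_1^{\top}\check D=\mathrm{diag}\big(\widetilde A_3(\check U^{+},\check\varPhi^{+}),\widetilde A_3(\check U^{-},\check\varPhi^{-})\big)\big|_{x_3=0}$) and then invokes the duality and regularization scheme of Coulombel--Secchi verbatim. The one imprecision is that the dual problem has homogeneous boundary data and no front unknown $\chi$: the divergence-form dual boundary condition is engineered precisely so that the front $\psi$ drops out of the weak formulation (via integration by parts of $\langle\check b\nabla\psi,\check B_1U\rangle$) and is recovered afterwards from the boundary equation, so your duality pairing $\langle g,\chi\rangle$ should instead be taken against the trace $\check B_1U$.
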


For the reformulated problem \eqref{reformulation}, Theorem \ref{thm3}
implies  that
\begin{equation}\label{thm3.e2}
 \gamma\|W\|^2_{L^2_{\gamma}(\Omega_T)} + \|W^{\mathrm{nc}} \|^2_{L^2_{\gamma}(\omega_T)}
+ \|\psi\|^2_{H^1_{\gamma}(\omega_T)}
\lesssim \gamma^{-3} \vertiii{ F^{\pm} }^2_{L^2(H^1_{\gamma}(\omega_T))}
+\gamma^{-2}\|g\|^2_{H^1_{\gamma}(\omega_T)}.
\end{equation}
For any nonnegative integer $m$, a generic smooth matrix-valued function of $\{(\p^{\alpha} \check{V},\p^{\alpha}\check{\varPsi}):|\alpha|\leq m\}$ is denoted by $\check{\rm c}_m$,
and we write $\underline{\check{\rm c}}_m$ to denote such a function that vanishes at the origin.
For example, the equations for $\dot{\rho}^{\pm}$ in \eqref{effective.1} can be rewritten as:
\begin{align}
\label{rho.eq}
(\p_t^{\check{\varPhi}^{\pm}}
+\check{v}^{\pm}_{\ell}\p_{\ell}^{\check{\varPhi}^{\pm}} )\dot{\rho}^{\pm}
+\check{\rho}^{\pm} \p_{\ell}^{\check{\varPhi}^{\pm}} \dot{v}_{\ell}^{\pm}
=\check{\rm c}_0 {f}+\underline{\check{\rm c}}_1 \dot{V}.
\end{align}
The precise forms of $\check{\rm c}_m$ and
$\underline{\check{\rm c}}_m$ may vary from line to line.

\subsection{Tangential Derivatives}\label{sec.tan}

The following lemma provides
an estimate for the tangential derivatives:
\begin{lemma}\label{lem.tan}
If the hypotheses of {\rm Theorem {\rm \ref{thm2}}} hold, then there exists a constant $\gamma_{m}\geq 1$,
 independent of $T$, such that
 \begin{align}
 &\notag
 \gamma^{1/2}\vertiii{W}_{L^2(H^{m}_{\gamma}(\omega_T))}
 +\|W^{\mathrm{nc}} \|_{H^{m}_{\gamma}(\omega_T)}
 +\|\psi\|_{H^{m+1}_{\gamma}(\omega_T)}\\
 &\quad  \lesssim
 \gamma^{-{3}/{2}}\vertiii{ F^{\pm} }_{L^2(H^{m+1}_{\gamma}(\omega_T))}
 +\gamma^{-{3}/{2}}\|W\|_{L^{\infty}(\Omega_T)}
  \| (\check{V}, \check{\varPsi} ) \|_{H^{m+3}_{\gamma}(\Omega_T)}
 \notag \\
 &\quad  \quad \,
+ \gamma^{-1}\|g\|_{H^{m+1}_{\gamma}(\omega_T)}
 +\gamma^{-1}
 \|(W^{\mathrm{nc}} ,\psi)\|_{L^{\infty}(\omega_T)}
 \|(\check{V},\check{\varPsi})\|_{{H^{m+2}_{\gamma}(\omega_T)}},
 \label{tan.est}
 \end{align}
for all $\gamma\geq \gamma_{m}$ and solutions  $(W,\psi)\in H^{m+2}_{\gamma}(\Omega_T)\times H^{m+2}_{\gamma}(\omega_T)$ to the problem \eqref{reformulation}.
\end{lemma}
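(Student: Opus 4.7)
The strategy is to reduce the tangential-derivative estimate to the basic $L^2$ estimate \eqref{thm3.e2} through a commutator analysis. For a multi-index $\alpha=(\alpha_0,\alpha_1,\alpha_2,0)$ with $|\alpha|\leq m$, I would apply $\partial^\alpha$ to both the interior equations \eqref{reformulation.1} and the boundary condition \eqref{reformulation.2}. Because $\partial^\alpha$ is purely tangential, it commutes with $I_2\partial_3$, so the differentiated system for $(\partial^\alpha W^{\pm},\partial^\alpha\psi)$ has exactly the same principal form as \eqref{reformulation}, with the interior source $\partial^\alpha F^{\pm}+\mathcal{I}^\alpha$ and the boundary source $\partial^\alpha g+\mathcal{B}^\alpha$, where $\mathcal{I}^\alpha$ collects the commutators $[\partial^\alpha,\mathcal{A}_j^{\pm}]\partial_j W^{\pm}$ and $[\partial^\alpha,\mathcal{A}_4^{\pm}]W^{\pm}$, and $\mathcal{B}^\alpha$ collects $[\partial^\alpha,\check{b}]\nabla\psi$, $[\partial^\alpha,\check{b}_\natural]\psi$ and $[\partial^\alpha,\bm{B}]W^{\mathrm{nc}}$.

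Next, I would apply the estimate \eqref{thm3.e2} to the differentiated system to obtain
\begin{equation*}
\gamma\|\partial^\alpha W\|_{L^2_\gamma(\Omega_T)}^2
+\|\partial^\alpha W^{\mathrm{nc}}\|_{L^2_\gamma(\omega_T)}^2
+\|\partial^\alpha\psi\|_{H^1_\gamma(\omega_T)}^2
\lesssim
\gamma^{-3}\vertiii{\partial^\alpha F^{\pm}+\mathcal{I}^\alpha}_{L^2(H^1_\gamma(\omega_T))}^2
+\gamma^{-2}\|\partial^\alpha g+\mathcal{B}^\alpha\|_{H^1_\gamma(\omega_T)}^2.
\end{equation*}
The commutators are then handled via Lemma \ref{lem.Moser}. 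Since the coefficients $\mathcal{A}_j^{\pm}$, $\mathcal{A}_4^{\pm}$ are smooth functions of $(\check V,\nabla\check V,\nabla\check\varPsi,\nabla^2\check\varPsi)$ and the basic assumption \eqref{thm2.H} puts them in $W^{1,\infty}(\Omega_T)$, the sharp Moser inequality \eqref{Moser4} yields a tame bound of the form
\begin{equation*}
\vertiii{\mathcal{I}^\alpha}_{L^2(H^1_\gamma(\omega_T))}
\lesssim
\vertiii{W}_{L^2(H^m_\gamma(\omega_T))}
+\|W\|_{L^\infty(\Omega_T)}\,\|(\check V,\check\varPsi)\|_{H^{m+3}_\gamma(\Omega_T)},
\end{equation*}
and analogously the boundary commutators $\mathcal{B}^\alpha$ are controlled by $\|W^{\mathrm{nc}}\|_{H^m_\gamma(\omega_T)}+\|\psi\|_{H^{m+1}_\gamma(\omega_T)}$ plus $\|(W^{\mathrm{nc}},\psi)\|_{L^\infty(\omega_T)}\|(\check V,\check\varPsi)\|_{H^{m+2}_\gamma(\omega_T)}$ (note that $\bm{B}$, $\check b$, $\check b_\natural$ are functions of traces, which sit in one less derivative). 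Summing these bounds over $|\alpha|\leq m$ and choosing $\gamma_m$ large enough, the lower-order interior term $\gamma^{-3}\vertiii{W}_{L^2(H^m_\gamma)}^2$ is absorbed by $\gamma\vertiii{W}_{L^2(H^m_\gamma)}^2$ on the left-hand side (after an induction on $m$ covering the lower orders), and similarly on the boundary.

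The main obstacle is obtaining the tame dependence rather than a generic polynomial dependence. Because the boundary matrix $I_2$ is singular, only $W^{\mathrm{nc}}$ is controlled on $\{x_3=0\}$, and the boundary coefficients $\bm B$, $\check b$, $\check b_\natural$ live on $\omega_T$ with regularity only $H^{m+2}$, half a derivative below the interior $H^{m+3}$ bound. One must therefore distribute the top-order derivatives so that, at each commutator of order $|\alpha|=m$, the highest-regularity factor is the background state, while the unknown appears in $L^\infty$; the inequality \eqref{Moser4} is critical precisely here because it trades one derivative on $u$ for the $W^{1,\infty}$-norm of $u$, keeping the count of derivatives on $(\check V,\check\varPsi)$ at $m+3$ (interior) or $m+2$ (boundary) and forcing the $L^\infty$ norms of $W$ and $(W^{\mathrm{nc}},\psi)$ to appear as the small factors. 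Careful bookkeeping between interior and boundary commutators, together with an induction on $m$ to absorb the lower-order remainders, then produces \eqref{tan.est}.
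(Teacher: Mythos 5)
There is a genuine gap, and it sits exactly at the point the paper flags as the main difficulty. Your plan is to put the full commutators $[\p^{\alpha},\mathcal{A}_j^{\pm}]\p_j W^{\pm}$ into the interior source and invoke \eqref{thm3.e2}. But \eqref{thm3.e2} measures the interior source in $L^2(H^1_{\gamma}(\omega_T))$ while controlling $W$ only in $L^2_{\gamma}$ --- it loses one tangential derivative, which is the whole signature of the weak stability here. The worst part of your $\mathcal{I}^{\alpha}$ is $\sum_{|\beta|=1}\p^{\beta}\mathcal{A}_j^{\pm}\,\p_j\p^{\alpha-\beta}W^{\pm}$, which already carries $m$ tangential derivatives of $W$; taking its $H^1_{\gamma}$ norm (as \eqref{thm3.e2} forces you to) produces $m+1$ derivatives of $W$. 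The commutator inequality \eqref{Moser4}, applied with total order $m+2$ (one for the extra gradient, $m$ for $\p^{\alpha}$, one for $\p_j$), gives $\|W\|_{H^{m+1}_{\gamma}}$, not $\|W\|_{H^{m}_{\gamma}}$ as you claim. This top-order term cannot be absorbed by $\gamma\vertiii{W}_{L^2(H^{m}_{\gamma})}$ on the left, no matter how large $\gamma_m$ is, and an induction on $m$ does not help because the offending term is at order $m+1$, above the order you are trying to control.

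The paper's proof avoids this by never putting the $|\beta|=1$ commutators into the source. It introduces the enlarged unknown $W^{(\ell)}=\{\p_t^{\alpha_0}\p_1^{\alpha_1}\p_2^{\alpha_2}W:\alpha_0+\alpha_1+\alpha_2=\ell\}$ and observes that $\p^{\beta}\mathcal{A}_j^{\pm}\,\p_j\p^{\alpha-\beta}W^{\pm}$ with $|\beta|=1$ is a \emph{zero-order} operator acting on the components of $W^{(\ell)}$; these terms are therefore absorbed into the zero-order coefficient $\mathscr{C}^{\pm}\in W^{1,\infty}$ of the enlarged system \eqref{tan.eq}, which retains the block structure required by Theorem \ref{thm.CHW}. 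Only the $|\beta|\geq 2$ commutators (carrying at most $m-1$ derivatives of $W$, hence at most $m$ after the extra $H^1_{\gamma}$ gradient) remain in the source $\mathscr{F}^{(\ell)}$, where the Moser calculus delivers exactly the tame bound \eqref{F.scr.e}. The rest of your outline (boundary commutators, distribution of top derivatives onto the background state, absorption for large $\gamma$) matches the paper, but without the enlarged-system step the argument does not close.
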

\begin{proof}
We will follow the approach of \cite[Proposition 1]{CS08MR2423311} to consider the enlarged system. However, for estimating the source terms, we will use the Moser-type calculus inequalities \eqref{Moser1}--\eqref{Moser4} instead of the Gagliardo--Nirenberg and H\"{o}lder inequalities used in \cite[Proposition 1]{CS08MR2423311}.

Let $\ell\in\mathbb{N}$ with $1\leq \ell\leq m$.
Let $\alpha=(\alpha_0,\alpha_1,\alpha_2,0)\in\mathbb{N}^4$ with $|\alpha|=\ell$
so that $\p^{\alpha}=\p_t^{\alpha_0}\p_1^{\alpha_1}\p_2^{\alpha_2}$ is a tangential derivative
satisfying $\alpha_0+\alpha_1+\alpha_2=\ell$.
We then apply the operator $\p^{\alpha}$ to \eqref{reformulation.1} to obtain
\begin{equation}\label{tan.id1}
\begin{split}
& \mathcal{A}_0^{\pm}\partial_t \p^{\alpha}W^{\pm}
+\mathcal{A}_1^{\pm}\partial_1 \p^{\alpha}W^{\pm}+\mathcal{A}_2^{\pm}\partial_2 \p^{\alpha}W^{\pm} +I_2\partial_3 \p^{\alpha}W^{\pm} +\mathcal{A}_4^{\pm} \p^{\alpha} W^{\pm}
\\
&\quad
+ \sum_{\substack{|\beta|=1,\,\beta\leq \alpha}}
C_{\alpha,\beta}\big(
\p^{\beta}\mathcal{A}_0^{\pm}\partial_t \p^{\alpha-\beta}W^{\pm}
+\p^{\beta}\mathcal{A}_1^{\pm}\partial_1 \p^{\alpha-\beta}W^{\pm}+\p^{\beta}\mathcal{A}_2^{\pm}\partial_2 \p^{\alpha-\beta}W^{\pm}
\big)
=\mathscr{F}^{\alpha}_{\pm},
\end{split}
\end{equation}
where
\begin{equation*}
\begin{split}
\mathscr{F}^{\alpha}_{\pm}
:= & \, \p^{\alpha}F^{\pm}
+\sum_{\substack{0<\beta\leq \alpha}}
C_{\alpha,\beta}\p^{\beta}\mathcal{A}_4^{\pm} \p^{\alpha-\beta}W^{\pm}\nonumber\\
&+\sum_{\substack{|\beta|\geq 2,\,\beta\leq \alpha}}
C_{\alpha,\beta}\big(
\p^{\beta}\mathcal{A}_0^{\pm}\partial_t \p^{\alpha-\beta}W^{\pm}
+\p^{\beta}\mathcal{A}_1^{\pm}\partial_1 \p^{\alpha-\beta}W^{\pm}+\p^{\beta}\mathcal{A}_2^{\pm}\partial_2 \p^{\alpha-\beta}W^{\pm}
\big).
\end{split}
\end{equation*}
Similarly, from \eqref{reformulation.2} we have
\begin{equation}\label{tan.id2}
\check{b}\nabla \p^{\alpha}\psi
+\check{b}_{\natural}\p^{\alpha}\psi
+\bm{B} \p^{\alpha}W^{\rm nc}=\mathscr{G}^{\alpha}
\quad \textrm{on }\ \omega_T,
\end{equation}
where
\begin{equation*}
\mathscr{G}^{\alpha}:=
\p^{\alpha} g-[\p^{\alpha}, \check{b}]\nabla  \psi
-[\p^{\alpha},\check{b}_{\natural}]\psi -[\p^{\alpha},\bm{B}]  W^{\rm nc}.
\end{equation*}

Since the terms involving tangential derivatives of order $\ell$ in \eqref{tan.id1}
do not solely contain $\p^{\alpha} W^{\pm}$,
as in \cite[Proposition 1]{CS08MR2423311},
we write an enlarged system that accounts for all the tangential derivatives of order $\ell$. This allows us to apply the $L^2$ {\it a priori} estimate of Theorem \ref{thm.CHW}.
It is important to note that the last term on the left-hand side of \eqref{tan.id1} cannot be treated
simply as source terms due to the loss of derivatives in \eqref{CHW.est}.
We define
\begin{equation*}
{W}^{(\ell)}_{\pm} := \left\{\p_t^{\alpha_0}\p_1^{\alpha_1}\p_2^{\alpha_2} W^{\pm}:
\alpha_0+\alpha_1+\alpha_2=\ell \right\},\quad
{\psi}^{(\ell)} := \left\{\p_t^{\alpha_0}\p_1^{\alpha_1} \p_2^{\alpha_2} \psi:
\alpha_0+\alpha_1+\alpha_2=\ell \right\},
\end{equation*}
and from \eqref{tan.id1}--\eqref{tan.id2}, we obtain the following system:
\begin{subequations}
\label{tan.eq}
\begin{alignat}{2}
&
\mathscr{A}_0^{\pm}\partial_t  W^{(\ell)}_{\pm}
+\mathscr{A}_1^{\pm}\partial_1  W^{(\ell)}_{\pm}+\mathscr{A}_2^{\pm}\partial_2  W^{(\ell)}_{\pm}
+\mathscr{I}\partial_3  W^{(\ell)}_{\pm}
+\mathscr{C}^{\pm}   W^{(\ell)}_{\pm}
=\mathscr{F}^{(\ell)}_{\pm},\\
&
\mathscr{B}\nabla\psi^{(\ell)}
+\mathscr{B}_{\natural}\psi^{(\ell)}
+\mathscr{M} W^{(\ell)}_{\rm nc}=\mathscr{G}^{(\ell)},
\end{alignat}
\end{subequations}
where $\mathscr{A}_0^{\pm}$, $\mathscr{A}_1^{\pm}$, $\mathscr{A}_2^{\pm}$ and $\mathscr{I}$
are block diagonal matrices with blocks $\mathcal{A}_0^{\pm}$, $\mathcal{A}_1^{\pm}$, $\mathcal{A}_2^{\pm}$
and $I_2$, respectively.
Matrices $\mathscr{C}^{\pm}$ belong to $W^{1,\infty}(\Omega)$.
The source terms $\mathscr{F}^{(\ell)}_{\pm}$ and $\mathscr{G}^{(\ell)}$ consist of
$\mathscr{F}^{\alpha}_{\pm}$  and $\mathscr{G}^{\alpha}$ for all $\alpha=(\alpha_0,\alpha_1,\alpha_2,0)$
with $|\alpha|=\ell$, respectively.
The enlarged problem \eqref{tan.eq} satisfies an energy estimate similar to
\eqref{thm3.e2}, {\it i.e.},
\begin{equation}\label{tan.e}
\begin{split}
 &{\gamma}^{1/2}\|W^{(\ell)}\|_{L^2_{\gamma}(\Omega_T)}
+\|W^{(\ell)}_{\mathrm{nc}} \|_{L^2_{\gamma}(\omega_T)}
+\|\psi^{(\ell)}\|_{H^1_{\gamma}(\omega_T)}  \\
&\qquad \qquad  \qquad
\lesssim \gamma^{-{3}/{2}} \vertiii{ \mathscr{F}^{(\ell)} }_{L^2(H^1_{\gamma}(\omega_T))}
+\gamma^{-1}\|\mathscr{G}^{(\ell)}\|_{H^1_{\gamma}(\omega_T)}.
\end{split}
\end{equation}

Now, by using Moser-type calculus inequalities \eqref{Moser1}--\eqref{Moser4}, we estimate the source terms $\mathscr{F}^{(\ell)}_{\pm}$ and $\mathscr{G}^{(\ell)}.$ First,  we have from definition,
\begin{equation*}
\begin{split}
\vertiii{ \p^{\alpha}F }_{L^2(H^1_{\gamma}(\omega_T) )}
& \lesssim
\|(\gamma\p^{\alpha}F,
\p_t\p^{\alpha}F,\p_1\p^{\alpha}F,\p_2\p^{\alpha}F)\|_{L^2_{\gamma}(\Omega_T )}
\lesssim \vertiii{ F }_{L^2(H^{\ell +1}_{\gamma}(\omega_T) )},
\label{tan.e1}\\
\|\p^{\alpha}g\|_{H^1_{\gamma}(\omega_T) }
&\lesssim \| g\|_{H^{\ell +1}_{\gamma}(\omega_T )}.
\end{split}
\end{equation*}

For $0<\beta\leq \alpha$, we obtain
\begin{equation}\label{tan.e3a}
\|\p^{\beta}\mathcal{A}_4   \p^{\alpha-\beta}W \|_{H^1_{\gamma}(\omega_T) }
 \lesssim
\| (\gamma\p^{\beta}\mathcal{A}_4   \p^{\alpha-\beta}W,
\nabla_{t,x_1,x_2}(\p^{\beta}\mathcal{A}_4   \p^{\alpha-\beta}W)
)
\|_{L^2_{\gamma}(\omega_T) }.
\end{equation}
Applying Moser-type calculus inequality \eqref{Moser1} yields that
\begin{equation}\label{tan.e3b}
\begin{split}
&\|  \p^{\beta}\mathcal{A}_4 \p^{\alpha-\beta}W
\|_{L^2_{\gamma}(\omega_T) }
=  \| \p^{\beta-\beta'}(\p^{\beta'}\mathcal{A}_4 )  \p^{\alpha-\beta}W
\|_{L^2_{\gamma}(\omega_T) } \\
&\quad
\lesssim
\|\p^{\beta'}\mathcal{A}_4\|_{L^{\infty}(\omega_T)}\|W\|_{H^{\ell-1}_{\gamma}(\omega_T)}
+\|\p^{\beta'}\mathcal{A}_4\|_{H^{\ell-1}_{\gamma}(\omega_T)}\|W\|_{L^{\infty}(\omega_T)} \\
&\quad
\lesssim
\|W\|_{H^{\ell-1}_{\gamma}(\omega_T)}
+\|(\check{V},\check{\varPsi} )\|_{H^{\ell+2}_{\gamma}(\omega_T)}
\|W\|_{L^{\infty}(\omega_T)},
\end{split}
\end{equation}
where $\beta'\leq \beta$ with $|\beta'|=1$.
Moreover, we have
\begin{equation*}
\| \nabla_{t,x_1,x_2}(\p^{\beta}\mathcal{A}_4   \p^{\alpha-\beta}W)
\|_{L^2_{\gamma}(\omega_T) }
\lesssim
\|W\|_{H^{\ell}_{\gamma}(\omega_T)}
+\|(\check{V},\check{\varPsi} )\|_{H^{\ell+3}_{\gamma}(\omega_T)}
\|W\|_{L^{\infty}(\omega_T)}.
\end{equation*}
Combining \eqref{tan.e3a} and \eqref{tan.e3b}, we have
\begin{equation}\label{tan.e3}
\vertiii{ \p^{\beta}\mathcal{A}_4   \p^{\alpha-\beta}W }_{L^2(H^1_{\gamma}(\omega_T) )}
  \lesssim
\vertiii{W}_{L^2(H^{\ell}_{\gamma}(\omega_T))}
+\|(\check{V},\check{\varPsi} )\|_{H^{\ell+3}_{\gamma}(\Omega_T)}
\|W\|_{L^{\infty}(\Omega_T)}.
\end{equation}

For $\beta\leq \alpha$ with $|\alpha|\geq 2$, similar to \eqref{tan.e3},
we use \eqref{Moser1} to derive
\begin{align}
\notag &
\vertiii{ \p^{\beta}\mathcal{A}_0 \partial_t \p^{\alpha-\beta}W }_{L^2(H^1_{\gamma}(\omega_T) )}
+ \vertiii{ \p^{\beta}\mathcal{A}_1 \partial_1 \p^{\alpha-\beta}W }_{L^2(H^1_{\gamma}(\omega_T) )}+ \vertiii{ \p^{\beta}\mathcal{A}_2 \partial_2 \p^{\alpha-\beta}W }_{L^2(H^1_{\gamma}(\omega_T) )}
\\
&\qquad \lesssim
\vertiii{W}_{L^2(H^{\ell}_{\gamma}(\omega_T))}
+\|(\check{V},\check{\varPsi} )\|_{H^{\ell+3}_{\gamma}(\Omega_T)}
\|W\|_{L^{\infty}(\Omega_T)}.
\label{tan.e4}
\end{align}
Combining \eqref{tan.e1}, \eqref{tan.e3}, and \eqref{tan.e4} leads to
\begin{equation}\label{F.scr.e}
\vertiii{ \mathscr{F}^{(\ell)} }_{L^2(H^1_{\gamma}(\omega_T) )}
\lesssim  \vertiii{ F }_{L^2(H^{\ell +1}_{\gamma}(\omega_T) )}
+\vertiii{W}_{L^2(H^{\ell}_{\gamma}(\omega_T))}+\|(\check{V},\check{\varPsi} )\|_{H^{\ell+3}_{\gamma}(\Omega_T)}
\|W\|_{L^{\infty}(\Omega_T)}.
\end{equation}

Using \eqref{Moser3}--\eqref{Moser4}, we obtain
\begin{align*}
\left\| [\p^{\alpha}, \check{b}]\nabla  \psi \right\|_{H^1_{\gamma}(\omega_T) }
&\lesssim
\gamma \left\| [\p^{\alpha}, \check{b}]\nabla  \psi \right\|_{L^2_{\gamma}(\omega_T) }
+
\sum_{|\beta|= 1}
\left\| \p^{\beta}[\p^{\alpha}, \check{b}]\nabla  \psi \right\|_{L^2_{\gamma}(\omega_T) }\\
&\lesssim
\|\psi\|_{H^{\ell+1}_{\gamma}(\omega_T) }
+\|\underline{\check{\rm c}}_0\|_{H^{\ell+2}_{\gamma}(\omega_T) }
\|\psi\|_{L^{\infty}(\omega_T) }
\notag
\\[1.5mm]
&\lesssim
\|\psi\|_{H^{\ell+1}_{\gamma}(\omega_T) }
+\|(\check{V},\check{\varPsi} )\|_{H^{\ell+2}_{\gamma}(\omega_T) }
\|\psi\|_{L^{\infty}(\omega_T) }.
\end{align*}
Applying Moser-type calculus inequalities \eqref{Moser3}--\eqref{Moser4} to
the other terms in $\mathscr{G}^{\alpha}$, we get
\begin{equation}\label{G.scr.e}
\begin{split}
\|\mathscr{G}^{(\ell)}\|_{H^1_{\gamma}(\omega_T) }
\lesssim \; &\| g\|_{H^{\ell +1}_{\gamma}(\omega_T) }
+\|W^{\rm nc}\|_{H^{\ell}_{\gamma}(\omega_T)}
+\|\psi\|_{H^{\ell+1}_{\gamma}(\omega_T)}
\\
&+\|(\check{V},\check{\varPsi} )\|_{H^{\ell+2}_{\gamma}(\omega_T)}
\|(W^{\rm nc},\psi)\|_{L^{\infty}(\omega_T)}.
\end{split}
\end{equation}

Substitute equations \eqref{F.scr.e} and \eqref{G.scr.e} into \eqref{tan.e}, multiply
the resulting estimate by $\gamma^{m-\ell}$. By choosing $\gamma$ large enough, we conclude the desired tame estimate \eqref{tan.est}.
This completes the proof of the lemma.
\end{proof}

\subsection{Normal Derivatives of the Noncharacteristic Variables}\label{sec.non}

Following \cite{RChen2020}, we compensate for
the loss of normal derivatives by utilizing the estimates
of the linearized divergences and vorticities.
From equation \eqref{reformulation.1}, we obtain:
\begin{equation}\label{W.nc.id}
\left[\begin{matrix}
0\\
\partial_3 W^{\rm nc}_{\pm}\\
{\mathbf 0}
\end{matrix}\right]
=F^{\pm}
-\mathcal{A}_0^{\pm}\partial_t W^{\pm}-\mathcal{A}_1^{\pm}\partial_1 W^{\pm}-\mathcal{A}_2^{\pm}\partial_2 W^{\pm}
-\mathcal{A}_4^{\pm} W^{\pm}.
\end{equation}
This leads to
\begin{equation*}
\vertiii{ \partial_3 W^{\rm nc} }_{L^2(H^{m-1}_{\gamma}(\omega_T)) }
\lesssim
\vertiii{ (F,  \check{\rm c}_1 \partial_t W,  \check{\rm c}_1\partial_1 W, \check{\rm c}_1\partial_2 W,
 \underline{\check{\rm c}}_2 W) }_{L^2(H^{m-1}_{\gamma}(\omega_T)) }.
\end{equation*}
It follows from \eqref{Moser1}--\eqref{Moser2} that
\begin{align*}
\|  \underline{\check{\rm c}}_2  W  \|_{ H^{m-1}_{\gamma}(\omega_T) } &\lesssim
\|\underline{\check{\rm c}}_2\|_{L^{\infty}(\omega_T) }\|W\|_{ H^{m-1}_{\gamma}(\omega_T) }
+\|\underline{\check{\rm c}}_2\|_{ H^{m-1}_{\gamma}(\omega_T) } \|W\|_{L^{\infty}(\omega_T) }\\
&\lesssim
 \|W\|_{ H^{m-1}_{\gamma}(\omega_T) }
+\|(\check{V},\check{\varPsi})\|_{ H^{m+1}_{\gamma}(\omega_T) } \|W\|_{L^{\infty}(\omega_T) },
\end{align*}
and
\begin{equation*}
\|  \underline{\check{\rm c}}_1  W  \|_{ H^{m}_{\gamma}(\omega_T) }
\lesssim
\|W\|_{ H^{m}_{\gamma}(\omega_T) }
+\|(\check{V},\check{\varPsi})\|_{ H^{m+1}_{\gamma}(\omega_T) } \|W\|_{L^{\infty}(\omega_T) }.
\end{equation*}
It is easy to check that
\begin{align*}
\| \check{\rm c}_1 \nabla_{t,x_1,x_2} W  \|_{ H^{m-1}_{\gamma}(\omega_T) }
&\lesssim
\| \check{\rm c}_1  W  \|_{ H^{m}_{\gamma}(\omega_T) }
+\| \nabla_{t,x_1,x_2}\check{\rm c}_1  W  \|_{ H^{m-1}_{\gamma}(\omega_T) } \\
&\lesssim
\|   W  \|_{ H^{m}_{\gamma}(\omega_T) }
+\| \underline{\check{\rm c}}_1  W  \|_{ H^{m}_{\gamma}(\omega_T) }
+\|  \underline{\check{\rm c}}_2  W  \|_{ H^{m-1}_{\gamma}(\omega_T) }.
\end{align*}
Using the estimate above, we get
\begin{equation}\label{W.nc.est}
\begin{split}
\vertiii{ \partial_3 W^{\rm nc} }_{L^2(H^{m-1}_{\gamma}(\omega_T)) }
\lesssim
\;& \|F\|_{H^{m-1}_{\gamma}(\Omega_T) }
+\|W\|_{ L^2(H^{m}_{\gamma}(\omega_T)) } \\
& + \vertiii{ (\check{V},\check{\varPsi}) }_{ L^2(H^{m+1}_{\gamma}(\omega_T)) }
\|W\|_{L^{\infty}(\Omega_T) }.
\end{split}
\end{equation}

Next, we introduce the linearized divergences and vorticities,
whose estimates allow us to
recover the normal derivatives of the characteristic variables
\begin{equation}\label{W.c}
\begin{split}
&W_1=\frac{1}{{\langle\partial_{\rm tan}\varPhi\rangle}^2}\Big[\big(1+(\partial_2\varPhi)^2\big)\dot{v}_1-(\partial_1\varPhi\partial_2\varPhi)\dot{v}_2+\partial_1\varPhi\dot{v}_3\Big], \\
&W_2=\frac{1}{{\langle\partial_{\rm tan}\varPhi\rangle}^2}\Big[\big(-\partial_1\varPhi\partial_2\varPhi\big)\dot{v}_1+\big(1+(\partial_1\varPhi)^2\big)\dot{v}_2+\partial_2\varPhi\dot{v}_3\Big], \\
&(W^{\pm}_5,W^{\pm}_6,W^{\pm}_7,W^{\pm}_8,W^{\pm}_9,W^{\pm}_{10},W^{\pm}_{11},W^{\pm}_{12},W^{\pm}_{13})\\
&\quad=(\dot{F}^{\pm}_{11},\dot{F}^{\pm}_{21},\dot{F}^{\pm}_{31},\dot{F}^{\pm}_{12},\dot{F}^{\pm}_{22},\dot{F}^{\pm}_{32},\dot{F}^{\pm}_{13},\dot{F}^{\pm}_{23},\dot{F}^{\pm}_{33}),
\end{split}
\end{equation}
according to the transformation given in \eqref{W.def}.

\subsection{Divergence Estimates}\label{sec.div}

Inspired by the involutions in \eqref{inv2}, we introduce the linearized divergences
$\zeta^{\pm}_j$ for $j=1,2,3$ as follows:
\begin{align} \label{zeta.def}
\zeta_j^{\pm}:=
\p_{i}^{\check{\varPhi}^{\pm}}\left(
\check{\rho}_{\pm} \dot{{F}}_{i j}^{\pm}
+\check{F}_{i j}^{\pm}\dot{\rho}^{\pm}\right),
\qquad
\end{align}
where the partial derivatives
$\p_{i}^{\check{\varPhi}^{\pm}}$ (with $i=1,2,3$) are defined in \eqref{differential}.
We now present the following estimates for $\zeta_1^{\pm},$ $\zeta_2^{\pm}$ and  $\zeta_3^{\pm}$.

\begin{lemma}[Divergence estimates]
 \label{lem.div}
If the hypotheses of {\rm Theorem} {\rm \ref{thm2}} hold, then there exists a constant $\gamma_{m}\geq 1$,
independent of $T$, such that
\begin{align}
\gamma \|(\zeta_1^{\pm}, \zeta_2^{\pm},  \zeta_3^{\pm}) \|_{H^{m-1}_{\gamma}(\Omega_T)}
\lesssim
\|(W,f) \|_{H^{m}_{\gamma}(\Omega_T)}
+\|(\check{V},\check{\varPsi})\|_{H^{m+2}_{\gamma}(\Omega_T)}
\|(W,f) \|_{L^{\infty}(\Omega_T)},
\label{zeta.est}
\end{align}
for all $\gamma\geq \gamma_{m}$ and
solutions  $(W,\psi)\in H^{m+2}_{\gamma}(\Omega_T)\times H^{m+2}_{\gamma}(\omega_T)$
to the problem \eqref{reformulation}.
\end{lemma}
\begin{proof}
The equations for $\dot{F}_{ij}$ in \eqref{effective.1} can be written as
\begin{align}
\label{F.dot.eq}&
(\p_t^{\check{\varPhi}}
+\check{v}_{\ell}\p_{\ell}^{\check{\varPhi}} )
\dot{F}_{ij}
- \check{{F}}_{\ell j}\p_{\ell}^{\check{\varPhi}} \dot{v}_i
=\check{\rm c}_0   f
+\underline{\check{\rm c}}_1 \dot{V}.
\end{align}
Using equations \eqref{rho.eq} and \eqref{F.dot.eq}, we apply the operator $\p_{i }^{\check{\varPhi}}$
and use
\begin{align*}
\check{\rho} \check{{F}}_{\ell 1}
\p_{i}^{\check{\varPhi} }\p_{\ell}^{\check{\varPhi} }\dot{v}_i
-\check{\rho} \check{{F}}_{i1}
\p_{i}^{\check{\varPhi} } \p_{\ell}^{\check{\varPhi} }\dot{v}_{\ell}
=\check{\rho} \check{{F}}_{i 1} \big[\p_{\ell}^{\check{\varPhi} },\p_{i}^{\check{\varPhi} }\big]\dot{v}_{\ell}
=\check{\rm c}_2 \nabla \dot{V}
\end{align*}
to obtain that
 \begin{align} \label{zeta.eq}
(\p_t^{\check{\varPhi}}
+\check{v}_{\ell}\p_{\ell}^{\check{\varPhi}} )
\zeta_j
=\check{\rm c}_1 \nabla f
+ \check{\rm c}_1 f+\check{\rm c}_2 \nabla W + \check{\rm c}_2 W.
\end{align}
Applying operator $e^{-\gamma t}\p^{\alpha}$ with $|\alpha|\leq m-1$
to \eqref{zeta.eq}  yields
\begin{align}
\notag & (\p_t^{\check{\varPhi}}
 +\check{v}_{\ell}\p_{\ell}^{\check{\varPhi}} )
 \big({e}^{-\gamma t} \p^{\alpha} \zeta_j \big)
+\gamma {e}^{-\gamma t} \p^{\alpha} \zeta_j\\
&\quad ={e}^{-\gamma t}\p^{\alpha}(\check{\rm c}_1 \nabla f
+ \check{\rm c}_1 f+\check{\rm c}_2 \nabla W + \check{\rm c}_2 W)
- e^{-\gamma t} [ \p^{\alpha} , \p_t^{\check{\varPhi}}
+\check{v}_{\ell}\p_{\ell}^{\check{\varPhi}} ] \zeta_j.
\notag
\end{align}
Multiplying the last identity by ${e}^{-\gamma t}\p^{\alpha} \zeta_j$
and integrating over $\Omega_T$, we have
\begin{align}
 \notag
\gamma \| \p^{\alpha} \zeta_j\|_{L^2_{\gamma}(\Omega_T) }
\lesssim \;&
\| \p^{\alpha}(\check{\rm c}_1 \nabla f + \check{\rm c}_1 f
+\check{\rm c}_2 \nabla W + \check{\rm c}_2 W)\|_{L^2_{\gamma}(\Omega_T) }\\
&
+\| [ \p^{\alpha} , \p_t^{\check{\varPhi}}
+\check{v}_{\ell}\p_{\ell}^{\check{\varPhi}} ] \zeta_j\|_{L^2_{\gamma}(\Omega_T) },
\label{zeta.e1}
\end{align}
for $\gamma\geq 1$ sufficiently large, where we have used the constraints \eqref{bas.2} and
\begin{equation*}
(\p_t^{\check{\varPhi}}
+\check{v}_{\ell}\p_{\ell}^{\check{\varPhi}} )
=\p_t+\check{v}_{1}\p_1+\check{v}_{2}\p_2\qquad
\textrm{if }\ x_3\geq 0.
\end{equation*}

Using Moser-type calculus inequality \eqref{Moser3}, we obtain
\begin{align}
\notag \| \p^{\alpha}  (\check{\rm c}_1 \nabla f
+ \check{\rm c}_1  f)  \|_{L_{\gamma}^{2}(\Omega_T)}
&\lesssim
\| (\check{\rm c}_1 \p^{\alpha} \nabla f,
\check{\rm c}_1 \p^{\alpha}  f )  \|_{L_{\gamma}^{2}(\Omega_T)}
+\| ([\p^{\alpha} ,\check{\rm c}_1 ]\nabla f,[\p^{\alpha} ,\check{\rm c}_1 ] f   )  \|_{L^{2}_{\gamma}(\Omega_T)} \\
& \lesssim
\|f \|_{H_{\gamma}^{|\alpha|+1}(\Omega_T) }
+ \|(\check{V},\check{\varPsi})\|_{H_{\gamma}^{|\alpha|+2}(\Omega_T) }
\|f\|_{L^{\infty} (\Omega_T)} .
\label{zeta.e1a}
\end{align}

Notice that $\zeta_j= \check{\rm c}_1 W+\check{\rm c}_1 \nabla W$,
we apply Moser-type calculus inequalities \eqref{Moser3}--\eqref{Moser4} to deduce that
\begin{align}
&\| \p^{\alpha}(\check{\rm c}_2 \nabla W + \check{\rm c}_2 W)\|_{L^2_{\gamma}(\Omega_T) }
+\| [ \p^{\alpha} , \p_t^{\check{\varPhi}}
+\check{v}_{\ell}\p_{\ell}^{\check{\varPhi}} ] \zeta_j\|_{L^2_{\gamma}(\Omega_T) }
\notag \\
\notag
&\quad \lesssim
\left\|(\check{\rm c}_2 \p^{\alpha}\nabla W , \check{\rm c}_2\p^{\alpha} W ,
[\p^{\alpha},\check{\rm c}_2 ] W,
[\p^{\alpha},\check{\rm c}_2 ] \nabla W,
[\p^{\alpha},\check{\rm c}_1 ] \nabla^2 W )
\right\|_{L^2_{\gamma}(\Omega_T) }\\
&\quad \lesssim
\|W \|_{H_{\gamma}^{|\alpha|+1}(\Omega_T) }
+ \|(\check{V},\check{\varPsi})\|_{H_{\gamma}^{|\alpha|+3}(\Omega_T) }
\|W\|_{L^{\infty} (\Omega_T)}.
\label{zeta.e1b}
\end{align}
Substituting \eqref{zeta.e1a} and \eqref{zeta.e1b} into \eqref{zeta.e1} yields the following estimate:
\begin{align}
\gamma^{m-|\alpha|} \| \p^{\alpha} \zeta_j\|_{L^2_{\gamma}(\Omega_T) }
\lesssim
\|(W ,f )\|_{H_{\gamma}^{m}(\Omega_T) }
+ \|(\check{V},\check{\varPsi})\|_{H_{\gamma}^{m+2}(\Omega_T) }
\|(W,f)\|_{L^{\infty} (\Omega_T)},
\notag
\end{align}
from which we obtain estimate \eqref{zeta.est} and complete the proof of the lemma.
\end{proof}

\subsection{Vorticity Estimates}\label{sec.vor}

The linearized vorticities $\xi^{\pm}_j$ for the velocities $\dot{v}^{\pm},$
and the linearized vorticities $\eta_j^{\pm}$ for the columns of the deformation gradient, are defined as follows:
\begin{align}
&\xi^{\pm}_1:=\p_2 ^{\check{\varPhi}^{\pm}}\dot{v}_3^{\pm}
-\p_3^{\check{\varPhi}^{\pm}}\dot{v}_2^{\pm},\label{xi.def}\\
&\xi^{\pm}_2:=\p_3 ^{\check{\varPhi}^{\pm}}\dot{v}_1^{\pm}
-\p_1^{\check{\varPhi}^{\pm}}\dot{v}_3^{\pm},\label{xi2.def}\\
&\xi^{\pm}_3:=\p_1 ^{\check{\varPhi}^{\pm}}\dot{v}_2^{\pm}
-\p_2^{\check{\varPhi}^{\pm}}\dot{v}_1^{\pm},\label{xi3.def}\\
&\eta_{1,j}^{\pm}:=\p_2 ^{\check{\varPhi}^{\pm}}\dot{F}_{3j}^{\pm}
-\p_3^{\check{\varPhi}^{\pm}}\dot{F}_{2j}^{\pm},\label{eta.def}\\
&\eta_{2,j}^{\pm}:=\p_3 ^{\check{\varPhi}^{\pm}}\dot{F}_{1j}^{\pm}
-\p_1^{\check{\varPhi}^{\pm}}\dot{F}_{3j}^{\pm},\label{eta2.def}\\
&\eta_{3,j}^{\pm}:=\p_1 ^{\check{\varPhi}^{\pm}}\dot{F}_{2j}^{\pm}
-\p_2^{\check{\varPhi}^{\pm}}\dot{F}_{1j}^{\pm},\label{eta3.def}
\end{align}
for $j=1,2,3$.
The estimates of $\xi^{\pm}_j$, $\eta_{k,j}^{\pm}$ for $k, j=1,2,3$ are provided by the following lemma.

\begin{lemma}[Vorticity estimates]
 \label{lem.vor}
If the hypotheses of {\rm Theorem {\rm \ref{thm2}}} hold, then there exists a constant $\gamma_{m}\geq 1$,
independent of $T$, such that
\begin{align}
\gamma \Big(\|\xi^{\pm}\|_{H^{m-1}_{\gamma}(\Omega_T)}+\|\eta^{\pm}\|_{H^{m-1}_{\gamma}(\Omega_T)}\Big)
\lesssim
\|(W,f) \|_{H^{m}_{\gamma}(\Omega_T)}
+\|(\check{V},\check{\varPsi})\|_{H^{m+2}_{\gamma}(\Omega_T)}
\|(W,f) \|_{L^{\infty}(\Omega_T)}
\label{xi.est}
\end{align}
for all $\gamma\geq \gamma_{m}$ and
solutions  $(W,\psi)\in H^{m+2}_{\gamma}(\Omega_T)\times H^{m+2}_{\gamma}(\omega_T)$
of problem \eqref{reformulation}, where 
\[
\|\xi^{\pm}\|_{H^{m-1}_{\gamma}(\Omega_T)}^2:=\sum_{k=1}^3\|\xi_k^{\pm}\|_{H^{m-1}_{\gamma}(\Omega_T)}^2,
\qquad
\|\eta^{\pm}\|_{H^{m-1}_{\gamma}(\Omega_T)}^2:=\sum_{k=1}^3\sum_{j=1}^3\|\eta_{k,j}^{\pm}\|_{H^{m-1}_{\gamma}(\Omega_T)}^2.
\]
\end{lemma}
\begin{proof}
The equations for $\dot{v}_1,$ $\dot{v}_2,$ and $\dot{v}_3$ in \eqref{effective.1} are given by
\begin{align}
\label{v.dot.eq}
(\p_t^{\check{\varPhi}}
+\check{v}_{\ell}\p_{\ell}^{\check{\varPhi}} )
\dot{v}_{i}
- \check{{F}}_{\ell j}\p_{\ell}^{\check{\varPhi}}  \dot{F}_{ij}
+ \frac{c(\check{\rho})^2}{\check{\rho}} \p_{i}^{\check{\varPhi}}  \dot{\rho}
=\check{\rm c}_0   f
+\underline{\check{\rm c}}_1 \dot{V}.
\end{align}
Taking the $\check{\varPhi}$-curl of \eqref{v.dot.eq}, we obtain, for $k=1,2,3$, the transport equation
\begin{align}
\label{xi.eq}
(\p_t^{\check{\varPhi}}
+\check{v}_{\ell}\p_{\ell}^{\check{\varPhi}} )
\xi_k -\sum_{j=1}^3\check{{F}}_{\ell j} \p_{\ell}^{\check{\varPhi}}   \eta_{k,j}
=\check{\rm c}_1 \nabla f
+ \check{\rm c}_1 f+\check{\rm c}_2 \nabla W + \check{\rm c}_2 W,
\end{align}
and similarly from \eqref{F.dot.eq}, we have, for $k,j=1,2,3$,
\begin{align}
\label{eta.eq}
(\p_t^{\check{\varPhi}}
+\check{v}_{\ell}\p_{\ell}^{\check{\varPhi}} )
\eta_{k,j} -\check{{F}}_{\ell j} \p_{\ell}^{\check{\varPhi}}   \xi_k
=\check{\rm c}_1 \nabla f
+ \check{\rm c}_1 f+\check{\rm c}_2 \nabla W + \check{\rm c}_2 W.
\end{align}
Next, apply the operator ${e}^{-\gamma t}\p^{\alpha}$ with $|\alpha|\leq m-1$
to \eqref{xi.eq} ({\it resp}.~\eqref{eta.eq}) multiply the resulting identity by ${e}^{-\gamma t}\p^{\alpha} \xi_j$ and ${e}^{-\gamma t}\p^{\alpha} \eta_{k,j}$ respectively,
and sum over $k,j$. This gives
\begin{align}
\notag
&\frac{1}{2} (\p_t^{\check{\varPhi}}
+\check{v}_{\ell}\p_{\ell}^{\check{\varPhi}} )
\Big\{\sum^3_{k=1}|{e}^{-\gamma t}\p^{\alpha}\xi_k|^2
+\sum^3_{k=1}\sum^3_{j=1}|{e}^{-\gamma t}\p^{\alpha}\eta_{k,j}|^2 \Big\}\\
\notag
&\quad
- \sum_{k=1}^3\sum_{j=1}^3 \check{{F}}_{\ell j} \p_{\ell}^{\check{\varPhi}}
\left( {e}^{-2\gamma t}\p^{\alpha}\xi_k \p^{\alpha}\eta_{k,j} \right)
+\gamma \Big\{ \sum^3_{k=1}|{e}^{-\gamma t}\p^{\alpha}\xi_k|^2
+\sum^3_{k=1}\sum^3_{j=1}|{e}^{-\gamma t}\p^{\alpha}\eta_{k,j}|^2 \Big\}
\\
\notag & =
\sum_{k=1}^3 {e}^{-2\gamma t} \p^{\alpha}\xi_k
\left\{\p^{\alpha}\left(\check{\rm c}_1 \nabla f
+ \check{\rm c}_1 f+\check{\rm c}_2 \nabla W + \check{\rm c}_2 W \right)
- [ \p^{\alpha} , \p_t^{\check{\varPhi}}
+\check{v}_{\ell}\p_{\ell}^{\check{\varPhi}} ] \xi_k\right\}\\
&\quad
+\sum_{k=1}^3\sum_{j=1}^3 {e}^{-2\gamma t} \p^{\alpha}\eta_{k,j}
\left\{\p^{\alpha}\left(\check{\rm c}_1 \nabla f
+ \check{\rm c}_1 f+\check{\rm c}_2 \nabla W + \check{\rm c}_2 W \right)
- [ \p^{\alpha} , \p_t^{\check{\varPhi}}
+\check{v}_{\ell}\p_{\ell}^{\check{\varPhi}} ] \eta_{k,j} \right\}
\notag \\
&\quad
+{e}^{-2\gamma t}
\sum_{k=1}^3\sum_{j=1}^3
\left\{
\p^{\alpha}\xi_k [ \p^{\alpha} ,  \check{{F}}_{\ell j} \p_{\ell}^{\check{\varPhi}}  ] \eta_{k,j}
+\p^{\alpha}\eta_{k,j} [ \p^{\alpha} ,  \check{{F}}_{\ell j} \p_{\ell}^{\check{\varPhi}}  ] \xi_k  \right\}.
\label{xi.id2}
\end{align}
It follows from the constraints in \eqref{bas.5} that
\begin{align*}
\check{{F}}_{\ell j} \p_{\ell}^{\check{\varPhi}}
=\check{{F}}_{1 j} \p_{1}+\check{{F}}_{2 j} \p_{2},  \quad 
x_3\geq 0.
\end{align*}
We now integrate the identity \eqref{xi.id2} over $\Omega_T$ and perform
a similar analysis as for $\zeta_j$ in Lemma \ref{lem.div} to
obtain the desired estimates \eqref{xi.est}.
The proof of the lemma is thus complete.
\end{proof}

\subsection{Proof of Theorem \ref{thm2}}\label{sec.proof1}
Thanks to Lemmas \ref{lem.div} and \ref{lem.vor},
we can derive the estimates for the normal derivative
of characteristic variables defined by \eqref{W.c}.
More precisely, in view of \eqref{W.c}, \eqref{xi.def} and \eqref{xi2.def}, and \eqref{differential}, we obtain
\begin{align*}
&\xi^{\pm}_1 =-\frac{1}{\p_3\check{\varPhi}^{\pm}}
\p_3\left({\langle \p_{\rm tan}\check{\varPhi}^{\pm} \rangle^2} W_1^{\pm} \right)
+\check{\rm c}_1\p_1 W+\check{\rm c}_1\p_2 W+\check{\rm c}_2 W, \\
&\xi^{\pm}_2 =-\frac{1}{\p_3\check{\varPhi}^{\pm}}
\p_3\left({\langle \p_{\rm tan}\check{\varPhi}^{\pm} \rangle^2} W_2^{\pm} \right)
+\check{\rm c}_1\p_1 W+\check{\rm c}_1\p_2 W+\check{\rm c}_2 W,
\end{align*}
which implies that, for $r = 1, 2$, 
\begin{align}
\p_3 W_{r}^{\pm}=\check{\rm c}_1 \xi^{\pm}_r
+\check{\rm c}_1 \p_1 W+\check{\rm c}_2 \p_2 W
+\check{\rm c}_2 W.
\label{W1.id}
\end{align}
Similarly, it follows from \eqref{zeta.def}, \eqref{eta.def}, and \eqref{eta2.def} that, for each $j=1,2,3$,
\begin{align}
\p_3 \dot{F}_{1j}^{\pm}&=\check{\rm c}_1 \eta_{2,j}^{\pm}
+\check{\rm c}_1 \p_1 W+\check{\rm c}_1 \p_2 W
+\check{\rm c}_2 W,
\label{F.dot.id.1}\\
\p_3 \dot{F}_{2j}^{\pm}&=\check{\rm c}_1 \eta_{1,j}^{\pm}
+\check{\rm c}_1 \p_1 W+\check{\rm c}_1 \p_2 W
+\check{\rm c}_2 W,
\label{F.dot.id.2}\\
\p_3 \dot{F}_{3j}^{\pm}&=\check{\rm c}_1 \zeta_j^{\pm}
+\check{\rm c}_1 \p_1 W+\check{\rm c}_1 \p_2 W
+\check{\rm c}_2 W.
\label{F.dot.id.3}
\end{align}
Using identities \eqref{W1.id}--\eqref{F.dot.id.3},
we apply Moser-type calculus inequalities \eqref{Moser1}--\eqref{Moser4} and
use \eqref{W.nc.est}, \eqref{zeta.est}, and \eqref{xi.est} to obtain that
\begin{equation}\label{normal.est}
\begin{split}
\vertiii{ \p_3^k W }_{L^2(H^{m-k}_{\gamma}(\omega_T) ) }
\lesssim
\;& \vertiii{ W }_{L^2(H^{m}_{\gamma}(\omega_T) ) }
 + \gamma^{-1}\|(W,f) \|_{H^{m}_{\gamma}(\Omega_T)} \\
&+\gamma^{-1}
\|(\check{V},\check{\varPsi})\|_{H^{m+2}_{\gamma}(\Omega_T)}
\|(W,f) \|_{L^{\infty}(\Omega_T)}
\end{split}
\end{equation}
holds for $k=1$.

Using identities \eqref{W.nc.id}, \eqref{W1.id}--\eqref{F.dot.id.3},
we can combine
estimates \eqref{zeta.def} and \eqref{xi.est} to prove
\eqref{normal.est} by finite induction in $k=1,\ldots,m$.
Since
\begin{align}
\notag
\|W\|_{H^{m}_{\gamma}(\Omega_T) }
\sim
\sum_{k=0}^m \vertiii{ \p_3^k W }_{L^2(H^{m-k}_{\gamma}(\omega_T) ) },
\end{align}
we utilize \eqref{tan.est} and \eqref{normal.est} to obtain
\begin{equation}\label{tame2}
\begin{split}
&
\gamma^{1/2}\|W\|_{H^{m}_{\gamma}(\Omega_T) }
+\|W^{\mathrm{nc}}|_{x_3=0}\|_{H^{m}_{\gamma}(\omega_T)}
+\|\psi\|_{H^{m+1}_{\gamma}(\omega_T)}\\[1mm]
&\quad  \lesssim
\gamma^{-{1}/{2}}\big\|f \big\|_{H^{m}_{\gamma}(\Omega_T)}
+\gamma^{-{3}/{2}}\vertiii{ f }_{L^2(H^{m+1}_{\gamma}(\omega_T))}
+ \gamma^{-1}\|g\|_{H^{m+1}_{\gamma}(\omega_T)}
  \\[1mm]
&\quad  \quad \,
+\gamma^{-1}\|(W,f)\|_{L^{\infty}(\Omega_T)}
\big\|\big(\check{V}, \check{\varPsi}\big)\big\|_{H^{m+3}_{\gamma}(\Omega_T)}
+\gamma^{-1}
\|(W^{\mathrm{nc}} ,\psi)\|_{L^{\infty}(\omega_T)}
\big\|\big(\check{V},\check{\varPsi}\big)\big\|_{{H^{m+2}_{\gamma}(\omega_T)}} ,
\end{split}
\end{equation}
for $\gamma $ sufficiently large.

Theorem {\rm\ref{thm3}} establishes the well-posedness of the effective linear problem \eqref{effective} for the source terms $(f^{\pm},g)\in L^2(H^1(\omega_T))\times H^1(\omega_T)$
vanishing in the past. Building on the results in  \cite{RM74MR0340832,CP82MR678605},
we can use the tame estimate \eqref{tame2} to reformulate Theorem \ref{thm3} as a well-posdness statement for \eqref{effective} in $H^{m}$. Specifically, as shown in Theorem \ref{thm2}, there exists a unique solution $(\dot{V}^{\pm},\psi)\in H^{m}(\Omega_T)\times H^{m+1}(\omega_T)$,
which vanishes in the past and satisfies \eqref{tame2} for all $\gamma\geq \gamma_{m}$.

The tame estimate \eqref{thm2.est} can be derived as follows. By the Sobolev embeddings
\[
\|W\|_{L^{\infty}(\Omega_T)} \lesssim \|W\|_{H^3(\Omega_T)},
\qquad
\|\psi\|_{W^{1,\infty}(\omega_T)}\lesssim \|\psi\|_{H^3(\omega_T)},
\]
and \eqref{tame2} with $m=2$, we obtain
\begin{equation} \label{e:363}
\|W\|_{L^{\infty}(\Omega_T)}
+\|\psi\|_{W^{1,\infty}(\omega_T)}
\leq C_{T,\gamma}\left(
\big\|f \big\|_{H^{3}_{\gamma}(\Omega_T)}
+ \|g\|_{H^{3}_{\gamma}(\omega_T)}
\right).
\end{equation}
Substituting \eqref{e:363} into \eqref{tame2}
gives the tame estimate \eqref{thm2.est}, thus completing the proof of Theorem \ref{thm2}.
\qed

\section{Compatibility Conditions and Approximate Solutions}\label{sec.compa}

To apply Theorem \ref{thm2}  in the general setting, we follow the approach in \cite{CS08MR2423311}, and transform the original nonlinear problem \eqref{Phi.eq}--\eqref{EVS} into a form with zero initial data. To achieve this, we introduce approximate solutions that incorporate the initial data into the interior equations. The construction of smooth approximate solutions imposes necessary compatibility conditions on the initial data.

\subsection{Compatibility Conditions}\label{sec.compa1}

Let $m\in \mathbb{N}$ with $m\geq 3$.
Assume that  the initial data $(U_0^{\pm},\varphi_0)$ satisfy
$\widetilde{U}_0^{\pm}:=U_0^{\pm}-\bar{U}^{\pm}\in H^{m+1/2}(\mathbb{R}^3_+)$
and  $\varphi_0\in H^{m+1}(\mathbb{R}^2)$, { and
 $(\widetilde{U}_0^{\pm},\varphi_0)$ has the following compact support,}
\begin{align} \label{CA1}
\supp\,\widetilde{U}_0^{\pm}\subset \{x_3\geq 0,\, x_1^2+x_2^2+x_3^2\leq 1\},
\qquad \supp\,\varphi_0\subset [-1,1]\times [-1,1].
\end{align}
Using the trace theorem,
we can construct
$\widetilde{\varPhi}_0^+=\widetilde{\varPhi}_0^-\in H^{m+3/2}(\mathbb{R}_+^3)$ satisfying
\begin{gather} \label{CA2}
\widetilde{\varPhi}_0^{\pm} |_{x_3=0}
=\varphi_0,\quad
\supp\, \widetilde{\varPhi}_0^{\pm}\subset \left\{x_3\geq 0,\, x_1^2+x_2^2+x_3^2\leq 2\right\},\\
 \label{CA3}
\big\|\widetilde{\varPhi}_0^{\pm}\big\|_{H^{m+3/2}(\mathbb{R}_+^3)}
\leq C\|\varphi_0\|_{H^{m+1}(\mathbb{R}^2)}.
\end{gather}
{Define $\varPhi_0^{\pm}:=\widetilde{\varPhi}_0^{\pm}+\bar{\varPhi}_0^{\pm}$,
which represents the initial data for the problem \eqref{Phi.eq},
\begin{align} \label{Phi.initial}
\varPhi^{\pm}|_{t=0}=\varPhi_0^{\pm}.
\end{align}
By \eqref{CA3} and the Sobolev embedding theorem, we have
\begin{align} \label{CA4}
\pm \p_3\varPhi_0^{\pm}\geq {7}/{8}\qquad\,\, \textrm{for all }x\in\mathbb{R}_+^3,
\end{align}
for sufficiently small $\varphi_0$   in $H^{m+1}(\mathbb{R}^2)$.
}

Let the perturbation be denoted by $(\widetilde{U}^{\pm},\widetilde{\varPhi}^{\pm})
:=(U^{\pm}-\bar{U}^{\pm},\varPhi^{\pm}-\bar{\varPhi}^{\pm})$, and define the traces of the ${k}$-th order time derivatives at $\{t=0\}$  as follows:
\begin{align}
\widetilde{U}^{\pm}_{({k})}:=\p_t^{{k}}\widetilde{U}^{\pm}\Big|_{t=0},\quad
\widetilde{\varPhi}^{\pm}_{({k})}:=\p_t^{{k}}\widetilde{\varPhi}^{\pm}\Big|_{t=0}
\qquad\textrm{for  }\ {k}\in\mathbb{N}.
\label{CA5}
\end{align}
Note that $\widetilde{U}^{\pm}_{(0)}=\widetilde{U}^{\pm}_{0}$ and $\widetilde{\varPhi}^{\pm}_{(0)}=\widetilde{\varPhi}^{\pm}_{0}$.

Let $\mathcal{W}^{\pm}
:=(\widetilde{U}^{\pm},\nabla_x\widetilde{U}^{\pm},\nabla_x\widetilde{\varPhi}^{\pm})^{\top}
\in\mathbb{R}^{55}$, then
  the first equation in \eqref{Phi.eq} and  the equation \eqref{EVS.a} can be written as
\begin{align}\label{tilde.U.Phi}
\p_t \widetilde{\varPhi}^{\pm}=\mathbf{G}_1(\mathcal{W}^{\pm}),\qquad
\p_t \widetilde{U}^{\pm}=\mathbf{G}_2(\mathcal{W}^{\pm}),
\end{align}
where $\mathbf{G}_1$ and $\mathbf{G}_2$ are two $C^{\infty}$ functions vanishing at the origin. Next, we apply  $\p^{k}_t$ to \eqref{tilde.U.Phi}, take the initial traces,
and use the generalized Fa\`a di Bruno's formula (see \cite[Theorem 2.1]{M00MR1781515})
to obtain
\begin{align}  \label{tilde.Phi.0}
&\widetilde{\varPhi}^{\pm}_{({k}+1)}
=\sum_{\alpha_{i}\in\mathbb{N}^{55},|\alpha_1|+\cdots+{k} |\alpha_{{k}}|={k}}
D^{\alpha_1+\cdots+\alpha_{k}}\mathbf{G}_1(\mathcal{W}^{\pm}_{(0)})
\prod_{i=1}^{k}\frac{{k}!}{\alpha_{i}!}
\left(\frac{\mathcal{W}_{(i)}^{\pm}}{i!}\right)^{\alpha_{i}},\\
&\widetilde{U}^{\pm}_{({k}+1)}
=
\sum_{\alpha_{i}\in\mathbb{N}^{55},|\alpha_1|+\cdots+{k} |\alpha_{{k}}|={k}}
D^{\alpha_1+\cdots+\alpha_{k}}\mathbf{G}_2(\mathcal{W}^{\pm}_{(0)})
\prod_{i=1}^{k}\frac{{k}!}{\alpha_{i}!}
\left(\frac{\mathcal{W}_{(i)}^{\pm}}{i!}\right)^{\alpha_{i}},\label{tilde.U.0}
\end{align}
where $\mathcal{W}_{(i)}^{\pm}$ denote the traces
$(\widetilde{U}_{(i)}^{\pm},\nabla_x\widetilde{U}_{(i)}^{\pm},\nabla_x\widetilde{\varPhi}_{(i)}^{\pm})$.
This leads to the following lemma
({\it cf.} \cite[Lemma 4.2.1]{M01MR1842775}).

\begin{lemma}\label{lem.CA1}
If \eqref{CA1}--\eqref{CA4} hold, then relations
\eqref{tilde.Phi.0} and \eqref{tilde.U.0} determine
$\widetilde{U}^{\pm}_{({k})}\in H^{m+1/2-{k}}(\mathbb{R}_+^3)$ for ${k}=1,\ldots,m$,
and $\widetilde{\varPhi}^{\pm}_{({k})}\in H^{m+3/2-{k}}(\mathbb{R}_+^3)$ for ${k}=1,\ldots,m+1$,
which satisfy
\begin{gather}
\notag
\supp\,\widetilde{U}_{({k})}^{\pm}\subset \{x_3\geq 0,\, x_1^2+x_2^2+x_3^2\leq 1\}, \quad
\supp\, \widetilde{\varPhi}_{({k})}^{\pm}\subset \{x_3\geq 0,\, x_1^2+x_2^2+x_3^2\leq 2\},\\
\notag
\sum_{{k}=0}^{m}\big\|\widetilde{U}^{\pm}_{({k})}\big\|_{H^{m+1/2-{k}}(\mathbb{R}_+^3)}
+\sum_{{k}=0}^{m+1}\big\|\widetilde{\varPhi}^{\pm}_{({k})}\big\|_{H^{m+3/2-{k}}(\mathbb{R}_+^3)}
\phantom{\qquad \qquad \qquad }\\
 \phantom{\qquad \qquad \qquad \qquad \qquad}
\leq C\Big(\big\|\widetilde{U}^{\pm}_0\big\|_{H^{m+1/2}(\mathbb{R}_+^3)}
+\|\varphi_0\|_{H^{m+1}(\mathbb{R}^2)} \Big),
\label{CA.est}
\end{gather}
for some constant $C>0$ depending solely upon
$\|(\widetilde{U}^{\pm}_{0},\widetilde{\varPhi}^{\pm}_{0})\|_{W^{1,\infty}(\mathbb{R}_+^3)}$
and $m$.

\end{lemma}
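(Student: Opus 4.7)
The proof proceeds by a finite induction on $k$, using the recursive definitions \eqref{tilde.Phi.0}--\eqref{tilde.U.0} together with Moser-type product and composition estimates in Sobolev spaces. The base case $k=0$ is built into the hypotheses: $\widetilde{U}^{\pm}_{(0)}\in H^{m+1/2}(\mathbb{R}_+^3)$ and $\widetilde{\varPhi}^{\pm}_{(0)}\in H^{m+3/2}(\mathbb{R}_+^3)$, with the stated supports by \eqref{CA1}--\eqref{CA2} and norm bound by \eqref{CA3}, while \eqref{CA4} gives a uniform lower bound on $\pm\p_3\varPhi_0^{\pm}$, making $\mathbf{G}_1$ and $\mathbf{G}_2$ in \eqref{tilde.U.Phi} smooth at the initial state.

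For the inductive step, assume that $\widetilde{U}_{(j)}^{\pm}\in H^{m+1/2-j}(\mathbb{R}_+^3)$ and $\widetilde{\varPhi}_{(j)}^{\pm}\in H^{m+3/2-j}(\mathbb{R}_+^3)$ for all $j\le k$, with the stated supports and norm bounds. Then each factor $\mathcal{W}_{(i)}^{\pm}=(\widetilde{U}_{(i)}^{\pm},\nabla_x\widetilde{U}_{(i)}^{\pm},\nabla_x\widetilde{\varPhi}_{(i)}^{\pm})$ in \eqref{tilde.Phi.0}--\eqref{tilde.U.0} lies in $H^{m-1/2-i}(\mathbb{R}_+^3)$. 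The plan is to show that each multinomial product $\prod_{i=1}^{k}(\mathcal{W}_{(i)}^{\pm}/i!)^{\alpha_i}$ with the constraint $\sum_i i|\alpha_i|=k$ lies in $H^{m-1/2-k}(\mathbb{R}_+^3)$, which is precisely the target space for $\widetilde{U}^{\pm}_{(k+1)}$ (and one derivative less than the target $H^{m+1/2-k}$ for $\widetilde{\varPhi}^{\pm}_{(k+1)}$; the extra derivative for $\varPhi$ is recovered from the fact that $\mathbf{G}_1$ does not depend on $\nabla_x\widetilde{U}^{\pm}$, so only the higher-regularity factors appear in \eqref{tilde.Phi.0}). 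The composition with the smooth functions $D^{\alpha_1+\cdots+\alpha_k}\mathbf{G}_{1,2}$ evaluated at $\mathcal{W}^{\pm}_{(0)}\in H^{m-1/2}\subset L^{\infty}$ is handled by the standard Moser composition estimate \eqref{Moser2} once an $L^{\infty}$ bound is in place.

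The key technical tool is the Moser-type product inequality: for nonnegative integers $s_1,\ldots,s_p$ with $\sum s_i\le S$, and $S>n/2$ ($n=3$ here), one has $\big\|\prod u_j\big\|_{H^{S-\sum s_i}}\lesssim \prod\|u_j\|_{H^{S-s_j+s_i}}$, or more precisely, the version that combines $L^{\infty}$ estimates from Sobolev embedding for factors with plenty of regularity with an $L^2$ estimate for the least regular factor. In our setting, the total ``derivative deficit'' in the product is exactly $\sum i\alpha_i=k$, matching the deficit on the left-hand side; the Sobolev embeddings $H^{m-1/2-i}\hookrightarrow L^{\infty}$ are available whenever $m-1/2-i>3/2$, and for the remaining factors (which occur only when $k$ is close to $m$) one carefully uses the $L^2$-based Moser estimate on the factor of lowest regularity while the others remain in $L^{\infty}$. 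This is exactly the structure encoded by \eqref{Moser1}. I expect this to be the main obstacle: tracking the accountancy of derivatives across all admissible index tuples $(\alpha_1,\ldots,\alpha_k)$ and verifying in each regime that the Moser estimate closes at the endpoint $H^{m+1/2-k}$ for the $\varPhi$-trace.

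Finally, preservation of compact support is immediate from the inductive hypothesis: each factor $\mathcal{W}_{(i)}^{\pm}$ is supported in $\{x_3\ge 0,\ x_1^2+x_2^2+x_3^2\le 2\}$, hence so are all products and their images under the smooth functions $D^{\alpha}\mathbf{G}_{1,2}$ (which vanish at the background state when no factor is present, so the constant term drops out on the complement of the support). Summing the inductive estimates in $k$ and absorbing constants depending only on $\|(\widetilde{U}^{\pm}_{0},\widetilde{\varPhi}^{\pm}_{0})\|_{W^{1,\infty}}$ (which controls the derivatives of $\mathbf{G}_{1,2}$ evaluated at $\mathcal{W}^{\pm}_{(0)}$ through \eqref{Moser2}) yields the bound \eqref{CA.est} and completes the proof.
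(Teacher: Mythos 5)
Your plan is the standard induction-on-$k$ argument via the Fa\`a di Bruno representations \eqref{tilde.Phi.0}--\eqref{tilde.U.0} combined with Moser product/composition estimates, and this is exactly the route the paper takes: it does not prove the lemma at all but cites \cite[Lemma 4.2.1]{M01MR1842775}, whose proof is precisely what you outline. Your derivative bookkeeping (factors $\mathcal{W}^{\pm}_{(i)}\in H^{m-1/2-i}$, total deficit $\sum_i i|\alpha_i|=k$, and the extra derivative for $\widetilde{\varPhi}_{(k+1)}$ coming from the fact that $\mathbf{G}_1$ involves only $\widetilde{U}$ and $\nabla_x\widetilde{\varPhi}$, never $\nabla_x\widetilde{U}$) is correct.

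One point in your support argument does not quite close. You conclude that all $\widetilde{U}_{(k)}^{\pm}$ and $\widetilde{\varPhi}_{(k)}^{\pm}$ are supported in the ball of radius $2$, but the lemma asserts the sharper inclusion $\supp\widetilde{U}_{(k)}^{\pm}\subset\{x_1^2+x_2^2+x_3^2\le 1\}$. Vanishing of $\mathbf{G}_2$ at the origin is not enough for this: in the annulus $1<|x|\le 2$ one has $\mathcal{W}_{(i)}^{\pm}=(0,0,\nabla_x\widetilde{\varPhi}_{(i)}^{\pm})\neq 0$, so the products in \eqref{tilde.U.0} need not vanish there on the basis of support alone. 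What saves the claim is the quasilinear structure of \eqref{EVS.a}: $\mathbf{G}_2(\mathcal{W})$ is linear in the $\nabla_x\widetilde{U}$ components (every term of $-A_1\p_1U-A_2\p_2U-\widetilde{A}_3\p_3U$ carries a factor $\nabla_x\widetilde{U}$, since $\bar U$ is constant). Hence at a point of the annulus either the multi-index $\alpha_1+\cdots+\alpha_k$ differentiates $\mathbf{G}_2$ in a $\nabla_x\widetilde{U}$ direction, in which case the accompanying product contains a factor $\nabla_x\widetilde{U}_{(i)}$ vanishing there by induction, or it does not, in which case $D^{\alpha_1+\cdots+\alpha_k}\mathbf{G}_2(\mathcal{W}_{(0)})$ is still linear in $\nabla_x\widetilde{U}_{(0)}=0$ and vanishes. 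With this structural observation added, your argument gives the stated supports; everything else in the plan is sound.
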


To guarantee the smoothness of the approximate solutions, the initial data must satisfy the following compatibility conditions.

\begin{definition}
\label{def.compa}
Let $m\in\mathbb{N}$ with $m\geq 3$.
Let $\widetilde{U}^{\pm}_0:=
U_0^{\pm}-\bar{U}_0^{\pm}\in H^{m+1/2}(\mathbb{R}_+^3)$
and $\varphi_0\in H^{m+1}(\mathbb{R}^2)$ satisfy \eqref{CA1}.
The initial data $U_0^{\pm}$ and $\varphi_0$ are said to
be compatible up to order $m$
if there exist functions $\widetilde{\varPhi}_0^{\pm}\in H^{m+3/2}(\mathbb{R}_+^3)$
satisfying \eqref{CA2}--\eqref{CA4} and
\begin{align}
\label{compa3}
F_{3j,0}^{\pm}=F_{1j,0} ^{\pm}\p_1\varPhi_0^{\pm}+F_{2j,0} ^{\pm}\p_2  \varPhi_0^{\pm}
\qquad \textrm{for } j=1,2,3
\end{align}
such that functions $\widetilde{U}^{\pm}_{(0)},\ldots,\widetilde{U}^{\pm}_{(m)},
\widetilde{\varPhi}^{\pm}_{(0)},\ldots,\widetilde{\varPhi}^{\pm}_{(m+1)}$
determined by \eqref{CA5} and \eqref{tilde.Phi.0}--\eqref{tilde.U.0} satisfy
\begin{subequations} \label{compa1}\nonumber
	\begin{alignat}{2}
	& \big(\widetilde{\varPhi}^{+}_{({k})}-\widetilde{\varPhi}^{-}_{({k})}\big)\big|_{x_3=0}=0
	\qquad  && \textrm{for }\  {k}=0,\ldots,m,\\
	&\big(\tilde{\rho}^{+}_{({k})}-\tilde{\rho}^{-}_{({k})}\big)\big|_{x_3=0}=0
	\qquad\,\, && \textrm{for } \  {k}=0,\ldots,m-1,
	\end{alignat}
\end{subequations}
and
\begin{subequations}\label{compa2}\nonumber
	\begin{alignat}{2}
	&\int_{\mathbb{R}_+^3}
	\big|\widetilde{\varPhi}^{+}_{(m+1)}-\widetilde{\varPhi}^{-}_{(m+1)}\big|^2 \dd x_1 \dd x_2\frac{\dd x_3}{x_3}
	<\infty,  \\
	&\int_{\mathbb{R}_+^3}
	\big| \tilde{\rho}^{+}_{(m)}-\tilde{\rho}^{-}_{(m)}\big|^2 \dd x_1 \dd x_2\frac{\dd x_3}{x_3}
	<\infty.
	\end{alignat}
\end{subequations}
\end{definition}

\subsection{Approximate Solutions}\label{sec.compa2}
 Following the approach in \cite{CS08MR2423311}, we now introduce approximate solutions
that satisfy the problem \eqref{Phi.eq}--\eqref{EVS}
in the sense of Taylor's expansions at $t=0$.

\begin{lemma} \label{lem.app}
Let $m\in\mathbb{N}$ with $m\geq 3$.
Assume that
$\widetilde{U}^{\pm}_0
:=U_0^{\pm}-\bar{U}_0^{\pm}\in H^{m+1/2}(\mathbb{R}_+^3)$
and $\varphi_0\in H^{m+1}(\mathbb{R}^2)$ satisfy \eqref{CA1},
and that initial data $U_0^{\pm}$ and $\varphi_0$ are compatible up to order $m$.
If $\widetilde{U}^{\pm}_0$ and $\varphi_0$ are sufficiently small,
then there exist functions $U^{a\pm}$, $\varPhi^{a\pm}$, and $\varphi^a$ such that
$\widetilde{U}^{a\pm}:=U^{a\pm}-\bar{U}^{\pm}\in H^{m}(\Omega)$,
$\widetilde{\varPhi}^{a\pm}:=\varPhi^{a\pm}-\bar{\varPhi}^{\pm}\in H^{m+2}(\Omega)$,
$\varphi^a\in H^{m+3/2}(\p\Omega)$, and
\begin{subequations} \label{app}
\begin{alignat}{2}
\label{app.eq.1}&\p_t^j\mathbb{L}(U^{a\pm},\varPhi^{a\pm})|_{t=0}={\mathbf 0}
\qquad &&\textrm{for }\ j=0,\ldots,m-2,\\
\label{app.eq.2}&\p_t\varPhi^{a\pm}+v_1^{a\pm}\p_1\varPhi^{a\pm}+v_2^{a\pm}\p_2\varPhi^{a\pm}-v_3^{a\pm}=0
\qquad &&\textrm{in }\ \Omega,\\
\label{app.eq.3}&\pm\p_3\varPhi^{a\pm}\geq {3}/{4}
\qquad &&\textrm{in }\ \Omega,\\
\label{app.eq.4}&\varPhi^{a+}=\varPhi^{a-}=\varphi^a
\qquad &&\textrm{on }\ \p\Omega,\\
\label{app.eq.5}&\mathbb{B}(U^{a+},U^{a-},\varphi^a)={\mathbf 0}
\qquad &&\textrm{on }\ \p\Omega,\\
\label{app.eq.6}&
{F}_{3j}^{a\pm}={F}_{1j}^{a\pm} \p_1\varPhi^{a\pm}+{F}_{2j}^{a\pm} \p_2\varPhi^{a\pm}
\qquad &&\textrm{on }\  \bar{\Omega},\quad \textrm{for }\   j=1,2,3.
\end{alignat}
\end{subequations}
Moreover, we have
\begin{align}
\label{app2}&\supp \big(\widetilde{U}^{a\pm},\widetilde{\varPhi}^{a\pm} \big)\subset \left\{t\in[-T,T],\,x_3\geq 0,\,x_1^2+x_2^2+x_3^2\leq 3 \right\},\\
&\big\|\widetilde{U}^{a\pm}\big\|_{H^{m}(\Omega)}
+\big\|\widetilde{\varPhi}^{a\pm}\big\|_{H^{m+2}(\Omega)}+\|\varphi^a\|_{H^{m+3/2}(\p\Omega)}
\notag \\
&\qquad\qquad \qquad\quad \
 \leq\varepsilon_0\Big(\big\|\widetilde{U}^{\pm}_0\big\|_{H^{m+1/2}(\mathbb{R}_+^3)}
+\|\varphi_0\|_{H^{m+1}(\mathbb{R}^2)}\Big), \label{app3}
\end{align}
where we write $\varepsilon_0(\cdot)$ as a generic function that tends to zero as its argument tends to zero.
\end{lemma}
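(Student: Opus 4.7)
The plan is to carry out a three-stage construction: (i) determine all time derivatives at $t=0$ via the compatibility conditions and Lemma \ref{lem.CA1}; (ii) perform a classical Sobolev lifting to obtain candidate functions whose time traces match these derivatives; and (iii) modify the lifts to enforce the pointwise constraints \eqref{app.eq.2}--\eqref{app.eq.6}. This follows the strategy of Coulombel--Secchi \cite{CS08MR2423311}, adapted to the enlarged elastodynamics system.

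For stage (i), Lemma \ref{lem.CA1} supplies $\widetilde{U}^{\pm}_{(k)}\in H^{m+1/2-k}(\mathbb{R}^3_+)$ for $k=0,\ldots,m$ and $\widetilde{\varPhi}^{\pm}_{(k)}\in H^{m+3/2-k}(\mathbb{R}^3_+)$ for $k=0,\ldots,m+1$, with bounds controlled by the initial data norm and compact support in $\{x_1^2+x_2^2+x_3^2\le 2\}$. The compatibility conditions of Definition \ref{def.compa} guarantee that the jumps $[\widetilde{\varPhi}_{(k)}]|_{x_3=0}$ and $[\tilde{\rho}_{(k)}]|_{x_3=0}$ vanish in the appropriate sense, providing well-defined common boundary traces that will serve as the initial time derivatives of $\varphi^a$. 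For stage (ii), we invoke a Borel--Seeley style lifting (as in \cite[Lemma 4.2.1]{M01MR1842775}) to produce intermediate functions $\widetilde{U}^{a\pm}_*\in H^m(\Omega)$ and $\widetilde{\varPhi}^{a\pm}_*\in H^{m+2}(\Omega)$ realizing the prescribed time traces, multiplied by a cut-off in $(t,x)$ to achieve \eqref{app2}. The bound \eqref{app3} follows from the continuity of the lifting map together with \eqref{CA.est}.

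For stage (iii), the interior relations \eqref{app.eq.1} already hold in the Taylor sense at $t=0$ by construction, since the derivatives $\widetilde{U}^{\pm}_{(k+1)}$ and $\widetilde{\varPhi}^{\pm}_{(k+1)}$ were defined through \eqref{tilde.Phi.0}--\eqref{tilde.U.0} precisely so that $\p_t^j\mathbb{L}(U^{a\pm}_*,\varPhi^{a\pm}_*)|_{t=0}=\mathbf{0}$ for $j=0,\ldots,m-2$. To enforce the eikonal equation \eqref{app.eq.2} exactly, we replace $\varPhi^{a\pm}_*$ by the solution $\varPhi^{a\pm}$ of the transport equation
\begin{equation*}
\p_t\varPhi^{a\pm}+v_1^{a\pm}\p_1\varPhi^{a\pm}+v_2^{a\pm}\p_2\varPhi^{a\pm}-v_3^{a\pm}=0,\qquad \varPhi^{a\pm}|_{t=0}=\varPhi^{\pm}_0,
\end{equation*}
driven by the velocity extracted from $U^{a\pm}_*$. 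Standard characteristic estimates, together with smallness of the data, preserve the lower bound \eqref{app.eq.3} via \eqref{CA4} and give $\widetilde{\varPhi}^{a\pm}\in H^{m+2}(\Omega)$; moreover the time derivatives of $\varPhi^{a\pm}$ at $t=0$ coincide with $\widetilde{\varPhi}^{\pm}_{(k)}$ because the right-hand side of the transport equation has been designed through \eqref{tilde.Phi.0} to reproduce exactly these derivatives. Setting $\varphi^a:=\varPhi^{a+}|_{x_3=0}$ yields \eqref{app.eq.4}, and the coincidence $\varPhi^{a+}|_{x_3=0}=\varPhi^{a-}|_{x_3=0}$ follows from the common initial datum $\varphi_0$ and the fact that the transport characteristics on $\{x_3=0\}$ agree for the two phases thanks to $[v\cdot\nu]=0$ carried by the compatibility.

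The Rankine--Hugoniot conditions \eqref{app.eq.5} are then imposed by a small correction of $U^{a\pm}$ in a neighborhood of $\{x_3=0\}$; since they already hold at $t=0$ to order $m-1$ by Definition \ref{def.compa}, the correction can be chosen to vanish to this order at $t=0$ and therefore leaves \eqref{app.eq.1} intact. The involution \eqref{app.eq.6} holds at $t=0$ by \eqref{compa3} and is propagated for all $t$ by the divergence structure already exploited in Proposition \ref{pro1.1}, applied now to the transport system satisfied by $\rho^{a\pm}F^{a\pm}_j$. The estimate \eqref{app3} then follows by combining Lemma \ref{lem.CA1}, continuity of the lifting, and standard transport estimates in Sobolev spaces. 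The main technical obstacle lies in this last stage: each correction (eikonal transport, boundary adjustment, involution propagation) must be carried out without destroying either the high-order Taylor vanishing \eqref{app.eq.1} of the interior operator $\mathbb{L}$ at $t=0$ or the prescribed Sobolev regularity. Careful bookkeeping of which Taylor coefficients are perturbed at each step, in the spirit of \cite[Section 4]{CS08MR2423311}, will be required to ensure that all modifications affect only Taylor orders beyond $m-2$.
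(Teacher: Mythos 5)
Your stages (i)--(ii) match the paper, but stage (iii) has a genuine ordering problem that the paper's proof is specifically designed to avoid. You propose to first lift \emph{all} components of $U^{a\pm}$, then solve the eikonal equation as a transport equation for $\varPhi^{a\pm}$ driven by the lifted velocity, and only afterwards enforce the Rankine--Hugoniot conditions \eqref{app.eq.5} by "a small correction of $U^{a\pm}$ near $\{x_3=0\}$." This is circular. The eikonal equation contains no $\p_3\varPhi$ term, so its restriction to $\{x_3=0\}$ is a closed transport equation for $\varphi^{a\pm}$ with coefficients $v_1^{a\pm}|_{x_3=0},v_2^{a\pm}|_{x_3=0}$ and source $v_3^{a\pm}|_{x_3=0}$; the two traces $\varphi^{a+}$ and $\varphi^{a-}$ coincide for all $t$ only if the jump relations $[v_1]\p_1\varphi+[v_2]\p_2\varphi-[v_3]=0$ already hold on the boundary --- i.e.\ precisely the first component of \eqref{app.eq.5}, which you have not yet arranged at that point. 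Compatibility of the data gives this only in the Taylor sense at $t=0$, not for all $t$. Conversely, if you then modify $U^{a\pm}$ near the boundary to force \eqref{app.eq.5}, you change the velocity field in the transport equation and invalidate the $\varPhi^{a\pm}$ you just constructed; moreover it is not clear that any correction supported near $\{x_3=0\}$ and vanishing to order $m-1$ in $t$ can enforce the nonlinear boundary identities \emph{exactly for all $t$}. The paper breaks this circle by reversing the roles: it lifts only $\rho^{a-}, v_1^{a\pm}, v_2^{a\pm},\varPhi^{a-}$ freely, lifts $\rho^{a+}$ and $\varPhi^{a+}$ so that $[\rho^a]=[\varPhi^a]=0$ on $\p\Omega$ exactly (possible by the compatibility conditions and the Lions--Magenes lifting), and then \emph{defines} $v_3^{a\pm}$ algebraically from the eikonal identity. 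With this ordering, \eqref{app.eq.2}, \eqref{app.eq.4} and \eqref{app.eq.5} hold identically by construction and no a posteriori correction is needed.

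A second, related gap concerns \eqref{app.eq.6}. You invoke the propagation argument of Proposition \ref{pro1.1}, but that proposition applies to exact solutions of the interior system, whereas your $U^{a\pm}$ from the generic lifting satisfies the $F$-equations only in the Taylor sense at $t=0$; the involution therefore does not propagate. The paper instead \emph{defines} $F^{a\pm}_{ij}$ as the exact solution of the transport equations \eqref{Fa.eq} with initial data $F^{\pm}_{ij(0)}$, so that, together with the exactly enforced eikonal equation, the constraint \eqref{app.eq.6} holds at $t=0$ by \eqref{compa3} and is then genuinely propagated. You should restructure stage (iii) accordingly: treat $v_3^{a\pm}$ and $F^{a\pm}$ not as generic lifts to be corrected, but as quantities determined exactly by the constraints, and reserve the free lifting (with boundary-trace matching) for $\rho^{a\pm}$, $v_1^{a\pm}$, $v_2^{a\pm}$, and $\varPhi^{a\pm}$ only.
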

\begin{proof}
The proof is divided into four steps.

\vspace*{2mm}
\noindent {\it Step 1}.\ \
First we consider $\tilde{\rho}^{a-}, \tilde{v}_1^{a\pm},\tilde{v}_2^{a\pm}\in H^{m+1}(\Omega)$ and
$\widetilde{\varPhi}^{a-}\in  H^{m+2}(\Omega)$ such that the following conditions are satisfied:
\begin{alignat*}{3}
\big(\p_t^{{k}}\tilde{\rho}^{a-},\, \p_t^{{k}}\tilde{v}_1^{a\pm},\p_t^{{k}}\tilde{v}_2^{a\pm} \big)\big|_{t=0}
=\;&\big(\tilde{\rho}_{({k})}^-,\tilde{v}_{1({k}),}^{\pm}, \tilde{v}_{2({k})}^{\pm}  \big),
&&\qquad
\textrm{for } {k}=0,\ldots,m,\\
 \p_t^{{k}}\widetilde{\varPhi}^{a-}\big|_{t=0}
=\;& \widetilde{\varPhi}^-_{({k})},
&&\qquad
\textrm{for } {k}=0,\ldots,m+1,
\end{alignat*}
where $\tilde{\rho}_{({k})}^-$, $\tilde{v}_{1({k})}^{\pm}$, $\tilde{v}_{2({k})}^{\pm}$ and
$\widetilde{\varPhi}^-_{({k})}$ are constructed in Lemma \ref{lem.CA1}.
Utilizing the compatibility conditions \eqref{compa1}--\eqref{compa2},
we apply the lifting result from \cite[Theorem 2.3]{LM72MR0350178}
to select $\tilde{\rho}^{a+}\in H^{m+1}(\Omega)$
and $\widetilde{\varPhi}^{a+}\in  H^{m+2}(\Omega)$ such that
\begin{alignat*}{3}
 \p_t^{{k}} \tilde{\rho}^{a+} \big|_{t=0}
=\;& \tilde{\rho}_{({k})}^+,
&&\qquad
\textrm{for } {k}=0,\ldots,m,\\
\p_t^{{k}}\widetilde{\varPhi}^{a+}\big|_{t=0}
=\;& \widetilde{\varPhi}^+_{({k})},
&&\qquad
\textrm{for } {k}=0,\ldots,m+1,
\end{alignat*}
and
\begin{align*}
[\tilde{\rho}^{a}]=0,\qquad
[ \widetilde{\varPhi}^{a}]=0\qquad
\textrm{on }\ \p\Omega.
\end{align*}
Furthermore, $\tilde{\rho}^{a\pm},$ $\tilde{v}_1^{a\pm}$,$\tilde{v}_2^{a\pm}$ and
$\widetilde{\varPhi}^{a\pm}$ can be chosen to satisfy \eqref{app2},
 since $(\widetilde{U}^{\pm}_{({k})}, \widetilde{\varPhi}^{\pm}_{({k})} )$
have a compact support.

\vspace*{2mm}
\noindent {\it Step 2}.\ \
Now, we define
\begin{alignat*}{3}
& \varphi^{a}
=  \widetilde{\varPhi}^{a+}\big|_{x_3=0}
=\widetilde{\varPhi}^{a-}\big|_{x_3=0}
\in H^{m+3/2}(\p\Omega),\\
&\tilde{v}_3^{a\pm}
=  \p_t \widetilde{\varPhi}^{a\pm}
+(\tilde{v}_1^{a\pm} \pm \bar{v}) \p_1 \widetilde{\varPhi}^{a\pm}+ (\tilde{v}_2^{a\pm}) \p_2 \widetilde{\varPhi}^{a\pm}
\in H^{m+1}(\Omega).
\end{alignat*}
Thus, we deduce that the
functions $\tilde{v}_3^{a\pm}$ satisfy \eqref{app2},
and \eqref{app.eq.2}, \eqref{app.eq.4}, and \eqref{app.eq.5} hold.

\vspace*{2mm}
\noindent {\it Step 3}.\ \
Since $\tilde{v}^{a\pm}\in H^{m+1}(\Omega)$
and $\widetilde{\varPhi}^{a\pm}\in H^{m+2}(\Omega)$
are already specified, we take $\widetilde{F}_{ij}^{a\pm}\in H^m(\Omega)$, for $i,j=1,2,3$,
as the unique solution of the transport equation
\begin{align}\label{Fa.eq}
\big(\p_t^{\varPhi^{a\pm}}+v_{\ell}^{a\pm} \p_{\ell}^{\varPhi^{a\pm}} \big)
\widetilde{F}_{ij}^{a\pm}
-{F}_{\ell j}^{a\pm} \p_{\ell}^{\varPhi^{a\pm}}  v^{a\pm}_i=0
\qquad
\textrm{on }\ \bar{\Omega},
\end{align}
supplemented with the initial data:
\begin{align} \label{Fij.initial}
\widetilde{F}_{ij}^{a\pm}\big|_{t=0}
=\widetilde{F}_{ij(0)}^{\pm}\in H^{m+1/2}(\mathbb{R}_+^3).
\end{align}
From equations \eqref{compa3} and \eqref{Fij.initial}, it follows that the
constraints \eqref{app.eq.6} are satisfied at the initial time.
Consequently, as in the proof of Proposition \ref{pro1.1},
we deduce that \eqref{app.eq.6} holds for all $t\in\mathbb{R}$.

\vspace*{2mm}
\noindent {\it Step 4}.\ \
Equations \eqref{tilde.Phi.0}--\eqref{tilde.U.0} imply \eqref{app.eq.1}.
The estimate \eqref{app3} is derived from \eqref{CA.est} and the continuity of the lifting operator.
Using \eqref{app3} and the Sobolev embedding theorem,
we deduce \eqref{app.eq.3}, provided that the initial perturbations are sufficiently small.
This completes the proof.
\end{proof}

We define $U^a:=(U^{a+},U^{a-})^{\top}$ and
$\varPhi^a:=(\varPhi^{a+},\varPhi^{a-})^{\top}$ for simplicity.  The vector $(U^a,\varPhi^a)$ constructed in Lemma \ref{lem.app} serves as an {\it approximate solution} to   \eqref{Phi.eq}--\eqref{EVS}.
From \eqref{app.eq.4} and \eqref{app2}, it is clear that
$\varphi^a$ is supported within the region $\{-T\le t\le T,\,x_1^2+x_2^2+ x_3^2 \leq 3\}$.
Applying \eqref{app3} and the Sobolev embedding theorem yields the following estimate:
\begin{align} \notag
\big\|\widetilde{U}^{a\pm} \big\|_{W^{2,\infty}(\Omega)}
+\big\| \widetilde{\varPhi}^{a\pm}\big\|_{W^{3,\infty}(\Omega)}
\leq\varepsilon_0\left(
\big\|\tilde{U}^{\pm}_0\big\|_{H^{m+1/2}(\mathbb{R}_+^3)}+\|\varphi_0\|_{H^{m+1}(\mathbb{R}^2)}
\right)
\end{align}
for any integer $m\geq 4$.

Next, we rewrite the system \eqref{Phi.eq}--\eqref{EVS}
as a problem with zero initial data.
Define the function $f^{a}$ as follows:
$f^{a}=-\mathbb{L}(U^{a},\varPhi^{a})$ for $t>0$, and
$f^{a}={\mathbf 0}$ for $t<0$.
 Thus, $f^{a}\in H^{m-1}(\Omega)$ and
$\supp f^{a}\subset \left\{0\le t\le T,\,x_3\geq 0,\,x_1^2+x_2^2\leq 3 \right\}$, as implied by \eqref{app.eq.1}, \eqref{app2}, and
 $(\tilde{U}^{a\pm},\nabla\widetilde{\varPhi}^{a\pm})\in  H^{m}(\Omega)$.
 Using Moser-type calculus inequalities and \eqref{app3}, we obtain:
\begin{align}\label{f.a.est}
\|f^{a}\|_{ H^{m-1}(\Omega)}
\leq \varepsilon_0\left(
\big\|\tilde{U}^{\pm}_0\big\|_{H^{m+1/2}(\mathbb{R}_+^3)}
+\|\varphi_0\|_{H^{m+1}(\mathbb{R}^2)}
\right).
\end{align}

Finally, based on \eqref{app}, the solution to
 the original problem \eqref{Phi.eq}--\eqref{EVS}  on $[0,T]\times \mathbb{R}_+^3$ is expressed as $(U,\varPhi)=(U^a,\varPhi^a)+(V,\varPsi)$, where $V=(V^+,V^-)^{\top}$ and
$\varPsi=(\varPsi^+,\varPsi^-)^{\top}$ solve the following problem:
\begin{align} \label{P.new}
\left\{\begin{aligned}
&\mathcal{L}(V,\varPsi):=\mathbb{L}(U^a+V,\varPhi^a+\varPsi)-\mathbb{L}(U^a,\varPhi^a)=f^a \quad&&\textrm{in }\Omega_T,\\
&\mathcal{E}(V,\varPsi):=\p_t\varPsi+(v_1^a+v_1)\p_1\varPsi+v_1\p_1\varPhi^a+(v_2^a+v_2)\p_2\varPsi+v_2\p_2\varPhi^a-v_3=0 \quad&&\textrm{in }\Omega_T,\\
&\mathcal{B}(V,\psi):=\mathbb{B}(U^a+V,\varphi^a+\psi)={\mathbf 0}, \quad \varPsi^+=\varPsi^-=\psi \quad&&\textrm{on }\omega_T,\\
&(V,\varPsi)={\mathbf 0}\quad&& \textrm{for }t< 0.
\end{aligned}\right.
\end{align}

Thus, solving the problem \eqref{P.new} on $[0,T]\times \mathbb{R}_+^3$ completes the problem.

\section{Nash--Moser Iteration}\label{sec.Nash}

In this section, we analyze the problem \eqref{P.new} through a suitable modification of the Nash--Moser iteration scheme. First, we outline the iterative scheme for problem \eqref{P.new} and present the corresponding inductive hypothesis. We then complete the proof of Theorem \ref{thm}
by demonstrating that the inductive hypothesis holds for all integers. It is worth noting that this section follows closely the standard procedure outlined in \cite{CS08MR2423311,RChen2020}, also see \cite{AG07MR2304160,CSW19MR3925528,S16MR3524197}.

\subsection{Iterative Scheme}\label{sec.scheme}

We start by recalling the following result from \cite[Proposition 4]{CS08MR2423311}.

\begin{proposition}\ \label{pro.smooth}
 Let $T>0$, $\gamma\geq 1$, and $m\in\mathbb{N}$ with $m\geq 4$.
 Then there exists a family  of smoothing operators $\{\mathcal{S}_{\theta}\}_{\theta\geq 1}$ such that
 \begin{align*}
 \mathcal{S}_{\theta}:\ \mathcal{F}_{\gamma}^3(\Omega_T)\times\mathcal{F}_{\gamma}^3(\Omega_T)
 \longrightarrow \bigcap_{s\geq 3}\mathcal{F}_{\gamma}^{s}(\Omega_T)\times\mathcal{F}_{\gamma}^{s}(\Omega_T),
 \end{align*}
 where
  $\mathcal{F}_{\gamma}^{s}(\Omega_T):=
 \big\{u\in H^{s}_{\gamma}(\Omega_T):u=0\textrm{ if }t<0\big\}
 $ for $s\geq 0$. These operators satisfy the following estimates{\rm:}
 \begin{subequations}\label{smooth.p1}
  \begin{alignat}{2}
  \label{smooth.p1a}&\|\mathcal{S}_{\theta} u\|_{H^{k}_{\gamma}(\Omega_T)}
  \lesssim \theta^{(k-j)_+}\|u\|_{H^{j}_{\gamma}(\Omega_T)}
  &&\quad\textrm{for }j,k=1,\ldots,m,\\[1.5mm]
  \label{smooth.p1b}&\|\mathcal{S}_{\theta} u-u\|_{H^{k}_{\gamma}(\Omega_T)}
  \lesssim \theta^{k-j}\|u\|_{H^{j}_{\gamma}(\Omega_T)}
  &&\quad\textrm{for }1\leq k\leq j\leq m,\\
  \label{smooth.p1c}&\left\|\frac{\dd}{\dd \theta}\mathcal{S}_{\theta} u\right\|_{H^{k}_{\gamma}(\Omega_T)}
  \lesssim \theta^{k-j-1}\|u\|_{H^{j}_{\gamma}(\Omega_T)}
  &&\quad\textrm{for }j,k=1,\ldots,m,
  \end{alignat}
 \end{subequations}
 and
 \begin{align}
 \|\mathcal{S}_{\theta}u-\mathcal{S}_{\theta}w \|_{H^{k}_{\gamma}(\omega_T)}
 \label{smooth.p2}
 \lesssim \theta^{(k+1-j)_+}\|u-w\|_{H^{j}_{\gamma}(\omega_T)}
\quad\textrm{for }j,k=1,\ldots,m,
 \end{align}
where $j$ and $k$ are integers,
and $(k-j)_+:=\max\{0,k-j\}$.
In particular, if $u=w$ on $\omega_T$,
then $\mathcal{S}_{\theta}u=\mathcal{S}_{\theta}w$ on $\omega_T$.
 Furthermore, smoothing operators can also be constructed for functions defined on $\omega_T$ {\rm(}denoted by $\mathcal{S}_{\theta}$ for simplicity{\rm)}, which satisfy the inequalities \eqref{smooth.p1},
with norms $\|\cdot\|_{H^{\ell}_{\gamma}(\omega_T)}$.
\end{proposition}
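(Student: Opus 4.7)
The plan is to construct the family $\{\mathcal{S}_\theta\}_{\theta\ge 1}$ by adapting the classical Friedrichs–type mollifier construction of Alinhac–G\'erard to the half-space setting with the causality constraint $u=0$ for $t<0$, following closely the scheme in \cite{CS08MR2423311} and the general framework in \cite{AG07MR2304160}. The starting point is a Littlewood–Paley dyadic decomposition on the full space $\mathbb{R}^4$: fix a radial cutoff $\chi\in C_c^\infty(\mathbb{R}^4)$ with $\chi\equiv 1$ near $0$, and define, for a function $u\in H^3_\gamma(\mathbb{R}^4)$ with $\tilde u:=\mathrm{e}^{-\gamma t}u$, the rescaled mollifier $\mathcal S_\theta^0 u := \mathrm{e}^{\gamma t}\,\mathcal F^{-1}\!\bigl(\chi(\xi/\theta)\,\hat{\tilde u}(\xi)\bigr)$. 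The three estimates \eqref{smooth.p1a}--\eqref{smooth.p1c} on the whole space are then immediate consequences of the Bernstein/Paley–Wiener bounds applied to $\chi(\xi/\theta)$ and its $\theta$-derivative, since $\|\cdot\|_{H^k_\gamma}$ is equivalent to the $\Lambda^{k,\gamma}$-weighted norm on $\tilde u$.

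First I would reduce the problem on $\Omega_T=(-\infty,T)\times\mathbb{R}^3_+$ to the full-space case by composing $\mathcal S_\theta^0$ with a Stein-type continuous extension operator $E:\mathcal F^s_\gamma(\Omega_T)\to H^s_\gamma(\mathbb{R}^4)$ that is simultaneously bounded for all $s\in\{1,\dots,m\}$ and preserves the vanishing for $t<0$ (such $E$ is constructed by an odd/even reflection in $x_3$ across $\{x_3=0\}$ combined with a Seeley-type extension in $t$ across $\{t=T\}$, with the reflection in $t$ suppressed for $t<0$ so that causality is untouched). Setting $\mathcal S_\theta := r\circ\mathcal S_\theta^0\circ E$, where $r$ is restriction to $\Omega_T$, one recovers \eqref{smooth.p1} on $\Omega_T$ directly from the whole-space estimates and boundedness of $E,r$. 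Causality $\mathcal S_\theta u\in\mathcal F^s_\gamma(\Omega_T)$ is obtained by choosing $\chi$ real and even, so $\mathcal S_\theta^0$ commutes with the reflection preserving $\{t<0\}$, and by inserting a temporal cutoff supported in $\{t>-c/\theta\}$ whose contribution is absorbed since a single further mollification against $u$ vanishing for $t<0$ yields an error exponentially small in $\theta$; the cleaner alternative, which I would follow, is to run the mollification only in the tangential variables $(x_1,x_2)$ and in a rescaled time variable, as in \cite[Appendix B]{CS08MR2423311}, thus preserving $\mathrm{supp}_t\subset(-\infty,T)$ automatically.

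The trace property \eqref{smooth.p2} and the implication ``$u=w$ on $\omega_T\Rightarrow \mathcal S_\theta u=\mathcal S_\theta w$ on $\omega_T$'' is the delicate point, and this is what I expect to be the main obstacle. To ensure it, I would design $\mathcal S_\theta$ so that it commutes with the trace operator on $\{x_3=0\}$: concretely, write $\mathcal S_\theta = \mathcal S_\theta^{\mathrm{tan}}\circ \mathcal S_\theta^{\mathrm{nor}}$, where $\mathcal S_\theta^{\mathrm{tan}}$ acts only in $(t,x_1,x_2)$ and $\mathcal S_\theta^{\mathrm{nor}}$ acts only in $x_3$ but via an \emph{even} extension across $\{x_3=0\}$ (as in Stein's reflection). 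Then the trace on $\omega_T$ equals $\mathcal S_\theta^{\mathrm{tan}}$ applied to the original trace, which depends only on $u|_{\omega_T}$, giving the required commutation; the estimate \eqref{smooth.p2} then follows from the analogue of \eqref{smooth.p1a} with $k$ replaced by $k+1$, accounting for one lost half-derivative in passing from $\Omega_T$ to $\omega_T$ via the trace inequality. The separate family on $\omega_T$ is obtained by simply keeping $\mathcal S_\theta^{\mathrm{tan}}$ alone, for which \eqref{smooth.p1} is again immediate.

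Finally I would verify the $\gamma$-uniformity of all constants: since $\chi(\xi/\theta)$ is homogeneous of degree $0$ in $\theta$ and the $H^s_\gamma$-norms are equivalent (uniformly in $\gamma\ge 1$) to the $\Lambda^{s,\gamma}$-weighted norms after the substitution $\tilde u=\mathrm e^{-\gamma t}u$, all the Bernstein-type inequalities applied to $\chi$ and its derivatives yield constants independent of $\gamma$. This produces the full statement of Proposition~\ref{pro.smooth}; the construction will be used throughout the Nash–Moser scheme to mollify the basic state at each iteration and control the quadratic and substitution errors arising from the tame estimate \eqref{thm2.est}.
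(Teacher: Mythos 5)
The paper does not prove this proposition at all: it is recalled verbatim from \cite[Proposition 4]{CS08MR2423311}, whose proof in turn rests on the Alinhac--G\'erard family of mollifiers. Your proposal reconstructs that standard construction in outline, and the overall strategy (rescaled mollifier, extension to the whole space, separate treatment of the trace on $\omega_T$) is the right one. However, two of the steps you identify as delicate are not actually resolved by your argument.

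First, causality. Membership in $\mathcal{F}^s_{\gamma}(\Omega_T)$ requires $\mathcal{S}_{\theta}u\equiv 0$ for $t<0$ \emph{exactly}; an ``exponentially small in $\theta$'' error after a temporal cutoff does not achieve this, and your fallback of mollifying only in $(t,x_1,x_2)$ fails for a different reason: without smoothing in $x_3$ the image cannot lie in $\bigcap_{s\ge 3}\mathcal{F}^s_{\gamma}$. The correct device is a one-sided (retarded) convolution kernel in time, $\theta\rho(\theta s)$ with $\operatorname{supp}\rho\subset[0,1]$, so that $u(t-s)=0$ whenever $t<0$; to recover \eqref{smooth.p1b} for all $1\le k\le j\le m$ one then needs $\rho$ to have vanishing moments up to order $m-1$, a point your Fourier-cutoff ansatz $\chi(\xi/\theta)$ cannot accommodate (a compactly supported multiplier is incompatible with a one-sided kernel).

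Second, and more seriously, the trace property. Your claim that $\mathcal{S}^{\mathrm{nor}}_{\theta}$ built from an even reflection in $x_3$ commutes with the trace on $\{x_3=0\}$ is false: the value of the mollified even extension at $x_3=0$ is $\int\rho_{\theta}(y_3)\,u(\cdot,|y_3|)\,\dd y_3$, which depends on the interior values of $u$ and not only on $u|_{\omega_T}$. Hence two functions agreeing only on $\omega_T$ need not have equal smoothed traces, and the key implication ``$u=w$ on $\omega_T\Rightarrow\mathcal{S}_{\theta}u=\mathcal{S}_{\theta}w$ on $\omega_T$'' fails for your operator. This is why the proposition is stated for \emph{pairs}: the construction in \cite{CS08MR2423311} (and in Francheteau--M\'etivier) takes a basic interior smoother $\widetilde{\mathcal{S}}_{\theta}$, a boundary smoother $\mathcal{S}^{b}_{\theta}$ on $\omega_T$, and the lifting operator $\mathcal{R}_T$ of Lemma \ref{lem.smooth2}, and corrects $\widetilde{\mathcal{S}}_{\theta}u$ by $\mathcal{R}_T\bigl(\mathcal{S}^{b}_{\theta}(u|_{\omega_T})-\widetilde{\mathcal{S}}_{\theta}u|_{\omega_T}\bigr)$ (and symmetrically for $w$) so that the traces match exactly whenever $u|_{\omega_T}=w|_{\omega_T}$. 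The exponent $(k+1-j)_+$ in \eqref{smooth.p2}, which you attribute vaguely to ``one lost half-derivative,'' is precisely the bookkeeping of this trace-plus-lift correction; a genuinely trace-commuting operator would give $(k-j)_+$ instead, which is another sign that the commuting construction is not the one being used.
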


The following lemma establishes a lifting operator that will be used in constructing the iterative scheme and the modified state
(see \cite[Chapter 5]{Francheteau2000} and \cite{CS08MR2423311} for the proof).

\begin{lemma}\  \label{lem.smooth2}
Let $T>0$, $\gamma\geq 1$, and $m\in\mathbb{N}_+$.
Then there exists a continuous operator $\mathcal{R}_T$ mapping
$\mathcal{F}_{\gamma}^s(\omega_T)$ to $\mathcal{F}_{\gamma}^{s+1/2}(\Omega_T)$
satisfying $(\mathcal{R}_T u)|_{x_3=0}=u$
when $u\in  \mathcal{F}_{\gamma}^s(\omega_T)$
for all $s\in [1,m]$.
\end{lemma}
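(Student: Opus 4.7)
\textbf{Proof plan for Lemma \ref{lem.smooth2}.} The construction is a standard Fourier-based lifting adapted to the weighted Sobolev space with causality. First I would extend $u \in \mathcal{F}^s_\gamma(\omega_T)$ to a function $\bar{u}$ on the full tangential space $\mathbb{R}^3 = \mathbb{R}_t \times \mathbb{R}^2_{x_1,x_2}$ using a continuous extension operator past $t = T$ (e.g.\ a Stein-type reflection across $t=T$ multiplied by a smooth cutoff vanishing for $t \geq T+1$); this extension preserves the condition $\bar u = 0$ for $t < 0$ and $\|\bar u\|_{H^s_\gamma(\mathbb{R}^3)} \lesssim \|u\|_{H^s_\gamma(\omega_T)}$. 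I would then define the global lifting by
\[
\mathcal{R}\bar u(t,x_1,x_2,x_3) := \mathrm{e}^{\gamma t}\, \mathcal{F}^{-1}_{\xi \to (t,x_1,x_2)}\!\Bigl[ \chi\bigl(x_3\,\lambda^{1,\gamma}(\xi)\bigr)\, \mathcal{F}(\mathrm{e}^{-\gamma\cdot}\bar u)(\xi)\Bigr],
\]
where $\xi = (\delta,\eta,\tilde\eta)$, $\lambda^{1,\gamma}(\xi) = \sqrt{\gamma^2 + \delta^2+\eta^2+\tilde\eta^2}$, and $\chi \in C_c^\infty([-1,1])$ with $\chi(0)=1$. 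The operator $\mathcal{R}_T$ is then the restriction of $\mathcal{R}\circ E$ to $\Omega_T$.

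Second, the trace identity and the tame estimate are elementary. At $x_3=0$, $\chi(0)=1$ and Fourier inversion yield $(\mathcal{R}\bar u)|_{x_3=0} = \bar u$, whose restriction to $\omega_T$ is $u$. For the norm estimate, Plancherel's theorem gives, writing $\lambda = \lambda^{1,\gamma}(\xi)$,
\begin{align*}
\|\mathcal{R}\bar u\|^2_{H^{s+1/2}_\gamma(\mathbb{R}^3\times\mathbb{R}_+)}
&\simeq \int_{\mathbb{R}^3}\!\int_0^\infty \lambda^{2s+1}\,\big|\chi(x_3 \lambda)\big|^2\, \bigl|\mathcal{F}(\mathrm{e}^{-\gamma\cdot}\bar u)(\xi)\bigr|^2 \,\dd x_3\,\dd\xi \\
&= \Bigl(\int_0^\infty |\chi(y)|^2\,\dd y\Bigr)\int_{\mathbb{R}^3} \lambda^{2s}\,\bigl|\mathcal{F}(\mathrm{e}^{-\gamma\cdot}\bar u)(\xi)\bigr|^2\,\dd\xi \\
&\lesssim \|\bar u\|^2_{H^s_\gamma(\mathbb{R}^3)} \lesssim \|u\|^2_{H^s_\gamma(\omega_T)},
\end{align*}
after the change of variables $y = x_3 \lambda$ at fixed $\xi$. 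The same computation applied to any integer order $\le m$ and interpolation gives continuity for every $s\in [1,m]$.

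Third, the main obstacle is preserving the causality condition $\mathcal{R}_T u = 0$ for $t<0$, since the Fourier multiplier $\chi(x_3 \lambda^{1,\gamma}(\xi))$ is not local in $t$. This is handled by a Paley--Wiener argument. Because $\mathrm{e}^{-\gamma t}\bar u$ is supported in $t\geq 0$ and tempered, its Fourier transform extends to a function $\hat{\bar u}(\tau,\eta,\tilde\eta)$ holomorphic in the half plane $\{\Re\tau>\gamma\}$ with polynomial bounds on vertical lines (identifying $\tau = \gamma + i\delta$). Choosing the principal branch, the symbol $\Lambda(\tau,\eta,\tilde\eta) := (\tau^2 + \eta^2 + \tilde\eta^2)^{1/2}$ is holomorphic in $\{\Re\tau>0\}$ and agrees with $\lambda^{1,\gamma}(\xi)$ up to controlled error on $\Re\tau = \gamma$; thus $\chi(x_3\Lambda)\hat{\bar u}$ admits a holomorphic extension to $\{\Re\tau>\gamma\}$ with the same type of polynomial growth, and the converse direction of Paley--Wiener forces its inverse Fourier--Laplace transform to vanish for $t<0$, which gives $\mathcal{R}\bar u = 0$ there. (Equivalently, one may realise $\mathcal{R}$ via a dyadic Littlewood--Paley decomposition in $\xi$ built from the causal smoothing operators of Proposition \ref{pro.smooth}, in which case support preservation is inherited directly from $\mathcal{S}_\theta$.)

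Finally, restricting $\mathcal{R}\bar u$ to $\Omega_T$ defines $\mathcal{R}_T u$; the trace identity on $\omega_T$, the continuity $\mathcal{F}^s_\gamma(\omega_T)\to \mathcal{F}^{s+1/2}_\gamma(\Omega_T)$ for each $s\in[1,m]$, and the vanishing for $t<0$ together yield the lemma. The only delicate step is the causality argument in paragraph three; the Fourier computation and trace identity in paragraph two are routine.
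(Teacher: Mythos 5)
The paper does not actually prove this lemma --- it defers to \cite[Chapter 5]{Francheteau2000} and \cite{CS08MR2423311} --- so the comparison is with the standard construction there. Your second paragraph (the trace identity and the $1/2$-derivative gain via the substitution $y=x_3\lambda$) is fine for the operator you write down. The genuine gap is in your third paragraph: the operator you define does \emph{not} preserve vanishing in the past, and the Paley--Wiener argument you invoke cannot be run. First, a nonconstant $\chi\in C_c^\infty([-1,1])$ is not real-analytic, hence admits no holomorphic extension off the real line; the expression $\chi(x_3\Lambda(\tau,\eta,\tilde\eta))$ is not even defined for complex $\tau$, so the claim that $\chi(x_3\Lambda)\hat{\bar u}$ extends holomorphically to $\{\Re\tau>\gamma\}$ is false. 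Second, and decisively, for fixed $x_3>0$ the multiplier $\delta\mapsto\chi\big(x_3\lambda^{1,\gamma}(\delta,\eta,\tilde\eta)\big)$ is compactly supported in $\delta$, so its inverse Fourier transform in $t$ is entire and certainly not supported in $t\le 0$; convolving with it pushes mass to $t<0$, so $\mathcal{R}\bar u$ genuinely fails to lie in $\mathcal{F}^{s+1/2}_\gamma$. Nor does the natural holomorphic substitute $e^{-x_3(\tau^2+\eta^2+\tilde\eta^2)^{1/2}}$ rescue the scheme: on $\Re\tau=\gamma$ the real part of $(\tau^2+\eta^2+\tilde\eta^2)^{1/2}$ degenerates to $O(\gamma)$ when $|\delta|\gg\gamma+|\eta|+|\tilde\eta|$, so $\int_0^\infty e^{-2x_3\Re\Lambda}\,\dd x_3=(2\Re\Lambda)^{-1}$ is only $O(\gamma^{-1})$ rather than $O\big((\lambda^{1,\gamma})^{-1}\big)$, and the half-derivative gain is lost in the time-frequency directions. (One can check that no degree-one symbol holomorphic and of linear growth in $\{\Re\tau>0\}$ has real part uniformly comparable to $\lambda^{1,\gamma}$ on $\Re\tau=\gamma$, so no single causal Fourier multiplier of this type exists.) The parenthetical fallback via the smoothing operators of Proposition \ref{pro.smooth} does not fill the hole either: those act on functions on $\Omega_T$, not on boundary data, and any Littlewood--Paley sum of tangential multipliers inherits exactly the same non-causality.

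The standard repair --- and, in substance, the construction in Francheteau--M\'etivier --- decouples the two requirements. Build the non-causal Fourier lifting $W_0$ exactly as in your second paragraph, then subtract a correction $G$ obtained by a Hestenes--Babich reflection \emph{in the time variable only} of $W_0|_{t<0}$, i.e.\ $G(t,\cdot)=\sum_j a_j\,W_0(-jt,\cdot)$ for $t>0$ and $G=W_0$ for $t<0$, with the usual Vandermonde choice of the $a_j$. Because this reflection acts solely in $t$, it commutes with the trace on $\{x_3=0\}$; since $u$ vanishes for $t<0$, the trace of $W_0|_{t<0}$ vanishes, hence $G|_{x_3=0}=0$. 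Then $\mathcal{R}_T u:=(W_0-G)|_{\Omega_T}$ has trace $u$, vanishes for $t<0$, and satisfies the same norm bound, uniformly in $\gamma$ (the change of variables $s=-jt$ only improves the weight $e^{-\gamma t}$ on $t<0$). With this replacement of your third paragraph the proof goes through; as written, the causality step fails.
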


Following \cite{CS08MR2423311,CSW19MR3925528}, we now describe the iteration scheme for problem \eqref{P.new}.
Let ${N}\ge 1$ be a given integer.

We begin by setting $ (V_0,\varPsi_0, \psi_0)=\mathbf{0}$ and assume that
$(V_{n},\varPsi_{n},\psi_{n})$ is given and satisfies
\begin{align}\label{scheme.H1}
(V_{n},\varPsi_{n},\psi_{n})\big|_{t<0}={\mathbf 0},\quad \varPsi_{n}^{+}\big|_{x_3=0}=\varPsi_{n}^{-}\big|_{x_3=0}=\psi_{n}
\quad \textit{for } n=0,\ldots,{N}.
\end{align}
Next, we define the iteration scheme:
\begin{align}\label{scheme}
V_{{N}+1}=V_{{N}}+\delta V_{{N}},\quad \varPsi_{{N}+1}=\varPsi_{{N}}+\delta \varPsi_{{N}},
\quad\,\, \psi_{{N}+1}=\psi_{{N}}+\delta \psi_{{N}},
\end{align}
where the increments
$\delta V_{{N}}$, $\delta \varPsi_{{N}}$, and $\delta \psi_{{N}}$
are determined by the problem
\begin{align} \label{effective.NM}
\left\{\begin{aligned}
&\mathbb{L}_e'(U^a+V_{{N}+1/2},\varPhi^a+\varPsi_{{N}+1/2})\delta \dot{V}_{{N}}=f_{{N}}
\qquad &&\textrm{in }\Omega_T,\\
& \mathbb{B}_e'(U^a+V_{{N}+1/2},\varPhi^a+\varPsi_{{N}+1/2})(\delta \dot{V}_{{N}},\delta\psi_{{N}})=g_{{N}}
\qquad &&\textrm{on }\omega_T,\\
& (\delta \dot{V}_{{N}},\delta\psi_{{N}})={\mathbf 0}\qquad &&\textrm{for }t<0.
\end{aligned}\right.
\end{align}
Here, the operators $\mathbb{L}_e'$ and $\mathbb{B}_e'$
are defined in \eqref{effective.1} and \eqref{effective.2}, respectively. The pair
$(V_{{N}+1/2},\varPsi_{{N}+1/2})$ represents a modified state
such that $(U^a+V_{{N}+1/2},\varPhi^a+\varPsi_{{N}+1/2})$ satisfies
constraints \eqref{bas.c2}--\eqref{bas}.
The source term $(f_{{N}},g_{{N}})$ will be determined
later on. For a detailed construction of the modified state,
see Section \ref{sec.modified}.
Following \eqref{good}, we write  
\begin{align} \label{good.NM}
\delta \dot{V}_{{N}}:=\delta V_{{N}}-\frac{\p_3 (U^a+V_{{N}+1/2})}{\p_3 (\varPhi^a+\varPsi_{{N}+1/2})}\delta\varPsi_{{N}}.
\end{align}

Then, we set $f_0:=\mathcal{S}_{\theta_0}f^a$ and $(e_0,\tilde{e}_0,g_0):={\mathbf 0}$
for sufficiently large $\theta_0\geq 1$.
Let $(f_{n},g_{n},e_{n},\tilde{e}_{n})$ be given
and vanish in the past for $n=0,\ldots,{N}-1$.
The terms $f_{{N}}$ and $g_{{N}}$ are determined by the equations
\begin{align} \label{fn.def}
\sum_{n=0}^{{N}} f_{n}+\mathcal{S}_{\theta_{{N}}}E_{{N}}=\mathcal{S}_{\theta_{{N}}}f^a,
\qquad \sum_{n=0}^{{N}}g_{n}+\mathcal{S}_{\theta_{{N}}}\widetilde{E}_{{N}}={\mathbf 0},
\end{align}
where
\begin{align}  \label{En.def}
E_{{N}}:=\sum_{n=0}^{{N}-1}e_{n}\in\mathbb{R}^{26},
\qquad \widetilde{E}_{{N}}:=\sum_{n=0}^{{N}-1}\tilde{e}_{n}\in\mathbb{R}^{3}.
\end{align}
\noindent By convention, $E_0=\widetilde{E}_0=\mathbf{0}$. Here, $\mathcal{S}_{\theta_{{N}}}$ are the smoothing operators
from Proposition \ref{pro.smooth},
with $\{\theta_{{N}}\}$ defined as
\begin{align} \label{theta.def}
\theta_0\geq 1,\qquad \theta_{{N}}=\sqrt{\theta^2_0+{N}}.
\end{align}
As a consequence, we can apply Theorem \ref{thm2} to
solve $(\delta \dot{V}_{{N}},\delta \psi_{{N}})$ for problem \eqref{effective.NM}.

Noticing \eqref{good.NM},
 we need to construct
functions $\delta\varPsi_{{N}}^{+}$
and $\delta\varPsi_{{N}}^{-}$
such that $\delta\varPsi_{{N}}^{\pm}\big|_{x_3=0}=\delta\psi_{{N}}$.
From the boundary conditions in \eqref{effective.NM}
({\it cf.}\;\eqref{b.ring}, \eqref{B.ring}, and \eqref{b.natural}),
we obtain that $\delta\psi_{{N}}$ satisfies
\begin{align}
\notag
&\p_t (\delta\psi_{{N}}) +U_{{N}+1/2,2}^{+} \p_1(\delta\psi_{{N}})+U_{{N}+1/2,3}^{+} \p_2(\delta\psi_{{N}})\\[1mm]
\notag
&\quad+ \left(
\p_1 \varPhi_{{N}+1/2}^+\frac{\p_3U_{{N}+1/2,2}^{+} }{\p_3 \varPhi_{{N}+1/2}^+}+\p_2 \varPhi_{{N}+1/2}^+\frac{\p_3U_{{N}+1/2,3}^{+} }{\p_3 \varPhi_{{N}+1/2}^+}
-\frac{\p_3U_{{N}+1/2,4}^{+} }{\p_3 \varPhi_{{N}+1/2}^+}
\right)  \delta\psi_{{N}}
\\[2mm]
\notag
&\qquad \qquad \qquad \qquad
+ \p_1 \varPhi_{{N}+1/2}^+ \delta \dot{V}_{{N},2}^+ + \p_2 \varPhi_{{N}+1/2}^+ \delta \dot{V}_{{N},3}^+ -   \delta \dot{V}_{{N},4}^+
=g_{{N},2}\qquad\qquad\quad   \textrm{on }\ \omega_T, \\[1mm]
\notag
&\p_t (\delta\psi_{{N}}) +U_{{N}+1/2,2}^{-} \p_1(\delta\psi_{{N}})+U_{{N}+1/2,3}^{-} \p_2(\delta\psi_{{N}})\\[1mm]
\notag
&\quad+ \left(
\p_1 \varPhi_{{N}+1/2}^-\frac{\p_3U_{{N}+1/2,2}^{-} }{\p_3 \varPhi_{{N}+1/2}^-}+\p_2 \varPhi_{{N}+1/2}^-\frac{\p_3U_{{N}+1/2,3}^{-} }{\p_3 \varPhi_{{N}+1/2}^-}
-\frac{\p_3U_{{N}+1/2,4}^{-} }{\p_3 \varPhi_{{N}+1/2}^-}
\right)  \delta\psi_{{N}}
\\[2mm]
\notag
&\qquad \qquad \qquad \qquad
+ \p_1 \varPhi_{{N}+1/2}^- \delta \dot{V}_{{N},2}^- +\p_2 \varPhi_{{N}+1/2}^- \delta \dot{V}_{{N},3}^-  -   \delta \dot{V}_{{N},4}^-
=g_{{N},2}-g_{{N},1}\qquad \textrm{on }\ \omega_T,
\end{align}
where we {define} $$U_{{N}+1/2}^{\pm}:=U^{a\pm}+V^{\pm}_{{N}+1/2}, \quad
\varPhi_{{N}+1/2}^{\pm}:=\varPhi^{a\pm}+\varPsi^{\pm}_{{N}+1/2}$$
for simplifying the presentation.
In accordance with the identities above, we take
$\delta\varPsi_{{N}}^+$ and $\delta\varPsi_{{N}}^-$ as the solutions to the transport equations
\begin{align}
\notag
&\p_t (\delta\varPsi_{{N}}^+) +U_{{N}+1/2,2}^{+} \p_1(\delta\varPsi_{{N}}^+)+U_{{N}+1/2,3}^{+} \p_2(\delta\varPsi_{{N}}^+)\\[1mm]
&+ \left(
\p_1 \varPhi_{{N}+1/2}^+\frac{\p_3U_{{N}+1/2,2}^{+} }{\p_3 \varPhi_{{N}+1/2}^+}+\p_2 \varPhi_{{N}+1/2}^+\frac{\p_3U_{{N}+1/2,3}^{+} }{\p_3 \varPhi_{{N}+1/2}^+}
-\frac{\p_3U_{{N}+1/2,4}^{+} }{\p_3 \varPhi_{{N}+1/2}^+}
\right)  \delta\varPsi_{{N}}^+
\nonumber\\[2mm]
\label{delta.Psi1}
&\qquad \qquad \qquad \qquad
+ \p_1 \varPhi_{{N}+1/2}^+ \delta \dot{V}_{{N},2}^+ +\p_2 \varPhi_{{N}+1/2}^+ \delta \dot{V}_{{N},3}^+  -   \delta \dot{V}_{{N},4}^+
=\mathcal{R}_T g_{{N},2} +h_{N}^+,  \\[2mm]
\notag
&\p_t (\delta\varPsi_{{N}}^-) +U_{{N}+1/2,2}^{-} \p_1(\delta\varPsi_{{N}}^-)+U_{{N}+1/2,3}^{-} \p_2(\delta\varPsi_{{N}}^-)\nonumber\\
&+ \left(
\p_1 \varPhi_{{N}+1/2}^-\frac{\p_3U_{{N}+1/2,2}^{-} }{\p_3 \varPhi_{{N}+1/2}^-}+\p_2 \varPhi_{{N}+1/2}^-\frac{\p_3U_{{N}+1/2,3}^{-} }{\p_3 \varPhi_{{N}+1/2}^-}
-\frac{\p_3U_{{N}+1/2,4}^{-} }{\p_3 \varPhi_{{N}+1/2}^-}
\right)  \delta\varPsi_{{N}}^-
\nonumber\\[2mm]
\label{delta.Psi2}
&\qquad \qquad \qquad \qquad
+ \p_1 \varPhi_{{N}+1/2}^- \delta \dot{V}_{{N},2}^- + \p_2 \varPhi_{{N}+1/2}^- \delta \dot{V}_{{N},3}^- -   \delta \dot{V}_{{N},4}^-
=\mathcal{R}_T(g_{{N},2}-g_{{N},1})+h_{N}^-.
\end{align}
Here, $\mathcal{R}_T$ is the lifting operator from Lemma \ref{lem.smooth2}, and the
source terms $h_{{N}}^{\pm}$ will be chosen via
a decomposition of the operator $\mathcal{E}$, as defined in \eqref{P.new}.

Finally, we set $(h_0^+,h_0^-,\hat{e}_0)={\mathbf 0}$,
and assume that $(h_{n}^+,h_{n}^-,\hat{e}_{n})$ are given
and vanish in the past for $n=0,\ldots,{N}-1$.
Under these conditions, 
we determine $h_{{N}}^{+}$ and $h_{{N}}^{-}$ using the equations
\begin{subequations} \label{hn.def}
 \begin{alignat}{1}
 &\mathcal{S}_{\theta_{{N}}}
 \left(\widehat{E}_{{N}}^+-\mathcal{R}_T\widetilde{E}_{{N},2}\right)
 +\sum_{n=0}^{{N}}h_{n}^+=0,\\
 &\mathcal{S}_{\theta_{{N}}}
 \left(\widehat{E}_{{N}}^--\mathcal{R}_T\widetilde{E}_{{N},2}
 +\mathcal{R}_T\widetilde{E}_{{N},1}\right)
 +\sum_{n=0}^{{N}}h_{n}^-=0,
 \end{alignat}
\end{subequations}
where 
\begin{align} \label{En.hat.def}
\widehat{E}_{{N}}=(\widehat{E}_{{N}}^+,\widehat{E}_{{N}}^-)^{\top}=\sum_{n=0}^{{N}-1}\hat{e}_{n}\in\mathbb{R}^2,
\end{align}
\noindent By convention, $\widehat{E}_0=\mathbf{0}$.
and   $h_{{N}}^{\pm}=0$ for $t<0$.
As in \cite{Francheteau2000},
we can show that the traces of $h_{{N}}^{\pm}$ on $\omega_T$ vanish.
Consequently, it follows that  $\delta\varPsi_{{N}}^{\pm}=0$, for $t<0$ and
 $\delta\varPsi_{{N}}^{\pm}|_{x_3=0}=\delta\psi_{{N}}$. These are the unique smooth solutions satisfying transport equations
\eqref{delta.Psi1}--\eqref{delta.Psi2}.
Hence, $\delta V_{{N}}$ can be derived from \eqref{good.NM}
and $(V_{{N}+1},\varPsi_{{N}+1},\psi_{{N}+1})$ can be derived from \eqref{scheme}.

From \eqref{En.def}--\eqref{fn.def} and \eqref{hn.def}--\eqref{En.hat.def},
it suffices to 
define the error terms
$e_{{N}}$, $\tilde{e}_{{N}}$, and $\hat{e}_{{N}}$. 
To this end, by an analogous argument in \cite{CS08MR2423311,CSW19MR3925528}, we decompose
\begin{align}
\notag&\mathcal{L}(V_{{N}+1},\varPsi_{{N}+1})-\mathcal{L}(V_{{N}},\varPsi_{{N}})\\
\label{decom1}&\quad = \mathbb{L}_e'(U^a+V_{{N}+1/2},\varPhi^a+\varPsi_{{N}+1/2})\delta \dot{V}_{{N}}+e_{{N}}'+e_{{N}}''+e_{{N}}'''+D_{{N}+1/2} \delta\varPsi_{{N}}
\end{align}
and
\begin{align}
\notag&\mathcal{B}(V_{{N}+1} ,\psi_{{N}+1})-\mathcal{B}(V_{{N}} ,\psi_{{N}})\\
\label{decom2}&\quad
=\mathbb{B}_e'(U^a+V_{{N}+1/2},\varPhi^a+\varPsi_{{N}+1/2})
(\delta \dot{V}_{{N}} ,\delta\psi_{{N}})+\tilde{e}_{{N}}'+\tilde{e}_{{N}}''+\tilde{e}_{{N}}''',
\end{align}
where
\begin{align}
\notag& e_{{N}}':=
\mathcal{L}(V_{{N}+1},\varPsi_{{N}+1})-\mathcal{L}(V_{{N}},\varPsi_{{N}}) -\mathbb{L}'(U^a+V_{{N}},\varPhi^a+\varPsi_{{N}})(\delta V_{{N}},\delta\varPsi_{{N}}),\\
\notag&e_{{N}}'':=
\mathbb{L}'(U^a+V_{{N}},\varPhi^a+\varPsi_{{N}})(\delta V_{{N}},\delta\varPsi_{{N}})
- \mathbb{L}'(U^a+\mathcal{S}_{\theta_{{N}}}V_{{N}},\varPhi^a+\mathcal{S}_{\theta_{{N}}}\varPsi_{{N}})(\delta V_{{N}},\delta\varPsi_{{N}}),\\
\notag&e_{{N}}''':=
\mathbb{L}'(U^a+\mathcal{S}_{\theta_{{N}}}V_{{N}},\varPhi^a+\mathcal{S}_{\theta_{{N}}}\varPsi_{{N}})(\delta V_{{N}},\delta\varPsi_{{N}})
-\mathbb{L}'(U^a+V_{{N}+1/2},\varPhi^a+\varPsi_{{N}+1/2})(\delta V_{{N}},\delta\varPsi_{{N}}),\\
\label{last.error}
&D_{{N}+1/2}:=
\left(\p_3(\varPhi^a+\varPsi_{{N}+1/2})\right)^{-1}
\p_3\mathbb{L}(U^a+V_{{N}+1/2},\varPhi^a+\varPsi_{{N}+1/2}),
\end{align}
and
\begin{align*}
 &\tilde{e}_{{N}}':=
\mathcal{B}(V_{{N}+1} ,\psi_{{N}+1})-\mathcal{B}(V_{{N}} ,\psi_{{N}})
-\mathbb{B}'(U^a+V_{{N}},\varphi^a+\psi_{{N}})
(\delta V_{{N}} ,\delta\psi_{{N}}),\\
&\tilde{e}_{{N}}'': =
\mathbb{B}'(U^a+V_{{N}},\varphi^a+\psi_{{N}})
(\delta V_{{N}} ,\delta\psi_{{N}})\\
&\qquad\ \
-\mathbb{B}'(U^a+\mathcal{S}_{\theta_{{N}}}V_{{N}},
\varphi^a+(\mathcal{S}_{\theta_{{N}}}\varPsi_{{N}})|_{x_3=0}  )
(\delta V_{{N}} ,\delta\psi_{{N}}),\\
 &\tilde{e}_{{N}}''': =
\mathbb{B}'(U^a+\mathcal{S}_{\theta_{{N}}}V_{{N}},
\varphi^a+(\mathcal{S}_{\theta_{{N}}}\varPsi_{{N}})|_{x_3=0}  )
(\delta V_{{N}} ,\delta\psi_{{N}})\\
&\qquad\ \ -\mathbb{B}_e'(U^a+V_{{N}+1/2},\varPhi^a+\varPsi_{{N}+1/2})
(\delta \dot{V}_{{N}} ,\delta\psi_{{N}}).
\end{align*}
Take
\begin{align} \label{en.def}
e_{{N}}:=e_{{N}}'+e_{{N}}''+e_{{N}}'''+D_{{N}+1/2} \delta\varPsi_{{N}},\qquad
\tilde{e}_{{N}}:=\tilde{e}_{{N}}'+\tilde{e}_{{N}}''+\tilde{e}_{{N}}'''.
\end{align}

As for error term $\hat{e}_{{N}}$,
we decompose
\begin{align}
\mathcal{E}(V_{{N}+1},\varPsi_{{N}+1})-\mathcal{E}(V_{{N}},\varPsi_{{N}})
 = \mathcal{E}'(V_{{N}+1/2},\varPsi_{{N}+1/2})(\delta V_{{N}},\delta\varPsi_{{N}})+\hat{e}_{{N}}'
+\hat{e}_{{N}}''+\hat{e}_{{N}}''', \label{decom3}
\end{align}
and set
\begin{align} \label{en.hat.def}
\hat{e}_{{N}}:=\hat{e}_{{N}}'+\hat{e}_{{N}}''+\hat{e}_{{N}}''',
\end{align}
where
\begin{align}
\notag &\hat{e}_{{N}}' :=
\mathcal{E}(V_{{N}+1},\varPsi_{{N}+1})-\mathcal{E}(V_{{N}},\varPsi_{{N}})
- \mathcal{E}'(V_{{N}},\varPsi_{{N}})(\delta V_{{N}},\delta\varPsi_{{N}}),  \\
&\hat{e}_{{N}}'' :=
\mathcal{E}'(V_{{N}},\varPsi_{{N}})(\delta V_{{N}},\delta\varPsi_{{N}})
-\mathcal{E}'(\mathcal{S}_{\theta_{{N}}}V_{{N}},\mathcal{S}_{\theta_{{N}}}\varPsi_{{N}})(\delta V_{{N}},\delta\varPsi_{{N}}),\notag\\
\notag &\hat{e}_{{N}}''' :=
\mathcal{E}'(\mathcal{S}_{\theta_{{N}}}V_{{N}},\mathcal{S}_{\theta_{{N}}}\varPsi_{{N}})(\delta V_{{N}},\delta\varPsi_{{N}})
- \mathcal{E}'(V_{{N}+1/2},\varPsi_{{N}+1/2})(\delta V_{{N}},\delta\varPsi_{{N}}).
\end{align}
It follows from \eqref{app.eq.2} that
\begin{align*}
\mathcal{E}(V,\varPsi)=\p_t (\varPhi^a+\varPsi)+(v_1^a+v_1)\p_1 (\varPhi^a+\varPsi)+(v_2^a+v_2)\p_2 (\varPhi^a+\varPsi)-(v_3^a+v_3).
\end{align*}
Then we derive from \eqref{delta.Psi1}--\eqref{delta.Psi2} and \eqref{decom3}
that
\begin{align}\notag 
\begin{bmatrix}
\mathcal{E}(V_{{N}+1}^+,\varPsi_{{N}+1}^+)-\mathcal{E}(V_{{N}}^+,\varPsi_{{N}}^+) \\[1mm]
\mathcal{E}(V_{{N}+1}^-,\varPsi_{{N}+1}^-)-\mathcal{E}(V_{{N}}^-,\varPsi_{{N}}^-)
\end{bmatrix}
=\begin{bmatrix}
\mathcal{R}_Tg_{{N},2}+h_{{N}}^++\hat{e}_{{N}}^+\\[1mm]
 \mathcal{R}_T(g_{{N},2}-g_{{N},1})+h_{{N}}^-+\hat{e}_{{N}}^-
\end{bmatrix}.
\end{align}
Thus, 
by $\mathcal{E}(V_0,\varPsi_0)=0$, one has
\begin{align}
\mathcal{E}(V_{{N}+1}^-,\varPsi_{{N}+1}^-)
=\mathcal{R}_T\left(\sum_{n=0}^{{N}}(g_{n,2}-g_{n,1})\right)
+\sum_{n=0}^{{N}}h_{n}^-+\widehat{E}_{{N}+1}^-.
\label{NM.id1}
\end{align}
Moreover, from \eqref{effective.NM} and \eqref{decom2}, we have
\begin{align}  \label{NM.id2}
g_{{N}}=
\mathcal{B}(V_{{N}+1},\psi_{{N}+1})-\mathcal{B}(V_{{N}},\psi_{{N}})-\tilde{e}_{{N}}.
\end{align}
Denote by $\mathcal{B}(V_{{N}+1},\psi_{{N}+1})_j$   the $j$th component of the vector  $\mathcal{B}(V_{{N}+1},\psi_{{N}+1})$ for $j=1,2.$
From  \eqref{P.new} and \eqref{B.def}, 
\begin{align}
\mathcal{B}(V_{{N}+1},\psi_{{N}+1})_2
&=\mathcal{E}(V_{{N}+1}^+,\varPsi_{{N}+1}^+) |_{x_3=0}     =\mathcal{E}(V_{{N}+1}^-,\varPsi_{{N}+1}^-)|_{x_3=0}
+\mathcal{B}(V_{{N}+1},\psi_{{N}+1})_1.
\label{NM.id3}
\end{align}
Using  \eqref{NM.id2}, we have
\begin{align}  \label{decom2.b}
g_{{N},2}-g_{{N},1}=
\mathcal{E}(V_{{N}+1}^-,\varPsi_{{N}+1}^-)|_{x_3=0}
-\mathcal{E}(V_{{N}}^-,\varPsi_{{N}}^-)|_{x_3=0}-\tilde{e}_{{N},2}+\tilde{e}_{{N},1}.
\end{align}
Then, \eqref{decom2.b} and \eqref{NM.id1}  yield
\begin{align}
&\mathcal{E}(V_{{N}+1}^-,\varPsi_{{N}+1}^-) 
\label{decom3.c1} 
=\mathcal{R}_T\left(\mathcal{E}\left(V_{{N}+1}^-,\varPsi_{{N}+1}^-\right)|_{x_3=0}
-\widetilde{E}_{{N}+1,2}+\widetilde{E}_{{N}+1,1}\right)
+\sum_{n=0}^{{N}}h_{n}^-+\widehat{E}_{{N}+1}^-,
\end{align}
and similarly,
\begin{align} 
&\mathcal{E}(V_{{N}+1}^+,\varPsi_{{N}+1}^+)
\label{decom3.c2}
=\mathcal{R}_T\left(\mathcal{E}\left(V_{{N}+1}^+,\varPsi_{{N}+1}^+\right)|_{x_3=0}
-\widetilde{E}_{{N}+1,2}\right)+\sum_{n=0}^{{N}}h_{n}^++\widehat{E}_{{N}+1}^+.
\end{align}

From \eqref{decom1} and \eqref{NM.id2}, together with \eqref{effective.NM} and \eqref{fn.def}, one has
\begin{align}
\label{conv.1}&\mathcal{L}(V_{N+1},\varPsi_{N+1})
= \sum_{n=0}^{N}f_{n} +E_{N+1}
=\mathcal{S}_{\theta_{N}}f^a+(I-\mathcal{S}_{\theta_{N}})E_{N}+e_{N},\\
\label{conv.2}&\mathcal{B}(V_{N+1},\psi_{N+1})
= \sum_{n=0}^{N}g_{n} +\widetilde{E}_{N+1}
=(I-\mathcal{S}_{\theta_{N}})\widetilde{E}_{N}+\tilde{e}_{N}.
\end{align}
Substituting \eqref{hn.def} into \eqref{decom3.c1}--\eqref{decom3.c2} and using \eqref{NM.id3}, we get
\begin{align}\label{conv.3}
\left\{
\begin{aligned}
\mathcal{E}(V_{N+1}^-,\varPsi_{N+1}^-)
 =\;& \mathcal{R}_T\big( \mathcal{B}(V_{{N}+1},\psi_{{N}+1})_2
-\mathcal{B}(V_{{N}+1},\psi_{{N}+1})_1\big)\\
&
+(I-\mathcal{S}_{\theta_{N}})
\Big( \widehat{E}_{N}^--\mathcal{R}_T\big(\widetilde{E}_{{N},2}-\widetilde{E}_{N,1}\big) \Big)\\
&
+\hat{e}_{N}^--\mathcal{R}_T\big(\tilde{e}_{N,2}-\tilde{e}_{N,1}\big),\\[1mm]
\mathcal{E}(V_{N+1}^+,\varPsi_{N+1}^+)
=\;&\mathcal{R}_T\big( \mathcal{B}(V_{{N}+1},\psi_{{N}+1})_2\big)\\
&
+(I-\mathcal{S}_{\theta_{N}})
\big( \widehat{E}_{N}^+-\mathcal{R}_T\widetilde{E}_{N,2}\big)
+\hat{e}_{N}^+-\mathcal{R}_T\tilde{e}_{N,2}.
\end{aligned}
\right.
\end{align}
From $\mathcal{S}_{\theta_{N}}\to Id$ as $N \to \infty$, we conclude that
if the error terms $(e_{N},\tilde{e}_{N},\hat{e}_{N})$  tend to zero,
then $$(\mathcal{L}(V_{N+1},\varPsi_{N+1}),
 \mathcal{B}(V_{N+1},\psi_{N+1}),
  \mathcal{E}(V_{N+1},\varPsi_{N+1}))\to (f^a, \mathbf{0}, 0),$$
thus, the solution to  \eqref{P.new} can be obtained formally.

In order to estimate the error terms, we need to introduce the {\it inductive hypothesis} as follows.
Let us take an integer ${\mu}\geq 4$,
a small number  $\epsilon>0$,
and another integer  $\tilde{\mu}>{\mu}$, which will be determined later.
Suppose that 
we have the   estimate
\begin{align} \label{small}
\|\widetilde{U}^a\|_{H^{\tilde{\mu}+4}_{\gamma}(\Omega_T)}
+\|\widetilde{\varPhi}^a\|_{H^{\tilde{\mu}+5}_{\gamma}(\Omega_T)}
+\|\varphi^a\|_{H^{\tilde{\mu}+9/2}_{\gamma}(\omega_T)}
+\|f^a\|_{H^{\tilde{\mu}+3}_{\gamma}(\Omega_T)}
\leq \epsilon,
\end{align}
then our inductive hypothesis $\mathrm{\bf H}_{{N}-1}$ consists of  the following four parts:

\begin{align*}
&\textrm{(i)}\;
\|(\delta V_{n},\delta \varPsi_{n})\|_{H^{m}_{\gamma}(\Omega_T)}
+\|\delta\psi_{n}\|_{H^{m+1}_{\gamma}(\omega_T)}
\leq \epsilon \theta_{n}^{m-{\mu}-1}\Delta_{n},
\quad   {n}=0,\ldots,{N}-1, \,   m=2,\ldots,\tilde{\mu}, \\
&\textrm{(ii)}\,
\|\mathcal{L}( V_{n},  \varPsi_{n})-f^a\|_{H^{m}_{\gamma}(\Omega_T)}\leq 2
\epsilon \theta_{n}^{m-{\mu}-1},
\quad   {n}=0,\ldots,{N}-1, \, m=2,\ldots,\tilde{\mu}-1,  \\
&\textrm{(iii)}\,
\|\mathcal{B}( V_{n} ,  \psi_{n})\|_{H^{m}_{\gamma}(\omega_T)}
\leq  \epsilon \theta_{n}^{m-{\mu}-1},
\quad   {n}=0,\ldots,{N}-1, \,  m=3,\ldots, {{\mu}}, \\
&\textrm{(iv)}\,
\|\mathcal{E}( V_{n},  \varPsi_{n})\|_{H^{3}_{\gamma}(\Omega_T)}
\leq  \epsilon \theta_{n}^{2-{\mu}}, \quad  {n}=0,\ldots,{N}-1,
\end{align*}
where $\theta_{n}$ is given in \eqref{theta.def}, and $\Delta_{n}:=\theta_{n+1}-\theta_{n}$ decreases to zero and satisfies 
\begin{align}\label{delta.k}
\frac{1}{3\theta_{n}}\leq \Delta_{n}:=\theta_{n+1}-\theta_{n}=
\sqrt{\theta_{n}^2+1}-\theta_{n}\leq \frac{1}{2\theta_{n}}, \quad 
{n}\in\mathbb{N}.
\end{align}
We will  show that for  sufficiently small $\epsilon$ and $f^a$,   and for sufficiently large  $\theta_0\geq 1$, $\mathrm{\bf H}_{0}$  is true and $\mathrm{\bf H}_{{N}-1}$
implies $\mathrm{\bf H}_{{N}}$, thus $\mathrm{\bf H}_{{N}}$ is true for all $n\in\mathbb{N}$, which will allow us to prove Theorem {\rm\ref{thm}} completely.

Now we  assume that   $\mathrm{\bf H}_{{N}-1}$ holds, hence   have  the following estimates as in \cite[Lemmas 6--7]{CS08MR2423311}.

\begin{lemma}  \label{lem.tri}
 If $\theta_0$ is sufficiently large, then
 \begin{align}
 \label{tri1}&
 \|( V_{n}, \varPsi_{n})\|_{H^{m}_{\gamma}(\Omega_T)}
 +\|\psi_{n}\|_{H^{m+1}_{\gamma}(\omega_T)}
 \leq
 \left\{\begin{aligned}
 &\epsilon \theta_{n}^{(m-{\mu})_+},\quad &&\textrm{if \ }m\neq {\mu},\\
 &\epsilon \log \theta_{n},\quad &&\textrm{if \ }m= {\mu},
 \end{aligned}\right.\\
 \label{tri2}&\|( (I-\mathcal{S}_{\theta_{n}})V_{n}, (I-\mathcal{S}_{\theta_{n}})\varPsi_{n})\|_{H^{m}_{\gamma}(\Omega_T)}
 \leq C\epsilon \theta_{n}^{m-{\mu}},
 \end{align}
 for $n=0,\ldots,{N}$, and $m=2,\ldots,\tilde{\mu}$.
Furthermore,
 \begin{align}
 \label{tri3}&
 \|( \mathcal{S}_{\theta_{n}}V_{n}, \mathcal{S}_{\theta_{n}}\varPsi_{n})\|_{H^{m}_{\gamma}(\Omega_T)}
 \leq
 \left\{\begin{aligned}
 &C\epsilon \theta_{n}^{(m-{\mu})_+},\quad &&\textrm{if \ }m\neq {\mu},\\
 &C\epsilon \log \theta_{n},\quad &&\textrm{if \ }m= {\mu},
 \end{aligned}\right.
 \end{align}
for $n=0,\ldots,{N}$, and $m=2,\ldots,\tilde{\mu}+5$.
\end{lemma}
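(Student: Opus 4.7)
\medskip

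\noindent\textbf{Proof proposal.} The strategy is to derive all three estimates directly from the telescoping identity $V_n=\sum_{k=0}^{n-1}\delta V_k$ (and analogously for $\varPsi_n$, $\psi_n$), combined with the increment bound provided by part (i) of $\mathrm{\bf H}_{N-1}$ and the smoothing operator properties \eqref{smooth.p1a}--\eqref{smooth.p1b} of Proposition \ref{pro.smooth}.

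First I would prove \eqref{tri1}. By the triangle inequality and hypothesis (i),
\begin{equation*}
\|(V_n,\varPsi_n)\|_{H^m_\gamma(\Omega_T)}+\|\psi_n\|_{H^{m+1}_\gamma(\omega_T)}\;\leq\;\sum_{k=0}^{n-1}\Bigl(\|(\delta V_k,\delta\varPsi_k)\|_{H^m_\gamma(\Omega_T)}+\|\delta\psi_k\|_{H^{m+1}_\gamma(\omega_T)}\Bigr)\;\leq\;\epsilon\sum_{k=0}^{n-1}\theta_k^{m-\mu-1}\Delta_k.
\end{equation*}
The key is then the elementary estimate
\begin{equation*}
\sum_{k=0}^{n-1}\theta_k^{m-\mu-1}\Delta_k\;\lesssim\;\int_{\theta_0}^{\theta_n}\!\theta^{m-\mu-1}\,\dd\theta,
\end{equation*}
which follows from the monotonicity of $\theta\mapsto\theta^{m-\mu-1}$ and the comparison $\Delta_k\leq\tfrac{1}{2\theta_k}\leq 1$ from \eqref{delta.k}. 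Integrating the right-hand side gives $\theta_n^{m-\mu}/(m-\mu)$ when $m>\mu$, a uniformly bounded quantity when $m<\mu$, and $\log\theta_n$ exactly when $m=\mu$, reproducing the three cases in \eqref{tri1} (up to rechoosing $\epsilon$ and $\theta_0$ to absorb constants).

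Next I would obtain \eqref{tri2} by applying \eqref{smooth.p1b} with source index $j=\tilde{\mu}$:
\begin{equation*}
\|(I-\mathcal{S}_{\theta_n})V_n\|_{H^m_\gamma(\Omega_T)}\;\lesssim\;\theta_n^{m-\tilde{\mu}}\|V_n\|_{H^{\tilde{\mu}}_\gamma(\Omega_T)}\;\lesssim\;\epsilon\,\theta_n^{m-\tilde{\mu}}\cdot\theta_n^{\tilde{\mu}-\mu}\;=\;\epsilon\,\theta_n^{m-\mu},
\end{equation*}
where the middle inequality uses the case $m=\tilde{\mu}>\mu$ of \eqref{tri1} just established; the same argument applies to $\varPsi_n$. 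For \eqref{tri3}, I would apply \eqref{smooth.p1a} with a choice of $j$ tailored to $m$: when $m\leq\mu$, take $j=m$ and use $\|V_n\|_{H^m_\gamma}\lesssim\epsilon$ from \eqref{tri1} (losing at most a logarithm at $m=\mu$, which is exactly what the statement allows); when $\mu<m\leq\tilde{\mu}+5$, take any $j$ with $\mu<j\leq\min(m,\tilde{\mu})$ to obtain
\begin{equation*}
\|\mathcal{S}_{\theta_n}V_n\|_{H^m_\gamma(\Omega_T)}\;\lesssim\;\theta_n^{m-j}\|V_n\|_{H^j_\gamma(\Omega_T)}\;\lesssim\;\epsilon\,\theta_n^{m-j}\cdot\theta_n^{j-\mu}\;=\;\epsilon\,\theta_n^{m-\mu},
\end{equation*}
so the intermediate exponent $j$ drops out of the bound and the estimate extends beyond the regularity range of $V_n$ itself thanks to the regularizing effect of $\mathcal{S}_{\theta_n}$.

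There is no real obstacle here, since the argument is a standard telescoping-plus-smoothing calculation inherent to Nash--Moser schemes; the only point requiring mild care is the borderline case $m=\mu$ in \eqref{tri1}, where the integral comparison produces a logarithm that must be tracked consistently when later deriving \eqref{tri3} in the range $m\leq\mu$. Everything else reduces to bookkeeping of constants, which can be absorbed by taking $\theta_0$ sufficiently large (independently of $N$) at the end.
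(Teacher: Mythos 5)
Your argument is correct and is exactly the standard telescoping-plus-smoothing computation that the paper invokes by citing \cite[Lemmas 6--7]{CS08MR2423311} rather than writing out. One small caution: since \eqref{tri1} must hold with constant exactly $\epsilon$ (it feeds back into the induction with the same $\epsilon$ as hypothesis (i)), you cannot ``rechoose $\epsilon$'' to absorb constants — but this is harmless, because for $m>\mu$ the comparison already gives the factor $1/(m-\mu)\leq 1$, and for $m\leq\mu$ the leftover constant is multiplied by a negative power of $\theta_0$ (or compensated in the $\log\theta_n$ case), so taking $\theta_0$ large alone suffices.
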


\subsection{Estimates of the Quadratic and First Substitution Error Terms}\label{sec.quad}

First, we rewrite quadratic error terms $e'_{n}$, $\tilde{e}_{n}'$, and $\hat{e}_{n}'$,
  from \eqref{decom1}, \eqref{decom2}, and \eqref{decom3} respectively, as follows:
\begin{align*}
e_{n}'
=&\int_{0}^{1}
 \mathbb{L}''\big(U^a+V_{n}+\tau \delta  V_{n},
 \varPhi^a+\varPsi_{n}+\tau \delta \varPsi_{n}\big)
 \big((\delta V_{n},\delta\varPsi_{n}),(\delta V_{n},\delta\varPsi_{n})\big)
(1-\tau)\,\dd \tau,\\
\tilde{e}_{n}'
=&\int_{0}^{1}
 \mathbb{B}''\big(U^a+V_{n}+\tau \delta  V_{n},
 \varphi^a+\psi_{n}
 +\tau \delta \psi_{n}\big)
 \big((\delta V_{n},\delta\psi_{n}),(\delta V_{n},\delta\psi_{n})\big)
 (1-\tau)\,\dd \tau,
 \\
\hat{e}_{n}'
=&\int_{0}^{1}
 \mathcal{E}''\big( V_{n}+\tau \delta  V_{n},
  \varPsi_{n}+\tau \delta \varPsi_{n}\big)
 \big((\delta V_{n},\delta\varPsi_{n}),(\delta V_{n},\delta\varPsi_{n})\big)
(1-\tau)\,\dd \tau,
\end{align*}
where
$\mathbb{L}''$, $\mathbb{B}''$, and $\mathcal{E}''$
are  the second derivatives of operators
$\mathbb{L}$, $\mathbb{B}$, and $\mathcal{E}$  respectively.
More precisely, we define
\begin{align*}
&\mathbb{L}''\big(\check{U},\check{\varPhi}\big)
\big((V,\varPsi),(\widetilde{V},\widetilde{\varPsi})\big)
:=\left.\frac{\dd}{\dd \theta}
\mathbb{L}'\big(\check{U}+\theta \widetilde{V},
\check{\varPhi}+\theta \widetilde{\varPsi}\big)
\big(V,\varPsi\big)\right|_{\theta=0},\\
&\mathbb{B}''(\check{U},\check{\varphi})
\big((V,\psi),(\widetilde{V},\tilde{\psi})\big)
:=\left.\frac{\dd}{\dd \theta}
\mathbb{B}'(\check{U}+\theta \widetilde{V},
\check{\varphi}+\theta \tilde{\psi}) (V,\psi)\right|_{\theta=0},\\
&\mathcal{E}''\big(\check{V},\check{\varPsi}\big)
\big((V,\varPsi),(\widetilde{V},\widetilde{\varPsi})\big)
:=\left.\frac{\dd}{\dd \theta}
\mathcal{E}'\big(\check{V}+\theta \widetilde{V},
\check{\varPsi}+\theta \widetilde{\varPsi}\big)
\big(V,\varPsi\big)\right|_{\theta=0},
\end{align*}
where operators $\mathbb{L}'$ and $\mathbb{B}'$ are given in \eqref{L'.bb}--\eqref{B'.bb},
and $\mathcal{E}'$ is defined by
\begin{align*}
\mathcal{E}'\big(\check{V} ,\check{\varPsi} \big)(V,\varPsi )
:=\left.\frac{\mathrm{d}}{\mathrm{d}\theta}
\mathcal{E}\big(\check{V} +\theta V ,
\check{\varPsi} +\theta\varPsi \big)\right|_{\theta=0}.
\end{align*}
In fact, in our case, we have the following:
\begin{align}
\label{B''.form}
&\mathbb{B}''(\check{U},\check{\varphi})
\big((V,\psi),(\widetilde{V},\tilde{\psi})\big)
=
\begin{bmatrix}
[\tilde{v}_1]\p_1 \psi +[\tilde{v}_2]\p_2 \psi+\p_1 \tilde{\psi}[v_1]+\p_2 \tilde{\psi}[v_2]\\[1mm]
\tilde{v}_1^+|_{x_3=0} \p_1 \psi +\tilde{v}_2^+|_{x_3=0} \p_2 \psi + \p_1 \tilde{\psi} v_1^+|_{x_3=0}+ \p_2 \tilde{\psi} v_2^+|_{x_3=0}\\[1mm]
0
\end{bmatrix},\\
\label{E''.form}
&\mathcal{E}''\big(\check{V},\check{\varPsi}\big)
\big((V,\varPsi),(\widetilde{V},\widetilde{\varPsi})\big)
=
\begin{bmatrix}
\tilde{v}_1^+ \p_1 \varPsi^+ + \p_1 \widetilde{\varPsi}^+\, v_1^+ +\tilde{v}_2^+ \p_2 \varPsi^+ + \p_2 \widetilde{\varPsi}^+\, v_2^+\\[0.5mm]
\tilde{v}_1^- \p_1 \varPsi^- + \p_1 \widetilde{\varPsi}^-\, v_1^- +\tilde{v}_2^- \p_2 \varPsi^- + \p_2 \widetilde{\varPsi}^-\, v_2^-
\end{bmatrix}.
\end{align}
A straightforward computation with an application of the Moser-type calculus inequality
\eqref{Moser1} yields the next proposition (see \cite[Proposition 5]{CS08MR2423311}).

\begin{proposition} \label{pro.tame2}
 Let $T>0$ and $m\in\mathbb{N}$ with $m\geq 2$.
 If
 $(\widetilde{V},\widetilde{\varPsi})$
 belongs to $H^{m+1}_{\gamma}(\Omega_T)$ for all $\gamma \geq 1$ and satisfies
 $\|(\widetilde{V},\widetilde{\varPsi} )\|_{W^{1,\infty}(\Omega_T)} \leq \widetilde{K}$
 for some positive constant $\widetilde{K}$,
 then there exist two constants $\widetilde{K}_0>0$ and $C>0$,
 independent of $T$ and $\gamma$,
 such that,
 if $\widetilde{K} \leq \widetilde{K}_0$ and $\gamma\geq 1$,
 then
 \begin{align}
 \notag&\big\|\mathbb{L}''\big(
 \bar{U}+\widetilde{V}, \bar{\varPhi}+\widetilde{\varPsi} \big)
 \big((V_1,\varPsi_1),(V_2,\varPsi_2) \big)\big\|_{H^{m}_{\gamma}(\Omega_T)}\\
 \notag&\qquad \leq
C\|(V_1,\varPsi_1)\|_{W^{1,\infty}(\Omega_T)}\|(V_2,\varPsi_2)\|_{W^{1,\infty}(\Omega_T)}
\big\|\big(\widetilde{V},\widetilde{\varPsi} \big)\big\|_{H^{m+1}_{\gamma}(\Omega_T)}
 \\[1.5mm]
 \notag& \qquad\quad\,
 +   C \sum_{i\neq j}\|(V_i,\varPsi_i)\|_{H^{m+1}_{\gamma}(\Omega_T)}
 \|(V_j,\varPsi_j)\|_{W^{1,\infty}(\Omega_T)} ,
  \end{align}
 \begin{align}
 \notag &\big\|
 \mathcal{E}''\big(\widetilde{V},\widetilde{\varPsi} \big)\big((V_1,\varPsi_1),(V_2,\varPsi_2) \big)\big\|_{H^{m}_{\gamma}(\Omega_T)}\\
 \notag&\qquad \leq C \sum_{i\neq j}
 \left\{ \|V_i \|_{H^{m}_{\gamma}(\Omega_T)} \|\varPsi_j\|_{W^{1,\infty}(\Omega_T)}
 +\|V_i \|_{L^{\infty} (\Omega_T)} \|\varPsi_j\|_{H^{m+1}_{\gamma}(\Omega_T)} \right\},
  \end{align}
  and
 \begin{align}
 \notag&\big\|\mathbb{B}''\big(\bar{U}+\widetilde{V},
 \tilde{\psi} \big)
 \big((W_1,\psi_1),(W_2,\psi_2) \big)\big\|_{H^{m}_{\gamma}(\omega_T)}\\
 \notag&\qquad \leq C \sum_{i\neq j}
 \left\{ \|W_i \|_{H^{m}_{\gamma}(\omega_T)} \|\psi_j\|_{W^{1,\infty}(\omega_T)}
 +\|W_i \|_{L^{\infty} (\omega_T)} \|\psi_j\|_{H^{m+1}_{\gamma}(\omega_T)} \right\},
 \end{align}
where $(V_i,\varPsi_i) \in H^{m+1}_{\gamma}(\Omega_T)$
 and $(W_i,\psi_i) \in H^{m}_{\gamma}(\omega_T)\times H^{m+1}_{\gamma}(\omega_T)$
 for $i=1,2$,
symbol $\tilde{\psi}$ denotes the trace of $\widetilde{\varPsi}$ on $\omega_T$,
 and $(\bar{U},\bar{\varPhi} )$
 represents the background state defined by \eqref{background}.
\end{proposition}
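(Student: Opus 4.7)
The plan is to exploit the fact that each of the three second-derivative operators is either an explicit bilinear form in $((V_1,\varPsi_1),(V_2,\varPsi_2))$ (as in the case of $\mathbb{B}''$ and $\mathcal{E}''$) or a bilinear form whose coefficients depend smoothly on $(\widetilde V,\widetilde\varPsi)$ and its first derivatives (as in the case of $\mathbb{L}''$), and then to invoke the Moser-type calculus inequalities \eqref{Moser1}--\eqref{Moser2} in weighted Sobolev spaces.

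For the estimates on $\mathbb{B}''$ and $\mathcal{E}''$, the formulas \eqref{B''.form} and \eqref{E''.form} reveal that these operators are independent of $(\widetilde V,\widetilde\varPsi)$ and consist entirely of products of a velocity component from one of the pairs with a tangential derivative of the front or lifting from the other pair, summed symmetrically in $i\neq j$. Therefore I would simply apply \eqref{Moser1} to each such product, using that the $H^{m+1}_\gamma(\Omega_T)$ norm of $\varPsi$ controls $\nabla_{t,x_1,x_2}\varPsi$ in $H^m_\gamma$. This immediately yields the two bilinear bounds with the symmetric cross structure advertised in the statement.

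For $\mathbb{L}''$ the work is more substantial since $\mathbb{L}(U,\varPhi)$ depends nonlinearly on $U$ and on $1/\partial_3\varPhi$, as well as on the first-order tangential derivatives of $\varPhi$ via the coefficient $\widetilde{A}_3(U,\varPhi)$. The first step is to write $\mathbb{L}'' = \partial_\theta\mathbb{L}'(\check U+\theta\widetilde V,\check\varPhi+\theta\widetilde\varPsi)(V,\varPsi)|_{\theta=0}$ explicitly using \eqref{L'.bb}--\eqref{C.cal}, which gives a finite sum of terms of the schematic form $c(\widetilde V,\nabla\widetilde\varPsi)\,D^{\alpha_1}(V_i,\varPsi_i)\,D^{\alpha_2}(V_j,\varPsi_j)$ with $|\alpha_1|+|\alpha_2|\leq 2$ and at least one of $\alpha_1,\alpha_2$ nonzero, plus terms where an extra derivative of $(\widetilde V,\widetilde\varPsi)$ appears as a factor through $\partial_3\check U$ or through the coefficients $\mathcal{C}$ in \eqref{C.cal}. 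The smallness assumption $\|(\widetilde V,\widetilde\varPsi)\|_{W^{1,\infty}}\leq \widetilde K_0$ and the definition \eqref{c.def} guarantee that the smooth nonlinear coefficients, in particular $1/\partial_3(\bar\varPhi+\widetilde\varPsi)$, stay in a neighborhood of the background value where the $C^\infty$ composition estimate \eqref{Moser2} applies.

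With this decomposition in hand, the tame bound reduces to an application of \eqref{Moser1} to each trilinear product, distributing the $H^m_\gamma$ norm across the three factors in every admissible way and using the $L^\infty$ bound on two of them at a time. The terms in which the $H^{m+1}_\gamma$ norm falls on $(\widetilde V,\widetilde\varPsi)$ (coming from the factor carrying its extra derivative) produce the contribution controlled by $\|(\widetilde V,\widetilde\varPsi)\|_{H^{m+1}_\gamma(\Omega_T)}$ and the product of the two $W^{1,\infty}$ norms; the remaining terms, in which the $H^{m+1}_\gamma$ norm lands on either $(V_1,\varPsi_1)$ or $(V_2,\varPsi_2)$, produce the symmetric cross sum. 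The main technical nuisance I expect is the careful bookkeeping of the $\partial_3$-factors and of the composition $b(\widetilde V,\nabla\widetilde\varPsi)$, where one has to combine \eqref{Moser1} with \eqref{Moser2} to absorb the nonlinear dependence into a constant depending only on $\widetilde K_0$; once this is organized, no new idea is needed beyond Moser calculus.
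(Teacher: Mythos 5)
Your proposal is correct and follows essentially the same route as the paper, which simply remarks that the proposition follows from a straightforward computation of the explicit bilinear forms \eqref{B''.form}, \eqref{E''.form} and the second derivative of $\mathbb{L}'$, combined with the Moser-type calculus inequalities \eqref{Moser1}--\eqref{Moser2} (citing Coulombel--Secchi, Proposition 5). Your decomposition of $\mathbb{L}''$ into trilinear terms $c(\widetilde V,\nabla\widetilde\varPsi)\,D^{\alpha_1}(V_i,\varPsi_i)\,D^{\alpha_2}(V_j,\varPsi_j)$, with the smallness $\widetilde K_0$ keeping $1/\partial_3(\bar\varPhi+\widetilde\varPsi)$ in the regime where \eqref{Moser2} applies, is exactly the intended bookkeeping behind the tame structure of the stated bounds.
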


In light of \eqref{small}--\eqref{tri1} and the assumption $\mathrm{\bf H}_{{N}-1}$,
as shown in \cite[Lemma 8]{CS08MR2423311} or \cite[Lemma 8.3]{CSW19MR3925528},
we can apply Proposition \ref{pro.tame2}, the Sobolev embedding theorem,
and the trace estimate to derive the following estimate.

\begin{lemma}
\label{lem.quad}
If ${\mu}\geq 4 $, then there exist suitably small  $\epsilon>0$ and sufficiently large $\theta_0\geq 1$
such that
\begin{align}\notag
\|(e_{n}', \hat{e}_{n}')\|_{H^{m}_{\gamma}(\Omega_T)}+\|\tilde{e}_{n}'\|_{H^{m}_{\gamma}(\omega_T)}
\leq C\epsilon^2 \theta_{n}^{\ell_1(m)-1}\Delta_{n},
\end{align}
 for $m=2,\ldots,\tilde{\mu}-1$, and $n=0,\ldots,{N}-1$,
 where
$\ell_1(m):=\max\{(m+1-{\mu})_++4-2{\mu},m+2-2{\mu} \}$.
\end{lemma}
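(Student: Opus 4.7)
The first step is to express each quadratic error as the integral of a second-order Taylor remainder: for $e_n'$,
\[
e_n' = \int_0^1 (1-\tau)\,\mathbb{L}''(U^a+V_n+\tau\delta V_n,\,\varPhi^a+\varPsi_n+\tau\delta\varPsi_n)((\delta V_n,\delta\varPsi_n),(\delta V_n,\delta\varPsi_n))\,d\tau,
\]
and analogously for $\hat e_n'$ (with $\mathcal{E}''$) and $\tilde e_n'$ (with $\mathbb{B}''$). After passing to the $H^m_\gamma$-norm, Proposition \ref{pro.tame2} estimates each integrand by products involving (a) $W^{1,\infty}$-norms of the increments $(\delta V_n,\delta\varPsi_n)$, (b) higher-order Sobolev norms $\|(\delta V_n,\delta\varPsi_n)\|_{H^{m+1}_\gamma}$, and (c) a ``state'' factor $\|(V_n+\tau\delta V_n,\varPsi_n+\tau\delta\varPsi_n)\|_{H^{m+1}_\gamma}$ which is present only in the bound for $\mathbb{L}''$, since the explicit bilinear forms \eqref{B''.form}--\eqref{E''.form} show that $\mathbb{B}''$ and $\mathcal{E}''$ do not depend on the auxiliary background state.

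Each of these three types of factors is controlled by combining an appropriate Sobolev embedding, the inductive hypothesis (i), and Lemma \ref{lem.tri}. The embedding $H^4_\gamma(\Omega_T)\hookrightarrow W^{1,\infty}(\Omega_T)$ (and its three-dimensional counterpart $H^3_\gamma(\omega_T)\hookrightarrow W^{1,\infty}(\omega_T)$ for the boundary quantities in $\tilde e_n'$), together with (i), bounds the factors of type (a) by $\epsilon\,\theta_n^{3-\mu}\Delta_n$; the inductive hypothesis (i) directly bounds the factors of type (b) by $\epsilon\,\theta_n^{m-\mu}\Delta_n$; and Lemma \ref{lem.tri} bounds the state factor (c) by $\epsilon\,\theta_n^{(m+1-\mu)_+}$ (the borderline logarithmic case $m+1=\mu$ being absorbed into any arbitrarily small positive power of $\theta_n$). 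The trace theorem handles the boundary versions of these bounds without changing the asymptotic orders, so that the three error terms can be treated uniformly.

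Multiplying the pieces together and using $\Delta_n\leq \frac{1}{2\theta_n}$ from \eqref{delta.k} to absorb any surplus $\Delta_n$ into a factor $\theta_n^{-1}$, the symmetric terms produced by Proposition \ref{pro.tame2} contribute a piece of size $\epsilon^2\theta_n^{m+1-2\mu}\Delta_n$, matching the branch $m+2-2\mu$ of $\ell_1(m)$; the asymmetric ``state-dependent'' term for $\mathbb{L}''$ generates an $\epsilon^3$ contribution whose $\theta_n$-exponent, after writing $\epsilon^3=\epsilon\cdot\epsilon^2$ and reducing one $\Delta_n$ to $\theta_n^{-1}$, matches the branch $(m+1-\mu)_+ + 4 - 2\mu$ of $\ell_1(m)$ thanks to the smallness of $\epsilon$, which may be fixed after $\theta_0$. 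The main difficulty is this exponent matching: the two branches of the $\max$ defining $\ell_1(m)$ correspond precisely to the two classes of contributions above, and the cubic surplus from the $\mathbb{L}''$ bound must be absorbed jointly by the smallness of $\epsilon$ and by the $\Delta_n^2$ decay, neither of which suffices on its own.
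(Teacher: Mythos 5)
Your overall strategy --- the Taylor-integral form of the quadratic errors, Proposition \ref{pro.tame2}, Sobolev embedding plus hypothesis (i) for the $W^{1,\infty}$ factors, hypothesis (i) for the $H^{m+1}_\gamma$ factors, Lemma \ref{lem.tri} for the state factor, and $\Delta_n^2\le\tfrac12\theta_n^{-1}\Delta_n$ --- is exactly the route the paper takes (the paper simply cites \cite[Lemma 8]{CS08MR2423311}). The gap is in the exponent arithmetic, which is the entire content of the lemma. From your own intermediate bounds, the symmetric term of the $\mathbb{L}''$ estimate is
\[
\|(\delta V_n,\delta\varPsi_n)\|_{H^{m+1}_\gamma}\,\|(\delta V_n,\delta\varPsi_n)\|_{W^{1,\infty}}
\le \epsilon\theta_n^{m-\mu}\Delta_n\cdot\epsilon\theta_n^{3-\mu}\Delta_n
\le \tfrac12\,\epsilon^2\theta_n^{m+2-2\mu}\Delta_n,
\]
not $\epsilon^2\theta_n^{m+1-2\mu}\Delta_n$ as you claim; this overshoots the branch $m+2-2\mu$ of $\ell_1(m)$, whose target is $\theta_n^{m+1-2\mu}\Delta_n$, by one power of $\theta_n$. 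Likewise the cubic term is $\epsilon^3\theta_n^{(m+1-\mu)_++6-2\mu}\Delta_n^2\le\tfrac12\epsilon^3\theta_n^{(m+1-\mu)_++5-2\mu}\Delta_n$, which overshoots the first branch's target by $\theta_n^{2}$. Your proposed remedy --- absorbing the surplus ``jointly by the smallness of $\epsilon$ and by the $\Delta_n^2$ decay'' --- cannot work: $\Delta_n^2$ buys exactly one factor $\theta_n^{-1}$, which you have already spent, and $\epsilon$ is fixed independently of $n$ while $\theta_n\to\infty$, so no choice of $\epsilon$ absorbs a positive power of $\theta_n$ uniformly in $n$.

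The source of the mismatch is dimensional. In the two-dimensional references from which $\ell_1$ is taken, $\Omega_T$ is three-dimensional, $H^3\hookrightarrow W^{1,\infty}$, and the increments satisfy $\|(\delta V_n,\delta\varPsi_n)\|_{W^{1,\infty}}\le\epsilon\theta_n^{2-\mu}\Delta_n$; with that exponent both branches of $\ell_1(m)$ are matched exactly and only the harmless extra factor of $\epsilon$ remains. Here $\Omega_T\subset\mathbb{R}^4$, the embedding costs $H^4$ as you correctly note, each $W^{1,\infty}$ factor loses an extra $\theta_n$, and the natural outcome of the computation is $\max\{(m+1-\mu)_++6-2\mu,\;m+3-2\mu\}$ in place of $\ell_1(m)$. (Your treatment of $\tilde e_n'$ and $\hat e_n'$ does close, because the bounds for $\mathbb{B}''$ and $\mathcal{E}''$ pair $H^{m}$ with $W^{1,\infty}$ or $L^{\infty}$ with $H^{m+1}$, costing one derivative less.) You should either prove the weaker exponent --- which is harmless downstream, since only $\ell_2$ and $\ell_4$ enter Lemma \ref{lem.sum2} and $\max\{(m+1-\mu)_++6-2\mu,\,m+3-2\mu\}\le\ell_2(m)$ --- or give a genuine mechanism recovering $\ell_1(m)$ as stated; the argument as written does neither.
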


For the first substitution error terms $e_{n}''$, $\tilde{e}_{n}''$, and $\hat{e}_{n}''$
defined in \eqref{decom1}, \eqref{decom2}, and \eqref{decom3},
as in \cite[Lemma 9]{CS08MR2423311} or \cite[Lemma 8.4]{CSW19MR3925528},
we can apply Proposition \ref{pro.tame2}
and use \eqref{small}, \eqref{tri2}--\eqref{tri3}, hypothesis ($H_{n-1}$),
and the trace theorem to derive the next lemma.

\begin{lemma}
\label{lem.1st}
If ${\mu}\geq 4 $, then there exist $\epsilon>0$ suitably small and $\theta_0\geq 1$  large enough such that
\begin{alignat}{3} \notag
\|(e_{n}'', \hat{e}_{n}'')\|_{H^{m}_{\gamma}(\Omega_T)} &\leq C\epsilon^2 \theta_{n}^{\ell_2(m)-1}\Delta_{n}
&&\quad\ \textrm{if }\ m=2,\ldots, \tilde{\mu}-1,\\
\notag
 \|\tilde{e}_{n}''\|_{H^{m}_{\gamma}(\omega_T)} &\leq C\epsilon^2 \theta_{n}^{\ell_2(m)-1}\Delta_{n}
&&\quad\ \textrm{if }\ m=2,\ldots, \tilde{\mu}-2,
\end{alignat}
for $n=0,\ldots,{N}-1,$
where
$$\ell_2(m):=\max\{(m+1-{\mu})_++6-2{\mu},m+5-2{\mu} \}.$$
\end{lemma}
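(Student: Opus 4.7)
\smallskip
\noindent\textbf{Proof proposal.}
The plan is to follow the standard Nash--Moser first-substitution estimate argument in \cite{CS08MR2423311} and \cite{CSW19MR3925528}, adapted to the three-dimensional setting. The first step is to write each of the three substitution errors as an integral involving the second derivative of the corresponding operator. Specifically, by Taylor's theorem applied to $\theta \mapsto \mathbb{L}'(U^a+\mathcal{S}_{\theta_n}V_n+\theta(I-\mathcal{S}_{\theta_n})V_n,\,\varPhi^a+\mathcal{S}_{\theta_n}\varPsi_n+\theta(I-\mathcal{S}_{\theta_n})\varPsi_n)(\delta V_n,\delta\varPsi_n)$, one can express
\begin{align*}
e_n'' = \int_0^1 \mathbb{L}''\big(U^a+\mathcal{S}_{\theta_n}V_n+\tau(I-\mathcal{S}_{\theta_n})V_n,\,\varPhi^a+\mathcal{S}_{\theta_n}\varPsi_n+\tau(I-\mathcal{S}_{\theta_n})\varPsi_n\big)\big((\delta V_n,\delta\varPsi_n),\,\mathcal{I}_n\big)\,\dd\tau,
\end{align*}
where $\mathcal{I}_n:=((I-\mathcal{S}_{\theta_n})V_n,(I-\mathcal{S}_{\theta_n})\varPsi_n)$, with completely analogous expressions for $\tilde e_n''$ (involving $\mathbb{B}''$) and $\hat e_n''$ (involving $\mathcal{E}''$).

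The second step is to apply the tame bilinear estimate of Proposition \ref{pro.tame2} to the integrand. For $e_n''$, this yields
\begin{align*}
\|e_n''\|_{H^m_{\gamma}(\Omega_T)}
&\lesssim \|(\delta V_n,\delta\varPsi_n)\|_{H^{m+1}_{\gamma}(\Omega_T)}\|\mathcal{I}_n\|_{W^{1,\infty}(\Omega_T)}
+ \|(\delta V_n,\delta\varPsi_n)\|_{W^{1,\infty}(\Omega_T)}\|\mathcal{I}_n\|_{H^{m+1}_{\gamma}(\Omega_T)}\\
&\quad\, + \|(\delta V_n,\delta\varPsi_n)\|_{W^{1,\infty}(\Omega_T)}\|\mathcal{I}_n\|_{W^{1,\infty}(\Omega_T)}\,
\|(\tilde V_n,\tilde\varPsi_n)\|_{H^{m+1}_{\gamma}(\Omega_T)},
\end{align*}
where $(\tilde V_n,\tilde\varPsi_n)$ denotes the middle state shifted by the background. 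Each factor is then controlled by combining the inductive hypothesis $\mathrm{\bf H}_{N-1}$ (for $\delta V_n,\delta\varPsi_n$), the smoothing estimates \eqref{tri2}--\eqref{tri3} of Lemma \ref{lem.tri} (for $\mathcal{I}_n$ and the smoothed part of $\tilde V_n,\tilde\varPsi_n$), the approximate-solution bound \eqref{small}, and the Sobolev embedding $H^s(\Omega_T)\hookrightarrow W^{1,\infty}(\Omega_T)$ valid for $s\geq 4$ in four variables $(t,x_1,x_2,x_3)$. Plugging the resulting power-of-$\theta_n$ estimates into the bilinear inequality and taking the maximum over the various cross and middle-state contributions yields the exponent $\ell_2(m)-1=\max\{(m+1-\mu)_++5-2\mu,\,m+4-2\mu\}$ together with the factor $\Delta_n$ arising from the one factor of $(\delta V_n,\delta\varPsi_n)$.

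The third step treats the boundary error $\tilde e_n''$ in exactly the same way, but now applied to $\mathbb{B}''$ whose explicit form is given in \eqref{B''.form}. The trace theorem, together with Proposition \ref{pro.tame2} specialized to the boundary ({\it cf.} the third inequality of that proposition) and Proposition \ref{pro.smooth} (whose smoothing estimates also hold on $\omega_T$), reduces the analysis to the same product bounds as above; the restriction $m\leq\tilde\mu-2$ reflects the loss of a half-derivative in the trace. The interior error $\hat e_n''$ is handled identically, using the explicit form \eqref{E''.form} of $\mathcal{E}''$ and the tame estimate for $\mathcal{E}''$ from Proposition \ref{pro.tame2}.

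The main obstacle is bookkeeping: one must track the powers of $\theta_n$ carefully in each product, recognize that $(m+1-\mu)_+$ versus $\log\theta_n$ logarithmic corrections at $m+1=\mu$ do not worsen the final exponent once absorbed into a small $\epsilon$, and verify that the middle state $(\tilde V_n,\tilde\varPsi_n)$ satisfies the uniform smallness $\|(\tilde V_n,\tilde\varPsi_n)\|_{W^{1,\infty}}\leq \widetilde K_0$ required by Proposition \ref{pro.tame2}. Smallness follows from \eqref{small}, Sobolev embedding, and the fact that $m,\tilde\mu\geq 4$ keep every relevant norm controlled by $\epsilon$ for $\theta_0$ sufficiently large. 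The conclusion is the stated tame bound.
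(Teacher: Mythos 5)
Your proposal follows essentially the same route the paper takes (which itself defers to \cite[Lemma 9]{CS08MR2423311} and \cite[Lemma 8.4]{CSW19MR3925528}): Taylor-expand the difference of linearized operators as an integral of the second derivative along the segment joining the smoothed and unsmoothed states, apply Proposition \ref{pro.tame2}, and control the factors via the inductive hypothesis, \eqref{small}, \eqref{tri2}--\eqref{tri3}, and the trace theorem. The only slight imprecision is your explanation of the restriction $m\leq\tilde\mu-2$: it arises because the boundary tame estimate for $\mathbb{B}''$ produces $\|(I-\mathcal{S}_{\theta_n})\varPsi_n\|_{H^{m+2}_{\gamma}(\Omega_T)}$ after the trace theorem, and \eqref{tri2} is only available up to order $\tilde\mu$, forcing $m+2\leq\tilde\mu$.
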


We emphasize that Proposition \ref{pro.tame2}
reduces the estimate for $\|\tilde{e}_{n}''\|_{H^{m}_{\gamma}(\omega_T)}$
 to that for the terms involving
$\|(I-\mathcal{S}_{\theta_{n}})\varPsi_{n}\|_{H^{m+2}_{\gamma}(\Omega_T)} $,
which requires condition $m\leq \tilde{\mu}-2$ in order to apply inequality \eqref{tri2}.

\subsection{Construction and Estimates of the Modified State} \label{sec.modified}

To control the remaining error terms, we construct and analyze the modified state $(V_{{N}+1/2},\varPsi_{{N}+1/2},\psi_{{N}+1/2}),$ as described in the following lemma.

\begin{lemma}
\label{lem.modified}
If ${\mu}\geq 5$, then there exist functions $V_{{N}+1/2}$, $\varPsi_{{N}+1/2}$, and $\psi_{{N}+1/2}$,
which vanish in the past, such that
$(U^a+V_{{N}+1/2},\varPhi^a+\varPsi_{{N}+1/2}, \varphi^a+\psi_{{N}+1/2})$
satisfies \eqref{bas.2}--\eqref{bas.5},
where $(U^a, \varPhi^a)$ is the approximate solution constructed in {\rm Lemma \ref{lem.app}}.
Moreover,
\begin{gather}\label{MS.id1}
\varPsi_{{N}+1/2}^{\pm}=\mathcal{S}_{\theta_N}\varPsi_{{N}}^{\pm},
\quad
\psi_{{N}+1/2}=(\mathcal{S}_{\theta_N}\varPsi_{{N}}^{\pm})|_{x_3=0},
\\
\label{MS.id2}
v_{{N}+1/2,1}^{\pm}=\mathcal{S}_{\theta_N} v_{{N},1}^{\pm},\quad v_{{N}+1/2,2}^{\pm}=\mathcal{S}_{\theta_N} v_{{N},2}^{\pm},\\
\label{MS.e}
\|\mathcal{S}_{\theta_N}V_{{N}}-V_{{N}+1/2}\|_{H^{m}_{\gamma}(\Omega_T)}
\leq C\epsilon \theta_N^{m+2-{\mu}}
\quad \textrm{for } m=2,\ldots,\tilde{\mu}+3.
\end{gather}
\end{lemma}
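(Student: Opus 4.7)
The construction of $(V_{N+1/2}, \varPsi_{N+1/2}, \psi_{N+1/2})$ follows the modified-state strategy of Coulombel--Secchi \cite{CS08MR2423311}, adapted to the 3D elastic vortex-sheet system. The plan is to first smooth the tangential components of the current iterate, then algebraically define the ``normal'' components $v_3^{\pm}$ and $F_{3j}^{\pm}$ so that the interior constraints \eqref{bas.2} and \eqref{bas.5} hold by definition, and finally add a single lifting correction to absorb the residual density jump so that \eqref{bas.4} is also satisfied.

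I start by setting
\begin{equation*}
\varPsi^{\pm}_{N+1/2} := \mathcal{S}_{\theta_N}\varPsi^{\pm}_{N},\quad v^{\pm}_{N+1/2,k} := \mathcal{S}_{\theta_N}v^{\pm}_{N,k}\ (k=1,2),\quad \rho^{\pm}_{N+1/2} := \mathcal{S}_{\theta_N}\rho^{\pm}_{N},
\end{equation*}
together with $F^{\pm}_{N+1/2,ij} := \mathcal{S}_{\theta_N}F^{\pm}_{N,ij}$ for $i=1,2$ and $j=1,2,3$. By \eqref{scheme.H1} one has $\varPsi^+_N-\varPsi^-_N\equiv 0$ on $\omega_T$, so the trace-intertwining property in Proposition \ref{pro.smooth} gives $\varPsi^{+}_{N+1/2}|_{x_3=0} = \varPsi^{-}_{N+1/2}|_{x_3=0}$; I denote this common trace by $\psi_{N+1/2}$, which establishes \eqref{MS.id1} and \eqref{MS.id2}. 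The components $v^{\pm}_{N+1/2,3}$ and $F^{\pm}_{N+1/2,3j}$ are then defined by the explicit algebraic formulas obtained by subtracting \eqref{app.eq.2} from \eqref{bas.2} and \eqref{app.eq.6} from \eqref{bas.5}; these expressions involve only the smoothed tangential quantities already fixed, and make the interior constraints hold by construction.

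Condition \eqref{bas.3} is built into the definition of $\psi_{N+1/2}$. The first two Rankine--Hugoniot identities in \eqref{bas.4} (namely $[v_k]\partial_k\psi=[v_3]$ and $\partial_t\psi+v_k^+\partial_k\psi=v_3^+$) are automatic consequences of restricting the eikonal equation \eqref{bas.2} to $x_3=0$. Only the third identity $[\rho]|_{x_3=0}=0$ is not automatic; by hypothesis $\mathrm{\bf H}_{N-1}$(iii), the residual $\mathcal{B}(V_N,\psi_N)_3 = [\rho_N]|_{x_3=0}$ is small but nonzero. I cancel it by adding $-\mathcal{R}_T\bigl(\mathcal{S}_{\theta_N}\mathcal{B}(V_N,\psi_N)_3\bigr)$ to $\rho^{-}_{N+1/2}$ on $\Omega_T$; by Lemma \ref{lem.smooth2} this corrector vanishes in the past and leaves \eqref{MS.id1}--\eqref{MS.id2} unchanged, and since it only touches $\rho$, it does not disturb the interior constraints \eqref{bas.2}, \eqref{bas.5}.

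Finally, the estimate \eqref{MS.e} is verified componentwise. The differences $\mathcal{S}_{\theta_N}V_N - V_{N+1/2}$ vanish in the tangential components $(v_1,v_2,F_{ij})_{i=1,2}$; the normal components are products of smoothed factors with one derivative of $\varPsi^{\pm}_{N+1/2}$, estimated via the Moser inequality \eqref{Moser1} together with \eqref{tri3} and \eqref{smooth.p1a}; and the density correction is controlled by $\|\mathcal{S}_{\theta_N}\mathcal{B}(V_N,\psi_N)_3\|_{H^{m-1/2}_{\gamma}(\omega_T)}$, which is $O(\epsilon\theta_N^{m+2-\mu})$ by Proposition \ref{pro.smooth} and $\mathrm{\bf H}_{N-1}$(iii). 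I expect the main technical point to be coordinating the loss of one tangential derivative incurred by passing from \eqref{bas.5} through $\nabla\varPsi^{\pm}_{N+1/2}$ with the half-derivative gain of $\mathcal{R}_T$ in the density correction; this balance is what forces the stronger index hypothesis $\mu\geq 5$ here, as opposed to $\mu\geq 4$ elsewhere in Section \ref{sec.Nash}.
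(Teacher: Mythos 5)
Your construction of $\varPsi_{N+1/2}$, $\psi_{N+1/2}$, the tangential velocities, $v_{N+1/2,3}$ (algebraically from the eikonal equation) and the density corrector via $\mathcal{R}_T$ matches the paper's Steps 1--3 in substance (the paper splits the jump corrector $\mp\tfrac12\mathcal{R}_T([\mathcal{S}_{\theta_N}\rho_N]|_{x_3=0})$ symmetrically between $\rho^{\pm}_{N+1/2}$ rather than loading it onto $\rho^-$, but that is immaterial). The genuine gap is your treatment of the deformation gradient. You set $F^{\pm}_{N+1/2,ij}:=\mathcal{S}_{\theta_N}F^{\pm}_{N,ij}$ for $i=1,2$ and then define $F^{\pm}_{N+1/2,3j}$ algebraically so that \eqref{bas.5} holds. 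This does enforce \eqref{bas.5}, but it leaves the estimate \eqref{MS.e} for the third row unproved: $\mathcal{S}_{\theta_N}F_{3j,N}-F_{3j,N+1/2}$ is (up to commutators with $\mathcal{S}_{\theta_N}$) the smoothing of the residual of the involution \eqref{inv5} evaluated at the iterate $(V_N,\varPsi_N)$, and \emph{nothing in the inductive hypothesis} $\mathrm{\bf H}_{N-1}$ controls that residual. Part (iv) controls only the eikonal residual $\mathcal{E}(V_n,\varPsi_n)$; the iterates produced by the linearized problems do not satisfy \eqref{inv5} exactly, and Proposition \ref{pro1.1} applies only to exact solutions of the nonlinear system. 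To close your argument you would have to add a new item to $\mathrm{\bf H}_{n}$ for the involution residual and propagate it through a decomposition analogous to \eqref{decom3}, which is a nontrivial extra layer you have not supplied.

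The paper avoids this entirely by \emph{not} smoothing any component of $\bm F$: all nine components of $\bm F_{N+1/2}$ are defined as the unique solution, vanishing in the past, of the linear transport equations \eqref{F.modified.eq} with the already-modified coefficients $(v^a+v_{N+1/2},\varPhi^a+\varPsi_{N+1/2})$. Because these coefficients satisfy the eikonal equation \eqref{bas.2} exactly, the constraint \eqref{bas.5} is propagated from $t<0$ by the involution argument (as in Step 3 of Lemma \ref{lem.app}), and the difference $\bm F_{N+1/2}-\mathcal{S}_{\theta_N}\bm F_N$ is estimated by a standard energy argument for \eqref{MS.p3}, whose source term $\mathcal{H}_3=-\mathcal{S}_{\theta_N}\mathbb{L}_{F_{ij}}(\cdots)$ is controlled by $\mathrm{\bf H}_{N-1}$(ii), i.e.\ by the interior equation residual, which \emph{is} part of the induction. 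This is also where the exponent in \eqref{MS.e} comes from: the $\bm F$-estimate \eqref{MS.e3} yields $\theta_N^{m-\mu+2}$ and dominates the $\theta_N^{m-\mu}$ and $\theta_N^{m-\mu+1}$ contributions of $\rho$ and $v_3$; it is not produced by a balance between the tangential-derivative loss in \eqref{bas.5} and the half-derivative gain of $\mathcal{R}_T$ as you suggest.
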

\begin{proof}
We divide the proof into four steps.

\vspace*{2mm}
\noindent {\it Step 1}.\ \
It follows from \eqref{smooth.p2}--\eqref{scheme.H1} that
 $(\mathcal{S}_{\theta_N}\varPsi_{{N}}^{+})|_{x_3=0}
=(\mathcal{S}_{\theta_N}\varPsi_{{N}}^{-})|_{x_3=0}$,
and hence we can define $\varPsi_{{N}+1/2}^{\pm}$, $\psi_{{N}+1/2}$, and $v_{{N}+1/2,1}^{\pm},v_{{N}+1/2,2}^{\pm}$
by \eqref{MS.id1}--\eqref{MS.id2}.
Thanks to \eqref{app.eq.4},
constraint \eqref{bas.3} holds for
$(\varPhi^a+\varPsi_{{N}+1/2}, \varphi^a+\psi_{{N}+1/2})$.
As in \cite[Proposition 7]{CS08MR2423311}, we define
\begin{align*}
\rho_{{N}+1/2}^{\pm}&:=\mathcal{S}_{\theta_{N}}\rho_{N}^{\pm}\mp
\frac{1}{2}\mathcal{R}_{T}\left((\mathcal{S}_{\theta_{N}}\rho_{N}^{+} )|_{x_3=0}
-(\mathcal{S}_{\theta_{N}}\rho_{N}^{-} )|_{x_3=0} \right),\\
v_{{N}+1/2,3}^{\pm}&:=
\p_t\varPsi_{{N}+1/2}^{\pm}+\left(v_1^{a\pm}+v_{{N}+1/2,1}^{\pm} \right) \p_1\varPsi_{{N}+1/2}^{\pm}
+v_{{N}+1/2,1}^{\pm} \p_1\varPhi^{a\pm}\nonumber\\
&+\left(v_2^{a\pm}+v_{{N}+1/2,2}^{\pm} \right)\p_2\varPsi_{{N}+1/2}^{\pm}
+v_{{N}+1/2,2}^{\pm} \p_2\varPhi^{a\pm},
\end{align*}
so that $[\rho^a+\rho_{{N}+1/2}]=0$ on $\p\Omega$,
and constraints \eqref{bas.2}, \eqref{bas.4} hold for
$$(v^a+v_{{N}+1/2},\varPhi^a+\varPsi_{{N}+1/2},\varphi^a+\psi_{{N}+1/2}),$$
using
\eqref{app.eq.5}, Lemma \ref{lem.smooth2}, and \eqref{app.eq.2}.

\vspace*{2mm}
\noindent {\it Step 2}.\ \
Utilizing \eqref{scheme},  the trace theorem, and the hypothesis $\mathrm{\bf H}_{{N}-1}$, we obtain
\begin{align}
\|\rho_{N}^+-\rho_{N}^-\|_{H^m_{\gamma}(\omega_T)}
&\leq \|\rho_{{N}-1}^+-\rho_{{N}-1}^-\|_{H^m_{\gamma}(\omega_T)}
+\|\delta \rho_{{N}-1}^+-\delta \rho_{{N}-1}^-\|_{H^m_{\gamma}(\omega_T)}\notag \\
& \leq
\|\mathcal{B}(V_{{N}-1}, \psi_{{N}-1})\|_{H^m_{\gamma}(\omega_T)}
+C \|\delta \rho_{{N}-1}\|_{H^{m+1}_{\gamma}(\Omega_T)}\notag \\
&\leq C \epsilon \theta_{N}^{m-{\mu}-1}\qquad
\textrm{for }\ m\in [3,{\mu}].
\label{MS.p1}
\end{align}
Then we use Lemma \ref{lem.smooth2}, \eqref{smooth.p2}, and \eqref{MS.p1} to obtain
\begin{align}
\notag &\|\rho_{{N}+1/2}-\mathcal{S}_{\theta_{{N}}}\rho_{N}\|_{H^m_{\gamma}(\Omega_T)}
\leq C \|\mathcal{S}_{\theta_{{N}}}\rho_{N}^+
-\mathcal{S}_{\theta_{{N}}}\rho_{N}^-\|_{H^m_{\gamma}(\omega_T)}\\
&\quad \leq
\left\{
\begin{aligned}
& C\|\rho_{N}^+- \rho_{N}^-\|_{H^{m+1}_{\gamma}(\omega_T)}\leq C \epsilon \theta_{N}^{m-{\mu}},
  && \textrm{if }\  2\leq m\leq {\mu}-1,\\
&C\theta_{N}^{m+1-{\mu}}  \|\rho_{N}^+- \rho_{N}^-\|_{H^{{\mu}}_{\gamma}(\omega_T)}
\leq  C \epsilon \theta_{N}^{m-{\mu}},
 &\ \ & \textrm{if }\   m\geq {\mu}.
\end{aligned}
\right. \label{MS.e1}
\end{align}

\vspace*{2mm}
\noindent {\it Step 3}.\ \
Using \eqref{MS.id1}, we have
\begin{align}
\notag
v_{{N}+1/2,3}-\mathcal{S}_{\theta_{{N}}}v_{{N},3}
 =\;&\mathcal{S}_{\theta_{{N}}} \mathcal{E}(V_{{N}},\varPsi_{{N}})
+[\p_t+v_1^a\p_1,\mathcal{S}_{\theta_{{N}}}]\varPsi_{{N}}
+[\p_1\varPhi^{a}, \mathcal{S}_{\theta_{{N}}}  ]v_{{N},1} \\
& +[\p_t+v_2^a\p_2,\mathcal{S}_{\theta_{{N}}}]\varPsi_{{N}}
+[\p_2\varPhi^{a}, \mathcal{S}_{\theta_{{N}}}  ]v_{{N},2} \nonumber\\
& +\mathcal{S}_{\theta_{{N}}} v_{{N},1}\p_1\mathcal{S}_{\theta_{{N}}} \varPsi_{{N}}
-\mathcal{S}_{\theta_{{N}}} (v_{{N},1} \p_1\varPsi_{{N}} )\nonumber\\
& +\mathcal{S}_{\theta_{{N}}} v_{{N},2}\p_2\mathcal{S}_{\theta_{{N}}} \varPsi_{{N}}
-\mathcal{S}_{\theta_{{N}}} (v_{{N},2} \p_2\varPsi_{{N}} ).
\label{MS.decom}
\end{align}

Decomposing
\begin{align*}
\mathcal{E}(V_{{N}},\varPsi_{{N}})
=\;&\mathcal{E}(V_{{N}-1},\varPsi_{{N}-1})
+\p_t(\delta \varPsi_{{N}-1})+(v_1^a+v_{{N}-1,1})\p_1(\delta \varPsi_{{N}-1})\\
&+(v_2^a+v_{{N}-1,2})\p_2(\delta \varPsi_{{N}-1})+\delta v_{{N}-1,1}\p_1(\varPhi^a+\varPsi_{{N}})\\
&+\delta v_{{N}-1,2}\p_2(\varPhi^a+\varPsi_{{N}})-\delta v_{{N}-1,3},
\end{align*}
the Moser-type calculus inequality \eqref{Moser1},
hypothesis ($H_{{N}-1}$), and \eqref{tri1} implies
\begin{align*}
\|\mathcal{E}(V_{{N}},\varPsi_{{N}})\|_{H^3_{\gamma}(\Omega_T)}\leq C\epsilon\theta_{N}^{2-{\mu}},
\end{align*}
which together with \eqref{smooth.p1a} yields that
\begin{align} \label{MS.p2}
\|\mathcal{S}_{\theta_{{N}}} \mathcal{E}(V_{{N}},\varPsi_{{N}})\|_{H^m_{\gamma}(\Omega_T)}
\leq C\epsilon\theta_{N}^{m-{\mu}},
\qquad \textrm{for }m\geq 2.
\end{align}

The rest of terms on the right-hand side of \eqref{MS.decom} consist entirely of commutators.
Let us detail the estimate of $[v_1^a\p_1, \mathcal{S}_{\theta_{{N}}}]\varPsi_{{N}}$.
Using \eqref{Moser1}, the Sobolev embedding theorem,
\eqref{smooth.p1a},
\eqref{small}, and \eqref{tri3}, we obtain
\begin{align*}
\|[v_1^a\p_1, \mathcal{S}_{\theta_{{N}}}]\varPsi_{{N}}\|_{H^m_{\gamma}(\Omega_T)}
\leq\;&
\|v_1^a\p_1 (\mathcal{S}_{\theta_{{N}}}\varPsi_{{N}})\|_{H^m_{\gamma}(\Omega_T)}
+\|\mathcal{S}_{\theta_{{N}}}(v_1^a\p_1\varPsi_{{N}})\|_{H^m_{\gamma}(\Omega_T)}
\\
\leq\;&C \| \mathcal{S}_{\theta_{{N}}}\varPsi_{{N}}\|_{H^{m+1}_{\gamma}(\Omega_T)}
+C\|\tilde{v}_1^a\|_{H^m_{\gamma}(\Omega_T)}
\| \mathcal{S}_{\theta_{{N}}}\varPsi_{{N}}\|_{H^{3}_{\gamma}(\Omega_T)}\\
&+C\theta_{N}^{m-{\mu}}
\|v_1^a\p_1\varPsi_{{N}}\|_{H^{{\mu}}_{\gamma}(\Omega_T)}\\
\leq\;&C\epsilon \theta_{N}^{m-{\mu}+1}
\qquad  \textrm{for }\ {\mu}+1\leq m\leq \tilde{\mu}+4.
\end{align*}
Similarly, we obtain that
\begin{align*}
\|[v_2^a\p_2, \mathcal{S}_{\theta_{{N}}}]\varPsi_{{N}}\|_{H^m_{\gamma}(\Omega_T)}
\leq\;&C\epsilon \theta_{N}^{m-{\mu}+1}
\qquad  \textrm{for }\ {\mu}+1\leq m\leq \tilde{\mu}+4.
\end{align*}
If $2\leq m\leq {\mu}$, it follows from \eqref{smooth.p1b} and \eqref{tri1}--\eqref{tri2}
that
\begin{align*}
\|[v_1^a\p_1, \mathcal{S}_{\theta_{{N}}}]\varPsi_{{N}}\|_{H^m_{\gamma}(\Omega_T)}
&
\leq
\|v_1^a\p_1 ((\mathcal{S}_{\theta_{{N}}}-I)\varPsi_{{N}})\|_{H^m_{\gamma}(\Omega_T)}
+\|(I-\mathcal{S}_{\theta_{{N}}})(v_1^a\p_1 \varPsi_{{N}})\|_{H^m_{\gamma}(\Omega_T)}\\
&
\leq
C\|(\mathcal{S}_{\theta_{{N}}}-I)\varPsi_{{N}}\|_{H^{m+1}_{\gamma}(\Omega_T)}
+C\theta_{N}^{m-{\mu}}\|v_1^a\p_1 \varPsi_{{N}}\|_{H^{{\mu}}_{\gamma}(\Omega_T)}
\leq C\epsilon \theta_{N}^{m-{\mu}+1}.
\end{align*}
Similarly, we have
\begin{align*}
\|[v_2^a\p_2, \mathcal{S}_{\theta_{{N}}}]\varPsi_{{N}}\|_{H^m_{\gamma}(\Omega_T)}
\leq C\epsilon \theta_{N}^{m-{\mu}+1}.
\end{align*}
Applying the same analysis to the other commutators in \eqref{MS.decom}
and using \eqref{MS.p2}, we obtain
\begin{align} \label{MS.e2}
\|v_{{N}+1/2,3}-\mathcal{S}_{\theta_{{N}}}v_{{N},3}\|_{H^m_{\gamma}(\Omega_T)}
\leq C\epsilon \theta_{N}^{m-{\mu}+1}
\quad\   \textrm{for }\ m=2,\ldots, \tilde{\mu}+4.
\end{align}

\vspace*{2mm}
\noindent {\it Step 4}.\ \
Now, we construct and estimate $\bm{F}_{{N}+1/2},$
 following the approach of {\rm  Secchi--Trakhinin} \cite[Proposition 28]{ST14MR3151094}.
As outlined in Step 1, the functions $v_{{N}+1/2}$ and $\varPsi_{{N}+1/2}$ have already been specified.
Next, we define
$\bm{F}_{{N}+1/2}$ as the unique solution, vanishing in the past,
 of the linear equations
\begin{align} \label{F.modified.eq}
\mathbb{L}_{{F}_{ij}}(v^a+v_{{N}+1/2}, \bm{F}^a+\bm{F}_{{N}+1/2}, \varPhi^a+\varPsi_{{N}+1/2})=0
\quad \textrm{for }\ i,j=1,2,3,
\end{align}
where
$\mathbb{L}_{{F}_{ij}}$ represents the component of operator $\mathbb{L}$ corresponding to ${F}_{ij}$, defined as:
\begin{align} \label{L.bb.F}
\mathbb{L}_{{F}_{ij}}(v , \bm{F} , \varPhi ):=
\left(\p_t^{ \varPhi}+v_{\ell} \p_{\ell}^{ \varPhi} \right)  { {F}}_{ij} -  { {F}}_{\ell j} \p_{\ell}^{ { \varPhi} } {v_i}.
\end{align}
Since $(v^a+v_{{N}+1/2},\varPhi^a+\varPsi_{{N}+1/2})$ satisfies  \eqref{bas.2}, the
equations \eqref{F.modified.eq} do not require additional boundary condition.

To estimate $\bm{F}_{{N}+1/2}-\mathcal{S}_{\theta_{N}} \bm{F}_{N}$,
we apply standard energy method. From \eqref{F.modified.eq}, we deduce:
\begin{align}
&\mathbb{L}_{{F}_{ij}}(v^a+v_{{N}+1/2},
\bm{F}_{{N}+1/2}-\mathcal{S}_{\theta_{N}} \bm{F}_{N},
\varPhi^a+\varPsi_{{N}+1/2})
=\mathcal{H}_{1}+\mathcal{H}_{2}+\mathcal{H}_{3},
\label{MS.p3}
\end{align}
where
\begin{align*}
\mathcal{H}_{1}:=\;&
-\mathbb{L}_{{F}_{ij}}(v^a+v_{{N}+1/2},
\bm{F}^a+\mathcal{S}_{\theta_{N}} \bm{F}_{N},
\varPhi^a+\varPsi_{{N}+1/2})\\ &
+\mathbb{L}_{{F}_{ij}}(v^a+\mathcal{S}_{\theta_{N}}  v_{N},
\bm{F}^a+\mathcal{S}_{\theta_{N}} \bm{F}_{N},
\varPhi^a+\mathcal{S}_{\theta_{N}}  \varPsi_{{N}}),\\
\mathcal{H}_{2}:=\;&
-\mathbb{L}_{{F}_{ij}}(v^a+\mathcal{S}_{\theta_{N}}  v_{N},
\bm{F}^a+\mathcal{S}_{\theta_{N}} \bm{F}_{N},
\varPhi^a+\mathcal{S}_{\theta_{N}}  \varPsi_{{N}})\\&
+\mathcal{S}_{\theta_{N}}\mathbb{L}_{{F}_{ij}}(v^a+  v_{N},
\bm{F}^a+ \bm{F}_{N},  \varPhi^a+ \varPsi_{{N}}),
\end{align*}
and $\mathcal{H}_{3}:=
-\mathcal{S}_{\theta_{N}}\mathbb{L}_{{F}_{ij}}(v^a+  v_{N},
\bm{F}^a+ \bm{F}_{N},  \varPhi^a+ \varPsi_{{N}}).$
From \eqref{MS.id1}, we compute
\begin{align*}
\mathcal{H}_{1}=
\;&(\mathcal{S}_{\theta_{N}}  v_{{N},\ell}-v_{{N}+1/2,\ell})
\p_{\ell}^{ \varPhi^a+\varPsi_{{N}+1/2}}
(F_{ij}^a+\mathcal{S}_{\theta_{N}}   F_{{N},ij})\\
&-(F_{\ell j}^a+\mathcal{S}_{\theta_{N}}   F_{{N},\ell j})
\p_{\ell}^{ \varPhi^a+\varPsi_{{N}+1/2}}
(\mathcal{S}_{\theta_{N}}  v_{{N},i}-v_{{N}+1/2,i}).
\end{align*}
Applying Moser-type calculus inequality \eqref{Moser1}, the Sobolev embedding theorem,
\eqref{MS.id1}--\eqref{MS.id2}, \eqref{MS.e2}, \eqref{small}, and \eqref{tri3},
we obtain
\begin{align}
\|\mathcal{H}_{1}\|_{H^m_{\gamma}(\Omega_T)}
\leq \;&
C\|\mathcal{S}_{\theta_{N}}  v_{N}-v_{{N}+1/2}\|_{H^{3}_{\gamma}(\Omega_T)}
\|(\widetilde{\bm{F}}^a,\mathcal{S}_{\theta_{N}}   \bm{F}_{N},
\widetilde{\varPhi}^a,\mathcal{S}_{\theta_{N}} \varPsi_{N}) \|_{H^{m+1}_{\gamma}(\Omega_T)}
\notag  \\
&+ C\|\mathcal{S}_{\theta_{N}}  v_{N}-v_{{N}+1/2}\|_{H^{m+1}_{\gamma}(\Omega_T)}
\notag
\\
\leq\;& C\epsilon \theta_{N}^{m-{\mu}+2}
\qquad\quad   \textrm{for }\ m=2,\ldots, \tilde{\mu}+3.
\label{H1.cal}
\end{align}
For $\mathcal{H}_2$, we follow the same strategy used to estimate
$[v_1^a\p_1, \mathcal{S}_{\theta_{{N}}}]\varPsi_{{N}}$ in Step 3 and obtain:
\begin{align} \label{H2.cal}
\|\mathcal{H}_{2}\|_{H^m_{\gamma}(\Omega_T)}
\leq C\epsilon \theta_{N}^{m-{\mu}+2}
\qquad  \textrm{for }\ m=2,\ldots, \tilde{\mu}+3.
\end{align}
 Regarding $\mathcal{H}_3$,  from \eqref{smooth.p1a}, \eqref{Fa.eq},
and the inductive hypothesis $\mathrm{\bf H}_{{N}-1}$, we find
\begin{align*}
&\| \mathcal{S}_{\theta_{N}}\mathbb{L}_{{F}_{ij}}(v^a+  v_{N-1},
\bm{F}^a+ \bm{F}_{N-1},  \varPhi^a+ \varPsi_{{N-1}}) \|_{H^m_{\gamma}(\Omega_T)}\\
&\quad \leq C\theta_{{N}}^{m-2}
\|  \mathbb{L}_{{F}_{ij}}(v^a+  v_{N-1},
\bm{F}^a+ \bm{F}_{N-1},  \varPhi^a+ \varPsi_{{N-1}}) \|_{H^2_{\gamma}(\Omega_T)}
\leq C\epsilon\theta_{{N}}^{m-{\mu}-1}
\end{align*}
for $m\geq 2$. Using \eqref{smooth.p1a},
\eqref{Moser1}, hypothesis $\mathrm{\bf H}_{N-1}$, and \eqref{tri1} yields
\begin{align*}
&\| \mathcal{S}_{\theta_{N}}\big(\mathbb{L}_{{F}_{ij}}(v^a+  v_{N},
\bm{F}^a+ \bm{F}_{N},  \varPhi^a+ \varPsi_{{N}})  \\
&\qquad\ -  \mathbb{L}_{{F}_{ij}}(v^a+  v_{N-1},
\bm{F}^a+ \bm{F}_{N-1},  \varPhi^a+ \varPsi_{{N-1}})\big) \|_{H^m_{\gamma}(\Omega_T)}
\leq C\epsilon\theta_{{N}}^{m-{\mu}+2}
\end{align*}
for $m\geq 2$.
Combining these estimates with \eqref{H1.cal}--\eqref{H2.cal}, we have
\begin{align}
\sum_{\ell=1}^{3}\|\mathcal{H}_{\ell}\|_{H^m_{\gamma}(\Omega_T)}
\leq C\epsilon \theta_{N}^{m-{\mu}+2},
\qquad  \textrm{for }\ m=2,\ldots, \tilde{\mu}+3.
\label{MS.p4}
\end{align}
Applying a standard energy argument to equations \eqref{MS.p3} and using
estimate \eqref{MS.p4}, we conclude
\begin{align}
\|\bm{F}_{{N}+1/2}-\mathcal{S}_{\theta_{N}} \bm{F}_{N}\|_{H^m_{\gamma}(\Omega_T)}
\leq C\epsilon \theta_{N}^{m-{\mu}+2}
\qquad  \textrm{for }\ m=2,\ldots, \tilde{\mu}+3.
\label{MS.e3}
\end{align}
Finally, estimate \eqref{MS.e} follows from \eqref{MS.id2}, \eqref{MS.e1},
\eqref{MS.e2}, and \eqref{MS.e3}. This completes the proof.
\end{proof}

\begin{remark}
Using the Sobolev embedding,  \eqref{small}, \eqref{tri3}, and \eqref{MS.e}, we obtain constraint \eqref{bas.c2}.
Constraint \eqref{bas.1} can be satisfied by choosing $\epsilon$ sufficiently small. Meanwhile, constraint \eqref{bas.c1} is ensured by applying an appropriate cut-off function, so that the terms
$(V_{{N}+1/2}, \varPsi_{{N}+1/2}, \psi_{{N}+1/2})$ can be truncated.
\end{remark}

\subsection{Estimates of the Second Substitution and Last Error Terms}\label{sec.2nd}
The following lemma provides the
estimates for the second substitution error terms $e_{n}'''$, $\tilde{e}_{n}'''$, and $\hat{e}_{n}'''$, as
defined in \eqref{decom1}, \eqref{decom2}, and \eqref{decom3}, respectively.

\begin{lemma}
\label{lem.2nd}
If ${\mu}\geq 5 $, then there exist $\epsilon>0$ suitably small and $\theta_0\geq 1$  large enough such that
\begin{alignat}{3} \notag
(\tilde{e}_{n}''',\hat{e}_{n}''')={\mathbf 0},\qquad
\|e_{n}'''\|_{H^{m}_{\gamma}(\Omega_T)}  \leq C\epsilon^2 \theta_{n}^{\ell_3(m)-1}\Delta_{n}
 \quad\ \textrm{if }\ m=2,\ldots, \tilde{\mu}-1,
\end{alignat}
for $n=0,\ldots,{N}-1,$
where $\ell_3(m):=\max\{(m+1-{\mu})_++9-2{\mu},m+6-2{\mu} \}$.
\end{lemma}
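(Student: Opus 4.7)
The plan is to treat the three error terms separately: the vanishing claims $\tilde e_n'''=\mathbf{0}$ and $\hat e_n'''=\mathbf{0}$ follow directly from the matching properties of the modified state built in Lemma \ref{lem.modified}, while the tame estimate on $e_n'''$ will be obtained from the fundamental theorem of calculus together with the $\mathbb L''$ bound provided by Proposition \ref{pro.tame2}.

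For $\hat e_n'''=\mathbf{0}$, the linearized operator $\mathcal E'(\check V,\check\varPsi)$ derived from \eqref{P.new} depends on the basic state only through the three components $(\check v_1,\check v_2,\check\varPsi)$. By \eqref{MS.id1}--\eqref{MS.id2}, each of these components of the modified state coincides with the corresponding component of $(\mathcal S_{\theta_n}V_n,\mathcal S_{\theta_n}\varPsi_n)$ on the whole of $\Omega_T$, so the two $\mathcal E'$-evaluations entering the definition of $\hat e_n'''$ are identical. For $\tilde e_n'''=\mathbf{0}$, I first invoke the algebraic identity $\mathbb B_e'(\check U,\check\varPhi)(\dot V,\psi)=\mathbb B'(\check U,\check\varphi)(V,\psi)$ under the change of unknowns $V=\dot V+(\partial_3\check U/\partial_3\check\varPhi)\varPsi$, which follows by expanding $\check b_\natural\psi+\check B\dot V|_{x_3=0}$ using the defining formula \eqref{b.natural} of $\check b_\natural$. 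This reduces $\tilde e_n'''$ to a difference of two $\mathbb B'$-values whose coefficient matrices $\check b$ and $\check B$ depend on the basic state only through $(\check v_1,\check v_2,\check F_{ij})|_{x_3=0}$ and $\check\varphi$ respectively. Matching of these traces between the modified state and $\mathcal S_{\theta_n}V_n$ follows from \eqref{MS.id1}--\eqref{MS.id2} for the velocity components and the front, combined with the involution constraint \eqref{bas.5} and the fact that the transport equation \eqref{F.modified.eq} for $\bm F_{n+1/2}$ becomes tangential to $\{x_3=0\}$ once the eikonal equation is used, forcing the required boundary consistency for the deformation gradient.

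For the tame estimate on $e_n'''$, the identity $\mathcal S_{\theta_n}\varPsi_n=\varPsi_{n+1/2}$ from \eqref{MS.id1} implies that only the $U$-argument of $\mathbb L'$ differs. Setting $\Delta V_n:=\mathcal S_{\theta_n}V_n-V_{n+1/2}$, the fundamental theorem of calculus in that argument yields
\begin{equation*}
e_n'''=\int_0^1\mathbb L''\bigl(U^a+V_{n+1/2}+\tau\Delta V_n,\,\varPhi^a+\mathcal S_{\theta_n}\varPsi_n\bigr)\bigl((\delta V_n,\delta\varPsi_n),(\Delta V_n,\mathbf{0})\bigr)\,\mathrm d\tau.
\end{equation*}
Proposition \ref{pro.tame2} then bounds $\|e_n'''\|_{H^m_\gamma(\Omega_T)}$ by a triple product $\|(\delta V_n,\delta\varPsi_n)\|_{W^{1,\infty}}\|\Delta V_n\|_{W^{1,\infty}}\|(\text{intermediate state})\|_{H^{m+1}_\gamma}$ plus the two cross-terms obtained by swapping $W^{1,\infty}$ and $H^{m+1}_\gamma$ between the two variations. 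Each factor is then controlled via the four-dimensional Sobolev embedding $H^4_\gamma(\Omega_T)\hookrightarrow W^{1,\infty}(\Omega_T)$, the inductive hypothesis $(\mathrm i)$ for the norms of $(\delta V_n,\delta\varPsi_n)$, the bound \eqref{MS.e} for the norms of $\Delta V_n$, and \eqref{tri3} for the intermediate state.

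The main technical obstacle is the careful bookkeeping of the powers of $\theta_n$ produced by these estimates. The two cross-terms give contributions of order $\epsilon^2\theta_n^{m+5-2\mu}\Delta_n$ once $\Delta_n\lesssim 1/\theta_n$ has been absorbed, matching the second component of $\ell_3(m)-1$; the triple product yields $\epsilon^3\theta_n^{(m+1-\mu)_++8-2\mu}\Delta_n$ via the $H^{m+1}_\gamma$ growth of the intermediate state from \eqref{tri3}, matching the first component of $\ell_3(m)-1$. The borderline case $m=\mu$, in which \eqref{tri3} introduces an extra $\log\theta_n$ factor, is absorbed into the constant $C$ precisely because we now require $\mu\geq 5$, which is the sharpened hypothesis appearing in this lemma compared with Lemmas \ref{lem.quad}--\ref{lem.1st}.
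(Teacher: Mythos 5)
Your overall route coincides with the paper's: $\hat e_n'''=\mathbf{0}$ and $\tilde e_n'''=\mathbf{0}$ are obtained from the matching properties \eqref{MS.id1}--\eqref{MS.id2} of the modified state, and $e_n'''$ is written as $\int_0^1\mathbb L''(U^a+V_{n+1/2}+\tau\Delta V_n,\varPhi^a+\mathcal S_{\theta_n}\varPsi_n)((\delta V_n,\delta\varPsi_n),(\Delta V_n,\mathbf 0))\,\mathrm d\tau$ and estimated by Proposition \ref{pro.tame2} together with the inductive hypothesis, \eqref{MS.e} and \eqref{tri3}; your exponent bookkeeping lands on $\ell_3(m)-1$ as required. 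Your reduction of $\mathbb B_e'$ applied to the good unknown to $\mathbb B'$ applied to $\delta V_n$ via \eqref{b.natural} is exactly the step the paper leaves implicit.

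One sub-claim in your treatment of $\tilde e_n'''$ is both unnecessary and, as stated, not justified: you assert that the boundary traces of the modified deformation gradient coincide with those of the smoothed iterate, arguing that the transport equation \eqref{F.modified.eq} is tangential to $\{x_3=0\}$. Tangentiality only tells you that $\bm F_{n+1/2}|_{x_3=0}$ solves a closed boundary transport equation driven by the modified velocity traces; it does not follow that $\mathcal S_{\theta_n}\bm F_n|_{x_3=0}$ solves the same equation, so the two traces need not agree. The correct (and simpler) observation, which is the one the paper uses, is that by \eqref{B''.form} the derivative of the relevant boundary operator with respect to the basic state involves only the velocity components $v_1,v_2$ and the front, so after the good-unknown reduction the difference defining $\tilde e_n'''$ depends only on state components that match exactly by \eqref{MS.id1}--\eqref{MS.id2}; the deformation gradient never enters. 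Two smaller points: in the cross terms only $(\delta V_n,\delta\varPsi_n)$ carries a factor $\Delta_n$ (the factor $\Delta V_n$ carries none), so there is no $\Delta_n^2$ to convert via $\Delta_n\lesssim\theta_n^{-1}$; and the hypothesis $\mu\ge 5$ is not what absorbs the logarithm at $m+1=\mu$ (that is handled by the $+1$ in the exponent $(m+1-\mu)_++1$) --- it is needed for Lemma \ref{lem.modified} itself and to ensure that the contribution $\theta_n^{m+10-3\mu}$ of the triple product is dominated by $\theta_n^{m+5-2\mu}$.
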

\begin{proof}
From 
\eqref{B''.form} and \eqref{MS.id1}--\eqref{MS.id2},
we have
\begin{align*}
\tilde{e}_{n}'''=
\;&\mathbb{B}'(U^a+\mathcal{S}_{\theta_{n}}V_{n},
\varphi^a+(\mathcal{S}_{\theta_{n}}\varPsi_{n})|_{x_3=0}  )
(\delta V_{n} ,\delta\psi_{n})\\
&-\mathbb{B}'(U^a+ V_{n+1/2},
\varphi^a+(\mathcal{S}_{\theta_{n}}\varPsi_{n})|_{x_3=0}  )
(\delta V_{n} ,\delta\psi_{n})={\mathbf 0}.
\end{align*}
Using \eqref{E''.form}--\eqref{MS.id2}, we deduce that $\hat{e}_{n}'''={\mathbf 0}$.
Thanks to \eqref{MS.id1}, the error term ${e}_{n}'''$ can be rewritten as
\begin{align*}
&{e}_{n}'''=\int_{0}^{1}\mathbb{L}''\big( U^a+V_{n+1/2}+\tau (\mathcal{S}_{\theta_{n}}V_{n} -V_{n+1/2}),\,
\varPhi^a 
+\mathcal{S}_{\theta_{n}} \varPsi_{n} \big)
\big((\delta V_{n},\delta\varPsi_{n}),(\mathcal{S}_{\theta_{n}} V_{n}-V_{n+1/2} ,0)\big)
\,\dd \tau.
\end{align*}
Apply the Sobolev embedding theorem, \eqref{small}, \eqref{tri3}, and \eqref{MS.e}, we find that
\begin{align*}
\|(\widetilde{U}^a,\, V_{n+1/2},\, \mathcal{S}_{\theta_{n}}V_{n} -V_{n+1/2},\,
\widetilde{\varPhi}^a,\, \mathcal{S}_{\theta_{n}} \varPsi_{n} ) \|_{W^{1,\infty}(\Omega_T)} \leq
C\epsilon,
\end{align*}
allowing us to use Proposition \ref{pro.tame2} for $\epsilon$ suitably small.
Furthermore, from \eqref{small}--\eqref{tri1} and \eqref{MS.e}, we have
\begin{align*}
\|(\widetilde{U}^a,\, V_{n+1/2},\, \mathcal{S}_{\theta_{n}}V_{n} ,\,
\widetilde{\varPhi}^a,\, \mathcal{S}_{\theta_{n}} \varPsi_{n} ) \|_{H^{m+1}_{\gamma}(\Omega_T)}
\leq C\epsilon \left( \theta_{n}^{(m+1-{\mu})_++1}+
 \theta_{n}^{m+3-{\mu}}\right)
\end{align*}
for $2\leq m\leq \tilde{\mu}-1$.
Using Proposition \ref{pro.tame2}, hypothesis $\mathrm{\bf H}_{n-1}$, and
\eqref{MS.e}, we obtain the estimate for term ${e}_{n}''',$ thereby completing the proof of lemma.
\end{proof}

For the last error term \eqref{last.error},
\begin{align*}
D_{n+1/2}\delta\varPsi_{n}=\frac{\delta\varPsi_{n}}{\p_3(\varPhi^a+\varPsi_{n+1/2})}R_{n},
\quad \text{where } R_{n}:=\p_3\mathbb{L}(U^a+V_{n+1/2},\varPhi^a+\varPsi_{n+1/2}),
\end{align*}
we  first observe that
\begin{align*}
|\p_3(\varPhi^{a\pm}+\varPsi_{n+1/2}^{\pm})|\geq \frac{1}{2},
\end{align*}
as deduced  from \eqref{app.eq.3}, \eqref{MS.id1}, and \eqref{tri3}
for sufficiently small $\epsilon$.
Consequently, we arrive at the following lemma, analogous to \cite[Lemma 8.6]{CS08MR2423311} or \cite[Lemma 12]{CSW19MR3925528}. The proof expands $D_{n+1/2}\delta\varPsi_n$, uses the lower bound on $\partial_3(\varPhi^a+\varPsi_{n+1/2})$ to control the denominator, and then applies the same tame product and smoothing estimates as before; we omit the routine details.

\begin{lemma}
\label{lem.last}
If ${\mu}\geq 5$ and $\tilde{\mu}> {\mu}$,
then there exist $\epsilon>0$ suitably small and $\theta_0\geq 1$  large enough such that
\begin{align} \label{last.e0}
\|D_{n+1/2}\delta\varPsi_{n}\|_{H^{m}_{\gamma}(\Omega_T)}\leq C\epsilon^2 \theta_{n}^{\ell_4(m)-1}\Delta_{n}
 \quad\ \textrm{if }\ m=2,\ldots, \tilde{\mu}-1,
\end{align}
for $n=0,\ldots,{N}-1$,
where
$$\ell_4(m):=\max\{(m+2-{\mu})_++8-2{\mu},(m+1-{\mu})_++9-2{\mu},m+6-2{\mu}\}.$$
\end{lemma}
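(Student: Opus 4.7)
The plan is to write $D_{n+1/2}\delta\varPsi_n=\delta\varPsi_n\cdot R_n/\p_3(\varPhi^a+\varPsi_{n+1/2})$ and reduce \eqref{last.e0} to a Moser-type product estimate between the small factor $\delta\varPsi_n$, already controlled by hypothesis $\mathrm{\bf H}_{N-1}$(i), and the factor $R_n$, which will be controlled by decomposing $\mathbb{L}(U^a+V_{n+1/2},\varPhi^a+\varPsi_{n+1/2})$ in the same spirit as for the substitution errors in Lemmas \ref{lem.1st}--\ref{lem.2nd}, but at one Sobolev level higher because of the extra $\p_3$.

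Since $|\p_3(\varPhi^{a\pm}+\varPsi_{n+1/2}^{\pm})|\geq 1/2$, applying \eqref{Moser1}--\eqref{Moser2} to $b(\p_3(\varPhi^a+\varPsi_{n+1/2}))\,\delta\varPsi_n\,R_n$ with $b(z)=1/z$, together with Sobolev embedding and hypothesis (i), yields
\begin{align*}
\|D_{n+1/2}\delta\varPsi_n\|_{H^m_\gamma(\Omega_T)}
&\lesssim \|\delta\varPsi_n\|_{L^\infty(\Omega_T)}\|R_n\|_{H^m_\gamma(\Omega_T)}
+\|\delta\varPsi_n\|_{H^m_\gamma(\Omega_T)}\|R_n\|_{L^\infty(\Omega_T)} \\
&\quad +\|(V_{n+1/2},\varPsi_{n+1/2})\|_{H^{m+1}_\gamma(\Omega_T)}\|\delta\varPsi_n\|_{L^\infty(\Omega_T)}\|R_n\|_{L^\infty(\Omega_T)},
\end{align*}
with $\|\delta\varPsi_n\|_{L^\infty}\lesssim \epsilon\theta_n^{2-\mu}\Delta_n$ and $\|\delta\varPsi_n\|_{H^m_\gamma}\leq \epsilon\theta_n^{m-\mu-1}\Delta_n$ from $\mathrm{\bf H}_{N-1}$(i). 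It therefore suffices to control $\|R_n\|_{H^m_\gamma}$ (and, at $m=3$, $\|R_n\|_{L^\infty}$).

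The heart of the argument is the decomposition
\begin{align*}
R_n
&=\p_3 f^a + \p_3\bigl[\mathbb{L}(U^a+V_{n+1/2},\varPhi^a+\varPsi_{n+1/2})-\mathbb{L}(U^a+\mathcal{S}_{\theta_n}V_n,\varPhi^a+\mathcal{S}_{\theta_n}\varPsi_n)\bigr] \\
&\quad+\p_3\bigl[\mathbb{L}(U^a+\mathcal{S}_{\theta_n}V_n,\varPhi^a+\mathcal{S}_{\theta_n}\varPsi_n)-\mathcal{S}_{\theta_n}\mathbb{L}(U^a+V_n,\varPhi^a+\varPsi_n)\bigr] \\
&\quad+\p_3 \mathcal{S}_{\theta_n}\bigl[\mathbb{L}(U^a+V_n,\varPhi^a+\varPsi_n)-f^a\bigr] + \p_3(\mathcal{S}_{\theta_n}-I)f^a.
\end{align*}
The first bracketed difference is Taylor-type and makes crucial use of the identity $\mathcal{S}_{\theta_n}\varPsi_n=\varPsi_{n+1/2}$ from \eqref{MS.id1}, so only the $V$-difference survives and \eqref{MS.e} applied at level $m+1$ delivers the bound. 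The second is a smoothing-commutator estimate handled exactly as in Lemma \ref{lem.2nd} but at one higher Sobolev level. The third is controlled via \eqref{smooth.p1a} combined with hypothesis $\mathrm{\bf H}_{N-1}$(ii) applied at level $m+1$, and the fourth via \eqref{smooth.p1b} and \eqref{small}. Multiplying each of the resulting $H^m_\gamma$-bounds by $\|\delta\varPsi_n\|_{L^\infty}$ (or by $\|\delta\varPsi_n\|_{H^m_\gamma}$ when the relevant bound on $R_n$ is instead in $L^\infty$), taking the maximum, and using $\Delta_n\sim \theta_n^{-1}$ from \eqref{delta.k} produces exactly the three contributions collected in $\ell_4(m)$: $m+6-2\mu$ from the Taylor piece, $(m+1-\mu)_++9-2\mu$ from the smoothing commutator, and $(m+2-\mu)_++8-2\mu$ from the $\mathcal{S}_{\theta_n}[\mathbb{L}-f^a]$ piece.

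The main obstacle is combinatorial rather than conceptual: the extra $\p_3$ shifts every Sobolev index in the decomposition of $R_n$ up by one relative to the analogous decompositions of $e_n''$ and $e_n'''$. Consequently, hypothesis $\mathrm{\bf H}_{N-1}$(ii) must be invoked at level $m+1$, forcing the restriction $m+1\leq \tilde\mu-1$, and \eqref{small} must be used at level $\tilde\mu+3$; these choices both pin down the admissible range $m\in\{2,\ldots,\tilde\mu-1\}$ in \eqref{last.e0} and explain why $\ell_4(m)$ is shifted by one unit relative to the $\ell_i(m)$ of the preceding lemmas. Once this bookkeeping is carried through, \eqref{last.e0} follows at once.
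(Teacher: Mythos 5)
Your overall strategy coincides with the paper's (which states the product decomposition $D_{n+1/2}\delta\varPsi_n=\delta\varPsi_n R_n/\p_3(\varPhi^a+\varPsi_{n+1/2})$ and the lower bound $|\p_3(\varPhi^{a\pm}+\varPsi^{\pm}_{n+1/2})|\geq 1/2$, then defers the rest to \cite[Lemma 8.6]{CS08MR2423311}): Moser product estimate, hypothesis $\mathrm{\bf H}_{N-1}$(i) for $\delta\varPsi_n$, and a telescopic decomposition of $R_n$. However, your particular decomposition of $R_n$ has a bookkeeping error that makes the term-by-term estimate fail exactly in the regime that is later used. You insert a standalone $\p_3 f^a$ at the front, a $-f^a$ inside $\mathcal{S}_{\theta_n}[\cdots]$, and $\p_3(\mathcal{S}_{\theta_n}-I)f^a$ at the end. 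These three $f^a$-contributions cancel algebraically, but none of them decays in $\theta_n$ individually: the leading piece alone gives
$\|\delta\varPsi_n\,\p_3 f^a\|_{H^m_\gamma}\lesssim \|\delta\varPsi_n\|_{L^\infty}\|f^a\|_{H^{m+1}_\gamma}+\|\delta\varPsi_n\|_{H^m_\gamma}\|f^a\|_{W^{1,\infty}}\lesssim\epsilon^2\big(\theta_n^{2-\mu}+\theta_n^{m-\mu-1}\big)\Delta_n$,
and since $\ell_4(2)-1=8-2\mu$, one has $2-\mu>\ell_4(2)-1$ as soon as $\mu\geq 7$ (the case required in Lemma \ref{lem.sum2} and in the proof of Theorem \ref{thm}), so \eqref{last.e0} is not obtained. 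Relatedly, $\mathbb{L}(U^a+V_n,\varPhi^a+\varPsi_n)-f^a=\mathcal{L}(V_n,\varPsi_n)-2f^a$ is not the quantity controlled by hypothesis (ii). The repair is immediate: since $f^a=-\mathbb{L}(U^a,\varPhi^a)$ for $t>0$, one has $\mathbb{L}(U^a+V_n,\varPhi^a+\varPsi_n)=\mathcal{L}(V_n,\varPsi_n)-f^a$ already, so no $\pm f^a$ insertions are needed; the last telescoping piece should simply be $\p_3\mathcal{S}_{\theta_n}\big[\mathcal{L}(V_n,\varPsi_n)-f^a\big]$, which is hypothesis (ii) combined with \eqref{smooth.p1a}.

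A second, smaller inconsistency: you assert that hypothesis (ii) must be invoked at level $m+1$ and that the constraint $m+1\leq\tilde\mu-1$ "pins down" the range in \eqref{last.e0}; but $m+1\leq\tilde\mu-1$ gives $m\leq\tilde\mu-2$, which contradicts the stated range $m\leq\tilde\mu-1$. In fact the extra normal derivative coming from $\p_3$ is absorbed by the smoothing operator via $\|\p_3\mathcal{S}_{\theta_n}u\|_{H^m_\gamma}\lesssim\theta_n^{(m+1-j)_+}\|u\|_{H^j_\gamma}$ for a fixed $j\leq\tilde\mu-1$, so hypothesis (ii) is never needed above its admissible range; the price is the factor $\theta_n^{(m+1-j)_+}$, which is where the shifted exponents collected in $\ell_4(m)$ actually originate. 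With these two corrections the argument closes and reproduces the reference's proof.
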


Lemmas \ref{lem.quad}--\ref{lem.last} provide the following estimates for $e_{n}$, $\tilde{e}_{n}$, and $\hat{e}_{n}$
defined in \eqref{en.def} and \eqref{en.hat.def}.

\begin{corollary} \label{lem.sum1}
If ${\mu}\geq 5$ and $\tilde{\mu}> {\mu}$, then there exist $\epsilon>0$ suitably small and $\theta_0\geq 1$  large enough
such that
\begin{alignat}{3}
\label{es.sum1}
\|e_{n}\|_{H^{m}_{\gamma}(\Omega_T)} &\leq C\epsilon^2 \theta_{n}^{\ell_4(m)-1}\Delta_{n}
&&\quad\ \textrm{if }\ m=2,\ldots, \tilde{\mu}-1,\\
\label{es.sum1.hat}
\|\hat{e}_{n}\|_{H^{m}_{\gamma}(\Omega_T)} &
\leq C\epsilon^2 \theta_{n}^{\ell_2(m)-1}\Delta_{n}
&&\quad\ \textrm{if }\ m=2,\ldots, \tilde{\mu}-1,\\
\label{es.sum1.tilde}
\|\tilde{e}_{n}\|_{H^{m}_{\gamma}(\omega_T)} &
\leq C\epsilon^2 \theta_{n}^{\ell_2(m)-1}\Delta_{n}
&&\quad\ \textrm{if }\ m=2,\ldots, \tilde{\mu}-2,
\end{alignat}
for $n=0,\ldots,{N}-1,$
where  $\ell_2(m)$ and $\ell_4(m)$ are defined
in {\rm Lemma  \ref{lem.1st}} and {\rm Lemma  \ref{lem.last}}, respectively.
\end{corollary}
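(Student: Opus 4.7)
The plan is to combine the bounds established in Lemmas \ref{lem.quad}, \ref{lem.1st}, \ref{lem.2nd}, and \ref{lem.last} by decomposing each error term according to the definitions \eqref{en.def} and \eqref{en.hat.def}, then reducing all the competing exponents to the largest one in each case. Specifically, from \eqref{en.def} I would write
\begin{equation*}
\|e_n\|_{H^m_{\gamma}(\Omega_T)} \leq \|e_n'\|_{H^m_{\gamma}(\Omega_T)} + \|e_n''\|_{H^m_{\gamma}(\Omega_T)} + \|e_n'''\|_{H^m_{\gamma}(\Omega_T)} + \|D_{n+1/2}\delta\varPsi_n\|_{H^m_{\gamma}(\Omega_T)},
\end{equation*}
and plug in the four lemmas to obtain a sum of four contributions of the form $C\epsilon^2 \theta_n^{\ell_i(m)-1}\Delta_n$ with $i=1,2,3,4$. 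The result then follows by observing that the exponents satisfy $\ell_1(m)\leq \ell_2(m)\leq \ell_3(m)\leq \ell_4(m)$ for every $m\geq 2$, so the $\ell_4$-term dominates; this delivers \eqref{es.sum1} on the range $m\in\{2,\ldots,\tilde{\mu}-1\}$ common to all four lemmas.

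For $\hat{e}_n=\hat{e}_n'+\hat{e}_n''+\hat{e}_n'''$ I would use the key observation from Lemma \ref{lem.2nd} that $\hat{e}_n'''=\mathbf 0$ (which follows because $v^{\pm}_{{N}+1/2,1}=\mathcal{S}_{\theta_{n}}v_{n,1}^{\pm}, v^{\pm}_{{N}+1/2,2}=\mathcal{S}_{\theta_{n}}v_{n,2}^{\pm}$ and $\varPsi_{{N}+1/2}^{\pm}=\mathcal{S}_{\theta_{n}}\varPsi_{n}^{\pm}$ by \eqref{MS.id1}--\eqref{MS.id2} and the explicit form \eqref{E''.form}), so only the quadratic and first-substitution contributions survive. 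Since $\ell_1(m)\leq\ell_2(m)$, the bound \eqref{es.sum1.hat} follows by summing the estimates from Lemmas \ref{lem.quad} and \ref{lem.1st}. For $\tilde{e}_n=\tilde{e}_n'+\tilde{e}_n''+\tilde{e}_n'''$, the identity $\tilde{e}_n'''=\mathbf 0$ from Lemma \ref{lem.2nd} reduces matters in the same way; the restricted range $m\leq\tilde{\mu}-2$ in \eqref{es.sum1.tilde} comes from the corresponding restriction on $\|\tilde{e}_n''\|_{H^m_{\gamma}(\omega_T)}$ in Lemma \ref{lem.1st}, which in turn stems from the need to bound $\|(I-\mathcal S_{\theta_n})\varPsi_n\|_{H^{m+2}_{\gamma}(\Omega_T)}$ using \eqref{tri2}.

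The only non-bookkeeping issue is the elementary verification of the monotonicity $\ell_1\leq\ell_2\leq\ell_3\leq\ell_4$ for the exponents; comparing the three maxima term by term one sees this is immediate since $4-2\mu\leq 6-2\mu\leq 9-2\mu$, $2-2\mu\leq 5-2\mu\leq 6-2\mu$, and the extra entry $(m+2-\mu)_++8-2\mu$ in $\ell_4(m)$ is manifestly no smaller than $(m+1-\mu)_++6-2\mu$ entering $\ell_2(m)$. There is no real obstacle here: the entire statement is a clean bookkeeping corollary of the four preceding lemmas, once the vanishing of $\hat{e}_n'''$ and $\tilde{e}_n'''$ (which encodes the fact that the modified state was chosen precisely to make the linearizations of $\mathcal E$ and the boundary operator exact at the smoothed iterate) has been recorded.
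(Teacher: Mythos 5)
Your proposal is correct and follows exactly the route the paper intends: the corollary is obtained by applying the triangle inequality to the decompositions \eqref{en.def} and \eqref{en.hat.def}, invoking Lemmas \ref{lem.quad}--\ref{lem.last}, using $\tilde{e}_n'''=\hat{e}_n'''=\mathbf 0$, and absorbing the smaller exponents via the (correctly verified) monotonicity $\ell_1\leq\ell_2\leq\ell_3\leq\ell_4$, with the range $m\leq\tilde{\mu}-2$ in \eqref{es.sum1.tilde} inherited from the boundary estimate of Lemma \ref{lem.1st}. The paper gives no separate proof, treating the corollary as an immediate consequence of those lemmas, so there is nothing to add.
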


\subsection{Proof of Theorem \ref{thm}}\label{sec.proof2}
We first prove the following lemma for
accumulated error terms $E_n$, $\widetilde{E}_n$, and $\hat{E}_n$
that are defined in \eqref{En.def} and \eqref{En.hat.def}.

\begin{lemma} \label{lem.sum2}
If ${\mu}\geq 7$ and $\tilde{\mu}={\mu}+3$, then there exist $\epsilon>0$ suitably small and $\theta_0\geq 1$  large enough such that
\begin{align}\label{es.sum2a}
&\|E_{{N}}\|_{H^{{\mu}+2}_{\gamma}(\Omega_T)}
\leq C\epsilon^2 \theta_{{N}},\\
\label{es.sum2b}
&\|\widetilde{E}_{{N}}\|_{H^{{\mu}+1}_{\gamma}(\omega_T)}
+\|\widehat{E}_{{N}}\|_{H^{{\mu}+1}_{\gamma}(\Omega_T)}
\leq C\epsilon^2 .
\end{align}
\end{lemma}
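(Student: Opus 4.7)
The strategy is entirely computational: apply the triangle inequality to the sums defining $E_N$, $\widetilde{E}_N$, $\widehat{E}_N$, invoke Corollary \ref{lem.sum1}, and verify that the choice $\mu\ge 7$, $\tilde{\mu}=\mu+3$ makes the exponents on $\theta_n$ small enough that the resulting series are controlled by the geometric properties of $\{\theta_n\}$ and $\{\Delta_n\}$ in \eqref{theta.def} and \eqref{delta.k}.

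First I would evaluate $\ell_4(\mu+2)$ at $m=\mu+2$. From the definition in Lemma \ref{lem.last},
\[
\ell_4(\mu+2)=\max\{4+8-2\mu,\; 3+9-2\mu,\; 8-\mu\}=\max\{12-2\mu,\;8-\mu\},
\]
and for $\mu\ge 7$ the maximum equals $8-\mu\le 1$. Thus $\theta_n^{\ell_4(\mu+2)-1}\le 1$, and \eqref{es.sum1} at $m=\mu+2\le\tilde{\mu}-1=\mu+2$ (which is why we need $\tilde{\mu}\ge\mu+3$) gives $\|e_n\|_{H^{\mu+2}_{\gamma}(\Omega_T)}\le C\epsilon^2\Delta_n$. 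Summing by telescoping,
\[
\|E_N\|_{H^{\mu+2}_{\gamma}(\Omega_T)}\le\sum_{n=0}^{N-1}\|e_n\|_{H^{\mu+2}_{\gamma}(\Omega_T)}\le C\epsilon^2\sum_{n=0}^{N-1}\Delta_n=C\epsilon^2(\theta_N-\theta_0)\le C\epsilon^2\theta_N,
\]
which gives \eqref{es.sum2a}.

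Next, for \eqref{es.sum2b}, I would compute
\[
\ell_2(\mu+1)=\max\{2+6-2\mu,\;6-\mu\}=\max\{8-2\mu,\;6-\mu\},
\]
which for $\mu\ge 7$ equals $6-\mu\le -1$. Note $m=\mu+1$ lies in the admissible range $2,\dots,\tilde{\mu}-1$ for $\hat{e}_n$ and in $2,\dots,\tilde{\mu}-2$ for $\tilde{e}_n$ precisely because $\tilde{\mu}=\mu+3$. Hence \eqref{es.sum1.hat}--\eqref{es.sum1.tilde} yield
\[
\|\hat{e}_n\|_{H^{\mu+1}_{\gamma}(\Omega_T)}+\|\tilde{e}_n\|_{H^{\mu+1}_{\gamma}(\omega_T)}\le C\epsilon^2\theta_n^{-2}\Delta_n.
\]
Using \eqref{delta.k} one has $\theta_n^{-2}\Delta_n\le \tfrac{1}{2}\theta_n^{-3}$, and since $\theta_n=\sqrt{\theta_0^2+n}\sim\sqrt{n}$, the series $\sum_{n\ge 0}\theta_n^{-3}$ behaves like $\sum n^{-3/2}$, which converges. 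Summing therefore yields
\[
\|\widehat{E}_N\|_{H^{\mu+1}_{\gamma}(\Omega_T)}+\|\widetilde{E}_N\|_{H^{\mu+1}_{\gamma}(\omega_T)}\le C\epsilon^2\sum_{n=0}^{\infty}\theta_n^{-2}\Delta_n\le C\epsilon^2,
\]
which is \eqref{es.sum2b}.

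There is essentially no obstacle here: the work was front-loaded into Corollary \ref{lem.sum1}, and the only thing to verify is the arithmetic of the indices $\ell_2,\ell_4$ together with the convergence (or telescoping) of the associated $\theta_n$-series. The calibration $\mu\ge 7$, $\tilde{\mu}=\mu+3$ is precisely what makes $\ell_4(\mu+2)\le 1$ (giving the $\theta_N$ growth) and $\ell_2(\mu+1)\le -1$ (giving a uniform bound), while simultaneously keeping $\mu+1$ within the regularity ranges allowed by Corollary \ref{lem.sum1}. Any step of this verification is routine once the index functions $\ell_2,\ell_4$ are written out, so I would not anticipate genuine technical difficulty.
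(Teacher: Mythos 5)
Your proof is correct and follows essentially the same route as the paper's: triangle inequality on the defining sums, substitution of the bounds from Corollary \ref{lem.sum1}, the computations $\ell_4(\mu+2)=8-\mu\le 1$ and $\ell_2(\mu+1)=6-\mu\le -1$ for $\mu\ge 7$, and the telescoping/convergent-series arguments using \eqref{theta.def} and \eqref{delta.k}. Your version merely spells out the index arithmetic in more detail than the paper does.
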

\begin{proof} Following \cite{CS08MR2423311,CSW19MR3925528}, we first note that $\ell_4({\mu}+2)\leq 1$ when ${\mu}\geq 7$.
From \eqref{es.sum1}, one has
\begin{align*}
&\|E_{{N}}\|_{H^{{\mu}+2}_{\gamma}(\Omega_T)}
\leq \sum_{n=0}^{{N}-1}\|e_{n} \|_{H^{{\mu}+2}_{\gamma}(\Omega_T)}
\leq \sum_{n=0}^{{N}-1} C\epsilon^2 \Delta_{n}
\leq C\epsilon^2\theta_{{N}},
\end{align*}
for ${\mu}\geq 7$ and ${\mu}+2\leq \tilde{\mu}-1$.
Since $\ell_2({\mu}+1)=6-{\mu}\leq -1$ for ${\mu}\geq 7$
and ${\mu}+1\leq \tilde{\mu}-2$,
from \eqref{es.sum1.hat}--\eqref{es.sum1.tilde}, we have
\begin{align*}
\|\widetilde{E}_{{N}}\|_{H^{{\mu}+1}_{\gamma}(\omega_T)}
+\|\widehat{E}_{{N}}\|_{H^{{\mu}+1}_{\gamma}(\Omega_T)}&\leq \sum_{n=0}^{{N}-1}
\left\{
\|\tilde{e}_{n} \|_{H^{{\mu}+1}_{\gamma}(\omega_T)}
+\|\hat{e}_{n} \|_{H^{{\mu}+1}_{\gamma}(\Omega_T)}
\right\}\nonumber\\
&\leq \sum_{n=0}^{{N}-1} C\epsilon^2 \theta_{n}^{-2}\Delta_{n}
\leq C\epsilon^2,
\end{align*}
where we have utilized \eqref{theta.def} and \eqref{delta.k}
 to obtain the last inequality.
The minimal possible $\tilde{\mu}$ is ${\mu}+3$.
The proof of the lemma is completed.
\end{proof}

Using the lemma above, we have the estimates for
$f_{{N}}$, $g_{{N}}$,
and $h_{{N}}^{\pm}$.
\begin{lemma}\  \label{lem.source}
If ${\mu}\geq 7$ and $\tilde{\mu}={\mu}+3$, then there exist $\epsilon>0$ suitably small and $\theta_0\geq 1$  large enough such that
 \begin{align}
 \label{es.fl}&\|f_{{N}}\|_{H^m_{\gamma}(\Omega_T)}
 \leq C \Delta_{{N}}\left\{\theta_{{N}}^{m-{\mu}-2}
 \left(\|f^a\|_{H^{{\mu}+1}_{\gamma}(\Omega_T)}
 +\epsilon^2\right)+\epsilon^2\theta_{{N}}^{\ell_4(m)-1}\right\},\\
 \label{es.gl}&\|g_{{N}}\|_{H^m_{\gamma}(\omega_T)}
 \leq C \epsilon^2 \Delta_{{N}}\big(\theta_{{N}}^{m-{\mu}-2}+\theta_{{N}}^{\ell_2(m)-1}\big),
 \end{align}
  for $m=2,\ldots,\tilde{\mu}+1$,
 and
 \begin{align}
 \label{es.hl}\|h_{{N}}^{\pm}\|_{H^m_{\gamma}(\Omega_T)}\leq C\epsilon^2 \Delta_{{N}}\big(\theta_{{N}}^{m-{\mu}-2}+\theta_{{N}}^{\ell_2(m)-1}\big)
\quad\ \textrm{for }\ m=2,\ldots, \tilde{\mu}.
 \end{align}
\end{lemma}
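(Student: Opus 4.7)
\textbf{Proof proposal for Lemma \ref{lem.source}.}

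The plan is to express each source term as a telescoping difference and then apply the smoothing estimates from Proposition \ref{pro.smooth}, together with the accumulated bounds from Corollary \ref{lem.sum1} and Lemma \ref{lem.sum2}. From the defining relations \eqref{fn.def}, subtracting the analogous identity at index $N-1$ and using $E_N=E_{N-1}+e_{N-1}$ and $\widetilde{E}_N=\widetilde{E}_{N-1}+\tilde e_{N-1}$ gives
\begin{align*}
f_N &=(\mathcal{S}_{\theta_N}-\mathcal{S}_{\theta_{N-1}})(f^a-E_{N-1})-\mathcal{S}_{\theta_N}e_{N-1},\\
g_N &=-(\mathcal{S}_{\theta_N}-\mathcal{S}_{\theta_{N-1}})\widetilde{E}_{N-1}-\mathcal{S}_{\theta_N}\tilde e_{N-1},
\end{align*}
and an analogous identity for $h_N^{\pm}$ from \eqref{hn.def}, involving $\widehat{E}_{N-1}$, $\widetilde{E}_{N-1,j}$, $\hat e_{N-1}^{\pm}$, $\tilde e_{N-1,j}$ and the lifting $\mathcal{R}_T$ (whose $H^{s+1/2}$-continuity from Lemma \ref{lem.smooth2} we will use to pass traces to the interior).

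For the first piece of $f_N$, I will use the Newton-Leibniz identity $\mathcal{S}_{\theta_N}-\mathcal{S}_{\theta_{N-1}}=\int_{\theta_{N-1}}^{\theta_N}\tfrac{d}{d\theta}\mathcal{S}_\theta\,d\theta$ together with \eqref{smooth.p1c} to get, for any $j\in\{1,\dots,m\}$,
\begin{equation*}
\|(\mathcal{S}_{\theta_N}-\mathcal{S}_{\theta_{N-1}})u\|_{H^m_\gamma}\lesssim \theta_N^{m-j-1}\Delta_N\,\|u\|_{H^j_\gamma}.
\end{equation*}
Applying this with $j=\mu+1$ to $f^a$ yields $\theta_N^{m-\mu-2}\Delta_N\|f^a\|_{H^{\mu+1}_\gamma}$, and with $j=\mu+2$ to $E_{N-1}$ (using $\|E_{N-1}\|_{H^{\mu+2}_\gamma}\le C\epsilon^2\theta_{N-1}$ from Lemma \ref{lem.sum2} and $\theta_{N-1}\le\theta_N$) yields $C\epsilon^2\theta_N^{m-\mu-2}\Delta_N$. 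For the remainder $\mathcal{S}_{\theta_N}e_{N-1}$ I split cases: if $m\le\tilde\mu-1$, apply \eqref{smooth.p1a} with $j=m$ and invoke \eqref{es.sum1} to bound it by $C\epsilon^2\theta_N^{\ell_4(m)-1}\Delta_N$; if $m\in\{\tilde\mu,\tilde\mu+1\}$, apply \eqref{smooth.p1a} with $j=\tilde\mu-1=\mu+2$ and use $\ell_4(\mu+2)\le 1$ (valid for $\mu\ge 7$) to obtain a bound of the form $C\epsilon^2\theta_N^{m-\mu-2}\Delta_N$, which is absorbed into \eqref{es.fl}. Summing these contributions gives exactly \eqref{es.fl}.

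The estimate \eqref{es.gl} is obtained analogously, using Lemma \ref{lem.sum2} to control $\widetilde{E}_{N-1}$ (with loss reduced to $C\epsilon^2$ rather than $C\epsilon^2\theta_{N-1}$, which is what produces the single power $\theta_N^{m-\mu-2}$ after applying \eqref{smooth.p1c} with $j=\mu+1$) and \eqref{es.sum1.tilde} to control $\tilde e_{N-1}$. For \eqref{es.hl}, I will substitute the explicit formulas from \eqref{hn.def}, group the terms involving $\widehat{E}_{N-1}$ and $\widetilde{E}_{N-1}$ (passing the latter through $\mathcal{R}_T$, which costs at most half a derivative and commutes harmlessly with the counting), then reuse the bounds from Lemma \ref{lem.sum2} together with \eqref{es.sum1.hat}--\eqref{es.sum1.tilde}; the term $\hat e_{N-1}^{\pm}$ is handled by the same dichotomy on $m$ used for $e_{N-1}$, with exponent function $\ell_2$ replacing $\ell_4$. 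The range $m\le\tilde\mu$ for \eqref{es.hl} (versus $m\le\tilde\mu+1$ for $f_N,g_N$) is dictated precisely by the upper limit $m\le\tilde\mu-2$ in \eqref{es.sum1.tilde}, which is the only place where we lose one derivative relative to the interior estimates.

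The only mildly delicate point, and the main bookkeeping obstacle, is verifying that in the high-regularity regime $m\in\{\tilde\mu,\tilde\mu+1\}$ the smoothing-mediated bounds on $\mathcal{S}_{\theta_N}e_{N-1}$ and $\mathcal{S}_{\theta_N}\hat e_{N-1}$ remain subsumed by the target rates $\theta_N^{m-\mu-2}$ and $\theta_N^{\ell_j(m)-1}$ respectively; this forces the choice $\tilde\mu=\mu+3$ already fixed in Lemma \ref{lem.sum2}, and at $\mu\ge 7$ one checks $\ell_4(\mu+2)\le 1$ and $\ell_2(\mu+2)\le 1$ directly from their definitions, which closes all three estimates.
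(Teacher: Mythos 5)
Your proof is correct and follows essentially the same route as the paper: the same telescoping identities for $f_{{N}}$, $g_{{N}}$, $h_{{N}}^{\pm}$, the Newton--Leibniz/\eqref{smooth.p1c} bound for $\mathcal{S}_{\theta_{{N}}}-\mathcal{S}_{\theta_{{N}-1}}$ played against Lemma \ref{lem.sum2}, and \eqref{smooth.p1a} combined with Corollary \ref{lem.sum1} for the residual terms. Your explicit case split for $m\in\{\tilde{\mu},\tilde{\mu}+1\}$, where \eqref{es.sum1}--\eqref{es.sum1.tilde} are not directly applicable and one must smooth from the level $\mu+2$ (resp.\ $\mu+1$) using $\ell_4(\mu+2)\leq 1$, is a detail the paper leaves implicit, and you handle it correctly.
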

\begin{proof}
Using $\theta_{{N}-1}\leq \theta_{{N}}\leq \sqrt{2}\theta_{{N}-1}$
and $\Delta_{{N}-1}\leq 3\Delta_{{N}}$,
from \eqref{smooth.p1a}, \eqref{smooth.p1c}, \eqref{es.sum1}, and \eqref{es.sum2a},
we obtain
\begin{align*}
\|f_{N}\|_{H^m_{\gamma}(\Omega_T)}
&\leq
\|(\mathcal{S}_{\theta_{{N}}}-\mathcal{S}_{\theta_{{N}-1}})f^a
-(\mathcal{S}_{\theta_{{N}}}-\mathcal{S}_{\theta_{{N}-1}})E_{{N}-1}
-\mathcal{S}_{\theta_{{N}}}e_{{N}-1}\|_{H^m_{\gamma}(\Omega_T)} \\
&
\leq
C \Delta_{{N}}
 \theta_{{N}}^{m-{\mu}-2}\big(\|f^a\|_{H^{{\mu}+1}_{\gamma}(\Omega_T)}
+\theta_{{N}}^{-1}\|E_{{N}-1}\|_{H^{{\mu}+2}_{\gamma}(\Omega_T)} \big)
+\|\mathcal{S}_{\theta_{{N}}} e_{{N}-1}\|_{H^m_{\gamma}(\Omega_T)} \\
&
\leq
C \Delta_{{N}}\left\{\theta_{{N}}^{m-{\mu}-2}(\|f^a\|_{H^{{\mu}+1}_{\gamma}(\Omega_T)}
+\epsilon^2)+\epsilon^2\theta_{{N}}^{\ell_4(m)-1}\right\}.
\end{align*}
By using \eqref{es.sum1.tilde} and \eqref{es.sum2b}, we get
\begin{align*}
 \|g_{N}\|_{H^m_{\gamma}(\omega_T)}
&\leq
\| (\mathcal{S}_{\theta_{{N}}}-\mathcal{S}_{\theta_{{N}-1}})\widetilde{E}_{{N}-1}
-\mathcal{S}_{\theta_{{N}}}\tilde{e}_{{N}-1}\|_{H^m_{\gamma}(\omega_T)} \\
& \leq
C \Delta_{{N}}
\theta_{{N}}^{m-{\mu}-2} \|\widetilde{E}_{{N}-1}\|_{H^{{\mu}+1}_{\gamma}(\omega_T)}
+\|\mathcal{S}_{\theta_{{N}}} \tilde{e}_{{N}-1}\|_{H^m_{\gamma}(\omega_T)} \\
& \leq
C \epsilon^2 \Delta_{{N}}\big(\theta_{{N}}^{m-{\mu}-2}+\theta_{{N}}^{\ell_2(m)-1}\big).
\end{align*}
Similarly, we can obtain \eqref{es.hl} for $h_{N}^{\pm}$ from \eqref{es.sum1.hat}
and \eqref{es.sum2b}. The proof of Lemma \ref{lem.source} is completed.
\end{proof}

In the following lemma, we obtain the estimate of differences
$\delta V_{{N}},$ $\delta\varPsi_{{N}}$, and
$\delta\psi_{{N}},$ by using the tame estimate \eqref{thm2.est}.
See \cite[Lemma 16]{CS08MR2423311}
or \cite[Lemma 8.10]{CSW19MR3925528} for the proof.
\begin{lemma}\  \label{lem.Hl1}
 Let ${\mu}\geq 7$ and $\tilde{\mu}={\mu}+3$.
 If $\epsilon>0$ and $\|f^a\|_{H^{{\mu}+1}_{\gamma}(\Omega_T)}/\epsilon$ are suitably small
 and $\theta_0\geq1$ is  large enough, then
 \begin{align} \label{Hl.a}
 \|(\delta V_{{N}},\delta\varPsi_{{N}})\|_{H^{m}_{\gamma}(\Omega_T)}
 +\|\delta\psi_{{N}}\|_{H^{m+1}_{\gamma}(\omega_T)}
 \leq \epsilon \theta_{{N}}^{m-{\mu}-1}\Delta_{{N}}
 \quad \textrm{for  $m=2,\ldots, \tilde{\mu}$.}
 \end{align}
\end{lemma}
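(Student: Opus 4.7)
The plan is to apply the tame estimate of Theorem \ref{thm2} to the linearized problem \eqref{effective.NM} posed around the modified coefficient state $(U^a+V_{{N}+1/2},\varPhi^a+\varPsi_{{N}+1/2})$, which by Lemma \ref{lem.modified} satisfies all the structural constraints \eqref{bas.c2}--\eqref{bas} required by Theorem \ref{thm2}. The smallness hypothesis on the background perturbation is guaranteed by \eqref{small}, \eqref{tri3}, \eqref{MS.e} together with the Sobolev embedding, provided $\epsilon$ is small enough and $\theta_0$ is sufficiently large. Consequently \eqref{thm2.est} yields, for $2\le m\le\tilde{\mu}$,
\begin{align*}
\|\delta\dot V_{N}\|_{H^{m}_{\gamma}(\Omega_T)}
+\|\delta\psi_{N}\|_{H^{m+1}_{\gamma}(\omega_T)}
\lesssim\;& \|f_{N}\|_{H^{m+1}_{\gamma}(\Omega_T)}+\|g_{N}\|_{H^{m+1}_{\gamma}(\omega_T)}\\
&+\bigl(\|f_{N}\|_{H^{3}_{\gamma}(\Omega_T)}+\|g_{N}\|_{H^{3}_{\gamma}(\omega_T)}\bigr)
\|(V_{{N}+1/2},\varPsi_{{N}+1/2})\|_{H^{m+3}_{\gamma}(\Omega_T)},
\end{align*}
up to an additive contribution from $\|(\widetilde U^a,\widetilde\varPhi^a)\|_{H^{m+3}_{\gamma}}$ which by \eqref{small} is controlled by $\epsilon$.

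The next step is a bookkeeping exercise with the exponents in $\theta_{N}$. By Lemma \ref{lem.source}, the source norms $\|f_N\|_{H^{m+1}_\gamma}+\|g_N\|_{H^{m+1}_\gamma}$ split into a ``principal'' contribution $C\Delta_{N}\theta_{N}^{m-1-{\mu}}(\|f^a\|_{H^{{\mu}+1}_{\gamma}}+\epsilon^2)$ and ``quadratic'' contributions $C\epsilon^2\Delta_{N}\bigl(\theta_{N}^{\ell_4(m+1)-1}+\theta_{N}^{\ell_2(m+1)-1}\bigr)$. Checking the definitions of $\ell_2$ and $\ell_4$ shows that for $\mu\ge 7$ and all $m\le\tilde{\mu}={\mu}+3$ one has $\ell_2(m+1),\,\ell_4(m+1)\le m-{\mu}$, with equality only at borderline values; the resulting factor $\theta_{N}^{\ell_i(m+1)-1}\le \theta_{N}^{m-{\mu}-1}$ can be absorbed after choosing $\theta_0$ large. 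Combined with the smallness of $\|f^a\|_{H^{{\mu}+1}_{\gamma}}/\epsilon$, this already delivers
\begin{equation*}
\|f_{N}\|_{H^{m+1}_{\gamma}(\Omega_T)}+\|g_{N}\|_{H^{m+1}_{\gamma}(\omega_T)}
\lesssim \epsilon^2\,\theta_{N}^{m-{\mu}-1}\Delta_{N}.
\end{equation*}
For the product term I use Lemma \ref{lem.modified} together with \eqref{tri3} to bound $\|(V_{{N}+1/2},\varPsi_{{N}+1/2})\|_{H^{m+3}_{\gamma}}\lesssim \epsilon\,\theta_{N}^{\max\{(m+3-{\mu})_+,\,m+5-{\mu}\}}$, while the same source estimate with $m=2$ produces $\|f_N\|_{H^3_\gamma}+\|g_N\|_{H^3_\gamma}\lesssim \epsilon\,\Delta_N\theta_N^{-6}$ when $\mu\ge 7$. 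Multiplying these two and using $\mu\ge 7$ gives a bound of order $\epsilon^3\Delta_N\theta_N^{m-{\mu}-1}$, hence absorbable into the target $\epsilon\,\theta_{N}^{m-{\mu}-1}\Delta_{N}$ for $\epsilon$ small.

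It remains to pass from $\delta\dot V_{N}$ and the trace $\delta\psi_{N}$ to the full triple $(\delta V_{N},\delta\varPsi_{N},\delta\psi_{N})$. To estimate $\delta\varPsi_{N}^{\pm}$ I apply a standard weighted-$H^m$ energy estimate directly to the transport equations \eqref{delta.Psi1}--\eqref{delta.Psi2}; the coefficients are smooth functions of the modified state, uniformly bounded in $W^{1,\infty}$ by Lemma \ref{lem.modified} and Sobolev embedding, so the classical commutator/Moser analysis (as in Section \ref{sec.tan}) yields
\begin{equation*}
\|\delta\varPsi_{N}^{\pm}\|_{H^{m}_{\gamma}(\Omega_T)}
\lesssim \|\mathcal{R}_T g_{N}\|_{H^{m}_{\gamma}(\Omega_T)}
+\|h_{N}^{\pm}\|_{H^{m}_{\gamma}(\Omega_T)}
+\|\delta\dot V_{N}\|_{H^{m}_{\gamma}(\Omega_T)}
+(\text{tame terms in the modified state}).
\end{equation*}
The lifting operator from Lemma \ref{lem.smooth2} together with Lemma \ref{lem.source} and the bound \eqref{es.hl} control the first two source terms by the same quantity $\epsilon^2\theta_{N}^{m-{\mu}-1}\Delta_{N}$, and the third has already been handled. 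Finally the good-unknown identity \eqref{good.NM} gives $\delta V_{N}=\delta\dot V_{N}+\bigl(\p_3(U^a+V_{{N}+1/2})/\p_3(\varPhi^a+\varPsi_{{N}+1/2})\bigr)\delta\varPsi_{N}$; applying Moser's inequality \eqref{Moser1} and the modified-state bounds closes the estimate. The only genuine difficulty is the fourth paragraph's arithmetic: verifying that the thresholds $\ell_2(m+1),\ell_4(m+1)\le m-{\mu}$ are simultaneously consistent with the hypothesis $2\le m\le\tilde{\mu}={\mu}+3$ forces the sharp lower bound ${\mu}\ge 7$, and the matching choice $\tilde{\mu}={\mu}+3$ is exactly what is needed for the product term involving the modified state in $H^{m+3}_{\gamma}$ at $m=\tilde{\mu}$; any smaller $\tilde{\mu}$ would break the closure.
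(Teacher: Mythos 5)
Your proposal is correct and follows essentially the same route as the proof the paper defers to (\cite[Lemma 16]{CS08MR2423311}, \cite[Lemma 8.10]{CSW19MR3925528}): apply the tame estimate \eqref{thm2.est} around the modified state, control the sources via Lemma \ref{lem.source} with the exponent checks $\ell_2(m+1),\ell_4(m+1)\le m-\mu$ for $\mu\ge7$, and recover $\delta\varPsi_{N}$ and $\delta V_{N}$ from the transport equations \eqref{delta.Psi1}--\eqref{delta.Psi2} and the good-unknown identity \eqref{good.NM}. The only minor inaccuracy is attributional: the constraint $\tilde{\mu}=\mu+3$ is actually forced by the accumulated-error estimates of Lemma \ref{lem.sum2} (which need $\mu+2\le\tilde{\mu}-1$), not by the product term here, but this does not affect the validity of your argument.
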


Lemma \ref{lem.Hl1} establishes the first part of the hypothesis $\mathrm{\bf H}_{{N}}$.
The following lemma addresses the remaining components of  $\mathrm{\bf H}_{{N}}$.

\begin{lemma}\ \label{lem.Hl2}
 Let ${\mu}\geq 7$ and $\tilde{\mu}={\mu}+3$.
 If $\epsilon>0$ and $\|f^a\|_{H^{{\mu}+1}_{\gamma}(\Omega_T)}/\epsilon$ are suitably small
 and $\theta_0\geq1$ is  large enough,
 then
 \begin{alignat}{3}\label{Hl.b}
 &\|\mathcal{L}( V_{{N}},  \varPsi_{{N}})-f^a\|_{H^{m}_{\gamma}(\Omega_T)}\leq 2 \epsilon \theta_{{N}}^{m-{\mu}-1}
 && \quad \textrm{for } m=2,\ldots, \tilde{\mu}-1,\\
 \label{Hl.c}
 &\|\mathcal{B}( V_{{N}} ,  \psi_{{N}})\|_{H^{m}_{\gamma}(\omega_T)}
 \leq  \epsilon \theta_{{N}}^{m-{\mu}-1}
 && \quad \textrm{for } m=3,\ldots,  {{\mu}},\\
 & \|\mathcal{E}( V_{{N}},  \varPsi_{{N}})\|_{H^{3}_{\gamma}(\Omega_T)}\leq
 \epsilon \theta_{{N}}^{2-{\mu}}.
 &&
 \label{Hl.d}
 \end{alignat}
\end{lemma}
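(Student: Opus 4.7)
\smallskip

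\noindent\textbf{Proof proposal for Lemma \ref{lem.Hl2}.}

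The plan is to derive each of the three estimates by using the telescoping identities \eqref{conv.1}--\eqref{conv.3} (evaluated at index $N-1$ instead of $N$) to express the residuals at step $N$ as a combination of a smoothed accumulated error plus the current one-step error, and then to invoke the bounds from Corollary \ref{lem.sum1} and Lemma \ref{lem.sum2} together with the smoothing estimates \eqref{smooth.p1a}--\eqref{smooth.p1c}.

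First, for the interior residual, I would use \eqref{conv.1} with index $N-1$ to obtain
\begin{equation*}
\mathcal{L}(V_N,\varPsi_N)-f^a
=-(I-\mathcal{S}_{\theta_{N-1}})f^a+(I-\mathcal{S}_{\theta_{N-1}})E_{N-1}+e_{N-1}.
\end{equation*}
Applying \eqref{smooth.p1b} together with the accumulated estimate \eqref{es.sum2a} gives
$\|(I-\mathcal{S}_{\theta_{N-1}})E_{N-1}\|_{H^m_\gamma(\Omega_T)}\lesssim \theta_{N-1}^{m-\mu-2}\|E_{N-1}\|_{H^{\mu+2}_\gamma(\Omega_T)}\lesssim \epsilon^2\theta_N^{m-\mu-1}$,
and similarly $\|(I-\mathcal{S}_{\theta_{N-1}})f^a\|_{H^m_\gamma(\Omega_T)}\lesssim \theta_N^{m-\mu-1}\|f^a\|_{H^{\mu+1}_\gamma(\Omega_T)}$ for $m\le \tilde\mu-1=\mu+2$. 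Then \eqref{es.sum1} controls $\|e_{N-1}\|_{H^m_\gamma(\Omega_T)}\lesssim \epsilon^2\theta_{N-1}^{\ell_4(m)-1}\Delta_{N-1}$; one checks (using $\mu\ge 7$) that $\ell_4(m)-1+\text{(decay from }\Delta_{N-1})\le m-\mu-1$ for all admissible $m$, so this term is dominated by $\epsilon^2\theta_N^{m-\mu-1}$. Collecting and choosing $\epsilon$ and $\|f^a\|_{H^{\mu+1}_\gamma(\Omega_T)}/\epsilon$ sufficiently small so that the absolute constants produce a bound $\le 2\epsilon\theta_N^{m-\mu-1}$ yields \eqref{Hl.b}.

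Second, for the boundary residual, \eqref{conv.2} (at index $N-1$) gives
\begin{equation*}
\mathcal{B}(V_N,\psi_N)=(I-\mathcal{S}_{\theta_{N-1}})\widetilde E_{N-1}+\tilde e_{N-1}.
\end{equation*}
Using \eqref{smooth.p1b} on $\mathcal{F}^{\mu+1}_\gamma(\omega_T)$, the bound \eqref{es.sum2b} for $\widetilde E_{N-1}$, and \eqref{es.sum1.tilde} for $\tilde e_{N-1}$, together with the observation that $\ell_2(m)-1\le m-\mu-2$ for $m\le \mu$ when $\mu\ge 7$, I would deduce $\|\mathcal{B}(V_N,\psi_N)\|_{H^m_\gamma(\omega_T)}\lesssim \epsilon^2\theta_N^{m-\mu-1}$, which is absorbed into $\epsilon\theta_N^{m-\mu-1}$ after a second smallness choice of $\epsilon$, giving \eqref{Hl.c}. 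The same strategy, applied to \eqref{conv.3} with the lifting estimate from Lemma \ref{lem.smooth2} and the bound \eqref{es.sum2b} for $\widehat E_{N-1}$, yields \eqref{Hl.d}; here we only need to close at $m=3$, which corresponds to $\ell_2(3)-1\le 2-\mu$ for $\mu\ge 7$.

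The main obstacle I anticipate is the careful book-keeping of the exponent arithmetic: verifying that for every $m$ in the stated range, the expressions $\ell_2(m)-1$ and $\ell_4(m)-1$ (including the factor of $\Delta_{N-1}\lesssim \theta_{N-1}^{-1}$) are majorized by $m-\mu-1$, so that the contributions of the one-step errors $e_{N-1},\tilde e_{N-1},\hat e_{N-1}$ genuinely give rise to an $\epsilon^2$ factor (rather than $\epsilon$), thereby allowing absorption. This is the step that both dictates the requirement $\mu\ge 7$ and forces the choice $\tilde\mu=\mu+3$, as in Lemma \ref{lem.sum2}. Once this arithmetic is carried out and the constants are tracked through the smoothing estimates, the gap between the bound $\le C\epsilon^2\theta_N^{m-\mu-1}$ and the desired bound $\le \epsilon\theta_N^{m-\mu-1}$ (respectively $2\epsilon$ for $\mathcal{L}$) is closed by taking $\epsilon$ small, $\|f^a\|_{H^{\mu+1}_\gamma(\Omega_T)}/\epsilon$ small, and $\theta_0$ large. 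This completes the induction step $\mathbf{H}_{N-1}\Rightarrow \mathbf{H}_N$ and thereby the proof of Theorem \ref{thm} via the convergence of $(V_N,\varPsi_N,\psi_N)$ in lower-regularity norms.
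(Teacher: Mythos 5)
Your proposal is correct and follows essentially the same route as the paper, which itself omits the proof of this lemma and refers to \cite[Lemmas 17--18]{CS08MR2423311} and \cite[Lemma 8.11]{CSW19MR3925528}: one expresses the residuals via the telescoping identities \eqref{conv.1}--\eqref{conv.3} at index ${N}-1$, bounds the smoothed accumulated errors by \eqref{smooth.p1b} together with Lemma \ref{lem.sum2}, bounds the one-step errors by Corollary \ref{lem.sum1}, checks the exponent arithmetic for $\mu\geq 7$, and absorbs the $\epsilon^2$ and $\|f^a\|_{H^{\mu+1}_{\gamma}}$ contributions by smallness. The only book-keeping points worth flagging are the endpoint $m=\tilde{\mu}-1=\mu+2$ of \eqref{Hl.b}, where \eqref{smooth.p1b} with $j=\mu+1$ is not directly applicable and one instead invokes \eqref{small} (harmless since $\theta_{{N}}^{m-\mu-1}=\theta_{{N}}\geq 1$ there), and the fact that the term $\mathcal{R}_T\mathcal{B}(V_{{N}},\psi_{{N}})_2$ in \eqref{conv.3} must be controlled by the intermediate $O(\epsilon^2)$ bound on $\mathcal{B}$ rather than the final $\epsilon$ bound so that the absorption in \eqref{Hl.d} closes.
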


We refer to \cite[Lemmas 17--18]{CS08MR2423311}
or \cite[Lemma 8.11]{CSW19MR3925528} for the proof of Lemma \ref{lem.Hl2}.
Let us assume ${\mu}\geq 7$, $\tilde{\mu}={\mu}+3$,
$\epsilon>0$ and $\|f^a\|_{H^{{\mu}+1}_{\gamma}(\Omega_T)}/\epsilon$ sufficiently small. Additionally, let $\theta_0\geq1$ be large enough to satisfy
the assumptions of Lemmas \ref{lem.Hl1}--\ref{lem.Hl2},
from which the inductive hypothesis $\mathrm{\bf H}_{{N}}$ follows.
Then, as shown in \cite[Lemma 19]{CS08MR2423311} or
\cite[Lemma 8.12]{CSW19MR3925528}, it can be proved that the hypothesis $\mathrm{\bf H}_{{0}}$ holds.

\begin{lemma}\ \label{lem.H0}
If $\|f^a\|_{H^{{\mu}+1}_{\gamma}(\Omega_T)}/\epsilon$ is small enough,
then the hypothesis $\mathrm{\bf H}_{{0}}$ holds.
\end{lemma}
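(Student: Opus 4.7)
The plan is to verify directly the four estimates defining $\mathbf{H}_0$. With $(V_0,\varPsi_0,\psi_0)=(\mathbf{0},\mathbf{0},0)$ by construction, items (iii)--(iv) are immediate: $\mathcal{B}(V_0,\psi_0)=\mathbb{B}(U^a,\varphi^a)=\mathbf{0}$ by the Rankine--Hugoniot identity \eqref{app.eq.5}, and $\mathcal{E}(V_0,\varPsi_0)\equiv 0$ by the eikonal equation \eqref{app.eq.2}, so the corresponding right-hand sides vanish identically. For item (ii), since $\mathcal{L}(V_0,\varPsi_0)=0$ as well, the required estimate collapses to $\|f^a\|_{H^m_\gamma(\Omega_T)}\leq 2\epsilon\theta_0^{m-\mu-1}$ for $m=2,\ldots,\tilde{\mu}-1=\mu+2$. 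This will be handled in two ranges: for $m\leq\mu+1$ by monotonicity $\|f^a\|_{H^m_\gamma}\leq\|f^a\|_{H^{\mu+1}_\gamma}$ together with the hypothesized smallness of $\|f^a\|_{H^{\mu+1}_\gamma}/\epsilon$ (taken small relative to $\theta_0^{-(\mu-1)}$ to cover the worst case $m=2$); for $m=\mu+2$ use the a priori bound $\|f^a\|_{H^{\mu+2}_\gamma}\leq\|f^a\|_{H^{\tilde{\mu}+3}_\gamma}\leq\epsilon\leq 2\epsilon\theta_0$ supplied by \eqref{small}.

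The substance is item (i). The idea is to apply Theorem \ref{thm2} to the effective linear problem \eqref{effective.NM} at $N=0$. Since $V_0=\varPsi_0=\mathbf{0}$, the modified state constructed as in Lemma \ref{lem.modified} inherits $V_{1/2}=\mathbf{0}$ and $\varPsi_{1/2}=\mathbf{0}$ (all four constituents $\rho_{1/2}^\pm$, $v_{1/2,j}^\pm$, $\varPsi_{1/2}^\pm$, $\bm F_{1/2}^\pm$ vanish identically by the construction in the proof of Lemma \ref{lem.modified}), so the basic state in \eqref{effective.NM} reduces to the approximate solution $(U^a,\varPhi^a,\varphi^a)$ itself. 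By Lemma \ref{lem.app} combined with \eqref{small} and Sobolev embedding, this basic state satisfies \eqref{bas.c1}--\eqref{bas} and the auxiliary bound \eqref{thm2.H} with constant $K$ controlled by $\epsilon$. With source terms $f_0=\mathcal{S}_{\theta_0}f^a$ and $g_0=\mathbf{0}$, the tame estimate \eqref{thm2.est} yields, for $m\in[2,\tilde{\mu}]$,
\[
\|\delta\dot V_0\|_{H^m_\gamma(\Omega_T)}+\|\delta\psi_0\|_{H^{m+1}_\gamma(\omega_T)}\lesssim \|\mathcal{S}_{\theta_0}f^a\|_{H^{m+1}_\gamma(\Omega_T)}+\epsilon\,\|\mathcal{S}_{\theta_0}f^a\|_{H^3_\gamma(\Omega_T)}.
\]
Combining with the smoothing estimate \eqref{smooth.p1a} in the form $\|\mathcal{S}_{\theta_0}f^a\|_{H^{m+1}_\gamma}\leq C\theta_0^{(m-\mu)_+}\|f^a\|_{H^{\mu+1}_\gamma}$, and using $\Delta_0\sim\theta_0^{-1}$ from \eqref{delta.k}, the right-hand side is controlled by $C\theta_0^{(m-\mu)_+}\|f^a\|_{H^{\mu+1}_\gamma}$, which does not exceed $\epsilon\theta_0^{m-\mu-1}\Delta_0$ provided $\|f^a\|_{H^{\mu+1}_\gamma}/\epsilon$ is small (the strictest case $m=2$ forces the ratio to be at most of order $\theta_0^{-\mu}$).

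Finally, $\delta\varPsi_0^\pm$ is obtained from the transport equations \eqref{delta.Psi1}--\eqref{delta.Psi2}, which with $g_0=\mathbf{0}$ and $h_0^\pm=\mathbf{0}$ are driven solely by linear combinations of $\delta\dot V_0$ with coefficients depending on the background $(U^a,\varPhi^a)$. A standard energy estimate for linear transport with $W^{1,\infty}$ coefficients transfers the bound on $\delta\dot V_0$ to $\delta\varPsi_0$, after which the substitution \eqref{good.NM} yields the corresponding bound on $\delta V_0$, closing item (i). The main bookkeeping obstacle is matching the powers of $\theta_0$ across the threshold $m=\mu$ in the smoothing estimate: below the threshold the growth factor is trivial and the bound is driven by smallness of $\|f^a\|_{H^{\mu+1}_\gamma}/\epsilon$, while above it the smoothing loss $\theta_0^{m-\mu}$ must be absorbed by the same ratio. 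Taking this ratio sufficiently small depending on the fixed large $\theta_0$ accommodates both regimes simultaneously, which is precisely the smallness hypothesis of the lemma.
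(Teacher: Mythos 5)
Your proof is correct and follows the same route as the argument the paper invokes by citation (\cite[Lemma 19]{CS08MR2423311}): with $(V_0,\varPsi_0,\psi_0)=\mathbf{0}$ parts (ii)--(iv) reduce to properties \eqref{app.eq.2}, \eqref{app.eq.5} of the approximate solution plus smallness of $\|f^a\|_{H^{\mu+1}_\gamma}/\epsilon$, and part (i) follows from the tame estimate \eqref{thm2.est} applied around the unperturbed modified state $(U^a,\varPhi^a)$ together with \eqref{smooth.p1a}. Your observations that the modified state collapses to the approximate solution at $N=0$ (in particular that $\bm F_{1/2}=\mathbf{0}$ because $\bm F^a$ already satisfies \eqref{Fa.eq} exactly, so the homogeneous linear transport problem has only the trivial solution vanishing in the past) and that $m=2$ is the binding case for the required smallness $\|f^a\|_{H^{\mu+1}_\gamma}/\epsilon\lesssim\theta_0^{-\mu}$ are exactly the points the standard proof rests on.
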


With this, we are ready to complete the proof of Theorem {\rm\ref{thm}}.
Our proof closely follows the idea in \cite{CS08MR2423311, CSW19MR3925528}, but is included here for completeness.

\vspace*{3mm}
\noindent  {\bf Proof of Theorem {\rm\ref{thm}}}.\quad
Let $\tilde{\mu}:=s_0-4\geq 10$ and ${\mu}:=\tilde{\mu}-3\geq 7$.  Under the assumptions of Theorem {\rm\ref{thm}}, the initial data $(U_0^{\pm},\varphi_0)$ are compatible up to order $s_0=\tilde{\mu}+4$.
If $(\tilde{U}_0^{\pm},\varphi_0)$ is sufficiently small
in $H^{s_0+1/2}(\mathbb{R}^3_+)\times H^{s_0+1}(\mathbb{R}^2)$,
where $\widetilde{U}_0^{\pm}:=U_0^{\pm}-\bar{U}^{\pm}$,
then the  assumption \eqref{small} and all the requirements
of Lemmas \ref{lem.Hl1}--\ref{lem.H0} are satisfied,
due to \eqref{app3} and \eqref{f.a.est}. Thus,   $\mathrm{\bf H}_{{N}}$  holds
for all   ${N}\in\mathbb N$. From the estimate 
\begin{align*}
\sum_{n=0}^{\infty}\left(\|(\delta V_{n},\delta \varPsi_{n})\|_{H^{m}_{\gamma}(\Omega_T)}+\|\delta\psi_{n}\|_{H^{m+1}_{\gamma}(\omega_T)} \right)
\leq C\sum_{n=0}^{\infty}\theta_{n}^{m-{\mu}-2} <\infty, \quad 3\le  m\le {\mu}-1,
\end{align*}
 we conclude that  $(V_{n},\varPsi_{n})$ converges
to some   $(V,\varPsi)$ in $H^{{\mu}-1}_{\gamma}(\Omega_T)$,
and   $\psi_{n}$ converges to some   $\psi$ in $H^{{\mu}}_{\gamma}(\omega_T)$.
Taking the limit in \eqref{Hl.b}--\eqref{Hl.c} for $m={\mu}-1=s_0-8$,
and in \eqref{Hl.d}, we obtain that  $(V,\varPsi)$ satisfies \eqref{P.new}.
Consequently, $(U, \varPhi)=(U^a+V, \varPhi^a+\varPsi)$ is a solution to   \eqref{Phi.eq}--\eqref{EVS} in $\Omega_T^+$.
The proof of Theorem \ref{thm} is complete.
\qed

\section*{Acknowledgments}
R.M. Chen was supported in part by the NSF grant DMS-2205910. F. Huang was supported in part by National Key R\&D Program of China, grant No. 2021YFA1000800, and the National Natural Sciences Foundation of China, grant No. 12288201. D. Wang was supported in part by NSF grants DMS-2219384 and DMS-2510532. D.  Yuan was supported by EPSRC grant EP/V051121/1, Fundamental Research Funds for the Central Universities No. 2233100021 and No. 2233300008. 
 
D. Yuan would like to thank Professor Xianpeng Hu for helpful discussions during his stay at City University of Hong Kong. The authors are very grateful to the anonymous referees for the valuable comments and suggestions.

\bibliographystyle{siam}
\bibliography{VS}

\end{document}